% Ceci est la version finale de l'article intitulé 
% Long-time homogenization and asymptotic ballistic transport of classical waves
% 21/09/2017

\documentclass[a4paper,11pt,reqno,noindent]{amsart}
\usepackage[centertags]{amsmath}
\usepackage{amsfonts,amssymb,amsthm} 
\usepackage{mathrsfs}
\usepackage[english]{babel}
\usepackage{newlfont}
\usepackage{color}
\usepackage[body={15cm,21.5cm},centering]{geometry} 
\usepackage{fancyhdr}
\pagestyle{fancy}
\usepackage{esint}

\fancyhead[RO,LE]{\footnotesize\thepage}
\fancyhead[LO]{\scriptsize\rightmark}
\fancyhead[RE]{\scriptsize\leftmark}

\setlength{\headheight}{12pt}  
\setlength{\headsep}{25pt} 
\usepackage{enumerate}

%%%%%%%%%%%%%%%%%%%%%%%%%%%%%%%%%%%%%%%%%%%%%%%%%%%%%%%%
% THEOREM Environments ---------------------------------

\newtheorem{theo}{Theorem}
\newtheorem{lemma}{Lemma}
\newtheorem{prop}{Proposition}
\newtheorem{coro}{Corollary}

\theoremstyle{definition}
\newtheorem{rem}{Remark}
\newtheorem{defi}{Definition}
\newtheorem{hypo}{Hypothesis}

\numberwithin{equation}{section}
\numberwithin{lemma}{section} 
\numberwithin{defi}{section} 
\numberwithin{rem}{section} 

%%%%%%%%%%%%%%%%%%%%%%%%%%%%%%%%%%%%%%%%%%%%%%%%%%%%%%%%

%Definitions ad hoc
\newcommand \dps{\displaystyle }

\newcommand{\Km}{K_{\mathrm{max},\ell}}
\newcommand{\loc}{\mathrm{loc}}

\newcommand{\ho}{\mathrm{hom}}

\newcommand{\R}{\mathbb{R}}
\newcommand{\Z}{\mathbb{Z}}

\newcommand{\N}{\mathbb{N}}
\newcommand{\T}{\mathbb{T}}
\newcommand{\Md}{\mathcal{M}_d(\R)}
\newcommand{\Id}{\mathrm{Id}}
\newcommand{\e}{\varepsilon}

\newcommand{\calF}{\mathcal{F}}

\newcommand{\Sc}{\mathcal{S}(\mathbb R^d)}

\newcommand{\calM}{\mathcal{M}}

\newcommand{\calL}{\mathcal{L}}

\newcommand{\cc}{\mathbf{c}}

\newcommand{\Pm}{\mathbb{P}}

\mathchardef\emptyset="001F

\def\aa{\boldsymbol a}
\newcommand{\bb}{\boldsymbol b}

%norm

%sto

\newcommand{\expec}[1]{\mathbb{E}\left[ #1 \right]}
\newcommand{\expecs}[1]{\mathbb{E}\big[ #1 \big]}

\newcommand{\step}[1]{\noindent \textit{Step} #1.}
\newcommand{\substep}[1]{\noindent \textit{Substep} #1.}

\renewcommand{\phi}{\varphi}

\definecolor{green}{rgb}{0,0.75,0.2}
\definecolor{blue}{rgb}{0,0.2,0.75}
%%%%%%%%%%%%%%%%%%%%%%%%%%%%%%%%%%%%%%%%%%%%%%%%%%%%%%%%%%%%%%%%%%%%%%%%%%%%%%%%%%%%%%%%%%%

\title[Homogenization and asymptotic ballistic transport of classical waves]{Long-time homogenization and asymptotic ballistic transport of classical waves}
\author[A. Benoit]{Antoine Benoit}
\author[A. Gloria]{Antoine Gloria}
\address[Antoine Benoit]{Universit\'e du Littoral C\^{o}te d'Opale  \\ Calais, France}
\email{antoine.benoit@univ-littoral.fr}
\address[Antoine Gloria]{Sorbonne Universit\'e, UMR 7598, Laboratoire Jacques-Louis Lions, F-75005, Paris, France 
\newline
Universit\'e Libre de Bruxelles, Belgium}
\email{antoine.gloria@upmc.fr}
\begin{document}
\maketitle
%%----------------------------------------------------------------------------------------------------------------------

Consider an elliptic operator in divergence form with symmetric coefficients.
If the diffusion coefficients are periodic, the Bloch theorem allows one to diagonalize the elliptic operator, which is 
key to the spectral properties of the elliptic operator and the usual starting point for the study of its long-time homogenization.
When the coefficients are not periodic (say, quasi-periodic, almost periodic, or random with decaying 
correlations at infinity), the Bloch theorem does not hold and both the spectral properties and the long-time behavior of the associated
operator are unclear.
At low frequencies, we may however consider a formal Taylor expansion of Bloch waves (whether they exist or not) based on correctors in elliptic homogenization.
The associated Taylor-Bloch waves diagonalize the elliptic operator up to an error term (an ``eigendefect''), which we express with the help of a new family of extended correctors.
We use the Taylor-Bloch waves with eigendefects % on a un eigendefect puis plusieurs
 to quantify the transport properties and homogenization error over large times
for the wave equation in terms of the spatial growth of these extended correctors.
On the one hand,  this quantifies the validity of homogenization over large times (both for the standard homogenized equation and higher-order versions).
On the other hand, this allows us to prove asymptotic ballistic transport of classical waves 
at low energies for almost periodic and random operators.

\smallskip

Keywords: homogenization, periodic, quasiperiodic,  random, waves, long-time, ballistic transport.

\medskip

Consid\'erons un op\'erateur elliptique sous forme divergence \`a coefficients sym\'etriques non constants.
Si ces coefficients sont p\'eriodiques, la th\'eorie de Floquet-Bloch permet de diagonaliser l'op\'erateur elliptique, ce qui est crucial
pour l'\'etude des propri\'et\'es spectrales de l'op\'erateur et le point de d\'epart usuel pour l'\'etude des propri\'et\'es d'homog\'en\'eisation en temps
long de l'op\'erateur des ondes associ\'e. Quand les coefficients ne sont pas p\'eriodiques (disons quasi-p\'eriodiques, presque p\'eriodiques, ou al\'eatoires
stationnaires ergodiques), la th\'eorie de Floquet-Bloch ne s'applique plus et les propri\'et\'es spectrales ainsi que le comportement en temps long de l'op\'erateur 
des ondes associ\'e ne sont pas claires a priori.
Aux basses fr\'equences, nous pouvons cependant consid\'erer un d\'eveloppement de Taylor formel des ondes de Bloch (que celles-ci existent ou non) en se basant
sur des correcteurs introduits en homog\'en\'eisation elliptique. Ces ondes de Taylor-Bloch diagonalisent l'op\'erateur elliptique \`a un terme d'erreur pr\`es (un ``d\'efaut propre''),
que nous exprimons \`a l'aide d'une nouvelle famille de correcteurs \'etendus. Nous utilisons cette formulation des d\'efauts propres pour quantifier les propri\'et\'es
de transport et d'homog\'en\'eisation en temps long pour l'\'equation des ondes associ\'ee en termes de croissance spatiale des correcteurs \'etendus.
D'une part, cela quantifie la validit\'e de l'homog\'en\'eisation en temps long (\`a la fois pour l'op\'erateur homog\'en\'eis\'e standard et pour des versions d'ordre sup\'erieur).
D'autre part, cela nous permet d'\'etablir le transport balistique asymptotique des ondes classiques aux basses \'energies pour des op\'erateurs presque p\'eriodiques et al\'eatoires.

\smallskip

Mots-cl\'es : homog\'en\'eisation, p\'eriodique, presque p\'eriodique, al\'eatoire, ondes, temps long, transport balistique.

\medskip

Mathematics Subject Classification: 35B27, 35L05, 35P05, 35R60, 74Q15

\medskip

\tableofcontents

\section{Introduction}

Let $\aa$ be a periodic symmetric coefficient field, and consider the rescaled wave operator $\square_\e:=\partial^2_{tt}-\nabla \cdot \aa(\frac \cdot\e)\nabla$.
It is known since the pioneering works in homogenization that for fixed final time $T<\infty$, the operator $\square_\e$ can 
be replaced by the homogenized wave operator $\square_\ho:=\partial^2_{tt}-\nabla \cdot \aa_\ho\nabla$, where $\aa_\ho$ are the
homogenized (and constant) coefficients associated with $\aa$ through elliptic homogenization (see \cite{BFM,FM} where the question of the corrector and convergence of the energy
is also addressed, and Section~\ref{sec:TB} for precise definitions).
Let $u_0 \in \Sc$, the Schwartz class (most of the results of this article hold for initial conditions in some Hilbert space $H^m(\R^d)$ for $m$ large enough), let $u_\e \in L^\infty(\R_+,L^2(\R^d))$ be the unique weak solution of
\begin{equation}\label{i1}
\left\{
\begin{array}{rcl}
\square_\e u_\e(t,x)&=&0,
\\
u_\e(0,\cdot)&=&u_0,
\\
\partial_t u_\e(0,\cdot)&=&0.
\end{array}
\right.
\end{equation}
Then for all $T>0$, $\lim_{\e \downarrow 0} \sup_{0\le t \le T} \|u_{\e}(t,\cdot)-u_\ho(t,\cdot)\|_{L^2(\R^d)}=0$, where $u_\ho$ solves 
the homogenized equation
\begin{equation}\label{i2}
\left\{
\begin{array}{rcl}
\square_\ho u_\ho(t,x)&=&0,
\\
u_\ho(0,\cdot)&=&u_0,
\\
\partial_t u_\ho(0,\cdot)&=&0.
\end{array}
\right.
\end{equation}
Refining this result received much attention in the recent years --- and in particular the large-time behavior of $u_\e$ with 
fixed or oscillating initial conditions.
For fixed initial conditions $u_0$ (independent of $\e$), one expects dispersive effects --- which are not accounted for by \eqref{i2} --- to appear at times of order $\e^{-2}T$ (see \cite{SS} for pioneering works in this direction, \cite{DLS1,DLS2,L} for the first rigorous results, and \cite{AGS,AP} for numerical methods).
For oscillating initial conditions, the medium interacts with the initial conditions much more, which yields even finer dispersive effects (see \cite{APR1,APR2}).
Both refinements crucially rely on spectral properties of the operator $-\nabla \cdot \aa \nabla$, namely that it is diagonalized 
by Floquet-Bloch waves (Bloch in short, see \cite{AC1,AC2,APR1,APR2,DLS1,DLS2}): the spectrum of  $-\nabla \cdot \aa \nabla$ is purely absolutely continuous,
and extended states are semi-explicit (see below).
Hence, there is a clear starting point to study the above questions: project the initial condition on the Bloch wave basis, and treat the wave equation \eqref{i1} as an ODE. 
From a spectral point of view, the Bloch theory implies that $-\nabla \cdot \aa \nabla$ has purely absolutely continuous spectrum in form  of possibly overlapping bands (the first one including $0$).

\medskip

The Bloch theory crucially relies on the periodicity of $\aa$, and can be seen as a variant of the Fourier transform (with which it coincides
when $\aa$ is a constant matrix).
The main idea is to look for extended states of the operator $-\nabla \cdot \aa \nabla$ in the form of modulated plane waves $x\mapsto e^{ik\cdot x} \psi_{k}(x)$, where $\psi_k$ is a periodic function.
Such a function $\psi_k$ is then solution of the magnetic eigenvalue problem on the torus
$$
-(\nabla + ik)\cdot \aa (\nabla +ik)\psi_k \,=\,\lambda_k \psi_k
$$
for some $\lambda_k$. By the Rellich theorem, $-(\nabla + ik)\cdot \aa (\nabla +ik)$ has compact resolvent, which allows one to define a family of eigenvectors and eigenvalues, on which the Bloch decomposition relies.
Replace $\aa$ by the sum of two periodic functions with incommensurable periods, and the whole picture breaks down: the magnetic operator is now lifted to a higher-dimensional torus, it is hypo-elliptic, and does not have compact resolvent any longer. In particular, we do not know whether the $\psi_k$ exist.
For more general coefficients (say almost-periodic, or random), the Bloch theory simply does not hold. Indeed, this theory 
implies that the elliptic operator $-\nabla \cdot \aa \nabla$ has purely absolutely continuous spectrum, whereas 
it is known that this operator has some discrete spectrum in any dimension for some representative examples of $\aa$, cf. \cite[Theorem~3.3.6]{S}.

\medskip

Questions regarding oscillating initial data explore the entire spectrum of the operator $-\nabla \cdot \aa\nabla$, and we expect a completely different behavior for periodic and non-periodic coefficients, since their spectrum is of different type.
This is the realm of challenging questions of spectral analysis \cite{S,AW} and radiative transport \cite{P,RPK}.
For non-oscillating initial data however, only the bottom of the spectrum of $-\nabla \cdot \aa \nabla$ is relevant, and we are in the realm of 
homogenization. For final times $T<\infty$ independent of $\e$, qualitative theory for the elliptic operator is enough to 
prove the convergence of \eqref{i1} to \eqref{i2}, and we just need to know that the 
solution operator $(-\nabla \cdot \aa(\frac \cdot\e) \nabla)^{-1}$ converges to the homogenized solution operator  $(-\nabla \cdot \aa_\ho \nabla)^{-1}$
as $\e \downarrow 0$. 
If we happen to have quantitative information on this convergence in terms of $\e$ (in a broad sense), we might be able to consider larger time frames 
$[0,\e^{-\alpha}T]$ (with $\alpha>0$) and gain information on the large-time behavior of $u_\e$.
The aim of this contribution is to develop such an approach for operators that are beyond the reach of the classical Bloch theory.

\medskip

As a first and crucial step, we introduce a proxy for the Bloch wave{\color{green}s} decomposition.
Since we are only interested in low frequencies, we only need a proxy for Bloch waves at low frequencies.
In the case of periodic coefficients, it is well-known that Bloch waves $\psi_k$ are essentially analytic functions of $k$, and that 
their derivatives are related to cell-problems in elliptic homogenization (e.g.~\cite{CV,ABV}).
Whereas eigenvectors $\psi_k$ might not exist (even at low frequencies), one may still consider their formal Taylor expansion $\psi_{k,j}$ of order $j$ 
for all $0\le |k|\ll 1$ based on correctors (up to order $j$)
provided the latter exist, which gives rise to what we call Taylor-Bloch waves $x\mapsto e^{ik\cdot x} \psi_{k,j}(x)$. 
These waves are only ``approximate'' extended states of the operator $-\nabla \cdot \aa \nabla$, so that the study of the defect in the eigenvector/eigenvalue relation (which we call the ``eigendefect'') is equally important as the formula for the Taylor-Bloch waves itself. 
The study of the Taylor-Bloch expansion is the aim of Section~\ref{sec:TB},
where we introduce a new family of higher-order correctors, that are used to put the eigendefect in a suitable form for the rest of our analysis.

\medskip

The second step consists in constructing an approximate solution to equation \eqref{i1} using the Taylor-Bloch waves, cf. Section~\ref{sec:wave}.
We first replace the initial condition by a well-prepared initial condition in the form of a Taylor-Bloch expansion --- which simply amounts
to replacing $e^{ik\cdot x}$ by $e^{ik\cdot x} \psi_{\e k,j}(x)$ in the Fourier inversion formula for $u_0$. 
The difference between the solutions of both initial value problems is then proved to be of order $\e$ uniformly in time by energy methods,
and it remains to solve the problem with well-prepared initial data. To this aim, we use that Taylor-Bloch waves diagonalize the elliptic operator (up to the eigendefect) to construct an approximate solution by explicit time-integration. Next, we estimate the error due to the eigendefect by energy methods on the wave equation, which yields a control over large times that depend on the growth of the extended correctors.
To conclude, we simplify the approximate solution by throwing away most of the corrections, while keeping sufficient accuracy in $L^2(\R^d)$.
This final approximation is accurate up to times $\e^{-\alpha}T$, where $\alpha>0$ depends on the growth of the extended correctors. 

\medskip

Equipped with the Taylor-Bloch approximation of the solution to equation \eqref{i1}, we turn to the main two results of this article: the long-time homogenization of $\partial_{tt}^2-\nabla \cdot \aa(\frac \cdot \e)\nabla$ (cf. Section~\ref{sec:disper}) and the (asymptotic) ballistic transport properties of $-\nabla \cdot \aa \nabla$ at low energies (cf. Section~\ref{sec:deloc}).
The range of application of these results crucially depends on the control we have on the extended correctors.
Although this is an important issue, this does not constitute the original part of this article: the analysis uses (rather than develops) methods 
introduced in recent independent works on quantitative homogenization of linear elliptic operators in divergence form that started with \cite{NS,GO1,GO2,GNO1,GO3}, and culminated in \cite{AS,AKM2} and \cite{GNO3,GNO3b,GO4}, see also \cite{AGK}.
For completeness, and in order to stress the interest of our results, we quickly display in Appendix~\ref{sec:corr} the estimates on the growth of the extended correctors that are expected to hold for some almost periodic and random coefficients (the proofs of these results are however not straightforward). 

\medskip

Let us start with the long-time homogenization results.
Based on the Taylor-Bloch approximation of the solution to equation \eqref{i1}, we prove the validity of the approximation of \eqref{i1} by \eqref{i2} up to times $\e^{-\alpha}T$ for some $0<\alpha<2$ depending on the coefficients and dimension (cf. Appendix~\ref{sec:corr}).
Provided the second extended correctors are essentially bounded (which is always the case for periodic coefficients,  holds under some conditions
for almost periodic coefficients, and can only hold in dimensions $d>4$ for random coefficients with decay of correlations at infinity), we obtain
the validity of an approximation involving dispersive effects up to times $\e^{-\alpha}T$ for some $2< \alpha<4$. The interpretation of dispersion in the approximate solution goes through a fourth-order equation,
which is a higher-order homogenized equation, and we essentially follow \cite{DLS2} (which deals with periodic coefficients). 
For non-periodic coefficients, all the results are new.
For periodic coefficients, besides we also treat systems, we improve  \cite{DLS2}  in three respects: we do not require the coefficient field $\aa$ to be smooth, we obtain error estimates
valid over larger times, and we generalize the result to any order (which yields a new family of higher-order homogenized equations parametrized by $n\in \N$ that are valid
up to times $\e^{-2(n+1)^{-}}T$).
Incidentally, our analysis also yields new insight in the homogenization of elliptic problems: it allows to define
higher-order homogenized elliptic operators and to control the associated multiscale homogenization error when the right-hand side (RHS in short) is well-prepared. This seems to be new even in the periodic setting and extends (in the symmetric setting) the recent independent work \cite{BFFO}  to any order.

\medskip

We conclude with the asymptotic 
ballistic transport for $-\nabla \cdot \aa \nabla$.
The characterization of the spectrum of  $-\nabla \cdot \aa \nabla$ (or that of the random Schr\"odinger operator $-\triangle+V$, with a random potential $V$), and in particular the understanding of the (expected) transition between discrete and continuous spectrum, is a major open problem of mathematical physics. 
In the case of the elliptic operator $-\nabla \cdot \aa \nabla$, the bottom of the spectrum is very peculiar (indeed, spectral localization can be proved at band edges, but not at $0$, cf.~\cite{FK1}), and one might expect the spectrum to have a continuous part in the neighborhood of $0$.  A stronger statement of the existence of continuous spectrum would be the ballistic transport of suitable initial conditions for arbitrarily long times. The approach based on Taylor-Bloch waves yields a first \emph{asymptotic} result in that direction, and allows us to prove the ballistic transport of initial conditions of ``energy'' $0<\e\ll 1$ on time frames $[0,\e^{-\alpha}T]$ for some specific $\alpha\ge 0$ depending on the structure of the coefficients and dimension.
For periodic coefficients, our estimates can be proved to be uniform, and yield ballistic transport at all times provided $0<\e\ll1$ (thus establishing ballistic transport at small energies without explicitly appealing to the Bloch theorem). For diophantine quasi-periodic coefficients (for which the Bloch theory does not hold), one can choose $\alpha>0$ arbitrarily large. However, as opposed to the periodic setting, the multiplicative constants in the estimates  blow up too fast as $\alpha \uparrow+\infty$ to prove ballistic transport at all times. For random coefficients, $\alpha$ depends on the dimension and the correlations: for Gaussian coefficient fields with integrable correlations, we have some asymptotic ballistic transport in dimensions $d>2$.

\medskip

In the core of this article we use scalar notation.
All the results also hold for systems, and we refer the reader to Appendix~\ref{sec:systems} for the necessary adaptations of the arguments to that setting.

\medskip

A similar strategy based on approximate spectral analysis (and various notions of approximate Taylor-Bloch waves) can be used to establish asymptotic ballistic transport of quantum waves, and we refer the reader to \cite{DGS} for such results on the Schr\"odinger equation with periodic, quasi-periodic, and random potentials.

%%%%%%%%%%%%%%%%%
%%%%%%%%%%%%%%%%%
%%%%%%%%%%%%%%%%%
%%%%%%%%%%%%%%%%%
%%%%%%%%%%%%%%%%%
%%%%%%%%%%%%%%%%%

\section{Taylor-Bloch waves}\label{sec:TB}

Let $\Lambda\ge 1$. Throughout this contribution we assume that $\aa:\R^d\to \Md$ is a measurable uniformly elliptic \emph{symmetric }coefficient field that satifies for all $\xi \in \R^d$
and almost all $x\in \R^d$ 
$$
\xi \cdot \aa(x)\xi \ge |\xi|^2, \quad |\aa(x)\xi|\le \Lambda |\xi|.
$$
All the constants in our estimates acquire a dependence on $\Lambda$.
(In view of the quantitative stochastic homogenization results used in Appendix~\ref{sec:corr}, all the results of this contribution 
hold true in the case of strongly elliptic systems, cf.~Appendix~\ref{sec:systems}).
This section is inspired by \cite{ABV}, where the authors derive equations satisfied by the derivatives
of Bloch waves at $0$ for periodic coefficients. 
The main additional insight compared to \cite{ABV} is the identification of the structure of these derivatives.
In particular, we rewrite them in terms
of suitable correctors, which allows us to turn the remainder in the Taylor expansion in divergence form plus a higher-order term, cf. the eigendefect in 
Definition~\ref{defi:Bloch}. In the periodic setting, this is not fundamental. In other settings however (like almost periodic or random), this allows us to
show that the Taylor-Bloch expansion is one order more accurate than expected (cf. Section~\ref{sec:wave}), which is crucial to capture both the correct long-time accuracy in homogenization (cf. Section~\ref{sec:disper}) and the correct dimensions
for asymptotic ballistic transport (cf. Section~\ref{sec:deloc}).

\medskip

We assume that $\aa$ are  $\Z^d$-stationary ergodic coefficients (this class includes periodic, quasiperiodic, almost-periodic, and random coefficients with decaying correlation at infinity).
We denote by $\expec{\cdot}$ the expectation (in the periodic setting, the expectation can be dropped).

\subsection{Extended correctors and higher-order homogenized coefficients}

We start with the definition of a family of extended correctors which will serve as the basis for the definition of the Taylor-Bloch waves, and momentarily fix
a unit direction $e\in \R^d$.
\begin{defi}\label{defi:ext-corr}
For all $\ell\ge 0$, we say that $(\phi_j,\sigma_j,\chi_j)_{0\le j\le \ell}$ are the first $\ell$ extended correctors in direction $e$ if these 
functions are locally square-integrable,  if for all $0<j\le \ell$ the functions $(\nabla \phi_j,\nabla \sigma_j)$ are $\Z^d$-stationary and
satisfy $\expec{\int_Q |(\nabla \phi_j,\nabla \sigma_j)|^2}<\infty$, if for all $0<j<\ell$ the functions $(\phi_j,\sigma_j,\nabla \chi_j)$ are $\Z^d$-stationary and satisfy
$\expec{\int_Q (\phi_j,\sigma_j,\nabla \chi_j)}=0$ and $\expec{\int_Q |( \phi_j, \sigma_j, \nabla \chi_j)|^2}<\infty$, and if the following extended corrector equations on $\R^d$
are satisfied:
\begin{itemize}
\item $\phi_0\equiv 1$, and for all $j\ge 1$, $\phi_j$ is a scalar field that satisfies
$$-\nabla \cdot \aa\nabla \phi_j=\nabla \cdot (-\sigma_{j-1}e+\aa e\phi_{j-1}+\nabla \chi_{j-1});$$
\item for all $j\ge 0$, the symmetric matrix $\tilde \aa_j$, the symmetric $(j+2)$-th order tensor $\bar \aa_j$, and the scalar $\lambda_j$ are given by
$$\bar \aa_j  e^{\otimes(j+1)} =\tilde \aa_j e:= \expec{\int_Q\aa (\nabla \phi_{j+1}+ e \phi_j)}, \quad \lambda_j:= e\cdot \tilde \aa_j  e,  \quad e^{\otimes (j+1)}:=\underbrace{e\otimes \dots \otimes e}_{j+1\text{ times }}
;$$
\item $\chi_0\equiv 0$, $\chi_1\equiv 0$, and for all $j\ge 2$, $\chi_j$ is a scalar field that satisfies
$$
-\triangle \chi_j=\nabla\chi_{j-1}\cdot e +\sum^{j-1}_{p=1}\lambda_{j-1-p}\phi_{p};
$$
\item for all $j\ge 1$, $q_j$ is a vector field (a higher-order flux) given by
$$q_j:=  \aa (\nabla\phi_j+e\phi_{j-1})-\tilde \aa_{j-1}e+\nabla \chi_{j-1}-\sigma_{j-1}e, \quad \expec{\int_Q q_j}=0;$$
\item $\sigma_0\equiv 0$, and for all $j\ge 1$, $\sigma_j$ is a skew-symmetric matrix field (a higher-order flux corrector), i.e. $\sigma_{jkl}=-\sigma_{jlk}$, that satisfies
$$
-\triangle \sigma_j = \nabla \times q_j, \quad \nabla \cdot \sigma_j= q_j,
$$
with the three-dimensional notation: $[\nabla \times q_j]_{mn}=\nabla_m [q_j]_n-\nabla_n [q_j]_m$,
and where the divergence is taken with respect to the second index, i.~e.~$(\nabla \cdot \sigma_j)_m:=\sum_{n=1}^d \partial_n \sigma_{jmn}$.
\end{itemize}
\qed
\end{defi}
Let us make a few comments on this definition.
\begin{itemize}
\item Let $\ell_1>\ell_2$ be two integers, and denote the families of correctors associated with $\ell_1$ and $\ell_2$ by $(\phi_j^{\ell_1},\sigma_j^{\ell_1},\chi_j^{\ell_1})_{0\le j\le \ell_1}$ and by  $(\phi_j^{\ell_2},\sigma_j^{\ell_2},\chi_j^{\ell_2})_{0\le j\le \ell_2}$, respectively. Then, for all $0\le j\le \ell_2$, $(\phi_j^{\ell_1},\sigma_j^{\ell_1},\chi_j^{\ell_1})=(\phi_j^{\ell_2},\sigma_j^{\ell_2},\chi_j^{\ell_2})$. In particular, if well-defined, correctors at order $j$ do not depend on $\ell\ge j$ in Definition~\ref{defi:ext-corr}.
However, depending on the assumptions we make on the distribution of the coefficient field $\aa$, there is a maximal $\ell$ for which the formal Definition~\ref{defi:ext-corr} makes sense (i.e.~for which correctors of order $j>\ell$ are not well-defined --- we see as a property of $\aa$).
\item The correctors $\phi_j$ are related but do not coincide (for $j>2$) with the higher-order correctors classically used in the multiscale expansion for periodic coefficients (or random coefficients in \cite{Gu}), and we refer the reader to \cite{ABV} for a discussion of these differences in the periodic setting. 
\item The higher-order flux $q_j$ is chosen to be divergence-free, so that it is an exact $(d-1)$-form
and hence admits a ``vector potential'', that is a $(d-2)$-form, which can be represented
by the skew-symmetric tensor $\sigma_j$ (the equation for $\sigma_j$ is the natural choice of gauge).
These definitions are natural generalizations to any order of the extended correctors $(\phi,\sigma)$ considered in \cite{GNO3} (see below).
For the the existence, uniqueness, and properties of these extended correctors (depending on the
properties of the field $\aa$ and the dimension $d$), we refer the reader to Appendix~\ref{sec:corr}. 
These correctors for $j=2$ were considered independently by Bella, Fehrmann, Fischer, and Otto in  \cite{BFFO}.
\item The correctors $\phi_j$ are variants of those defined in \cite{ABV}. They are however not normalized the same way (see in particular Remark~\ref{rem:normalization} below), which is crucial to consider unbounded higher-order correctors.
\item The correctors $(\sigma_j,\chi_j)$ are primarily introduced to develop an approximate spectral theory. This can be used to study the wave equation with well-prepared initial data (the main aim of this article), but also to study the elliptic equation with well-prepared RHS. In this case, these correctors allow one to write the remainder of the multiscale expansion in divergence form plus a higher-order term, which turns out to be new (to our knowledge) and directly yields sharp convergence results for the higher-order expansion with well-prepared RHS (which strictly generalizes the quantitative two-scale expansion of \cite{GNO3}, as well as the 
second order expansion of \cite{BFFO}, which both treat non-symmetric coefficients; see also \cite[Section~5]{ABV}), cf.~Theorem~\ref{th:2scale-ell} and Corollary~\ref{coro:alaBFFO}.
\item Let us quickly show that the first extended correctors (of order $j=1$) are indeed the standard correctors
in elliptic homogenization. The equation satisfied by $\phi_1$ takes the form
$$
-\nabla \cdot \aa (\nabla \phi_1+e)\,=\,0
$$
so that $\phi_1$ is the classical corrector in stochastic homogenization, i.e. the unique sublinear at infinity
solution  of $-\nabla\cdot \aa (\nabla \phi_1+e)=0$ with stationary gradient (for conditions on $\aa$ under which $\phi_1$ is stationary, we refer
to \cite{GO1,GO3,GNO3,GO4}). Thus $\bar \aa_0=\tilde \aa_0=\aa_\ho$ (the homogenized coefficients), $\lambda_0=e\cdot \aa_\ho e$.
Hence, $q_1=\aa(\nabla \phi_1+e)-\aa_\ho e$ (the flux of
the corrector minus the homogenized flux), so that  $\sigma_1$ is nothing but the flux corrector (the existence of which is proved in \cite{GNO3} for stationary ergodic coefficients $\aa$).
\item We refer the reader to Appendix~\ref{sec:systems} for the precise extension of Definition~\ref{defi:ext-corr} to systems.
\end{itemize}

\medskip

We conclude this paragraph with some important properties of the higher-order homogenized coefficients  $\lambda_j$.
For notational convenience we assume that $\aa$ enjoys continuum stationarity (for discrete stationarity, it suffices to replace
$\expec{\cdot}$ by $\expec{\int_Q\cdot }$).
\begin{prop}\label{prop:properties-lambda}
Let $\ell\in \N_0$ and assume that the correctors $(\phi_{j},\sigma_j,\chi_j)_{0\le j\le \ell+1}$ are well-defined in the sense of Definition~\ref{defi:ext-corr},
so that the higher-order homogenized coefficients $\{\lambda_{j}=\expec{e\cdot \aa (\nabla \phi_{j+1}+ e \phi_{j})}\}_{0\le j\le \ell}$ are well-defined.
Then:
\begin{itemize}
\item[(i)] if $0\le j\le \ell$ is odd, then $\lambda_{j}=0$;
\item[(ii)] $\lambda_0>0$ and $\lambda_{2} \ge 0$ (provided $\ell\ge 2$).
\end{itemize}
\qed
\end{prop}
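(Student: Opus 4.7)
The plan is to prove (ii) first because it has a natural energy interpretation, then to derive (i) from a closely related identity.

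For $\lambda_0>0$, I test the equation $-\nabla \cdot \aa(\nabla \phi_1+e)=0$ against $\phi_1$ (legitimate since $\phi_1$ is $\Z^d$-stationary with zero mean) and use the symmetry of $\aa$ to obtain the classical Dirichlet formula
\[
\lambda_0 = \expec{\int_Q (e+\nabla \phi_1)\cdot \aa (e+\nabla \phi_1)}.
\]
Coercivity and Jensen's inequality (using $\expec{\int_Q (e+\nabla \phi_1)}=e$) then give $\lambda_0\ge |e|^2=1$.

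For the remaining assertions the common ingredient is an energy identity for $\lambda_j$ derived from the extended corrector equations. Writing $G_m:=\nabla \phi_m+e\phi_{m-1}$ and decomposing $e=G_1-\nabla \phi_1$ inside $\lambda_j=\expec{\int_Q e\cdot \aa G_{j+1}}$, I would use three structural properties: (a) $\aa G_1-\aa_\ho e=\nabla \cdot \sigma_1$ is divergence-free; (b) $H_{j+1}:=\aa G_{j+1}-\sigma_j e+\nabla \chi_j$ is divergence-free (this is precisely the equation for $\phi_{j+1}$); and (c) the skew-symmetry of the $\sigma_p$'s, which gives the integration-by-parts identity $\expec{\int_Q \nabla \phi_i\cdot \sigma_p e}=\expec{\int_Q \phi_i\,(e\cdot q_p)}$. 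Testing the equation for $\phi_{j+1}$ against $\phi_1$ and using the equation for $\chi_j$ to rewrite $\expec{\int_Q \nabla \phi_1\cdot \nabla \chi_j}$ produces an identity schematically of the form
\[
\lambda_j = \expec{\int_Q \nabla \phi_j\cdot \sigma_1 e}-\expec{\int_Q \nabla \phi_1\cdot \sigma_j e} + \sum_{p=1}^{j-1}\lambda_{j-1-p}\,\expec{\int_Q \phi_1\,\phi_p}
\]
modulo lower-order contributions from $\nabla \chi_{j-1}$. For $j=1$ both sums are absent and the two $\sigma$-terms coincide, giving $\lambda_1=0$. For $j=2$ the formula reads $\lambda_2=\lambda_0\,\expec{\int_Q \phi_1^2}+\bigl(\expec{\int_Q \nabla \phi_2\cdot \sigma_1 e}-\expec{\int_Q \nabla \phi_1\cdot \sigma_2 e}\bigr)$; the first summand is non-negative, and the $\sigma$-combination is recast as a non-negative quadratic form by substituting the definition of $q_p$ and exploiting the gauge conditions $\nabla\cdot \sigma_p=q_p$ and $-\triangle \sigma_p=\nabla\times q_p$. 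For general odd $j$, I would iterate by induction on $j$: the induction hypothesis $\lambda_{j'}=0$ for odd $j'<j$ cancels the $\lambda_{j-1-p}$-contributions with odd $j-1-p$, and the remaining $\sigma$-terms pair antisymmetrically and vanish.

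The main obstacle is the algebraic identification of the $\sigma$-combination as a non-negative quadratic form when $d\ge 2$: in dimension one the skew correctors $\sigma_p$ vanish identically so that $\lambda_2=\lambda_0\,\expec{\int_Q \phi_1^2}\ge 0$ is immediate, but in higher dimension the non-negativity hinges on careful manipulation of the defining equations for $\sigma_p$ and is the technical heart of the argument.
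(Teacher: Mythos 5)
Your treatment of $\lambda_0>0$ is fine and matches the standard argument. The schematic identity you write is in fact correct: testing the equation for $\phi_{j+1}$ with $\phi_1$, the equation for $\phi_1$ with $\phi_{j+1}$, and the equation for $\chi_j$ with $\phi_1$, and using $\nabla\cdot\sigma_1=q_1$ together with skew-symmetry, gives
$\lambda_j=\expec{\nabla\phi_j\cdot\sigma_1e}-\expec{\nabla\phi_1\cdot\sigma_je}+\expec{\phi_1\,\nabla\chi_{j-1}\cdot e}+\sum_{p=1}^{j-1}\lambda_{j-1-p}\expec{\phi_1\phi_p}$.
But the two steps where the actual content of the proposition lies are asserted, not proved, and as stated they do not go through. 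For (ii): you claim the combination $\expec{\nabla\phi_2\cdot\sigma_1e}-\expec{\nabla\phi_1\cdot\sigma_2e}$ can be ``recast as a non-negative quadratic form'' from the definition of $q_p$ and the gauge conditions, and you yourself flag this as the main obstacle. This is precisely the missing idea: no direct substitution of $q_p$, $\nabla\cdot\sigma_p=q_p$, $-\triangle\sigma_p=\nabla\times q_p$ produces a sign for that combination, and your decomposition would in fact prove the strictly stronger inequality $\lambda_2\ge\lambda_0\expec{\phi_1^2}$ (with equality in $d=1$), which is nowhere established. The paper's non-negativity does not come from making each term of such a decomposition non-negative; it comes from exhibiting $\lambda_2$ itself as the complete square $\expec{\nabla(\phi_2-\tfrac12\phi_1^2)\cdot\aa\nabla(\phi_2-\tfrac12\phi_1^2)}$, and the key trick is the \emph{nonlinear} combination $\phi_2-\tfrac12\phi_1^2$, implemented by testing with $\nabla\phi_1^2$ and $\nabla\phi_1^3$ and using $\nabla\cdot(\nabla\cdot\sigma_2)=0$ plus the Leibniz rule. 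That device is absent from your sketch, so (ii) for $\lambda_2$ is not proved.

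For (i) the same problem appears already at $j=3$: your identity leaves you with $\lambda_0\expec{\phi_1\phi_2}+\expec{\phi_1\,\nabla\chi_2\cdot e}+\big(\expec{\nabla\phi_3\cdot\sigma_1e}-\expec{\nabla\phi_1\cdot\sigma_3e}\big)$, and none of these terms vanishes by parity, by the induction hypothesis, or by any ``antisymmetric pairing'': $\lambda_0\neq0$, $\expec{\phi_1\phi_2}$ has no reason to vanish, and the two $\sigma$-terms involve different correctors. A single application of the corrector equations tested against $\phi_1$ is simply not enough. The paper's proof needs the full two-index family of identities relating $\expec{\nabla\phi_j\cdot\aa\nabla\phi_l}$ (its identity (2.1)), iterated $j$ times to fold $\expec{\nabla\phi_1\cdot\aa\nabla\phi_{2(j+1)}}$ down to the middle pair $\expec{\nabla\phi_{j+1}\cdot\aa\nabla\phi_{j+2}}$, and then a symmetrization over all index pairs $(p,l)$ with $p+l\le 2j+1$, where the cancellation $((-1)^{p+1}+(-1)^{l+1})\lambda_{2j+1-p-l}=0$ uses the parity of $p+l$ together with the induction hypothesis. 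This iterative folding and symmetrized cancellation is the mechanism your induction sketch is missing, so (i) beyond $j=1$ is also not established.
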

For periodic coefficients, this proposition is standard: Statement (i) is due to the symmetry of the coefficient field and follows from spectral theory (see e.g. \cite{AC1}, and the proof of Proposition~\ref{prop:corr-perio} in Appendix~\ref{sec:corr}), and the nice observation (ii) for $j=2$ was first proved in \cite{COV} in the scalar setting. 
Although the proof of (ii) in \cite{COV} essentially extends to the stochastic setting, and the proof of (i) follows from the result in the periodic setting by a suitable approximation argument (the so-called periodization method),
we display elementary proofs of both results that do not rely on spectral theory, hold mutadis mutandis for systems, and use in a systematic way the ``algebraic'' properties of the extended correctors of Definition~\ref{defi:ext-corr}.
(The proof of $\lambda_{2j+1}=0$ extends the result $\lambda_1=0$ proved in \cite{BFFO}.)

\begin{proof}[Proof of Proposition~\ref{prop:properties-lambda}]
We start with the proof of (i) by induction, and then turn to the proof of~(ii).
In this proof we systematically use the symmetry of $\aa$ to change the order in scalar products
without transposing $\aa$.
This proof mainly exploits the algebraic structure of correctors and of differential operators,
which makes it rather dry.

\medskip

\step{1} Proof of (i).

\noindent 
The aim of this step is to prove that some quantity vanishes.
In particular, we have to unravel cancellations.
This goes through a careful reformulation of the quantity combined with an 
induction argument. We split the proof of (i) into several substeps.
In the first substep we prove a crucial identity for quadractic forms of the correctors
which is both at the basis of (i) and (ii).
In the second substep, we formulate a particular (and useful) case of this identity.
The third substep is dedicated to the proof of (i) by induction.

\medskip

\substep{1.1} Proof of the identity: for all $j\ge 1$ and $l\ge j+1$,
\begin{multline}\label{a.2.1}
\expec{\nabla \phi_j\cdot \aa \nabla \phi_l-\phi_{j-1}\phi_{l-1}e\cdot \aa e}
\,=\,\expec{-\nabla \phi_{j+1}\cdot \aa \nabla \phi_{l-1}+\phi_{j}\phi_{l-2}e\cdot \aa e}
\\
-\expec{\sum_{m=1}^{j-1} \lambda_{j-1-m}\phi_{l-1}\phi_m+\sum_{m=1}^{l-2} \lambda_{l-2-m} \phi_m \phi_j}.
\end{multline}
Starting point is the equation for $\phi_l$ in the form 
\begin{eqnarray*}
\expec{\nabla \phi_j\cdot \aa \nabla \phi_l}&=&\expec{-\nabla \phi_j \cdot (\aa e \phi_{l-1}-\sigma_{l-1}e+\nabla \chi_{l-1})}
\\
&=&\expec{-\nabla \phi_j \cdot \aa e \phi_{l-1}}+\expec{\nabla \phi_j \cdot \sigma_{l-1}e}+\expec{-\nabla \phi_j\cdot \nabla \chi_{l-1}},
\end{eqnarray*}
and we reformulate the last two RHS terms. 
By stationarity of $\nabla \cdot (\phi_j\sigma_{l-1}e)$ in the form $0=\expec{\nabla \cdot (\phi_j\sigma_{l-1}e)}=\expec{\nabla \phi_j \cdot \sigma_{l-1}e}+\expec{ \phi_j  \cdot (\nabla \cdot \sigma_{l-1}e)}$,
the skew-symmetry of $\sigma_{l-1}$ in the form $\expec{ \phi_j  \cdot (\nabla \cdot \sigma_{l-1}e)}=-\expec{ \phi_j  e\cdot (\nabla \cdot \sigma_{l-1})}$,
the defining property $\nabla \cdot \sigma_{l-1}=q_{l-1}$, the fact that $\expec{\phi_j}=0$,
and the skew-symmetry of $\sigma_{l-2}$ in the form $e\cdot \sigma_{l-2}e=0$, we have
\begin{eqnarray*}
\expec{\nabla \phi_j \cdot \sigma_{l-1}e}&=&-\expec{ \phi_j  \cdot (\nabla \cdot \sigma_{l-1}e)}
\\
&=&\expec{ \phi_j e \cdot (\nabla \cdot \sigma_{l-1})}
\\
&=&\expec{ \phi_j e \cdot (\aa(\nabla \phi_{l-1}+e \phi_{l-2})-\lambda_{l-1}{\tilde{a}_{l-2}e}+\nabla \chi_{l-2}+\sigma_{l-2}e)}
\\
&=&\expec{ \phi_j e \cdot (\aa(\nabla \phi_{l-1}+e \phi_{l-2})+\nabla \chi_{l-2})}.
\end{eqnarray*}
By the equation for $\chi_{l-1}$ tested with $\phi_j$, we also have
\begin{equation*}
\expec{-\nabla \phi_j\cdot \nabla \chi_{l-1}}\,=\,\expec{-\nabla \chi_{l-2} \cdot e\phi_j}-\expec{\sum_{m=1}^{l-2} \lambda_{l-2-m} \phi_m \phi_j}.
\end{equation*}
The combination of these last four identities yields
\begin{equation}\label{a.2.1-0}
\expec{\nabla \phi_j\cdot \aa \nabla \phi_l}\,=\,\expec{ \phi_j e \cdot \aa \nabla \phi_{l-1}-\nabla \phi_j \cdot \aa e \phi_{l-1}}
+\expec{ \phi_j \phi_{l-2}e \cdot \aa e -\sum_{m=1}^{l-2} \lambda_{l-2-m} \phi_m \phi_j}.
\end{equation}
We then appeal to the equation for $\phi_{j+1}$ in the form
$$
\expec{  \nabla \phi_{l-1}\cdot \aa \phi_j e} \,=\, \expec{-\nabla \phi_{l-1}\cdot \aa \nabla \phi_{j+1}}-\expec{\nabla \phi_{l-1}\cdot (\nabla \chi_{j}-\sigma_{j}e)}.
$$
We need to reformulate the second RHS term. By the stationarity of $\nabla \cdot(\phi_{l-1}\sigma_{j}e)$ in the form $0=\expec{\nabla \cdot (\phi_{l-1}\sigma_{j}e)}=\expec{\nabla  \phi_{l-1}\cdot \sigma_{j}e)}+\expec{\phi_{l-1}\nabla \cdot \sigma_{j}e}$, followed by the skew-symmetry of $\sigma_{j}$ in form of $\expec{\phi_{l-1}\nabla \cdot \sigma_{j}e}=-\expec{\phi_{l-1}e \cdot (\nabla \cdot \sigma_{j})}$,
we have $\expec{\nabla \phi_{l-1}\cdot \sigma_{j}e}=-\expec{\phi_{l-1}e \cdot (\nabla \cdot \sigma_{j})}$.
Hence, by using the equation for $\chi_{j}$ to reformulate $\expec{\nabla \phi_{l-1}\cdot \nabla \chi_{j}}$, we obtain
\begin{multline*}
-\expec{\nabla \phi_{l-1}\cdot (\nabla \chi_{j}-\sigma_{j}e)}\,\\
=\,\expec{-  \nabla \chi_{j-1}\cdot e \phi_{l-1}-\sum_{m=1}^{j-1} \lambda_{j-1-m}\phi_m\phi_{l-1}}
+\expec{\phi_{l-1}e \cdot (\nabla \cdot \sigma_j)}.
\end{multline*}
We then insert the formula $\nabla \cdot \sigma_j= \aa(\nabla \phi_j+e\phi_{j-1})-{\tilde{a}_je}+\nabla \chi_{j-1}+\sigma_{j-1} e$ and obtain
(since $\expec{\phi_{l-1}}=0$  and $e\cdot \sigma_{j-1}e\equiv 0$ by skew-symmetry)
\begin{equation*}
-\expec{\nabla \phi_{l-1}\cdot (\nabla \chi_{j}-\sigma_{j}e)}\,
=\,-\expec{\sum_{m=1}^{j-1} \lambda_{j-1-m}\phi_m\phi_{l-1}}
+\expec{\phi_{l-1}e \cdot\aa(\nabla \phi_j+e\phi_{j-1})},
\end{equation*}
so that \eqref{a.2.1-0} takes the form
\begin{multline*}
\expec{\nabla \phi_j\cdot \aa \nabla \phi_l}\,=\,\expec{\phi_{l-1}\phi_{j-1} e \cdot\aa e}+\expec{ -\nabla \phi_{j+1}\cdot \aa \nabla \phi_{l-1}+\phi_j \phi_{l-2}e \cdot \aa e} 
\\-\expec{\sum_{m=1}^{j-1} \lambda_{j-1-m}\phi_m\phi_{l-1} +\sum_{m=1}^{l-2} \lambda_{l-2-m} \phi_m \phi_j}.
\end{multline*}
This yields \eqref{a.2.1}.

\medskip

\substep{1.2} Proof of the identity: for all $j\ge 1$,
\begin{equation}\label{a.2.2}
\expec{\nabla \phi_{j}\cdot \aa \nabla \phi_{j+1}-\phi_{j-1}\phi_{j}e\cdot \aa e}
\,=\,-\expec{\sum_{m=1}^{j-1} \lambda_{j-1-m} \phi_m \phi_j}.
\end{equation}
This identity is a direct consequence of \eqref{a.2.1-0}, which for $l=j+1$ takes the simpler form
\begin{equation*}
\expec{\nabla \phi_j\cdot \aa \nabla \phi_{j+1}}\,=\,\underbrace{\expec{ \phi_j e \cdot \aa \nabla \phi_{j}-\nabla \phi_j \cdot \aa e \phi_{j}}}_{\dps =0}
+\expec{ \phi_j \phi_{j-1}e \cdot \aa e -\sum_{m=1}^{j-1} \lambda_{j-1-m} \phi_m \phi_j}.
\end{equation*}

\medskip

\substep{1.3} Conclusion. 

\noindent We are now in the position to prove that for all $j\in \N_0$, 
$$
\lambda_{2j+1}\,=\,\expec{e\cdot \aa(\nabla \phi_{2(j+1)}+e \phi_{2j+1})}\,=\,0.
$$
We start by using the equation for $\phi_1$ and the definition $\phi_0\equiv 1$ to turn the above into
$$
\lambda_{2j+1}\,=\,\expec{-\nabla \phi_1 \cdot \aa \nabla \phi_{2(j+1)} + \phi_{2j+1}\phi_0 e\cdot \aa e}.
$$
We then apply identity~\eqref{a.2.1} $j$ times to obtain 
\begin{multline*}
\lambda_{2j+1}\,=\, \expec{(-1)^{j+1}\nabla \phi_{j+1} \cdot \aa \nabla \phi_{j+2}+(-1)^j \phi_{j}\phi_{j+1}e\cdot \aa e}
\\
+\sum_{m_1=1}^j   (-1)^{m_1+1} \expec{\sum_{m_2=1}^{m_1-1} \lambda_{m_1-1-m_2}\phi_{m_2}\phi_{2(j+1)-m_1}+\sum_{m_2=1}^{2j+1-m_1} \lambda_{2j+1-m_1-m_2} \phi_{m_2} \phi_{m_1}},
\end{multline*}
in which we insert  identity~\eqref{a.2.2}. This yields
\begin{multline}\label{a.2.3-0}
\lambda_{2j+1}\,=\, (-1)^{j} \expec{\sum_{m_2=1}^{j} \lambda_{j-m_2} \phi_{m_2} \phi_{j+1}}
\\
+\sum_{m_1=1}^j   (-1)^{m_1+1} \expec{\sum_{m_2=1}^{m_1-1} \lambda_{m_1-1-m_2}\phi_{m_2}\phi_{2(j+1)-m_1}+\sum_{m_2=1}^{2j+1-m_1} \lambda_{2j+1-m_1-m_2} \phi_{m_2} \phi_{m_1}},
\end{multline}
and it remains to argue that the RHS vanishes.
We proceed by induction and assume that for all $l < j$, $\lambda_{2l+1}=0$. 
To initialize the induction, it is enough to note that $\lambda_1=0$ since the RHS of \eqref{a.2.3-0} is obviously zero  for $j=0$.
We then rewrite \eqref{a.2.3-0} in a more suitable form to unravel the cancellations:
\begin{equation*}
\lambda_{2j+1}\,=\, \sum_{p=1}^{2j+1} (-1)^{p+1} \sum_{l=1}^{2j+1-p} \lambda_{2j+1-p-l} \expec{ \phi_{l} \phi_{p}}.
\end{equation*}
If $p+l$ is odd then $(-1)^{l+1}+(-1)^{p+1}=0$, whereas if $p+l$ is even then $\lambda_{2j+1-p-l}=0$ by the induction assumption.
Hence, for all $p,l\ge 1$ with $p+l\le 2j+1$, we have $((-1)^{l+1}+(-1)^{p+1})\lambda_{2j+1-p-l}=0$.
This concludes the proof of (i) since by symmetry one may rewrite the above sum as
\begin{equation*}
\lambda_{2j+1}\,=\, \frac12 \sum_{p=1}^{2j+1} \sum_{l=1}^{2j+1-p} ((-1)^{l+1}+(-1)^{p+1}) \lambda_{2j+1-p-l} \expec{ \phi_{l} \phi_{p}}=0.
\end{equation*}

\medskip

\step{2} Proof of (ii).

\noindent For $j=0$, the result reduces to the well-known ellipticity of $\aa_\ho$, which follows from the formula
$$
\lambda_0\,=\,\expec{e\cdot \aa(\nabla \phi_1+e)}\,=\, \expec{(\nabla \phi_1+e)\cdot \aa (\nabla \phi_1+e)} \ge 1,
$$
a direct consequence of the corrector equation for $\phi_1$ in the form $\expec{\nabla \phi_1 \cdot \aa (\nabla \phi_1+e)}=0$,
and of the ellipticity assumption on $\aa$.
We now turn to $\lambda_2$, and shall prove that 
\begin{equation}\label{e.fo-lambda2}
\lambda_2\,=\, \expec{\nabla (\phi_2-\frac{\phi_1^2}{2}) \cdot \aa \nabla (\phi_2-\frac{\phi_1^2}{2})}\,\ge\,0.
\end{equation}
Starting point is the equation for $\phi_1$ followed by identity \eqref{a.2.1} for $j=1$ and $l=3$:
\begin{eqnarray*}
\lambda_2&=&\expec{e\cdot \aa(\nabla \phi_3+e\phi_2)}
\\
&=& \expec{-\nabla \phi_1\cdot \nabla \phi_3+\phi_0\phi_2 e\cdot \aa e}
\\
&=& \expec{\nabla \phi_2\cdot \aa \nabla \phi_2-\phi_1^2 e\cdot \aa e+\phi_1^2\lambda_0}.
\end{eqnarray*}
We reformulate the last two terms using the property $\nabla \cdot \sigma_1=\aa(\nabla \phi_1+e)-\bar \aa_0e$,
and the stationarity of $\nabla \cdot(\phi_1^2 e \cdot \sigma_1)$ and the skew-symmetry of $\sigma_1$ in the form
$\expec{\phi_1^2 e \cdot (\nabla \cdot \sigma_1)}=\expec{\nabla \cdot(\phi_1^2 e \cdot \sigma_1)}- \expec{\nabla \phi_1^2 \otimes e \cdot \sigma_1}
=0+\expec{\nabla \phi_1^2 \cdot \sigma_1 e}$:
\begin{eqnarray*}
\expec{-\phi_1^2 e\cdot \aa e+\phi_1^2\lambda_0}
&=& \expec{-\phi_1^2e \cdot (\aa(\nabla \phi_1+e)-\bar\aa_0 e)+\phi_1^2 e\cdot \aa \nabla \phi_1}
\\
&=&\expec{-\phi_1^2e\cdot  (\nabla \cdot \sigma_1)+\phi_1^2 e\cdot \aa \nabla \phi_1}
\\
&=&\expec{-\nabla \phi_1^2\cdot  \sigma_1 e+\phi_1^2 e\cdot \aa \nabla \phi_1}.
\end{eqnarray*}
We then appeal to the definition formula for $\sigma_2$ together with the property $\nabla \cdot (\nabla \cdot \sigma_2)=0$ 
in the weak form $\expec{\nabla \phi_1^2 \cdot (\nabla \cdot \sigma_2)}=0$, and the equation for $\phi_1$ in the
form $\expec{-\nabla \phi_1^3\cdot \aa e}=\expec{\nabla \phi_1^3\cdot \aa \nabla \phi_1}$, and obtain
\begin{eqnarray*}
\lefteqn{\expec{-\nabla \phi_1^2\cdot  \sigma_1 e+\phi_1^2 e\cdot \aa \nabla \phi_1}}
\\
&=&\expecs{\nabla \phi_1^2 \cdot \underbrace{(\aa(\nabla \phi_2+\phi_1e)-\sigma_1e)}_{\dps = \nabla \cdot \sigma_2}
-\nabla \phi_1^2\cdot \aa \nabla \phi_2\underbrace{-\nabla \phi_1^2 \cdot \aa e \phi_1+\phi_1^2 e\cdot \aa \nabla \phi_1}_{\dps=\,-\frac13 \nabla \phi_1^3\cdot \aa e }}
\\
&=&\expec{-\nabla \phi_1^2\cdot \aa \nabla \phi_2+\frac13 \nabla \phi_1^3\cdot \aa \nabla \phi_1}.
\end{eqnarray*}
Combining these last three identities, we finally conclude by the Leibniz rule  that
$$
\lambda_2\,=\, \expec{\nabla \phi_2\cdot \aa \nabla \phi_2-\nabla \phi_1^2\cdot \aa \nabla \phi_2+\frac13 \nabla \phi_1^3\cdot \aa \nabla \phi_1}\,=\, \expec{\nabla (\phi_2-\frac{\phi_1^2}{2}) \cdot \aa \nabla (\phi_2-\frac{\phi_1^2}{2})},
$$
as claimed.
\end{proof}

\begin{rem}
Whereas the identity $\lambda_{2j+1}=0$ is generic, it is unclear to us whether we generically have a sign for $\lambda_{2j}$.
In particular, arguing as in the proof above we obtain
$$
\lambda_4\,=\,\expec{-\nabla \phi_3\cdot \aa \nabla \phi_3+\phi_2^2e\cdot \aa e-\phi_2^2 \lambda_0+\lambda_2\phi_1^2},
$$
which we are presently unable to reformulate as a square (or sum of squares) as in~\eqref{e.fo-lambda2}. \qed
\end{rem}

\subsection{Taylor-Bloch wave, eigenvalue, and eigendefect}

In the following, we assume that extended correctors exist.
We define Taylor-Bloch waves as follows.
\begin{defi}\label{defi:Bloch}
Let $k:=\kappa e$ with $\kappa \in \R$ and let $(\phi_j,\lambda_j,\sigma_j,\chi_j)_{0\le j\le \ell}$ be as in Definition~\ref{defi:ext-corr}.
The Taylor-Bloch wave $\psi_{k,\ell}$, Taylor-Bloch eigenvalue $\tilde \lambda_{k,\ell}$, and Taylor-Bloch eigendefect $\mathfrak{d}_{k,\ell}$ of order $\ell$ in direction $k$ are defined by
\begin{multline*}
\psi_{k,\ell} \,:=\,\sum_{j=0}^\ell (i\kappa)^j \phi_j, \quad \tilde \lambda_{k,\ell}\,:=\,\kappa^2 \sum_{j=0}^{\ell-1} (i\kappa)^j \lambda_j,
\\
 \mathfrak{d}_{k,\ell}= \nabla \cdot (-\sigma_{\ell}e+\aa e\varphi_{\ell} +\nabla \chi_{\ell})+i\kappa \Big(e\cdot \aa e \varphi_{\ell}-\sum_{j=1}^\ell \sum_{l=\ell-j}^{\ell-1} (i\kappa)^{j+l-\ell}  \lambda_{l}\phi_{j}\Big) .
\end{multline*}
\qed
\end{defi}
Note that by Proposition~\ref{prop:properties-lambda}, $\tilde \lambda_{k,\ell}$ is real-valued since $\lambda_{2j+1}=0$ for all $j$.
The interest of this definition is the following proposition, which establishes that the Taylor-Bloch wave $\psi_{k,\ell}$ is an
eigenvector of the magnetic operator $-(\nabla+ik)\cdot \aa(\nabla +ik)$ on $\R^d$ for the eigenvalue $\tilde \lambda_{k,\ell}$ up to the eigendefect $\mathfrak{d}_{k,\ell}$.
\begin{prop}\label{prop:bloch}
Let $k=\kappa e$, and $\psi_{k,\ell},\tilde \lambda_{k,\ell},\mathfrak{d}_{k,\ell}$ be as in Definition~\ref{defi:Bloch} for some $\ell\ge 1$.
Then we have
\begin{equation}
-(\nabla +ik) \cdot \aa (\nabla + ik )\psi_{k,\ell} \,=\, \tilde \lambda_{k,\ell} \psi_{k,\ell} -
(i\kappa)^{\ell+1} \mathfrak{d}_{k,\ell}.
\end{equation}
\qed
\end{prop}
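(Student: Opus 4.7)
The plan is a direct computation in which both sides are expanded in powers of $(i\kappa)$ and matched level by level. Starting from the identity
$$-(\nabla+ik)\cdot\aa(\nabla+ik)\psi_{k,\ell} = -\nabla\cdot(\aa\nabla\psi_{k,\ell}) - i\kappa\nabla\cdot(\aa e\,\psi_{k,\ell}) - i\kappa\, e\cdot \aa\nabla\psi_{k,\ell} + \kappa^2 e\cdot\aa e\,\psi_{k,\ell},$$
I would substitute $\psi_{k,\ell}=\sum_{j=0}^\ell (i\kappa)^j \phi_j$, apply the corrector equation to rewrite $-\nabla\cdot\aa\nabla\phi_j = \nabla\cdot(-\sigma_{j-1}e + \aa e\phi_{j-1}+\nabla\chi_{j-1})$ for each $j\ge 1$, and organize the resulting quadruple sum by powers of $(i\kappa)$. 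A telescoping of the $\nabla\cdot(\aa e\phi_{j-1})$ contributions from the first and second terms leaves a single boundary remainder $-(i\kappa)^{\ell+1}\nabla\cdot(\aa e\phi_\ell)$; everything else, at each level $p$ with $2\le p\le\ell$, assembles into the single expression $\nabla\cdot(-\sigma_{p-1}e+\nabla\chi_{p-1}) - e\cdot\aa(\nabla\phi_{p-1}+e\phi_{p-2})$.

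For $2\le p\le \ell$ the next step is to reduce this expression to the expected coefficient $-\sum_{m=0}^{p-2}\lambda_{p-2-m}\phi_m$ of $(i\kappa)^p$ in $\tilde\lambda_{k,\ell}\psi_{k,\ell}$. I would use the flux identity $\aa(\nabla\phi_{p-1}+e\phi_{p-2}) = q_{p-1}+\tilde\aa_{p-2}e -\nabla\chi_{p-2}+\sigma_{p-2}e$ together with two algebraic properties that are the heart of Definition~\ref{defi:ext-corr}: (i) the skew-symmetry of $\sigma_j$, which yields $e\cdot\sigma_{p-2}e=0$ and $\nabla\cdot(\sigma_{p-1}e)=-e\cdot(\nabla\cdot\sigma_{p-1})=-e\cdot q_{p-1}$, so that the $e\cdot q_{p-1}$ contributions cancel, and (ii) the Poisson equation $-\triangle\chi_{p-1}= e\cdot\nabla\chi_{p-2}+\sum_{m=1}^{p-2}\lambda_{p-2-m}\phi_m$, which absorbs $\triangle\chi_{p-1}-e\cdot\nabla\chi_{p-2}$ into a finite sum. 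Recalling $e\cdot\tilde\aa_{p-2}e=\lambda_{p-2}$ and $\phi_0\equiv 1$ completes the match at these levels; the levels $p=0,1$ are immediate from $\sigma_0\equiv 0$, $\chi_0\equiv\chi_1\equiv 0$, $\phi_0\equiv 1$.

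At the remaining high levels $p\in\{\ell+1,\ldots,2\ell+1\}$ the LHS reduces to $-\nabla\cdot(\aa e\phi_\ell)-e\cdot\aa(\nabla\phi_\ell+e\phi_{\ell-1})$ for $p=\ell+1$, to $-e\cdot\aa e\phi_\ell$ for $p=\ell+2$, and to zero for $p\ge\ell+3$. At $p=\ell+1$ the divergence block $-(i\kappa)^{\ell+1}\nabla\cdot(-\sigma_\ell e+\aa e\phi_\ell+\nabla\chi_\ell)$ of $-(i\kappa)^{\ell+1}\mathfrak{d}_{k,\ell}$ supplies precisely the divergence terms needed, and the same $\chi_\ell$-identity as before (or $\chi_1\equiv 0$ when $\ell=1$) closes the match with $\tilde\lambda_{k,\ell}\psi_{k,\ell}|_{\ell+1}$. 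At $p=\ell+2$ and beyond, the double sum $\sum_{j=1}^\ell\sum_{l=\ell-j}^{\ell-1}(i\kappa)^{j+l+2}\lambda_l\phi_j$ inside $-(i\kappa)^{\ell+1}\mathfrak{d}_{k,\ell}$ is, after the reindexing $j\leftrightarrow s-2-j$ (where $s=p$), exactly the part of $\tilde\lambda_{k,\ell}\psi_{k,\ell}$ of order exceeding $\ell+1$, so the cancellation is complete. The main obstacle is really just accurate bookkeeping of index shifts and sign conventions; the design of the eigendefect $\mathfrak{d}_{k,\ell}$, and in particular the apparently mysterious range $\ell-j\le l\le\ell-1$ in its double sum, is precisely tailored so that the residue at every level vanishes after using the skew-symmetry of the $\sigma_j$ and the Poisson equations satisfied by the $\chi_j$.
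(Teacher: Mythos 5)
Your proposal is correct and is essentially the paper's own proof: both expand in powers of $(i\kappa)$, invoke the corrector equations for the $\phi_j$, and reduce every intermediate level to the identity $e\cdot\aa(\nabla\phi_{p-1}+e\phi_{p-2})-\sum_{m=0}^{p-2}\lambda_m\phi_{p-2-m}=\nabla\cdot(-\sigma_{p-1}e+\nabla\chi_{p-1})$ via the definition of $q_{p-1}$, the skew-symmetry of the $\sigma_j$ (giving $e\cdot\sigma_{p-2}e=0$ and $\nabla\cdot(\sigma_{p-1}e)=-e\cdot q_{p-1}$), and the Poisson equations for the $\chi_j$, with the orders $\ell+1$ and $\ell+2$ absorbed into the eigendefect exactly as in the paper. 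The only difference is bookkeeping order (you substitute the corrector equations before matching levels, while the paper subtracts $\tilde\lambda_{k,\ell}\psi_{k,\ell}$ first and then shows each coefficient vanishes), which is immaterial.
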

\begin{rem}\label{rem:normalization}
In the periodic setting, the Taylor-Bloch wave $\psi_{k,\ell}$ can be compared to the Taylor expansion $\tilde \psi_{k,\ell}$ defined in \cite[Remark~3.2]{ABV} of the 
Bloch wave $\tilde \psi_k$ on the unit torus $\mathbb T$ (which is known to exist).
The latter satisfies $\|\tilde \psi_{k,\ell}\|_{L^2(\mathbb T)}=1+O(\kappa^{\ell+1})$ due to the choice of 
unit normalization of Bloch waves in $L^2(\mathbb T)$, whereas the former satisfies at best
$\|\psi_{k,\ell}\|_{L^2(\mathbb T)}=1+O(\kappa)$.
The advantage of proceeding that way is twofold: we do not require the terms of the expansion to have bounded $L^2$-norm  and the algebraic structure is easier to unravel.
In the above we also chose the phase to be zero at all orders --- the choice of phase is different in  \cite{CV,ABV}.
\qed
\end{rem}
\begin{rem}
Proposition~\ref{prop:bloch} is extended to systems in Appendix~\ref{sec:systems}, in which case the ``eigenvalues'' $\tilde \lambda_{k,\ell}$ are matrices rather than scalars.
\qed
\end{rem}
\begin{proof}[Proof of Proposition~\ref{prop:bloch}]

We first compute for $\ell\ge 1$
\begin{eqnarray*}\label{expansion2}
\lefteqn{-(\nabla +ik) \cdot \aa  (\nabla + ik ) \psi_{k,\ell}} 
\\
 &=& -(i\kappa) \nabla\cdot \aa (e+\nabla \varphi_1) -\sum_{j=2}^\ell (i\kappa)^j(\nabla \cdot \aa \nabla \varphi_j+e\cdot \aa \nabla \varphi_{j-1}+\nabla \cdot(\aa e\varphi_{j-1}) +e\cdot \aa e \varphi_{\ell-2}) \\ \nonumber
&&-(i\kappa)^{\ell+1}(e\cdot \aa\nabla \varphi_{\ell}+\nabla \cdot(\aa e\varphi_{\ell}) +e\cdot \aa e \varphi_{\ell-1})\\ \nonumber
&&-(i\kappa)^{\ell+2}e\cdot \aa e \varphi_{\ell}
\end{eqnarray*}
and, after several resummations:
\begin{eqnarray*}
\tilde \lambda_{k,\ell} \psi_{k,\ell} &=& \sum_{j=0}^\ell\sum_{p=0}^{\ell-1} \kappa^{2+j+p} i^{j+p}\lambda_{p}\phi_{j}
\\
&=&-\sum_{j=0}^{\ell-1} (i\kappa)^{2+ j} \sum_{p_1+p_2= j} \lambda_{p_2} \phi_{p_1}-(i\kappa)^{\ell+2}  \sum_{j=1}^\ell \sum_{p=\ell-j}^{\ell-1} (i\kappa)^{j+p-\ell}  \lambda_{p}\phi_{j}
\\
&=&-\sum_{j=2}^{\ell+1} (i\kappa)^{j} \sum_{p=0}^{j-2} \lambda_{p} \phi_{j-2-p}-(i\kappa)^{\ell+2}  \sum_{j=1}^\ell \sum_{p=\ell-j}^{\ell-1} (i\kappa)^{j+p-\ell}  \lambda_{p}\phi_{j}.
\end{eqnarray*}
Hence,
\begin{eqnarray*}
\lefteqn{-(\nabla +ik) \cdot \aa  (\nabla + ik ) \psi_{k,\ell}-\tilde \lambda_{k,\ell} \psi_{k,\ell} }\\
&=& -(i\kappa) \nabla\cdot \aa (e+\nabla \varphi_1) \\
&&-\sum_{j=2}^\ell (i\kappa)^j(\nabla \cdot \aa \nabla \varphi_j+e\cdot \aa \nabla \varphi_{j-1}+\nabla \cdot(\aa e\varphi_{j-1}) +e\cdot \aa e \varphi_{j-2}-\sum_{p=0}^{j-2} \lambda_{p} \phi_{j-2-p}) \\ \nonumber
&&-(i\kappa)^{\ell+1}(e\cdot \aa\nabla \varphi_{\ell}+\nabla \cdot(\aa e\varphi_{\ell}) +e\cdot \aa e \varphi_{\ell-1}-\sum_{p=0}^{\ell-1} \lambda_{p} \phi_{\ell-1-p})\\ \nonumber
&&-(i\kappa)^{\ell+2}(e\cdot \aa e \varphi_{\ell}-  \sum_{j=1}^\ell \sum_{p=\ell-j}^{\ell-1} (i\kappa)^{j+p-\ell}  \lambda_{p}\phi_{j}).
\end{eqnarray*}
By definition of $\phi_1$, the term of order $\kappa$ vanishes. Let us now reformulate the RHS terms of order $\kappa^j$ for $j=2,\dots,\ell+1$.
To this aim we note that in view of $\phi_0\equiv 1$ and of the definition of $\chi_{j-2}$ and $\chi_{j-1}$,
$$
\sum_{p=0}^{j-2} \lambda_{p} \phi_{j-2-p}\,=\, \lambda_{j-2}+\sum_{p=0}^{j-3} \lambda_{p} \phi_{j-2-p}\,=\,\lambda_{j-2}-\triangle \chi_{j-1}-\nabla \chi_{j-2}\cdot e,
$$
so that, by the skew-symmetry of $\sigma$ in form of $e \cdot \sigma_{j-2} e=0$ and by definition of $q_{j-1}$ and $\sigma_{j-1}$,
\begin{eqnarray*}
\lefteqn{e\cdot \aa \nabla \varphi_{j-1} +e\cdot \aa e \varphi_{j-2}-\sum_{p=0}^{j-2} \lambda_{p} \phi_{j-2-p}}
\\
&=&e\cdot \aa( \nabla \varphi_{j-1}+e \varphi_{j-2})-\lambda_{j-2}+\nabla \chi_{j-2}\cdot e +\triangle \chi_{j-1}-e\cdot \sigma_{j-2} e
\\
&=&e \cdot \underbrace{(  \aa( \nabla \varphi_{j-1}+e \varphi_{j-2})- \tilde \aa_{j-2} e +\nabla \chi_{j-2}\cdot e-\sigma_{j-2} e)}_{\dps =q_{j-1}} +\triangle \chi_{j-1}
\\
&=&\nabla \cdot (-\sigma_{j-1}e+\nabla \chi_{j-1}).
\end{eqnarray*}
Combined with the defining equation for $\phi_{j-1}$, this shows that
the terms of order $\kappa^j$ for $j=2,\dots,\ell$  vanish:
\begin{multline*}
\nabla \cdot \aa \nabla \varphi_j+e\cdot \aa \nabla \varphi_{j-1}+\nabla \cdot(\aa e\varphi_{j-1}) +e\cdot \aa e \varphi_{j-2}-\sum_{p=0}^{j-2} \lambda_{p} \phi_{j-2-p}
\\
=\, \nabla \cdot \aa \nabla \varphi_j + \nabla\cdot (-\sigma_{j-1}e+\nabla \chi_{j-1}+\aa e\varphi_{j-1})\,=\,0.
\end{multline*}
This also implies that the term of order $\kappa^{\ell+1}$ is in divergence form:
\begin{equation*}
e\cdot \aa\nabla \varphi_{\ell}+\nabla \cdot(\aa e\varphi_{\ell}) +e\cdot \aa e \varphi_{\ell-1}-\sum_{p=0}^{\ell-1} \lambda_{p} \phi_{\ell-1-p}\,
=\,  \nabla\cdot (-\sigma_{\ell}e+\nabla \chi_{\ell}+\aa e\varphi_{\ell}).
\end{equation*}
We have thus proved 
\begin{eqnarray*}
{-(\nabla +ik) \cdot \aa  (\nabla + ik ) \psi_\ell-\tilde \lambda_{k,\ell} \psi_{k,\ell} }
&=&-(i\kappa)^{\ell+1}\nabla\cdot (-\sigma_{\ell}e+\nabla \chi_{\ell}+\aa e\varphi_{\ell})\\ \nonumber
&&-(i\kappa)^{\ell+2}(e\cdot \aa e \varphi_{\ell}-  \sum_{j=1}^\ell \sum_{p=\ell-j}^{\ell-1} (i\kappa)^{j+p-\ell}  \lambda_{p}\phi_{j}),
\end{eqnarray*}
which is the claim by definition of the eigendefect.
\end{proof}

%%%%%%%%%%%%%%%%%
%%%%%%%%%%%%%%%%%
%%%%%%%%%%%%%%%%%
%%%%%%%%%%%%%%%%%
%%%%%%%%%%%%%%%%%
%%%%%%%%%%%%%%%%%

\section{Taylor-Bloch approximate solution of the wave equation}\label{sec:wave}

\subsection{Main result and structure of the proof}\label{sec:Kmax}

Let $u_\e \in L^\infty(\R_+,L^2(\R^d))$ be the unique weak solution of 
\begin{equation}\label{e.eq-ueps}
\left\{
\begin{array}{rcl}
\square_\e u_\e&=&0,
\\
u_\e(0,\cdot)&=&u_0,
\\
\partial_t u_\e(0,\cdot)&=&0,
\end{array}
\right.
\end{equation}
where $u_0\in \Sc$ (the Schwartz class).
Let $\hat u_0:k\mapsto \calF u_0(k)=\int_{\R^d} u_0(x)e^{-ik\cdot x} dx$ be the Fourier transform of $u_0$ (which is also in $\Sc$).
Let $\ell \ge 1$, and assume that the (higher-order) homogenized tensors $\bar \aa_j$ of Definition~\ref{defi:ext-corr} are well-defined for all $0\le j\le \ell-1$, and set $\bar \Gamma_\ell:=\max_{0\le j \le \ell-1}|\bar \aa_j|<\infty$.
Since for all $j\ge 0$, $e \cdot \tilde \aa_{2j+1}e=0$, the quantity $\tilde \lambda_{k,\ell}$ (cf. Definition~\ref{defi:Bloch}) is real-valued.
On the other hand, since $\bar \aa_0\ge \Id$ is positive-definite, there exists $\Km>0$ (which only depends on $\bar \Gamma_\ell$) such that $|k|^{-2}\tilde \lambda_{k,\ell}\ge \frac14$ for all $|k|\le \Km$.
Let $\omega_{\ell}:\R_+ \to [0,1]$ be a smooth cut-off function which takes value $1$ on $[0,\frac12 \Km]$ and value 0 on $[\Km,\infty)$.
We define the approximation $u_{\e,\ell}$ of $u_\e$ as the following inverse Fourier transform:
\begin{eqnarray}
u_{\e,\ell}(t,x)&:=&\calF^{-1}\big(\omega_{\ell}(\e |\cdot|) \hat u_0\cos(\e^{-1} \Lambda_\ell(\e \cdot)t)\big)(x)\nonumber
\\
&=&  \frac{1}{(2\pi)^d} \int_{\R^d} \omega_{\ell}(\e |k|) \hat u_0(k) e^{ik\cdot x} \cos(\e^{-1} \Lambda_\ell(\e k)t)dk,\label{e.def-uepsell}
\end{eqnarray}
where $\Lambda_\ell(k)\,:=\,  \sqrt{\tilde \lambda_{k,\ell}}$, which is well-defined since $\tilde \lambda_{\e k,\ell}\ge 0$ when $\omega_{\ell}(\e |k|) \ne 0$.
Note that $u_{\e,\ell}$ is real-valued since $\Lambda_\ell(-k)=\Lambda_\ell(k)$, so that $k\mapsto   \omega_{\ell}(\e |k|)\hat u_0(k)\cos(\e^{-1} \Lambda_\ell(\e k)t)$ is Hermitian.
The main result of this section establishes the accuracy of the approximation of $u_\e$ by $u_{\e,\ell}$ over large times depending on the growth of the 
extended correctors, which we formulate as an assumption (we refer the reader to Appendix~\ref{sec:corr} for details).
\begin{hypo}\label{hypo:corr} 
Let $\ell\ge 1$, and assume that the extended correctors $(\phi_j,\sigma_j,\chi_j)_{0\le j\le \ell}$ are well-defined for all directions $e \in \R^d$, $|e|=1$, and satisfy the following
growth properties:
\begin{itemize}
\item for all $0\le j\le \ell-1$, $\phi_j,\sigma_j,\nabla \chi_j$ are $\Z^d$-stationary and satisfy (uniformly over $e$)
\begin{equation}\label{hyp:growth1}
\sup_{x\in \R^d} \expec{\fint_{B(x)}|\phi_j|^2+|\sigma_j|^2+|\nabla \chi_j|^2}^\frac12 \,\lesssim\, 1
\end{equation}
where $B(x)$ denotes the unit ball centered at $x$;
\item for $j=\ell$, there exists $\alpha=(\alpha_1,\alpha_2)$ with $0\le \alpha_1<1$ and $0\le \alpha_2$ such that for all $x\in \R^d$ (uniformly over $e$)
\begin{equation}\label{hyp:growth2}
\expec{\fint_{B(x)} |\phi_j|^2+|\sigma_j|^2+|\nabla \chi_j|^2}^\frac12 \,\lesssim\, \mu_{\alpha}(|x|),
\end{equation}
where $\mu_{\alpha}(t)=(1+t)^{\alpha_1}\log^{\alpha_2}(2+t)$.
\end{itemize}
\qed
\end{hypo}
\begin{rem}
If the RHS of \eqref{hyp:growth2} is also a lower bound for the left-hand side (LHS in short) of  \eqref{hyp:growth2}, then correctors of order $\ell+1$ and more are not well-defined.
\qed
\end{rem}
In the rest of this article, $\lesssim$ stands for $\le C\times$ for a generic constant $C$ that might depend on $d$, $\ell$,  $\bar \Gamma_\ell$,
and the multiplicative constants in \eqref{hyp:growth1} \& \eqref{hyp:growth2} (but not on $T$, $\e>0$, and $u_0$).
\begin{theo}\label{t.wave-eq}
Let $\ell\ge 1$, and assume that $(\phi_j,\sigma_j,\chi_j)_{0\le j\le \ell}$ satisfy Hypothesis~\ref{hypo:corr}
for some $\alpha\in [0,1)\times \R_+$. Then we have for all $T\ge 0$ and all $1\ge \e>0$,
\begin{equation}\label{e.wave-eq-estim}
\sup_{t\le T} \expec{\|u_\e-u_{\e,\ell}\|_{L^2(\R^d)}^2}^\frac12\,
 \lesssim \, C_\ell(u_0) (\max\{\e,\e^{\ell}\mu_\alpha(\e^{-1})\}+\e^{\ell}T\mu_{\alpha}(\e^{-1}T)),
\end{equation}
where $C_\ell(u_0)$ is a generic norm of $u_0$ (that may change from line to line in the proofs) which only depends on $\ell$ and $d$, and is finite for $u_0\in \Sc$.
\qed
\end{theo}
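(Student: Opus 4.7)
The plan is to follow the three-step outline of Section~\ref{sec:wave}: build an approximate solution from the Taylor-Bloch waves of Definition~\ref{defi:Bloch}, compare it to $u_\e$ using the diagonalization of Proposition~\ref{prop:bloch}, and compare it to the simpler formula $u_{\e,\ell}$ of \eqref{e.def-uepsell}. The three errors then accumulate via the triangle inequality.

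\emph{Step 1 (well-prepared initial datum).} I would introduce
\begin{equation*}
v_0^\e(x) := \frac{1}{(2\pi)^d}\int_{\R^d}\omega_\ell(\e|k|)\,\hat u_0(k)\,e^{ik\cdot x}\,\psi_{\e k,\ell}(x/\e)\,dk,
\end{equation*}
obtained from the Fourier inversion of $u_0$ by replacing the plane wave $e^{ik\cdot x}$ by the rescaled Taylor-Bloch wave, truncated to low frequencies via $\omega_\ell(\e|\cdot|)$. Let $v_\e$ denote the solution of \eqref{e.eq-ueps} with initial datum $v_0^\e$ and zero initial velocity; the self-adjointness on $L^2(\R^d)$ of $L_\e:=-\nabla\cdot\aa(\cdot/\e)\nabla$ makes the cosine family $\cos(t\sqrt{L_\e})$ unitary, so that $\|u_\e(t)-v_\e(t)\|_{L^2}=\|u_0-v_0^\e\|_{L^2}$ uniformly in $t$. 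The latter is then estimated by splitting $u_0-v_0^\e$ into a high-frequency tail (bounded by $\e$ times a Schwartz norm of $u_0$ through the rapid decay of $\hat u_0$) and the $\ell$ corrector contributions $(i\e|k|)^j\phi_j(x/\e)$ for $1\le j\le\ell$: for $j<\ell$, Plancherel's identity together with stationarity and \eqref{hyp:growth1} bounds the $L^2$-norm by $\e^j$ times a Schwartz norm of $u_0$, while for $j=\ell$ the sublinear growth \eqref{hyp:growth2} of $\phi_\ell$ produces a factor $\mu_\alpha(\e^{-1})$ (the microscopic scale $\e^{-1}$ being dictated by the support of $\hat u_0$; made rigorous by a dyadic decomposition in space), yielding the overall bound $C_\ell(u_0)\max\{\e,\e^\ell\mu_\alpha(\e^{-1})\}$ in root-mean-square.

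\emph{Step 2 (Taylor-Bloch approximate solution).} I would define
\begin{equation*}
v_\e^{\mathrm{app}}(t,x) := \frac{1}{(2\pi)^d}\int_{\R^d}\omega_\ell(\e|k|)\,\hat u_0(k)\,e^{ik\cdot x}\,\psi_{\e k,\ell}(x/\e)\,\cos(\e^{-1}\Lambda_\ell(\e k)t)\,dk,
\end{equation*}
which satisfies $v_\e^{\mathrm{app}}(0,\cdot)=v_0^\e$ and $\partial_t v_\e^{\mathrm{app}}(0,\cdot)=0$. Applying $\square_\e$ and using Proposition~\ref{prop:bloch} after rescaling $y=x/\e,\,q=\e k$ --- so that $-\nabla_x\cdot\aa(x/\e)\nabla_x[e^{ik\cdot x}\psi_{\e k,\ell}(x/\e)]=\e^{-2}\tilde\lambda_{\e k,\ell}\,e^{ik\cdot x}\psi_{\e k,\ell}(x/\e)-\e^{-2}(i\e|k|)^{\ell+1}e^{ik\cdot x}\mathfrak{d}_{\e k,\ell}(x/\e)$ --- the identity $\partial_{tt}^2\cos(\e^{-1}\Lambda_\ell(\e k)t)=-\e^{-2}\tilde\lambda_{\e k,\ell}\cos(\cdot)$ exactly cancels the eigenvalue contribution, leaving the eigendefect as the sole source. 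Exploiting the divergence structure of $\mathfrak{d}_{\e k,\ell}$ in Definition~\ref{defi:Bloch} and pulling the $\nabla_y$ out through the rescaling saves one factor of $\e$, so that the source takes the form $\e^\ell\nabla_x\cdot F_\e(t,x)+\e^{\ell+1}G_\e(t,x)$ with $F_\e,G_\e$ built from the order-$\ell$ extended correctors $(\phi_\ell,\sigma_\ell,\nabla\chi_\ell)$ evaluated at $x/\e$, weighted in $k$ by the Schwartz function $\omega_\ell\hat u_0|k|^{\ell+1}$. Standard energy estimates for the wave equation with zero initial data and a divergence-type source (testing against $\partial_t(v_\e-v_\e^{\mathrm{app}})$, integrating by parts in $x$, and closing by Gr\"onwall) control $\|\nabla(v_\e-v_\e^{\mathrm{app}})(t)\|_{L^2}+\|\partial_t(v_\e-v_\e^{\mathrm{app}})(t)\|_{L^2}$ uniformly in $t\le T$, and a further integration in time yields the linear factor $T$ for the $L^2$-norm of the difference itself. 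Finite speed of propagation restricts the $L^2$-norms of $\phi_\ell,\sigma_\ell,\nabla\chi_\ell$ to balls of macroscopic radius $\lesssim T$, hence to microscopic radius $\lesssim\e^{-1}T$, and \eqref{hyp:growth2} then yields the factor $\mu_\alpha(\e^{-1}T)$ after expectation, producing
\begin{equation*}
\sup_{t\le T}\expec{\|v_\e(t)-v_\e^{\mathrm{app}}(t)\|_{L^2(\R^d)}^2}^{1/2}\lesssim C_\ell(u_0)\,\e^\ell\,T\,\mu_\alpha(\e^{-1}T).
\end{equation*}

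\emph{Step 3 (stripping the correctors) and conclusion.} The function $u_{\e,\ell}$ of \eqref{e.def-uepsell} is obtained from $v_\e^{\mathrm{app}}$ by replacing $\psi_{\e k,\ell}(x/\e)$ by $\phi_0\equiv 1$, so the same term-by-term analysis as in Step~1 (applied uniformly in $t$, since the $\cos$ factor has modulus bounded by $1$) bounds $\sup_{t\le T}\expec{\|v_\e^{\mathrm{app}}(t)-u_{\e,\ell}(t)\|_{L^2(\R^d)}^2}^{1/2}\lesssim C_\ell(u_0)\max\{\e,\e^\ell\mu_\alpha(\e^{-1})\}$. The triangle inequality combining the three estimates then delivers \eqref{e.wave-eq-estim}. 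The main technical obstacle lies in Step~2: crucially exploiting the divergence structure of the eigendefect --- which is precisely the reason why the flux corrector $\sigma_\ell$ and the auxiliary corrector $\chi_\ell$ are introduced in Definition~\ref{defi:ext-corr} --- to save the power of $\e$ that a naive estimate of the $(i\e|k|)^{\ell+1}$ factor in the residual would lose, and accurately localizing the $L^2$-norm of the residual to the propagation ball so that the sublinear growth \eqref{hyp:growth2} of the order-$\ell$ correctors only contributes through the factor $\mu_\alpha(\e^{-1}T)$.
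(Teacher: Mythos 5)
Your three-step skeleton is exactly the paper's: well-prepared data (your $v_0^\e$ coincides with the paper's $u_{0,\e,\ell}$), the Taylor--Bloch ansatz $v_\e^{\mathrm{app}}=\tilde v_{\e,\ell}$ driven by the eigendefect, and stripping the correctors at the end; you also correctly identify that the divergence structure of $\mathfrak d_{k,\ell}$ (i.e.\ the role of $\sigma_\ell,\chi_\ell$) is what saves the power of $\e$. The genuine gap is in the execution of Step 2, which is precisely the part the paper flags as subtle. First, the energy bookkeeping as you state it does not produce $\e^\ell T\mu_\alpha(\e^{-1}T)$: testing with $\partial_t w$ and integrating by parts in $x$ creates the term $\int F\cdot\nabla\partial_t w$, which is \emph{not} controlled by the energy; the paper closes this only by a second integration by parts \emph{in time}, exploiting the explicit $\cos(\e^{-1}\Lambda_\ell(\e k)t)$ dependence so that $\partial_t F$ costs only a bounded factor $\sim|k|$. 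With this done correctly the energy norm grows like $\e^\ell(1+T)\mu_\alpha(\e^{-1}T)$, so your claim that it is controlled ``uniformly in $t\le T$'' is false, and your subsequent ``further integration in time'' would then yield $T^2$ rather than $T$ for the $L^2$-norm. The paper avoids this loss by a separate argument for the $L^2$ bound (Substep~3.2): integrate the \emph{equation} once in time, so the oscillatory source integrates to the uniformly bounded symbol $\e\Lambda_\ell(\e k)^{-1}\sin(\cdot)$, then test with $w$ and use Cauchy--Schwarz against $\|\nabla w\|_{L^2}$. (A Duhamel argument using $\|\sin(r\sqrt{L_\e})L_\e^{-1/2}\nabla\cdot F\|_{L^2}\lesssim\|F\|_{L^2}$ would also do, but that is not what you wrote.)

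Second, your mechanism for the factor $\mu_\alpha(\e^{-1}T)$ is not rigorous as stated. Finite speed of propagation constrains solutions of $\square_\e$ with compactly supported data, but here the point is the localization of the \emph{source} factors $F_{i,t}u_0$, which are Schwartz, not compactly supported; likewise in Step 3 the bound $|\cos|\le1$ controls $\|F_{4,t}u_0\|_{L^2}$ uniformly in $t$ but not the weighted integral $\int\sup_B|F_{4,t}u_0|^2\mu_\alpha(|x/\e|)^2\,dx$, which is what Hypothesis~\ref{hypo:corr} forces you to estimate (so your Step-3 bound $\e^\ell\mu_\alpha(\e^{-1})$ uniform in $t\le T$ is not justified; the correct bound there is $\e+\e^\ell\mu_\alpha(\e^{-1}T)$). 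The paper makes the localization quantitative through Fourier-symbol estimates in weighted Sobolev norms: the weight $|x|$ becomes $\nabla_k$, the factor $t$ appears exactly when $\nabla_k$ hits the cosine (bounded group velocity), and the logarithmic part of $\mu_\alpha$ is handled by the elementary bound $\log(1+t)\lesssim\beta^{-1}t^\beta$ together with H\"older and optimization in $\beta$. These two ingredients --- the space-then-time integration by parts with the associated $t$-dependent symbol estimates, and the weighted-norm treatment of $\mu_\alpha$ --- are the actual content of the proof, and your sketch replaces them with assertions that, taken literally, either fail or give a weaker ($T^2$) estimate. Minor additional slips: $\cos(t\sqrt{L_\e})$ is an $L^2$-contraction by spectral calculus, not unitary (the inequality suffices), and in Step 1 the scale $\e^{-1}$ is dictated by the spatial decay of $u_0$, not by the support of $\hat u_0$.
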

\begin{rem}
For applications, the integer $\ell$ can be chosen arbitrary large  in Theorem~\ref{t.wave-eq} for periodic and (diophantine) quasi-periodic coefficients, whereas
for almost periodic and random (with decaying correlations at infinity) coefficients there is a maximal $\ell$ for which  Theorem~\ref{t.wave-eq} holds ---  see Appendix \ref{sec:corr} about the growth of correctors and the relation to Hypothesis~\ref{hypo:corr} . \qed
\end{rem}
\begin{rem}\label{rem:odd-even}
Since $\Lambda_{2\ell+1}\equiv\Lambda_{2\ell}$, we have $u_{\e,2\ell+1}\equiv u_{\e,2\ell}$, so that if \eqref{hyp:growth1} \&~\eqref{hyp:growth2} hold with $\ell$ replaced by $2\ell+1$, estimate \eqref{e.wave-eq-estim}
takes the equivalent form
\begin{equation}\label{e.wave-eq-estim-rem}
\sup_{t\le T} \expec{\|u_\e-u_{\e,2\ell}\|_{L^2(\R^d)}^2}^\frac12\, \lesssim \, C_{2\ell+1}(u_0) (\e+\e^{2\ell+1}T\mu_\alpha(\e^{-1}T)).
\end{equation}
\qed
\end{rem}
\begin{rem}\label{rem:sto-int}
In Hypothesis~\ref{hypo:corr} the growth of the correctors is measured in terms of their second stochastic moments. There is nothing
special about this quantity. If higher stochastic integrability is assumed, the result of Theorem~\ref{t.wave-eq} will hold with the corresponding higher stochastic integrability, as well as all the other results of this contribution. We consider this as a separate issue.
\qed
\end{rem}
\begin{rem}
Estimate \eqref{e.wave-eq-estim} does not necessarily improve as $\ell$ gets larger for large times.
Indeed, there is an interplay between the growth of the corrector and the final time: if the corrector $\varphi_{\ell}$ is unbounded, the $L^2(\R^d)$-norm of
the approximation of the solution blows up at large times whereas the $L^2(\R^d)$-norm of the solution remains bounded.
\qed
\end{rem}
\begin{rem}\label{rem:forcing}
Instead of considering the initial-value problem \eqref{e.eq-ueps} in Theorem~\ref{t.wave-eq} we can also consider the more general problem
\begin{equation*}
\left\{
\begin{array}{rcl}
\square_\e u_\e&=&f,
\\
u_\e(0,\cdot)&=&u_0,
\\
\partial_t u_\e(0,\cdot)&=&v_0,
\end{array}
\right.
\end{equation*}
where $f$ is compactly supported in time and in the Schwartz class in space, and $u_0$ and $v_0$ are in the Schwartz class. 
For $u_0=v_0=0$, we can in addition
prove similar results in the energy norm (by taking into account the correctors). We refer the reader to Appendix~\ref{sec:forcing} for 
this variant. 
\qed
\end{rem}
\begin{rem}
A similar result holds for systems, cf.~Appendix~\ref{sec:systems}.\qed
\end{rem}

The proof of Theorem~\ref{t.wave-eq} relies on three arguments:
\begin{itemize}
\item the fact that the initial condition can be replaced by a well-prepared initial condition $u_{0,\e,\ell}$ up to an error uniformly small in time by the assumptions \eqref{hyp:growth1} \&~\eqref{hyp:growth2}, cf.~Lemma~\ref{lem:prepare};
\item the fact that the Taylor-Bloch waves almost diagonalize the wave operator, and that the error due to the eigendefect can be controlled by suitable energy estimates on the wave equation 
with well-prepared initial condition and the assumptions \eqref{hyp:growth1} \&~\eqref{hyp:growth2}, cf.~Proposition~\ref{prop:energy-wave};
\item the fact that the almost solution $\tilde v_{\e,\ell}$ of the wave equation with well-prepared condition $u_{0,\e,\ell}$ can be well-approximated by $u_{\e,\ell}$ by the assumptions \eqref{hyp:growth1} \&~\eqref{hyp:growth2}, cf.~Lemma~\ref{lem:throw-away}.
\end{itemize}
In the following, we shall systematically use the notation $k=\kappa e$ with $\kappa \in \R$ and $|e|=1$,
and shall make the dependence of the correctors $\phi_j$ upon the direction $e$ explicit, and use the notation $\phi_j^e$.
We start with the preparation of the initial condition:
\begin{lemma}\label{lem:prepare}
Let $\ell\ge 1$, and let $u_{0,\e,\ell} \in L^2(\R^d)$ be defined by 
$$
u_{0,\e,\ell}\,:=\,\sum_{j=0}^{\ell}\varepsilon^j \varphi_j\big( \frac{x}{\varepsilon}\big) \cdot \nabla^j u_{0,\e}(x),
$$
where $u_{0,\e}:=\calF^{-1}(\omega_{\ell}(\e|\cdot|)\hat u_0)$, and $\varphi_j$ stands for the (symmetric) $j$-th order tensor such that $\varphi_j \cdot e^{\otimes j}=\varphi_j^e$, the $j$-th corrector in direction $e$.
Consider the unique weak solution $v_{\e,\ell}\in L^\infty(\R_+,L^2(\R^d))$ of the initial value problem
\begin{equation}\label{e.eq-velleps}
\left\{
\begin{array}{rcl}
\square_\e v_{\e,\ell}&=&0,
\\
v_{\e,\ell}(0,\cdot)&=&u_{0,\e,\ell},
\\
\partial_t v_{\e,\ell}(0,\cdot)&=&0.
\end{array}
\right.
\end{equation}
Then if Hypothesis~\ref{hypo:corr} holds, we have 
\begin{equation*}
\sup_{0 \leq t<\infty}\expec{ \|u_\e-v_{\e,\ell}\|_{L^2(\R^d)}^2}^\frac12 \,\lesssim \, C_\ell(u_0) \max\{\e,\e^{\ell}\mu_\alpha(\e^{-1})\}.
\end{equation*}
\qed
\end{lemma}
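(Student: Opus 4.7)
My plan is to use a contraction property of the wave propagator to reduce the claim to a deterministic-in-time estimate on $\|u_0-u_{0,\e,\ell}\|_{L^2(\R^d)}$, and then to bound its expected square by unwinding the definition of $u_{0,\e,\ell}$ and invoking Hypothesis~\ref{hypo:corr}.

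By linearity, $w:=u_\e-v_{\e,\ell}$ solves $\square_\e w=0$ with initial data $u_0-u_{0,\e,\ell}$ and zero initial velocity. Since $-\nabla\cdot\aa(\cdot/\e)\nabla$ is self-adjoint and non-negative on $L^2(\R^d)$, the associated cosine propagator is $L^2$-bounded with operator norm $1$, and pathwise
$$\sup_{t\ge 0}\|w(t,\cdot)\|_{L^2(\R^d)}\,\le\,\|u_0-u_{0,\e,\ell}\|_{L^2(\R^d)},$$
so it suffices to bound $\expec{\|u_0-u_{0,\e,\ell}\|_{L^2(\R^d)}^2}^{1/2}$. Using $\varphi_0\equiv 1$, I split
$$u_0-u_{0,\e,\ell}\,=\,(u_0-u_{0,\e})\,-\,\sum_{j=1}^{\ell}\e^j\,\varphi_j(\cdot/\e)\cdot\nabla^j u_{0,\e}.$$
The first term is the high-frequency remainder of $u_0$ beyond the cut-off $\omega_\ell(\e|\cdot|)$; since $u_0\in\Sc$, Plancherel and the support of $1-\omega_\ell(\e|\cdot|)$ yield $\|u_0-u_{0,\e}\|_{L^2(\R^d)}\lesssim \e^N\|u_0\|_{H^N(\R^d)}$ for any $N\ge 1$, which is absorbed into $\e\,C_\ell(u_0)$.

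For each corrector term $1\le j\le\ell$, Fubini gives
$$\expec{\|\varphi_j(\cdot/\e)\cdot\nabla^j u_{0,\e}\|_{L^2(\R^d)}^2}\,=\,\int \expec{|\varphi_j(x/\e)|^2}\,|\nabla^j u_{0,\e}(x)|^2\,dx.$$
Since $\nabla^j u_{0,\e}$ varies on the unit scale while $\varphi_j(\cdot/\e)$ oscillates on scale $\e$, a short smoothing allows me to replace the pointwise prefactor $\expec{|\varphi_j(x/\e)|^2}$ by its average over a ball of radius $\e$ around $x$, after which Hypothesis~\ref{hypo:corr} applies directly. For $1\le j<\ell$, stationarity and \eqref{hyp:growth1} give the uniform bound $\sup_y\expec{\fint_{B(y)}|\varphi_j|^2}\lesssim 1$, so that the $j$-th contribution is controlled by $\e^j\|\nabla^j u_{0,\e}\|_{L^2(\R^d)}\lesssim \e\,C_\ell(u_0)$. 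For $j=\ell$, the weight $\mu_\alpha(|x|/\e)^2$ from \eqref{hyp:growth2} factorizes as $\mu_\alpha(\e^{-1})^2\,w(x)$ with $w(x)\lesssim (1+|x|)^{2\alpha_1}\log^{2\alpha_2}(2+|x|)$, and since $\alpha_1<1$ and $u_0\in\Sc$ the weighted norm $\int w\,|\nabla^\ell u_{0,\e}|^2$ is bounded by $C_\ell(u_0)^2$ uniformly in $\e$, yielding a contribution of order $\e^\ell\mu_\alpha(\e^{-1})\,C_\ell(u_0)$. Summing the $\ell+1$ pieces yields the claim.

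The only nontrivial point is the $j=\ell$ estimate: the corrector $\varphi_\ell$ may be unbounded, and is only controlled through sublinearly growing local $L^2$-averages. Factoring the weight so as to cleanly extract the prefactor $\mu_\alpha(\e^{-1})$ while leaving a residual weight $w$ that remains integrable against the rapidly decaying $|\nabla^\ell u_{0,\e}|^2$ is what makes the assumption $\alpha_1<1$ essential; everything else is routine bookkeeping, essentially identical in structure to the lowest-order two-scale expansion estimate.
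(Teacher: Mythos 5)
Your proof is correct and follows essentially the same route as the paper: reduce to $\|u_0-u_{0,\e,\ell}\|_{L^2(\R^d)}$ via the $L^2$-contraction of the propagator (the paper gets the same bound by a once-integrated-in-time energy identity), split off the frequency-cutoff error, and control the corrector terms through local averages and Hypothesis~\ref{hypo:corr}. Your factorization $\mu_\alpha(|x|/\e)\lesssim\mu_\alpha(\e^{-1})\mu_\alpha(|x|)$ is a slightly sharper variant of the paper's crude bound $\mu_\alpha(|x|/\e)\lesssim\mu_\alpha(\e^{-1})(1+|x|)$, but the argument is otherwise the same.
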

The initial condition $u_{0,\e,\ell}$ of Lemma~\ref{lem:prepare} also takes the following form (cf. Step~1 in the proof of Proposition~\ref{prop:energy-wave} below)
$$
u_{0,\e,\ell}\,=\,\frac{1}{(2\pi)^d}\int_{\R^d} \omega_{\ell}(\e|k|)\hat u_0(k)e^{ik\cdot x} \psi_{\e k,\ell} \big(\frac x\e\big) \ dk,
$$
which shows that it is well-prepared in terms of the Taylor-Bloch expansion.
We then turn to the core of the proof: the use of the Taylor-Bloch expansion and the eigendefects.
\begin{prop}\label{prop:energy-wave}
For $\ell\ge 1$, let $\tilde v_{\e,\ell} \in L^\infty(\R_+,L^2(\R^d))$ be the (real-valued) function defined by
\begin{equation}\label{e.def-vtildeepsell}
\tilde v_{\e,\ell}(t,x) \,:=\,\frac{1}{(2\pi)^d}\int_{\R^d} \omega_{\ell}(\e|k|)\hat u_0(k)e^{ik\cdot x}  \cos(\e^{-1} \Lambda_\ell(\e k)t) \psi_{\e k,\ell} \big(\frac x\e\big)\ dk,
\end{equation}
and let $v_{\e,\ell}$ be the unique weak solution of \eqref{e.eq-velleps}.
Then if Hypothesis~\ref{hypo:corr} holds we have for all $T\ge 0$ and $\e>0$,
\begin{equation*}
\sup_{t\le T} \expec{ \|v_{\e,\ell}-\tilde v_{\e,\ell}\|_{L^2(\R^d)}^2}^\frac12 \,\lesssim \, C_\ell(u_0)\e^{\ell}T\mu_{\alpha}(\e^{-1}T).
\end{equation*}
\qed
\end{prop}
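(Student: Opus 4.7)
The plan is to view $\tilde v_{\e,\ell}$ as an almost-solution of the wave equation matching the initial data of $v_{\e,\ell}$, and to control the difference by a wave-energy estimate adapted to the divergence-plus-lower-order structure of the eigendefect. First I would compute $\square_\e \tilde v_{\e,\ell}$ directly from the Fourier representation~\eqref{e.def-vtildeepsell}: $\partial^2_{tt}$ brings down a factor $-\e^{-2}\tilde\lambda_{\e k,\ell}$, while applying $-\nabla_x\cdot \aa(\cdot/\e)\nabla_x$ to $e^{ik\cdot x}\psi_{\e k,\ell}(x/\e)$ reduces, via the chain rule, to $\e^{-2}e^{ik\cdot x}[-(\nabla+i\e k)\cdot \aa (\nabla+i\e k)\psi_{\e k,\ell}](x/\e)$. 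By Proposition~\ref{prop:bloch} applied at the wavevector $\e k$, the two contributions cancel up to the eigendefect, so that
\begin{equation*}
\square_\e \tilde v_{\e,\ell}(t,x) \,=\, -\frac{i^{\ell+1}\e^{\ell-1}}{(2\pi)^d}\int_{\R^d} \omega_\ell(\e|k|)\hat u_0(k)\kappa^{\ell+1}\cos(\e^{-1}\Lambda_\ell(\e k)t)e^{ik\cdot x}\mathfrak{d}_{\e k,\ell}(x/\e)\, dk \,=:\, g_\e(t,x).
\end{equation*}
The same representation yields $\tilde v_{\e,\ell}(0,\cdot)=u_{0,\e,\ell}$ (the identity noted right after Lemma~\ref{lem:prepare}) and $\partial_t\tilde v_{\e,\ell}(0,\cdot)=0$, so $w:=v_{\e,\ell}-\tilde v_{\e,\ell}$ solves $\square_\e w=-g_\e$ with vanishing Cauchy data.

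Next I would exploit the structure of the eigendefect $\mathfrak{d}_{\e k,\ell}=\nabla_y\cdot F^{e}_\ell + i\e\kappa\, R^{e,\e\kappa}_\ell$, where $F^{e}_\ell := -\sigma_\ell^{e} e + \aa e\phi_\ell^{e}+\nabla\chi_\ell^{e}$ carries the possibly $\mu_\alpha$-growing $\ell$-th extended correctors, and $R^{e,\e\kappa}_\ell$ is a polynomial in $\e\kappa$ whose coefficients involve $\phi_j^{e}$ and $\lambda_j$ for $0\le j\le \ell$. Combining the scaling identity $(\nabla_y\cdot F)(x/\e)=\e\,\nabla_x\cdot(F(x/\e))$ with the product rule $e^{ik\cdot x}\nabla_x\cdot(F(x/\e))=\nabla_x\cdot(e^{ik\cdot x}F(x/\e))-ik\cdot e^{ik\cdot x}F(x/\e)$ produces the decomposition
\begin{equation*}
g_\e \,=\, \e^\ell\nabla_x\cdot G^{(1)}_\e + \e^\ell G^{(2)}_\e,
\end{equation*}
where $G^{(1)}_\e$ and $G^{(2)}_\e$ are inverse Fourier transforms in $k$ of Schwartz-in-$k$ symbols multiplied by $\cos(\e^{-1}\Lambda_\ell(\e k)t)$ and by the correctors evaluated at $x/\e$. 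The gain is a full power of $\e$ over the naive order $\e^{\ell-1}$ of the eigendefect, and, crucially, only $L^2$-control (not $H^1$-control) of the correctors will be needed in the sequel.

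For the energy estimate, I would test $\square_\e w=-g_\e$ against $\partial_t w$. The bulk contribution $\e^\ell G^{(2)}_\e$ is absorbed by Cauchy--Schwarz; for the divergence contribution, a spatial integration by parts transfers $\nabla_x\cdot G^{(1)}_\e$ paired with $\partial_t w$ into $-G^{(1)}_\e$ paired with $\nabla\partial_t w$, after which an integration by parts in time (using $\nabla w(0)=0$) yields a boundary pairing $\langle G^{(1)}_\e(t),\nabla w(t)\rangle$ and a volume pairing $\int_0^t\langle \partial_s G^{(1)}_\e,\nabla w\rangle\, ds$. Absorbing $\|\nabla w\|_{L^2}$ into $\sqrt{E(t)}$ via the ellipticity of $\aa$ and applying a Gronwall-type argument gives
\begin{equation*}
\sup_{s\le t}\|\partial_s w(s)\|_{L^2(\R^d)}\,\lesssim\, \e^\ell\Big(\sup_{s\le t}\|G^{(1)}_\e(s)\|_{L^2}+\int_0^t\|\partial_s G^{(1)}_\e(s)\|_{L^2}\,ds+\int_0^t\|G^{(2)}_\e(s)\|_{L^2}\,ds\Big),
\end{equation*}
and a final time integration yields $\|w(t)\|_{L^2}$ bounded by $t$ times the RHS --- the linear-in-$T$ growth is tied to the fact that only $L^2$-norms of $G^{(1)}_\e$, $G^{(2)}_\e$, and time derivatives appear.

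The hard part will be to bound $\expec{\|G^{(j)}_\e(s)\|^2_{L^2(\R^d)}}^{1/2}$. Taking the expectation inside the spatial integral and using Minkowski in $k$, Hypothesis~\ref{hypo:corr} furnishes the pointwise-in-$x$ bound $\expec{|F^{e}_\ell(x/\e)|^2}^{1/2}\lesssim \mu_\alpha(|x|/\e)$ (and analogously for $R^{e,\e\kappa}_\ell$, which involves only lower-order correctors and possibly $\phi_\ell$). The obstacle is that the spatial integral $\int_{\R^d}\mu_\alpha(|x|/\e)^2\, dx$ diverges, and one must exploit the oscillatory/Schwartz structure of the Fourier representation of $G^{(j)}_\e$ to produce an effective spatial localization. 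I would achieve this by repeated integration by parts in $k$, transferring the Schwartz decay of $\hat u_0$ --- as well as the smoothness in $k$ of $\omega_\ell$, $\Lambda_\ell$, and of the dependence on the angular variable $e=k/|k|$ --- into arbitrary polynomial decay of the integrand in $|x|+s|k|$. This confines the effective support of $G^{(j)}_\e(s,\cdot)$ to $|x|\lesssim 1+s\le 1+T$, on which $\mu_\alpha(|x|/\e)\lesssim \mu_\alpha(\e^{-1}T)$; the time derivative $\partial_s G^{(1)}_\e$ merely picks up the bounded factor $\e^{-1}\Lambda_\ell(\e k)\lesssim |k|$ and obeys the same bound. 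Combining these inputs with the energy bound above yields the claimed estimate $\e^\ell T\mu_\alpha(\e^{-1}T) C_\ell(u_0)$; reconciling the sublinear stochastic growth of the correctors with the absolute $L^2(\R^d)$-integrability needed in the energy estimate is the true technical core of the proof.
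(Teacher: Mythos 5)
Up to the final step, your argument is essentially the paper's: the identification $\tilde v_{\e,\ell}(0,\cdot)=u_{0,\e,\ell}$, the computation of $\square_\e\tilde v_{\e,\ell}$ via Proposition~\ref{prop:bloch}, the gain of one power of $\e$ through $(\nabla\cdot F)(\tfrac x\e)=\e\,\nabla_x\cdot(F(\tfrac x\e))$ and the split into a divergence-form source plus a bulk source, the energy estimate with the space-then-time integration by parts, and the treatment of the weight $\mu_\alpha(|x|/\e)$ by trading Schwartz decay of $\hat u_0$ for the factor $t$ produced when $\nabla_k$ hits the oscillating phase (your repeated integration by parts in $k$ is an acceptable variant of the paper's weighted-Sobolev/H\"older argument, where only one $k$-derivative is needed since $\alpha_1<1$).

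The gap is in the passage from the energy seminorm to the $L^2$-norm of $w:=v_{\e,\ell}-\tilde v_{\e,\ell}$. Your Gronwall step yields, after the symbol estimates, $\sup_{s\le T}\expec{\|\partial_s w\|_{L^2}^2}^{1/2}\lesssim \e^{\ell}(1+T)\mu_\alpha(\e^{-1}T)$: the terms $\int_0^T\|\partial_s G^{(1)}_\e\|_{L^2}\,ds$ and $\int_0^T\|G^{(2)}_\e\|_{L^2}\,ds$ each carry a factor $T$, because the integrands are of order $\mu_\alpha(\e^{-1}s)\gtrsim 1$ uniformly in $s$ (the $\cos$/$\sin$ factors give no decay in time). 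Consequently, writing $\|w(T)\|_{L^2}\le\int_0^T\|\partial_s w\|_{L^2}\,ds\le T\sup_{s\le T}\|\partial_s w\|_{L^2}$ produces $\e^{\ell}T^2\mu_\alpha(\e^{-1}T)$, one factor of $T$ worse than the statement; the asserted ``linear-in-$T$ growth'' does not follow from the displayed bound. The missing idea (the paper's Substep~3.2) is a second, independent energy identity: integrate the equation once in time and test with $w$ itself. The time primitive of the divergence-form source carries the factor $\frac{\e}{\Lambda_\ell(\e k)}\sin(\e^{-1}\Lambda_\ell(\e k)s)$, hence is bounded \emph{uniformly in time} (no secular growth; the extra $\kappa^{\ell+1}$ in the symbol absorbs the $1/|k|$ near $k=0$), and after one spatial integration by parts it pairs against $\nabla w$, which is controlled by your energy bound. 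This yields $\|w(T)\|_{L^2}^2\lesssim \e^{\ell}\sup_{t\le T}\|\nabla w(t)\|_{L^2}\int_0^T\mu_\alpha(\e^{-1}s)\,ds\lesssim\big(\e^{\ell}(1+T)\mu_\alpha(\e^{-1}T)\big)^2$, i.e.\ the claimed linear-in-$T$ estimate. Without this (or some genuinely dispersive argument in time, which you do not propose), your proof does not close.
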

We conclude with the approximation of $\tilde v_{\e,\ell}$ by neglecting the corrector terms.
\begin{lemma}\label{lem:throw-away}
For $\ell \ge 1$, let $u_{\e,\ell}$ and $\tilde v_{\e,\ell}$ be given by \eqref{e.def-uepsell} and \eqref{e.def-vtildeepsell}, respectively.
Then if Hypothesis~\ref{hypo:corr} holds we have for all $\e>0$ and for all $T>0$,
\begin{equation*}
\sup_{0\leq t\leq T} \expec{\|u_{\e,\ell}-\tilde v_{\e,\ell}\|_{L^2(\R^d)}^2}^\frac12 \,\lesssim \, C_\ell(u_0) \big(\e+\e^{\ell}\mu_\alpha(\e^{-1}T)\big).
\end{equation*}
\qed
\end{lemma}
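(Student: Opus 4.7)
My plan is to exploit the explicit Fourier representations of $u_{\e,\ell}$ and $\tilde v_{\e,\ell}$: they differ only by the factor $\psi_{\e k,\ell}(x/\e)$ inside the integral over $k$. Since $\phi_0\equiv 1$, writing $(i\kappa)^j\phi_j^e=i^j\phi_j\cdot k^{\otimes j}$ (where $\phi_j$ denotes the symmetric $j$-th order tensor lift of the direction-dependent scalar corrector) yields
\begin{equation*}
\tilde v_{\e,\ell}(t,x)-u_{\e,\ell}(t,x)\,=\,\sum_{j=1}^\ell i^j\e^j\,\phi_j(x/\e)\cdot G_j(t,x),
\end{equation*}
where
\begin{equation*}
G_j(t,x)\,:=\,\frac{1}{(2\pi)^d}\int_{\R^d}\omega_{\ell}(\e|k|)\hat u_0(k)\,k^{\otimes j}\,e^{ik\cdot x}\cos(\e^{-1}\Lambda_\ell(\e k)t)\,dk.
\end{equation*}
By the triangle inequality in $L^2(\R^d)$ it suffices to estimate the contribution of each order $j$ separately.

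For the subcritical orders $1\le j\le\ell-1$, Hypothesis \eqref{hyp:growth1} provides $\Z^d$-stationarity of $\phi_j$ together with a uniform bound on its second moment. Partitioning $\R^d$ into $\e$-cells, applying Cauchy--Schwarz tensorially between $\phi_j(x/\e)$ and $G_j(t,x)$, and exchanging expectation and integration by Fubini yields
\begin{equation*}
\expec{\|\phi_j(\cdot/\e)\cdot G_j(t,\cdot)\|_{L^2(\R^d)}^2}\,\lesssim\,\|G_j(t,\cdot)\|_{L^2(\R^d)}^2\,\lesssim\,\int_{\R^d}|k|^{2j}|\hat u_0(k)|^2\,dk\,\lesssim\,C_\ell(u_0)^2,
\end{equation*}
the middle inequality coming from Plancherel and the bounds $|\cos|\le1$, $|\omega_\ell|\le1$. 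Multiplying by $\e^j$ and summing, the total contribution of these subcritical orders is at most $\e\,C_\ell(u_0)$.

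The critical term $j=\ell$ is the main obstacle. The same cell decomposition combined with the growth bound \eqref{hyp:growth2} delivers
\begin{equation*}
\expec{\|\phi_\ell(\cdot/\e)\cdot G_\ell(t,\cdot)\|_{L^2(\R^d)}^2}\,\lesssim\,\int_{\R^d}\mu_\alpha(|x|/\e)^2\,|G_\ell(t,x)|^2\,dx,
\end{equation*}
and this weighted norm would blow up without a spatial decay of $G_\ell$. To produce such a decay, I split $\R^d$ into a near-field $|x|\le C_0(1+T)$ and a far-field $|x|>C_0(1+T)$, where $C_0$ is twice the uniform bound on $|\Lambda_\ell'|$ over the support of $\omega_\ell$. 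On the near-field, the subpolynomial nature of $\mu_\alpha$ gives $\mu_\alpha(|x|/\e)\lesssim\mu_\alpha(T/\e)$, so that Plancherel controls this piece by $\mu_\alpha(T/\e)^2C_\ell(u_0)^2$. On the far-field, I invoke non-stationary phase: rewriting $\cos(\e^{-1}\Lambda_\ell(\e k)t)=\tfrac12(e^{i\e^{-1}\Lambda_\ell(\e k)t}+e^{-i\e^{-1}\Lambda_\ell(\e k)t})$, the phase gradient in $k$ equals $x\pm t\Lambda_\ell'(\e k)$, which by the choice of $C_0$ stays $\ge|x|/2$ for $|x|\ge C_0(1+T)$. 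Repeated integration by parts (using that $\omega_\ell$ is $C_c^\infty$, $\hat u_0\in\Sc$, and that each $\e\partial_k$ acting on $\omega_\ell(\e|\cdot|)$ stays uniformly bounded) then yields $|G_\ell(t,x)|\lesssim C_\ell(u_0)(1+|x|)^{-N}$ for any fixed $N\ge1$. Choosing $N$ large enough (depending on $d$ and $\alpha$) absorbs the weight $\mu_\alpha(|x|/\e)^2\lesssim\e^{-2\alpha_1}(1+|x|)^{2\alpha_1}\log^{2\alpha_2}(2+|x|)$ and renders the far-field contribution subdominant. Combining the two pieces yields
\begin{equation*}
\expec{\|\phi_\ell(\cdot/\e)\cdot G_\ell(t,\cdot)\|_{L^2(\R^d)}^2}^{1/2}\,\lesssim\,C_\ell(u_0)\,\mu_\alpha(T/\e),
\end{equation*}
and multiplying by $\e^\ell$ yields the second part of the claimed bound.

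The heart of the argument is the non-stationary phase analysis, which reflects the finite group velocity of the approximate dispersion relation $\Lambda_\ell$. It is precisely this spatial localization of $G_\ell$ within the light cone of radius $\sim(1+T)$ that lets us pay only the factor $\mu_\alpha(T/\e)$ for the polynomial growth of the top-order corrector $\phi_\ell$, rather than the divergent $\mu_\alpha(\infty)$.
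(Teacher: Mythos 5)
Your decomposition of $\tilde v_{\e,\ell}-u_{\e,\ell}$ into the terms $\e^j\phi_j(\cdot/\e)\cdot G_j$ and your treatment of the orders $1\le j\le \ell-1$ coincide with the paper's proof (there, $G_j=F_{4,t,\e,\ell,j}u_0$ with symbol $(ik)^{\otimes j}\omega_\ell(\e|k|)\cos(\e^{-1}\Lambda_\ell(\e k)t)$). Where you genuinely diverge is the critical order $j=\ell$: the paper never localizes in space by stationary-phase methods. Instead it reduces, by the same cell decomposition, to the deterministic weighted integral $\int_{\R^d}\sup_{B_\e(x)}|F_{4,t,\e,\ell,\ell}u_0|^2\,\mu_\alpha(|x|/\e)^2dx$ and then runs the interpolation argument of Substep~3.1 of Proposition~\ref{prop:energy-wave}: bound $\mu_\alpha(s)\lesssim 1+s^{\alpha_1+\beta\alpha_2}\beta^{-\alpha_2}$, apply H\"older between $\|F u_0\|_{H^m(\R^d)}$ and the weighted norm $\|F u_0\|_{H^m(\R^d,(1+|x|^2)dx)}$, observe that the latter is $\lesssim(1+T)C_\ell(u_0)$ because a \emph{single} $\nabla_k$ on the symbol brings down at most $t\,\nabla\Lambda_\ell$, and optimize in $\beta$. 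So the factor $\mu_\alpha(\e^{-1}T)$ emerges from second moments in $x$ (one $k$-derivative), with no smoothness of $\Lambda_\ell$ beyond first order required. Your near/far-field splitting with non-stationary phase encodes the same finite group-velocity mechanism, but in a ``hard'' pointwise form; it buys spatial decay of $G_\ell$ outside the cone $|x|\lesssim 1+T$, at the price of controlling many derivatives of the symbol uniformly in $\e$.

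That price is exactly where your write-up has a gap: the claim $|G_\ell(t,x)|\lesssim C_\ell(u_0)(1+|x|)^{-N}$ ``for any fixed $N\ge1$'' is not justified as stated. The phase $k\mapsto k\cdot x\pm t\,\e^{-1}\Lambda_\ell(\e k)$ and the amplitude are \emph{not} smooth at $k=0$: $\Lambda_\ell(q)=\sqrt{\tilde\lambda_{q,\ell}}\sim|q|$ with direction-dependent coefficients $\lambda_j^{q/|q|}$, so it has a conic singularity, and one checks that $m$-th order $k$-derivatives of the phase cost $t\,|k|^{1-m}$ (uniformly in $\e$, which itself deserves a line of proof). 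Consequently each integration by parts may lose a factor $|k|^{-1}$ on top of the gain $|x|^{-1}$, and after $N$ integrations by parts the $k$-integral near the origin only converges if $N<d+\ell$ (the factor $k^{\otimes\ell}$ is what saves you). The argument can be repaired, since you only need $N>\tfrac d2+\alpha_1$ to absorb the weight, and such an admissible $N$ exists at least for $d\ge2$ (and in $d=1$ unless $\ell=1$ and $\alpha_1$ is close to $1$); but this accounting must be done, and ``any $N$'' is false. A second, minor imprecision: $\expec{\|\phi_j(\cdot/\e)\cdot G_j\|_{L^2}^2}\lesssim\|G_j\|_{L^2}^2$ does not follow from Hypothesis~\ref{hypo:corr}, which only controls local quadratic averages $\expec{\fint_{B(x)}|\phi_j|^2}$; the cell decomposition gives $\lesssim\int_{\R^d}\sup_{B_\e(x)}|G_j|^2dx\lesssim\|G_j\|_{H^m(\R^d)}^2$ (as in the paper), which is still $\lesssim C_\ell(u_0)^2$, so the conclusion is unaffected. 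Finally, your near-field bound really yields $\mu_\alpha(C_0(1+T)\e^{-1})$ rather than $\mu_\alpha(\e^{-1}T)$; this small-$T$ wrinkle is shared by the paper's own statement and is harmless for Theorem~\ref{t.wave-eq}, so I do not count it against you.
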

Theorem~\ref{t.wave-eq} is a straightforward consequence of the combination of Lemma~\ref{lem:prepare}, Proposition~\ref{prop:energy-wave}, and  Lemma~\ref{lem:throw-away},
which are proved in the following three subsections.

\subsection{Proof of Lemma~\ref{lem:prepare}: Preparation of the initial condition}

We split the proof into two steps.
In the first step, we prove by an energy estimate that it is enough to control
the initial error $u_0-u_{0,\varepsilon , \ell}$ in $L^2(\R^d)$, which we estimate 
in Step~2 using Hypothesis~\ref{hypo:corr}.

\medskip

\step{1} Energy estimate for \eqref{i1}.

\noindent
We first derive an energy estimate for the wave equation \eqref{i1}. We integrate once \eqref{i1} in time,  multiply by $u_\varepsilon$, and integrate over $\left[0,t \right] \times \mathbb{R}^d$ to obtain
$$
\frac{1}{2}\left(\int_0^t \frac{d}{ds}\Vert u_\varepsilon(s,\cdot)\Vert^2_{L^2(\mathbb{R}^d)}ds+\int_0^t \frac{d}{ds}\Big\| \sqrt{\aa_\e}\nabla \int_{0}^s u_\varepsilon(r,\cdot)dr\Big\|_{L^2(\mathbb{R}^d)} ^2ds \right)=0, 
$$
from which, by uniform ellipticity of $\aa$, it immediately follows that 
\begin{equation}\label{estimation_initiale}
\Vert u_\varepsilon(t,\cdot)\Vert^2_{L^2(\mathbb{R}^d)}\leq \Vert u_0\Vert^2_{L^2(\mathbb{R}^d)}. 
\end{equation}
By linearity of \eqref{i1}, $u_\e -v_{\e , \ell}$ satisfies \eqref{i1} with initial condition $u_0-u_{0,\e, \ell}$. 
In view of the energy estimates \eqref{estimation_initiale} it is sufficient to estimate $\Vert u_{0,\e, \ell}-u_0\Vert_{L^2(\mathbb{R}^d)}$ to conclude the proof.

\medskip

\step{2} Estimate of $\Vert u_{0,\e, \ell}-u_0\Vert_{L^2(\mathbb{R}^d)}$.

\noindent By definition of $u_{0,\varepsilon , \ell}$,
\begin{equation*}
(u_{0,\varepsilon, \ell} -u_0)(x) \,=\,(u_{0,\e}-u_0)(x)+\sum_{j=1}^{\ell}\varepsilon^j \varphi_j\big( \frac{x}{\varepsilon}\big) \cdot \nabla^j u_{0,\e}(x),
\end{equation*}
so that
\begin{equation*}
\Vert u_{0,\varepsilon, \ell} -u_0\Vert_{L^2(\mathbb{R}^d)}^2 \leq \int_{\mathbb{R}^d} |(u_{0,\e}-u_0)(x)|^2 dx+\sum_{j=1}^{\ell}\varepsilon^{2j}\int_{\mathbb{R}^d} \big(\fint_{B_\e(x)} \big| \varphi_j\big( \frac{\cdot}{\varepsilon}\big)\big|^2\big)
\sup_{B_\e(x)}  |\nabla^j u_{0,\e}|^2 dx.
\end{equation*}
We start with the first RHS term, for which the Plancherel identity yields
\begin{eqnarray*}
\int_{\mathbb{R}^d} |(u_{0,\e}-u_0)(x)|^2dx&=&\frac1{(2\pi)^d}\int_{\R^d}(1-\omega_{\ell}(\e|k|))^2 |\hat u_0(k)|^2 dk
\\ 
&\le&\int_{|k|\ge \frac1{2\e} \Km}  |\hat u_0(k)|^2 dk\,\leq\, \Big(\frac{2\e}\Km\Big)^2 \int_{\R^d} |\nabla u_0|^2  \,\lesssim \, \e^2 C_\ell(u_0)^2.
\end{eqnarray*}
Taking the expectation of the other RHS terms and using assumption \eqref{hyp:growth1} for $0<j\le \ell-1$  and \eqref{hyp:growth2} for $j=\ell$, we obtain
\begin{multline*}
\expec{\sum_{j=1}^{\ell}\varepsilon^{2j}\int_{\mathbb{R}^d} \big(\fint_{B_\e(x)} \big| \varphi^e_j\big( \frac{\cdot}{\varepsilon}\big)\big|^2\big)
\sup_{B_\e(x)}  |\nabla^j u_{0,\e}|^2 dx} 
\\
\lesssim \sum_{j=1}^{\ell-1}\varepsilon^{2j} \int_{\mathbb{R}^d} \sup_{B_\varepsilon(x)}|\nabla^j u_{0,\e}|^2 dx+\varepsilon^{2\ell} \int_{\mathbb{R}^d} \sup_{B_\varepsilon(x)}|\nabla^j u_{0,\e}|^2 \mu_\alpha(|\frac x\e |)^2 dx.
\end{multline*}
Using the crude bound $\mu_\alpha({\left|\frac x\e\right|})\lesssim \mu_\alpha(\e^{-1})(1+|x|)$ together with the bound
\begin{eqnarray*}
\int_{\mathbb{R}^d} \sup_{B_\varepsilon(x)}|\nabla^j u_{0,\e}|^2 (1+|x|)^2 dx &\leq & \int_{\mathbb{R}^d} \Big(\sup_{\R^d} \{(1+|\cdot|)^{\frac{d+3}{2}} |\nabla^j u_{0,\e}|\}\Big)^2 (1+|x|)^{-d-1} dx 
\\
&\lesssim & \Big(\sup_{\R^d} \{(1+|\cdot|)^{\frac{d+3}{2}} |\nabla^j u_{0,\e}|\}\Big)^2 \le C_\ell(u_0)^2,
\end{eqnarray*}
the claim follows.

\subsection{Proof of Proposition~\ref{prop:energy-wave}: Almost diagonalization of the wave operator}
We split the proof into three steps.
In the first step we reformulate the initial condition in terms of a Taylor-Bloch expansion.
Doing so we may exploit that Taylor-Bloch waves approximately diagonalize the wave operator to write
an approximation of the solution as the explicit time-integration of the initial Taylor-Bloch expansion.
Since the diagonalization is approximate, the approximate solution does only solve the wave equation up to a remainder term, cf.~Step~2.
This term can be written using the eigendefect, which allows us to reformulate the error as a source term 
in conservative form plus higher-order term. Step~3 is then dedicated to the derivation of energy estimates to control the error 
generated by this source term, at the level of the solution. 
These energy estimates are rather subtle: they rely on the specific form of the source term (which allows explicit integrations by parts in space and time),
and on estimates of Fourier symbols.

\medskip

\step1 Reformulation of $u_{0,\e,\ell}$ in term of Taylor-Bloch waves.

\noindent We claim that
$$
u_{0,\e,\ell}\,=\,\frac{1}{(2\pi)^d}\int_{\R^d} \hat u_{0,\e}(k)e^{ik\cdot x} \psi_{\e k,\ell} \big(\frac x\e\big) \ dk,
$$
where we recall that $\hat u_{0,\e}(k)$ is a short-hand notation for  $\omega_{\ell}(\e|k|)\hat u_0(k)$
(note that $\psi_{\e k,\ell}$  is Hermitian by construction).
Indeed,
\begin{eqnarray*}
u_{0,\varepsilon, \ell}(x)&=&\sum_{j=0}^{\ell}\varepsilon^j \varphi_j\big( \frac{x}{\varepsilon}\big) \cdot \nabla^j u_{0,\e}(x)
\\
&=&\frac{1}{(2\pi)^d} \sum_{j=0}^{\ell}\varepsilon^j \varphi_j\big( \frac{x}{\varepsilon}\big) \cdot \int_{\mathbb{R}^d}(ik)^{\otimes j}\hat u_{0,\e}(k)e^{ik\cdot x}dk
\\
 &=&\frac{1}{(2\pi)^d}\int_{\mathbb{R}^d}\hat u_{0,\e}(k)e^{ik\cdot x}\sum_{j=0}^{\ell} (i\e k)^{\otimes j}\cdot \varphi_j^e\big( \frac{x}{\varepsilon}\big)dk,
\end{eqnarray*}
from which the claim follows by definition of $ \psi_{\e k,\ell}$.

\medskip

\step{2} Equation satisfied by $\widetilde{v}_{\varepsilon ,\ell}-{v}_{\varepsilon ,\ell}$.

\noindent 
In view of Proposition~\ref{prop:bloch}, $\widetilde{v}_{\varepsilon,\ell}$ satisfies 
\begin{multline*}
\square_\varepsilon \widetilde{v}_{\varepsilon ,\ell}  =\\ -\frac{\varepsilon^{\ell-1}}{(2\pi)^d}\int_{\mathbb{R}^d} (i\kappa)^{\ell+1} \hat u_{0,\e}(k)e^{ik\cdot x}(\nabla \cdot (-\sigma_{\ell}^e e+\aa e\varphi_{\ell}^e +\nabla \chi_{\ell}^e))\big(\frac{x}{\varepsilon}\big)\cos(\varepsilon^{-1}\Lambda_\ell(\varepsilon k )t)dk \\ 
-\frac{\varepsilon^{\ell}}{(2\pi)^d}\int_{\mathbb{R}^d}(i\kappa)^{\ell+2}\hat u_{0,\e}(k)e^{ik\cdot x} \big( e\cdot \aa e\varphi_{\ell}^e-\sum_{j=1}^{\ell} \sum_{p=\ell-j}^{\ell} (i\e\kappa)^{j+p-\ell}  \lambda_{p}\phi_{j}^e\big)
\big( \frac{x}{\varepsilon}\big)\cos(\varepsilon^{-1}\Lambda_\ell(\varepsilon k )t)dk.
\end{multline*}
Using that $(\nabla\cdot f)\left( \frac{x}{\varepsilon}\right)=\varepsilon \nabla\cdot ( f( \frac{x}{\varepsilon}))$, the above turns into
\begin{equation}
\square_\varepsilon \widetilde{v}_{\varepsilon ,\ell}  = -\frac{\varepsilon^{\ell}}{(2\pi)^d}\int_{\mathbb{R}^d}(i\kappa)^{\ell+1}\hat u_{0,\e}(k)
\Big( \nabla \cdot \big(e^{ik\cdot x}\Phi_{1,\ell}^e\big( \frac{x}{\varepsilon}\big) \big)+i\kappa e^{ik\cdot x}\Phi_{2,\ell,\kappa}^e\big( \frac{x}{\varepsilon}\big) \Big) \cos(\varepsilon^{-1}\Lambda_\ell(\varepsilon k )t) dk 
\end{equation}
where 
$$\Phi_{1,\ell}^e:=-\sigma_{\ell}^e e+\aa e\varphi_{\ell}^e +\nabla \chi_{\ell}^e \text{, } \quad \Phi_{2,\ell,\kappa}^e:=e\cdot \aa e\varphi_{\ell}^e-\sum_{j=1}^{\ell} \sum_{p=\ell-j}^{\ell} (i\e\kappa)^{j+p-\ell}  \lambda_{p}\phi_{j}^e$$  
are linear combinations of correctors.
Hence, $\widetilde{v}_{\varepsilon ,\ell}-v_{\e,\ell}$ solves the following equation 
\begin{equation}
\label{ondes_interieur}
\left\{
\begin{array}{rcl}
\square_\e (\widetilde{v}_{\varepsilon ,\ell}-v_{\e,\ell})(t,x)&=&\varepsilon^{\ell}(f_{\varepsilon , 1,\ell} +f_{\varepsilon , 2,\ell} ) ,
\\
\partial_t (\widetilde{v}_{\varepsilon ,\ell}-v_{\e,\ell})(0,\cdot)&=&0,
\\
(\widetilde{v}_{\varepsilon ,\ell}-v_{\e,\ell})(0,\cdot)&=&0.
\end{array}
\right.
\end{equation}
with the source terms given by 
\begin{eqnarray*}
f_{\e ,1,\ell}(t,x)&:=&-\frac{1}{(2\pi)^d} \int_{\mathbb{R}^d}(i\kappa)^{\ell+1} \hat u_{0,\e}(k)\nabla \cdot \left(e^{ik\cdot x}\Phi_{1,\ell}^e\big( \frac{x}{\varepsilon}\big) \right)\cos(\varepsilon^{-1}\Lambda_\ell(\varepsilon k )t)dk,
\\
 f_{\e ,2,\ell}(t,x)&:=&-\frac{1}{(2\pi)^d}\int_{\mathbb{R}^d}(i\kappa)^{\ell+2} \hat u_{0,\e}(k) e^{ik\cdot x} \Phi_{2,\ell,\kappa}^e\big( \frac{x}{\varepsilon}\big) \cos(\varepsilon^{-1}\Lambda_\ell(\varepsilon k )t)dk .
\end{eqnarray*}
In the third and last step, we establish energy estimates for equation~\eqref{ondes_interieur}. 

\medskip

\step{3} Energy estimates for \eqref{ondes_interieur}.

\noindent
By linearity of \eqref{ondes_interieur}, it is sufficient to consider the following two auxiliary wave equations:
\begin{equation}\label{sys_annexe}
\left\{
\begin{array}{rcl}
\square_\e v_{\e ,{p},\ell}(t,x)&=&\varepsilon^{\ell} f_{\e ,{p},\ell},
\\
\partial_t v_{\e ,p,\ell}(0,\cdot)&=&0,
\\
v_{\e ,p,\ell}(0,\cdot)&=&0,
\end{array}
\right. ,\: {p}\in \left\lbrace 1,2\right\rbrace .
\end{equation}
Since for ${p}=1$ \eqref{sys_annexe} involves a divergence term, the energy estimate is not completely standard and relies very much on the specific form of the RHS.
We give a complete proof of the desired estimate in Substeps~3.1 and~3.2.
The proof of the estimate for ${p}=2$, which takes a similar form as for ${p}=1$, is simpler and left to the reader.

\medskip

\substep{3.1} Estimate of  $\expec{\Vert \partial_t v_{\e ,1,\ell}(t,\cdot)\Vert_{L^2(\mathbb{R}^d)}^2+\Vert \nabla v_{\e ,1,\ell}(t,\cdot)\Vert_{L^2(\mathbb{R}^d)}^2}$.

\noindent  Multiplying \eqref{sys_annexe} by $\partial_t v_{\e ,1,\ell}$ and integrating over $\left[0,t \right] \times \mathbb{R}^d $, we obtain
by ellipticity of $\aa$
\begin{eqnarray}\label{step11}
\frac{1}{2}\left(\Vert \partial_t v_{\e ,1,\ell}(t,\cdot)\Vert_{L^2(\mathbb{R}^d)}^2 + \Vert \nabla v_{\e ,1,\ell}(t,\cdot)\Vert_{L^2(\mathbb{R}^d)}^2\right) \leq \frac{\varepsilon^{\ell}}{(2\pi)^d}  I_{1,\ell},
\end{eqnarray}
where 
\begin{multline*}
I_{1,\ell}:=\\-\int_{\left[0,t \right] \times \mathbb{R}^d}\left(\int_{\mathbb{R}^d}(i\kappa)^{\ell+1}\hat u_{0,\e}(k)\nabla\cdot \left( e^{ik\cdot x}\Phi_{1,\ell}^e\left(\frac{x}{\varepsilon}\right)\right)\cos(\varepsilon^{-1}\Lambda_\ell(\varepsilon k )s)dk\right)\partial_t v_{\varepsilon ,1,\ell}(s,x)dsdx .
\end{multline*}
The subtlety is the divergence term which is not bounded uniformly in $L^2(\R^d)$ with respect to $\e$.
To obtain a suitable energy estimate, we first integrate by parts in space. This yields the term $\nabla \partial_t v_{\varepsilon ,1,\ell}$, which 
we do not control a priori. What makes the argument possible is that one may in turn integrate by parts in time, and end up with the quantity $\nabla  v_{\varepsilon ,1,\ell}$ which we
can then absorb  in the LHS of \eqref{step11}.
More precisely, by Fubini's theorem and integration by parts in space and time, $I_{1,\ell}$ takes the form 
\begin{eqnarray*}\label{step12}
\lefteqn{I_{1,\ell}}
\\
&=& \int_{\left[0,t \right] \times \mathbb{R}^d}(i\kappa)^{\ell+1}\hat u_{0,\e}(k)\cos\left(  \varepsilon^{-1}\Lambda_\ell(\varepsilon k)s\right) \int_{\mathbb{R}^d}\nabla \partial_t v_{\e ,1,\ell}(s,x)\cdot e^{ik\cdot x}\Phi_{1,\ell}^e\big(\frac{x}{\varepsilon}\big)dxdsdk  \\ \nonumber
&=&\int_{\mathbb{R}^d}\nabla v_{\e ,1,\ell}(t,x)\cdot \int_{\mathbb{R}^d}(i\kappa)^{\ell+1}\hat u_{0,\e}(k)e^{ik\cdot x}\Phi_{1,\ell}^e\big(\frac{x}{\varepsilon}\big)\cos\left(  \varepsilon^{-1}\Lambda_\ell(\varepsilon k) t\right)dk dx\\ \nonumber
&+&\int_{\left[0,t \right]\times\mathbb{R}^d}\nabla v_{\varepsilon ,1,\ell}(s,x)\cdot \int_{\mathbb{R}^d}(i\kappa)^{\ell+1}\hat u_{0,\e}(k)e^{ik\cdot x}\Phi_{1,\ell}^e\big(\frac{x}{\varepsilon}\big)\frac{\Lambda_\ell(\varepsilon k )}{\varepsilon}\sin\left(  \varepsilon^{-1}\Lambda_\ell(\varepsilon k  )s\right)dk dx ds .
\end{eqnarray*}
Let $F_{1,t,\e,\ell}$ and $F_{2,t,\e,\ell}$ be the linear operators from $\Sc$ to $\Sc$ characterized by their Fourier symbols 
\begin{eqnarray}
\hat F_{1,t,\e,\ell}(k)&:=&(i\kappa)^{ \ell+1}\omega_{\ell}(\e|k|)\cos\left(  \varepsilon^{-1}\Lambda_\ell(\varepsilon k) t\right),\label{e.Fourier-sym1}\\
\hat F_{2,t,\e,\ell}(k)&:=&(i\kappa)^{\ell+1}\omega_{\ell}(\e|k|)\frac{\Lambda_\ell(\varepsilon k )}{\varepsilon}\sin\left(  \varepsilon^{-1}\Lambda_\ell(\varepsilon k  )t\right).\label{e.Fourier-sym2}
\end{eqnarray}
Note that the dependence on $t$ of these Fourier symbols and of their derivatives (with respect to $k$) will be crucial
for the estimates to come.
Proceeding as in Step~2 of the proof of Lemma~\ref{lem:prepare}, and using assumptions~\eqref{hyp:growth1} \&~\eqref{hyp:growth2}, we then obtain
\begin{multline*}
\expec{\int_{\mathbb{R}^d} \left|\int_{\mathbb{R}^d}(i\kappa)^{\ell+1}\hat{u}_{0,\e}(k)e^{ik\cdot x}\Phi_{1,\ell}^e\big(\frac{x}{\varepsilon}\big)\cos\left(  \Lambda_\ell(\varepsilon\vert k \vert )t\right)dk \right|^2 dx} \\
\lesssim \, \int_{\mathbb{R}^d}  \sup_{B_\varepsilon(x)}|F_{1,t,\e,\ell} u_{0}|^2 \mu_\alpha({|\frac x\e |})^2dx 
\end{multline*}
and 
\begin{multline*}
\expec{\int_{\mathbb{R}^d} \left|\int_{\mathbb{R}^d}(i\kappa)^{\ell+1}\hat{u}_{0,\e}(k)e^{ik\cdot x}\Phi_{1,\ell}^e\big(\frac{x}{\varepsilon}\big)\frac{\Lambda_\ell(\varepsilon k )}{\varepsilon}\sin\left(  \varepsilon^{-1}\Lambda_\ell(\varepsilon k  )s\right)dk \right|^2 dx}
\\
\lesssim  \, \int_{\mathbb{R}^d}  \sup_{B_\varepsilon(x)}|F_{2,t,\e,\ell} u_0|^2 \mu_\alpha({|\frac x\e |})^2dx ,
\end{multline*}
so that 
\begin{multline*}
\expec{I_{1,\ell}}\,\lesssim \, \expec{\| \nabla v_{\e ,1,\ell}(t,\cdot)\|_{L^2(\R^d)}^2}^\frac12 \Big(\int_{\mathbb{R}^d}  \sup_{B_\varepsilon(x)}|F_{1,t,\e,\ell} u_0|^2 \mu_\alpha({|\frac x\e|})^2dx\Big)^\frac12
\\
+ \int_0^t \expec{\| \nabla v_{\e ,1,\ell}(s,\cdot)\|_{L^2(\R^d)}^2}^\frac12 \Big(\int_{\mathbb{R}^d}  \sup_{B_\varepsilon(x)}|F_{2,s,\e,\ell} u_0|^2 \mu_\alpha({|\frac x\e |})^2dx\Big)^\frac12ds.
\end{multline*}
Combined with \eqref{step11} and Young's inequality, this yields the energy estimate for all $T\ge 0$
\begin{multline}\label{e.bound-before-symbol}
\sup_{0\le t\le T} \expec{ \Vert \partial_t v_{\e ,1,\ell}(t,\cdot)\Vert_{L^2(\mathbb{R}^d)}^2 + \Vert \nabla v_{\e ,1,\ell}(t,\cdot)\Vert_{L^2(\mathbb{R}^d)}^2}^\frac12
\\
\lesssim\,  \varepsilon^{\ell}  \Big(\int_{\mathbb{R}^d}  \sup_{B(x)}|F_{1,T,\e,\ell} u_0|^2 \mu_\alpha(|\frac x\e|)^2dx\Big)^\frac12
\\
+  \varepsilon^{\ell} \int_0^T  \Big(\int_{\mathbb{R}^d}  \sup_{B(x)}|F_{2,s,\e,\ell} u_0|^2 \mu_\alpha(|\frac x\e|)^2dx\Big)^\frac12ds.
\end{multline}
It remains to reformulate the RHS.
Recall that $\mu_\alpha(t)=(1+t)^{\alpha_1}\log^{\alpha_2}(2+t)$ for some $0\le \alpha_1 <1$ and some $\alpha_2\ge 0$.
We assume without loss of generality that $\alpha_2>0$ (otherwise the proof is simpler).
Starting point if the following elementary inequality: There exists $C<\infty$ such that for all $\beta>0$ and all $t\ge 0$,
$$
\log(1+t)\,\le \, \frac C\beta t^\beta,
$$
from which we deduce 
$$
\mu_\alpha(t) \,\le \, C^{\alpha_1+\alpha_2}(1+ t^{\alpha_1+\beta \alpha_2} \beta^{-\alpha_2}) \,\lesssim\, 1+ t^{\alpha_1+\beta \alpha_2} \beta^{-\alpha_2}.
$$
We then combine this estimate for $\beta\ll 1$ small enough so that $\alpha_1+\beta\alpha_2<1$ with H\"older's inequality in the form
\begin{eqnarray*}
\lefteqn{\int_{\mathbb{R}^d} |h|^2 \mu_\alpha(|\frac x\e|)^2dx}
\\
&\lesssim & \int_{\mathbb{R}^d} |h|^2 dx+\e^{-2(\alpha_1+\beta \alpha_2)}\beta^{-2\alpha_2}\int_{\mathbb{R}^d} |h|^2  |x|^{2(\alpha_1+\beta \alpha_2)} dx
\\
&\le &  \int_{\mathbb{R}^d} |h|^2 dx+\e^{-2(\alpha_1+\beta \alpha_2)}\beta^{-2\alpha_2} \Big(\int_{\mathbb{R}^d} |h|^2 dx\Big)^{1-\alpha_1-\beta \alpha_2} 
 \Big(\int_{\mathbb{R}^d} |h|^2 |x|^2 dx\Big)^{\alpha_1+\beta \alpha_2} .
\end{eqnarray*}
By the Sobolev embedding  
$\sup_{B(x)}|F_{1,T,\e,\ell} u_0|^2 \lesssim \|F_{1,T,\e,\ell} u_0\|_{H^m(B(x))}^2$ for some $m$ depending only on the dimension, this inequality takes the form
\begin{multline*}
{\int_{\mathbb{R}^d}\sup_{B(x)}|F_{1,T,\e,\ell} u_0|^2 \mu_\alpha(|\frac x\e|)^2dx}
\,
\le \, \|F_{1,T,\e,\ell} u_0\|_{H^m(\R^d)}^2\\
+\e^{-2(\alpha_1+\beta \alpha_2)}\beta^{-2\alpha_2}  \|F_{1,T,\e,\ell} u_0\|_{H^m(\R^d)}^{2(1-\alpha_1-\beta \alpha_2)} 
 \|F_{1,T,\e,\ell} u_0\|_{H^m(\R^d,(1+|x|^2)dx)}^{2(\alpha_1+\beta \alpha_2)} 
\end{multline*}
(note the weighted Sobolev norm).
In view of the definition \eqref{e.Fourier-sym1} of the Fourier symbol, $\|F_{1,T,\e,\ell} u_0\|_{H^m(\R^d)} \,\le \, C(u_0)$ (a high-norm of $u_0$),
whereas 
$$ 
\|F_{1,T,\e,\ell} u_0\|_{H^m(\R^d,|x|^2dx)}^2 \,\lesssim \, \int_{\R^d} (1+|k|^{2m}) |\nabla_k (\hat F_{1,T,\e,\ell}\hat u_0)(k)|^2 dk \,\le\, T^2 C(u_0)^2 
$$
(the factor $T$ appears when the derivative falls on the cosinus). Altogether, this yields
$$
\int_{\mathbb{R}^d}\sup_{B(x)}|F_{1,T,\e,\ell} u_0|^2 \mu_\alpha(|\frac x\e|)^2dx \,\le\, C(u_0) (1+ (\e^{-1}T)^{2\alpha_1} \times \beta^{-2\alpha_2} (\e^{-1}T)^{2\beta \alpha_2}).
$$
It remains to choose $\beta>0$ to minimize the RHS. The minimum is obtained for $\beta = \log (\e^{-1}T)^{-1}$ if $\e^{-1}T\gg 1$ (otherwise $\beta=1$ will do), and we finally have
$$
\int_{\mathbb{R}^d}\sup_{B(x)}|F_{1,T,\e,\ell} u_0|^2 \mu_\alpha(|\frac x\e|)^2dx \,\le\, C(u_0) (1+ \mu_\alpha(\e^{-1}T)^2).
$$
Proceeding the same way to bound the second RHS term of \eqref{e.bound-before-symbol}, it follows that
\begin{equation}\label{e.ener-estim-3.1}
\sup_{0\le t\le T} \expec{ \Vert \partial_t v_{\e ,1,\ell}(t,\cdot)\Vert_{L^2(\mathbb{R}^d)}^2 + \Vert \nabla v_{\e ,1,\ell}(t,\cdot)\Vert_{L^2(\mathbb{R}^d)}^2}^\frac12
\,
\le\, C_\ell(u_0) \varepsilon^{\ell}(1+T)\mu_\alpha(\e^{-1}T),
\end{equation}
where $C_\ell(u_0)$ is a suitable (finite) norm of $u_0$ that only depends on $\ell$ and $d$ (but not on~$T$ and~$\e$).

\medskip

\substep{3.2} Estimate of $\expec{\Vert  v_{\e ,1,\ell}(t,\cdot)\Vert_{L^2(\mathbb{R}^d)}^2}$.

\noindent 
We integrate  \eqref{sys_annexe} once in time, test with $v_{\e ,1,\ell}$, and integrate over $\left[ 0,t\right]\times\mathbb{R}^d$, so that
to obtain the energy estimate
\begin{equation}\label{e.ener-estim-3.2}
\frac{1}{2}\Vert v_{\e ,1,\ell}(t,\cdot)\Vert^2_{L^2(\mathbb{R}^d)} \,\leq\,\varepsilon^{\ell} \int_{\left[ 0,t\right]\times\mathbb{R}^d}\left(\int_0^sf_{\e ,1,\ell}(r,x) dr\right)v_{\e ,1,\ell}(s,x) dsdx.
\end{equation}
The time integration is explicit,
$$
\int_0^sf_{\e ,1,\ell}(r,x) dr\,=\,-\frac1{(2\pi)^d} \int_{\mathbb{R}^d}(i\kappa)^{\ell+1}\hat{u}_{0,\e}(k)\nabla\cdot\left( e^{ik\cdot x}\Phi_{1,\ell}^e\big(\frac{x}{\varepsilon}\big)\right)\frac{\varepsilon}{\Lambda_\ell(\varepsilon k )}\sin(\varepsilon^{-1}\Lambda_\ell(\varepsilon k )s) dk,
$$
so that by integration by parts in space, we may rewrite the RHS of the energy estimate in the form
\begin{multline*}
\int_{\left[ 0,t\right]\times\mathbb{R}^d}\left(\int_0^sf_{\e ,1,\ell}(r,x) dr\right)v_{\e ,1,\ell}(s,x) dsdx
\\
=\,\frac1{2\pi} \int_{\left[ 0,t\right]\times\mathbb{R}^d}\nabla v_{\e ,1,\ell}(s,x) \cdot  \int_{\mathbb{R}^d}(i\kappa)^{\ell+1}\hat{u}_{0,\e}(k)e^{ik\cdot x}\Phi_{1,\ell}^e\big(\frac{x}{\varepsilon}\big)\frac{\varepsilon}{\Lambda_\ell(\varepsilon k )}\sin(\varepsilon^{-1}\Lambda_\ell(\varepsilon k )s)  dkdsdx.
\end{multline*}
We then define the linear operator $F_{3,s,\e,\ell}$ from $\Sc$ to $\Sc$ characterized by its Fourier symbol
\begin{eqnarray*}
\hat F_{3,s,\e,\ell}(k)&:=&(i\kappa)^{\ell+1}\omega_{\ell}(\e|k|)\frac{\varepsilon}{\Lambda_\ell(\varepsilon k )}\sin(\e^{-1}\Lambda_\ell(\varepsilon k )s) ,
\end{eqnarray*}
and conclude as in Substep~3.1 that
\begin{multline*}
\expec{\int_{\mathbb{R}^d} \left|\int_{\mathbb{R}^d}(i\kappa)^{\ell+1}\hat{u}_{0,\e}(k)e^{ik\cdot x}\Phi_{1,\ell}^e\big(\frac{x}{\varepsilon}\big)\frac{\varepsilon}{\Lambda_\ell(\varepsilon k )}\sin(\e^{-1}\Lambda_\ell(\varepsilon k )s)  dk \right|^2 dx}^\frac12 \\
\lesssim \,  \Big(\int_{\mathbb{R}^d}  \sup_{B_\varepsilon(x)}|F_{3,s,\e,\ell} u_0|^2 \mu_\alpha(|\frac x \e|)^2dx \Big)^\frac12\,
\lesssim\, C_\ell(u_0) \mu_\alpha(\e^{-1}s) .
\end{multline*}
Combined with \eqref{e.ener-estim-3.2} and \eqref{e.ener-estim-3.1}, this turns into
\begin{eqnarray*}
\lefteqn{\expec{\Vert v_{\e ,1,\ell}(T,\cdot)\Vert^2_{L^2(\mathbb{R}^d)}}}
\\
& \lesssim &\varepsilon^{\ell} \sup_{0\le t\le T} \expec{\|\nabla v_{\e ,1,\ell}(t,x) \|_{L^2(\R^d)}^2}^\frac12
 \int_{0}^T  C_\ell(u_0) \mu_\alpha(\e^{-1}s)ds \\
 &\lesssim &   \Big(C_\ell(u_0)  \varepsilon^{\ell} (1+T)\mu_\alpha(\e^{-1} T)\Big)^2.
\end{eqnarray*}
Proceeding similarly for the estimate of $\expec{\Vert v_{\e ,2,\ell}(T,\cdot)\Vert^2_{L^2(\mathbb{R}^d)}}$, this concludes the proof.

\subsection{Proof of Lemma~\ref{lem:throw-away}: Simplification of the Taylor-Bloch expansion}

By definition,
\begin{eqnarray*}
(\tilde v_{\e,\ell}-u_{\e,\ell})(t,x)
&=&  \frac{1}{(2\pi)^{d}} \sum_{j=1}^{\ell} \e^j\phi_j(\frac x\e)\cdot \int_{\R^d} ( i k)^{\otimes j}\omega_{\ell}(\e |k|) \hat u_0(k) e^{ik\cdot x} \cos(\e^{-1} \Lambda_\ell(\e k)t) dk.
\end{eqnarray*}
We then define the linear operators $\{F_{4,t,\e,\ell,j}\}_{j=1,\dots,\ell}$ from $\Sc$ to $\Sc$ characterized by their Fourier symbols
\begin{eqnarray*}
\hat F_{4,t,\e,\ell,j}(k)&:=&(ik)^{\otimes j}\omega_{\ell}(\e|k|) \cos(\e^{-1} \Lambda_\ell(\e k)t) ,
\end{eqnarray*}
and conclude as in Substep~3.1 of the proof of Proposition~\ref{prop:energy-wave} that
for all $1\le j\le \ell-1$,
\begin{multline*}
\expec{\int_{\mathbb{R}^d} \left|\phi_j(\frac x\e)\cdot \int_{\R^d} ( i k)^{\otimes j}\omega_{\ell}(\e |k|) \hat u_0(k) e^{ik\cdot x} \cos(\e^{-1} \Lambda_\ell(\e k)t) dk\right|^2 dx}^\frac12 \\
\lesssim \,  \Big(\int_{\mathbb{R}^d}  \sup_{B_\varepsilon(x)}|F_{4,t,\e,\ell,j} u_0|^2dx \Big)^\frac12\,
\lesssim\, C_\ell(u_0) ,
\end{multline*}
whereas for $j=\ell$,
\begin{multline*}
\expec{\int_{\mathbb{R}^d} \left|\phi_{\ell}(\frac x\e)\cdot \int_{\R^d} ( i k)^{\otimes \ell}\omega_{\ell}(\e |k|) \hat u_0(k) e^{ik\cdot x} \cos(\e^{-1} \Lambda_\ell(\e k)t) dk\right|^2 dx}^\frac12 \\
\lesssim \,  \Big(\int_{\mathbb{R}^d}  \sup_{B_\varepsilon(x)}|F_{4,t,\e,\ell,\ell} u_0|^2\mu_\alpha({|\frac x\e |})dx \Big)^\frac12\,
\lesssim\, C_\ell(u_0) \mu_\alpha(\e^{-1}T).
\end{multline*}
This proves the claim.

%%%%%%%%%%%%%%%%%
%%%%%%%%%%%%%%%%%
%%%%%%%%%%%%%%%%%
%%%%%%%%%%%%%%%%%
%%%%%%%%%%%%%%%%%
%%%%%%%%%%%%%%%%%

\section{Long-time homogenization and higher-order homogenized operators}\label{sec:disper}

In this section, we draw the consequences of Theorem~\ref{t.wave-eq} for the approximation of equation~\eqref{e.eq-ueps}
by higher-order homogenized equations, extending previous results of \cite{DLS1,DLS2} for periodic coefficients to higher-order time-scales and to random coefficients. We also give the counterpart of these results for the associated elliptic equation, which extends
the recent independent analysis of \cite{BFFO} to any order, and makes quantitative the 
formal analysis of \cite[Section~5]{ABV}.

\subsection{Higher-order homogenized wave equations}

For all $\ell\ge 1$, we define the elliptic operator
\begin{equation}\label{e.eq-ell} 
\tilde \calL_{\ho,\e, \ell}\,:=\,-\sum_{j=0}^{\ell-1} \e^{j} \bar \aa_j \cdot \nabla^{j+2},
\end{equation}
where $\bar \aa_j \cdot \nabla^{j+2} v:=\prod_{h=1}^{j+2} \sum_{i_h=1}^d [\bar \aa_j]_{i_1,\dots,i_{j+2}} \nabla_{i_1}\dots \nabla_{i_{j+2}} v$
(recall that $\bar \aa_{2j+1}=0$ for all $j\in \N$), and observe that the function $u_{\e,\ell}$ defined in \eqref{e.def-uepsell} satisfies
\begin{equation}\label{e.eq-uepsell} 
\left\{
\begin{array}{rcl}
\partial^2_{tt} u_{\e,\ell}+\tilde \calL_{\ho,\e, \ell} u_{\e,\ell} &=&0,\\
u_{\e,\ell}(0,\cdot)&=&{u_{0,\e,\ell}},\\
\partial_t u_{\e,\ell}(0,\cdot)&=&0.
\end{array}
\right.
\end{equation}
Indeed we have
\begin{eqnarray}\nonumber
\partial^2_{tt}u_{\e,\ell}&=&\frac{1}{(2\pi)^d} \int_{\R^d} \omega_{\ell}(\e |k|) \hat u_0(k) e^{ik\cdot x} \e^{-2} \Lambda_\ell(\e k)^2\cos(\e^{-1} \Lambda_\ell(\e k)t)dk ,\\ \nonumber
&=& \frac{1}{(2\pi)^d} \int_{\R^d} \omega_{\ell}(\e |k|) \hat u_0(k) e^{ik\cdot x} \Big(\sum_{j=0}^{\ell-1}(i\e)^j\vert k \vert^{j+2}\lambda_j\Big) \cos(\e^{-1} \Lambda_\ell(\e k)t)dk,
\end{eqnarray}
where we recall that $\Lambda_\ell(k):=\tilde{\lambda}_{k,\ell}$. Equation \eqref{e.eq-uepsell} then follows from the definition of the $\lambda_j$ and of the $\bar \aa_j$ (see Definition \ref{defi:ext-corr}). 

For $\ell= 3$, equation~\eqref{e.eq-uepsell} is not well-posed since 
the higher-order operator $-\bar \aa_{2} \cdot \nabla^{4}$ in $\tilde \calL_{\ho,\e,3}$ is non-positive, as first noticed in \cite{COV}, cf.~Proposition~\ref{prop:properties-lambda}.
In order to circumvent this difficulty, we regularize the operator $\tilde \calL_{\ho,\e,\ell}$ by a higher-order term, and define for all $\ell\ge 1$
\begin{equation}\label{first_ell_operator}
\calL_{\ho,\e,\ell}\,:=\,\tilde \calL_{\ho,\e,\ell} - \gamma_\ell(i\e)^{2([\frac{\ell-1}{2}]+1)} \Id \cdot \nabla^{2([\frac{\ell-1}{2}]+2)}\end{equation}
for some $\gamma_\ell\ge0$ to be chosen below, and consider the higher-order homogenized wave equation
\begin{equation}\label{e.eq-higher2} 
\left\{
\begin{array}{rcl}
\partial_{tt}^2w_{\e,\ell}+\calL_{\ho,\e,\ell}w_{\e,\ell}&=&0,\\
w_{\e,\ell}(0,\cdot)&=&{u_{0}},\\
\partial_t w_{\e,\ell}(0,\cdot)&=&0,
\end{array}
\right.
\end{equation}
(Note that we didn't modify the initial condition in \eqref{e.eq-higher2}, as opposed to \eqref{e.eq-uepsell}.)
The main result of this section is the following long-time homogenization of the wave equation~\eqref{i1}.
\begin{theo}\label{t.hom-long}
Let $\ell\ge 1$, and assume that $(\phi_j,\sigma_j,\chi_j)_{0\le j\le \ell}$ satisfy Hypothesis~\ref{hypo:corr}
for some $\alpha\in [0,1)\times \R_+$. Assume that $\gamma_\ell\ge 0$ is large enough so that $\calL_{\ho,\e,\ell}$ is a positive elliptic operator (see Lemma~\ref{lem:elliptic-well-posed} below).
Let $u_0 \in \Sc$, and for all $\e>0$, let $u_{\e}$ and $w_{\e,\ell}$ denote the solutions of \eqref{i1} and \eqref{e.eq-higher2}, respectively.
Then we have for all $T\ge 0$
\begin{equation}\nonumber
\sup_{0\leq t\le T} \expec{\|u_\e-w_{\e,\ell}\|_{L^2(\R^d)}^2}^\frac12
\, \lesssim \, C_\ell(u_0)\big(\varepsilon  \\+\varepsilon^{\ell}T\mu_\alpha(\varepsilon^{-1}T)\big),
\end{equation}
where $C_\ell(u_0)$ is a generic norm of $u_0$ which only depends on $\ell$ and $d$, and is finite for $u_0\in \Sc$.
\qed
\end{theo}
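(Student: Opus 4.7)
The plan is to compare the true solution $u_\e$ and $w_{\e,\ell}$ through the Taylor-Bloch approximation $u_{\e,\ell}$ of Section~\ref{sec:wave}. By the triangle inequality,
\[
\|u_\e - w_{\e,\ell}\|_{L^2(\R^d)} \le \|u_\e - u_{\e,\ell}\|_{L^2(\R^d)} + \|u_{\e,\ell} - w_{\e,\ell}\|_{L^2(\R^d)},
\]
and Theorem~\ref{t.wave-eq} bounds the first term by the RHS of the claim (after taking expectations). The remaining (deterministic) task is to show that $\|u_{\e,\ell} - w_{\e,\ell}\|_{L^2}\lesssim C_\ell(u_0)(\e + \e^\ell T)$.

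The key observation is that $\calL_{\ho,\e,\ell}$ has constant coefficients, and by the standing assumption on $\gamma_\ell$ and Lemma~\ref{lem:elliptic-well-posed} its Fourier symbol $\Sigma_\ell(k)$ is nonnegative, so $w_{\e,\ell}$ admits the explicit Fourier representation
\[
w_{\e,\ell}(t,x) = \frac{1}{(2\pi)^d}\int_{\R^d}\hat u_0(k)\,e^{ik\cdot x}\cos(t\sqrt{\Sigma_\ell(k)})\,dk.
\]
Setting $n := [\tfrac{\ell-1}{2}]$, the regularization term in \eqref{first_ell_operator} has symbol $\gamma_\ell\e^{2(n+1)}|k|^{2(n+2)}$, while a direct computation identifies the Fourier symbol of $\tilde\calL_{\ho,\e,\ell}$ as $\e^{-2}\tilde\lambda_{\e k,\ell}$. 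Hence
\[
\Sigma_\ell(k) - \e^{-2}\tilde\lambda_{\e k,\ell} = \gamma_\ell\e^{2(n+1)}|k|^{2(n+2)},
\]
with the crucial arithmetic point that $2(n+1)\ge \ell$ for every $\ell\ge 1$.

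Combining this with the defining formula \eqref{e.def-uepsell} of $u_{\e,\ell}$ and Plancherel,
\[
\|u_{\e,\ell} - w_{\e,\ell}\|_{L^2}^2 \lesssim \int_{\R^d}|\hat u_0(k)|^2\,\bigl|\omega_{\ell}(\e|k|)\cos(\e^{-1}\Lambda_\ell(\e k)t) - \cos(t\sqrt{\Sigma_\ell(k)})\bigr|^2 dk.
\]
On the high-frequency set $\{|\e k|\ge \tfrac12\Km\}$, the Schwartz regularity of $u_0$ yields an $O(\e^N)$ contribution for any $N$. On $\{|\e k|\le \tfrac12\Km\}$ we have $\omega_{\ell}(\e|k|)=1$, and the elementary bound $|\cos a - \cos b|\le |a-b|$ with $a = t\sqrt{\Sigma_\ell(k)}$, $b = \e^{-1}\Lambda_\ell(\e k)t$ gives
\[
|a-b| = t\,\frac{\gamma_\ell\e^{2(n+1)}|k|^{2(n+2)}}{\sqrt{\Sigma_\ell(k)} + \e^{-1}\sqrt{\tilde\lambda_{\e k,\ell}}}\;\lesssim\; t\,\e^{2(n+1)}|k|^{2n+3}\;\lesssim\; t\,\e^\ell|k|^{2n+3},
\]
where the denominator is bounded below by $\tfrac12|k|$ thanks to the lower bound $\tilde\lambda_{\e k,\ell}\ge \tfrac14|\e k|^2$ baked into the definition of $\Km$. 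Squaring and integrating against $|\hat u_0|^2$, the polynomial weight $|k|^{4n+6}$ is absorbed in a Schwartz seminorm $C_\ell(u_0)^2$, yielding $\|u_{\e,\ell}-w_{\e,\ell}\|_{L^2}\lesssim C_\ell(u_0)(\e + \e^\ell T)$.

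The main technical ingredient is the lower bound $\sqrt{\Sigma_\ell(k)} + \e^{-1}\sqrt{\tilde\lambda_{\e k,\ell}}\gtrsim |k|$ on the low-frequency regime, which follows from the positivity of $\aa_\ho$ near $k=0$ combined with the definition of $\Km$. Beyond this, the argument is essentially a Fourier multiplier comparison, substantially simpler than the energy estimates of Proposition~\ref{prop:energy-wave}, since both $u_{\e,\ell}$ and $w_{\e,\ell}$ are given by explicit constant-coefficient Fourier multipliers acting on $u_0$; no correctors enter, and no integration by parts in time is needed.
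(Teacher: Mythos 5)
Your proposal is correct, and it follows the paper's overall architecture for the first step only: like the paper, you reduce to the triangle inequality $\|u_\e-w_{\e,\ell}\|\le\|u_\e-u_{\e,\ell}\|+\|u_{\e,\ell}-w_{\e,\ell}\|$ and invoke Theorem~\ref{t.wave-eq} for the first term. Where you diverge is the deterministic comparison of $u_{\e,\ell}$ and $w_{\e,\ell}$. The paper isolates this as Lemma~\ref{lem:long-time} and proves it by PDE energy estimates: the difference is split as $h_{\e,\ell}+\tilde h_{\e,\ell}$, where $\tilde h_{\e,\ell}$ solves the homogeneous higher-order equation with the initial-data mismatch $u_{0,\e,\ell}-u_0$ (giving the $\e$ term) and $h_{\e,\ell}$ solves it with the source $\e^{2([\frac{\ell-1}{2}]+1)}R_{\e,\ell}$ coming from the regularization term, estimated after one time integration using the positivity of $\calL_{\ho,\e,\ell}$ from Lemma~\ref{lem:elliptic-well-posed}, yielding $\e^{2([\frac{\ell-1}{2}]+1)}T\,C_{2([\frac{\ell-1}{2}]+2)}(u_0)$. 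You instead observe that both $u_{\e,\ell}$ and $w_{\e,\ell}$ are explicit constant-coefficient Fourier multipliers applied to $u_0$, identify the symbol of $\tilde\calL_{\ho,\e,\ell}$ with $\e^{-2}\tilde\lambda_{\e k,\ell}$, and compare the two cosines via $|\cos a-\cos b|\le|a-b|$ together with the lower bound $\e^{-1}\Lambda_\ell(\e k)\ge\frac12|k|$ on the support of $\omega_\ell(\e|\cdot|)$; the high frequencies (which also carry the data mismatch $\omega_\ell(\e|k|)-1$) are handled by the decay of $\hat u_0$, giving the same $\e+\e^{2([\frac{\ell-1}{2}]+1)}T\le\e+\e^{\ell}T$ bound with a comparable Sobolev seminorm of $u_0$. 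Two small points you should make explicit: the coercivity statement of Lemma~\ref{lem:elliptic-well-posed} yields pointwise nonnegativity of the symbol $\Sigma_\ell(k)$ (immediate by Plancherel for a constant-coefficient operator), and uniqueness from that lemma is what identifies $w_{\e,\ell}$ with your multiplier formula. What each approach buys: yours is more elementary and avoids the integration-by-parts bookkeeping entirely, since no correctors appear in this step; the paper's energy-estimate route is more robust in that it transfers verbatim to the Boussinesq-type equation \eqref{e.eq-higher1} of Corollary~\ref{coro.hom-long-alaDLS}, whose operator contains the cross term $\e^2\bb\cdot\nabla^2\partial_{tt}^2$ and is therefore not a purely spatial multiplier, and it never requires computing or sign-checking the symbol. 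Finally, note that your chain (like the paper's own) produces the term $\max\{\e,\e^{\ell}\mu_\alpha(\e^{-1})\}$ inherited from Theorem~\ref{t.wave-eq}, which is absorbed into the stated right-hand side exactly as the paper does, so this is not a defect of your argument.
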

\begin{rem}
A similar result holds when considering a source term rather than an initial condition,
cf.~Remark~\ref{rem:forcing} and Appendix~\ref{sec:forcing}.
\qed
\end{rem}
As mentioned in the introduction, when $\bar \aa_{2} \cdot \nabla^{4}$ is the higher order operator in \eqref{e.eq-uepsell} it is also possible to  reformulate this term in a way that yields a well-posed higher-order homogenized wave equation and so that $u_{\e,\ell}$ remains a ``nearly-solution'' on sufficiently large times.
This approach, due to \cite{DLS1,DLS2}, uses the so-called ``Boussinesq trick'' and is based on the following algebraic decomposition property:
\begin{lemma}\label{lemma:decomp}\cite[Lemma~2.5]{DLS2}
Let $\bar \aa_{2}$ be as in Definition~\ref{defi:ext-corr}.  There exists a symmetric positive semi-definite second order tensor $\bb$ and a symmetric positive semi-definite fourth order tensor $\cc$ such that  
\begin{equation}\label{decomposition_eq}
\bar \aa_{2}\cdot \nabla^4= (\bb \otimes\bar \aa_0)\cdot \nabla^{{4}}-\cc\cdot\nabla^4.
\end{equation}
\qed
\end{lemma}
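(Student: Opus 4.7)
The plan is to reduce the tensorial identity~\eqref{decomposition_eq} to a scalar identity between homogeneous polynomials of degree four on $\R^d$ by taking the Fourier symbol on both sides. Since the operators in~\eqref{decomposition_eq} act on scalar functions, $\nabla^4 u$ is a fully symmetric tensor and it suffices to match the symbols $p(k):=\bar\aa_2\cdot k^{\otimes 4}$ on the left and $(\bb k\cdot k)\,q(k)-\cc\cdot k^{\otimes 4}$ on the right, where $q(k):=\bar\aa_0 k\cdot k$. Two facts already established put us in a favorable situation: Proposition~\ref{prop:properties-lambda}(ii) applied to $e=k/|k|$ yields $p(k)\ge 0$ on $\R^d$, while the ellipticity bound $\bar\aa_0\succeq \Id$ gives $q(k)\ge |k|^2>0$ for $k\ne 0$.

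The construction I would carry out is as simple as possible: set $\bb:=c_*\,\Id$ with
\[
c_*\,:=\,\sup_{|e|=1}\frac{\lambda_2(e)}{e\cdot\bar\aa_0 e},
\]
which is a well-defined non-negative real number by continuity on the compact sphere and non-negativity of $\lambda_2$. Then $\bb$ is symmetric positive semi-definite as a second order tensor, and by homogeneity one obtains the pointwise domination $c_*|k|^2 q(k)\ge p(k)$ for every $k\in\R^d$. I then define $\cc$ as the (unique) fully symmetric fourth order tensor whose action on $k^{\otimes 4}$ is the non-negative homogeneous quartic polynomial
\[
c(k)\,:=\,c_*|k|^2 q(k)-p(k)\,\ge\,0.
\]
In particular $\cc\cdot e^{\otimes 4}\ge 0$ for every unit vector $e$, which is the positive semi-definiteness needed to close the Fourier-based energy estimate for the associated Boussinesq-type wave equation. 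Taking the inverse Fourier transform of the symbol identity $p(k)=(\bb k\cdot k)q(k)-c(k)$ then yields~\eqref{decomposition_eq} as an identity of operators on scalar test functions.

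The only step that demands some care is the interpretation of ``positive semi-definite'' for the fourth order tensor $\cc$. Under the natural interpretation compatible with energy estimates on a scalar unknown---namely the rank-one positivity $\cc\cdot e^{\otimes 4}\ge 0$, or equivalently non-negativity of the Fourier symbol---the construction above is immediate and is what I expect to be the content of the statement. If instead the stronger elasticity-type condition $\cc\cdot(M\otimes M)\ge 0$ for every symmetric matrix $M$ were required, the scalar choice $\bb=c_*\Id$ would be inadequate, because $(\bb\cdot M)(\bar\aa_0\cdot M)$ is a rank-one quadratic form on symmetric matrices and cannot dominate $\bar\aa_2\cdot(M\otimes M)$ on the kernel of $M\mapsto \mathrm{tr}(M)$; one would then need to choose $\bb=t\,\bar\aa_0$ for $t$ large and close the argument via a convex-duality argument on the cone of PSD 4-tensors. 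Since the downstream use of~\eqref{decomposition_eq} is precisely the Fourier-based energy estimate for a scalar wave equation, the weaker rank-one interpretation suffices and the plan above closes the proof.
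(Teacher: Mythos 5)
The paper does not actually prove this lemma---it is imported verbatim from \cite[Lemma~2.5]{DLS2}---and your construction is correct and essentially the same as the one behind that citation: Proposition~\ref{prop:properties-lambda} gives $\bar \aa_2\cdot k^{\otimes 4}=|k|^4\lambda_2(k/|k|)\ge 0$ and $e\cdot\bar\aa_0e\ge 1$, so $\bb:=c_*\Id$ dominates by homogeneity, $\cc$ is the symmetric tensor obtained by polarizing the non-negative quartic $c_*|k|^2(\bar\aa_0k\cdot k)-\bar\aa_2\cdot k^{\otimes4}$, and matching symbols of constant-coefficient operators acting on scalar functions yields \eqref{decomposition_eq}. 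Your caveat about which notion of positive semi-definiteness is meant for $\cc$ resolves in favor of your weaker (rank-one/symbol) reading: every downstream use in the paper (the energy estimate in the proof of Lemma~\ref{lem:long-time} and the coercivity needed for \eqref{e.hom-eq-alaDLS}) only requires $\int_{\R^d}\cc\cdot(\nabla^2w\otimes\nabla^2w)\ge 0$, which by Plancherel is exactly non-negativity of the symbol $\cc\cdot k^{\otimes4}$, i.e.\ what your construction delivers.
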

\begin{rem}The construction of $\textbf{b}$ and $\textbf{c}$ given in \cite{DLS2} extends mutadis mutandis to systems of equations
under the assumption that $\bar \aa_2$ is a symmetric non-negative tensor, and $\bar \aa_0$ is positive-definite (as provided by Proposition~\ref{prop:properties-lambda}).
We do not know whether such a construction holds at higher orders --- if it does, we expect Corollary~\ref{coro.hom-long-alaDLS} 
to extend accordingly.
\qed
\end{rem}
In particular, an alternative higher-order homogenized wave equation for $3\le \ell \le 4$ takes the form
\begin{equation}\label{e.eq-higher1} 
\left\{
\begin{array}{rcl}
\partial_{tt}^2w_{\e,\ell}-\bar \aa_0\cdot \nabla^2w_{\e,\ell}-\varepsilon^2\textbf{b}\cdot \nabla^2\partial_{tt}^2w_{\e,\ell}+\varepsilon^2\textbf{c}\cdot \nabla^4w_{\e,\ell}&=&0,\\
w_{\e,\ell}(0,\cdot)&=&u_0,\\
\partial_t w_{\e,\ell}(0,\cdot)&=&0.
\end{array}
\right.
\end{equation}
\begin{rem}
In \eqref{e.eq-higher1}, the cross-derivative term $\textbf{b}\cdot \nabla^2\partial_{tt}^2$ comes from the reformulation of $(\bb \otimes\bar \aa_0)\cdot \nabla^{{4}}$ in \eqref{decomposition_eq} at leading order using that  $\partial_{tt}^2 \simeq \nabla \cdot \aa_0\nabla$ for error terms by (first-order) homogenization. We refer to Subsection \ref{subsection_proof_ondes} for more details. \qed
\end{rem}

In this case we have
\begin{coro}\label{coro.hom-long-alaDLS}
Let $3\le \ell\le 4$, and assume that $(\phi_j,\sigma_j,\chi_j)_{0\le j\le \ell}$ satisfy Hypothesis~\ref{hypo:corr}
for some $\alpha\in [0,1)\times \R_+$. Let $u_0\in \Sc$, and for all $\e>0$, let $u_{\e}$ and $w_{\e,\ell}$ denote the solutions of \eqref{i1} and \eqref{e.eq-higher1}, respectively.
Then we have for all $T\ge 0$
\begin{equation}\nonumber
\sup_{0 \leq t\le T} \expec{\|u_\e-w_{\e,\ell}\|_{L^2(\R^d)}^2}^\frac12
\, \lesssim \, C_\ell(u_0)\big(\varepsilon  \\+\varepsilon^{\ell}T\mu_\alpha(\varepsilon^{-1}T) \big),
\end{equation}
where $C_\ell(u_0)$ is a generic norm of $u_0$ which only depends on and $d$, and is finite for $u_0\in \Sc$.
\qed
\end{coro}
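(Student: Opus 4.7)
The plan is to compare the Boussinesq-type solution $w_{\e,\ell}$ directly with the Taylor-Bloch approximation $u_{\e,\ell}$ of \eqref{e.def-uepsell}, and to conclude by the triangle inequality in combination with Theorem~\ref{t.wave-eq}. Since for $3\le \ell\le 4$ both $u_{\e,\ell}$ and $w_{\e,\ell}$ solve \emph{constant-coefficient} wave-type equations with data built from $u_0$, Plancherel reduces the $L^2$ comparison to an estimate on the difference between two explicit dispersion relations. Explicitly, $\widehat{u_{\e,\ell}}(t,k)=\omega_{\ell}(\e|k|)\hat u_0(k)\cos(\Omega_T(\e,k)t)$ where, using $\bar\aa_1=\bar\aa_3=0$ from Proposition~\ref{prop:properties-lambda},
\begin{equation*}
\Omega_T(\e,k)^2 \,=\, \e^{-2}\tilde\lambda_{\e k,\ell} \,=\, \bar\aa_0\cdot k^{\otimes 2}-\e^2\bar\aa_2\cdot k^{\otimes 4},
\end{equation*}
and the Fourier transform of $w_{\e,\ell}$ solves the ODE induced by \eqref{e.eq-higher1}, yielding $\widehat{w_{\e,\ell}}(t,k)=\hat u_0(k)\cos(\Omega_B(\e,k)t)$ with
\begin{equation*}
\Omega_B(\e,k)^2 \,=\, \frac{\bar\aa_0\cdot k^{\otimes 2}+\e^2\cc\cdot k^{\otimes 4}}{1+\e^2\bb\cdot k^{\otimes 2}} \,\ge\, 0,
\end{equation*}
the non-negativity coming from the positive semi-definiteness of $\bar\aa_0,\bb,\cc$ guaranteed by Lemma~\ref{lemma:decomp}.

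By Plancherel,
\begin{equation*}
\|u_{\e,\ell}(t,\cdot)-w_{\e,\ell}(t,\cdot)\|_{L^2(\R^d)}^2 \,\lesssim\, \int_{\R^d}\bigl|\omega_{\ell}(\e|k|)\cos(\Omega_T t)-\cos(\Omega_B t)\bigr|^2|\hat u_0(k)|^2\,dk,
\end{equation*}
and I would split the integral into $D_1:=\{|k|\le \Km/(2\e)\}$, on which $\omega_{\ell}(\e|k|)=1$ and $\Omega_T^2\ge |k|^2/4$ by the definition of $\Km$, and its complement, the latter contributing an $O(\e^m)$ error for any $m\ge 1$ by the Schwartz decay of $\hat u_0$. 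On $D_1$ a direct computation gives
\begin{equation*}
(\Omega_T^2-\Omega_B^2)(1+\e^2\bb\cdot k^{\otimes 2}) \,=\, \e^2\bigl[(\bar\aa_0\cdot k^{\otimes 2})(\bb\cdot k^{\otimes 2})-\bar\aa_2\cdot k^{\otimes 4}-\cc\cdot k^{\otimes 4}\bigr]-\e^4(\bar\aa_2\cdot k^{\otimes 4})(\bb\cdot k^{\otimes 2}),
\end{equation*}
and the critical point is that the bracketed $\e^2$-term vanishes identically by Lemma~\ref{lemma:decomp}, which after full contraction with the symmetric tensor $k^{\otimes 4}$ reads $\bar\aa_2\cdot k^{\otimes 4}=(\bar\aa_0\cdot k^{\otimes 2})(\bb\cdot k^{\otimes 2})-\cc\cdot k^{\otimes 4}$. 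Hence $|\Omega_T^2-\Omega_B^2|\lesssim \e^4|k|^6$, and since $\Omega_T+\Omega_B\gtrsim |k|$ on $D_1$ one obtains $|\Omega_T-\Omega_B|\lesssim \e^4|k|^5$. Combined with $|\cos a-\cos b|\le t|a-b|$ and $\int|k|^{10}|\hat u_0(k)|^2\,dk\le C_\ell(u_0)^2$, this yields $\|u_{\e,\ell}-w_{\e,\ell}\|_{L^2}\lesssim C_\ell(u_0)(\e+t\e^4)$.

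For $3\le\ell\le 4$ and $\mu_\alpha(\e^{-1}T)\ge 1$ the residual $t\e^4$ is absorbed into $\e^{\ell}T\mu_\alpha(\e^{-1}T)$, so combining with Theorem~\ref{t.wave-eq} via the triangle inequality concludes the proof. The heart of the argument---the only nontrivial step---is the cancellation of the $\e^2$-order bracket granted by the Boussinesq decomposition of Lemma~\ref{lemma:decomp}; without it $|\Omega_T^2-\Omega_B^2|$ would be of order $\e^2|k|^4$, producing an $O(\e^2T)$ bound that is insufficient to match the target accuracy for $\ell\in\{3,4\}$. The remaining ingredients (Plancherel, the high-frequency cutoff, Schwartz decay of $\hat u_0$) are entirely standard.
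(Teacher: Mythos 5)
Your proposal is correct, and it follows the paper's overall architecture --- pass through the Taylor-Bloch approximation $u_{\e,\ell}$ of \eqref{e.def-uepsell}, control $u_\e-u_{\e,\ell}$ by Theorem~\ref{t.wave-eq}, and conclude by the triangle inequality --- but it implements the intermediate comparison $u_{\e,\ell}$ vs.\ $w_{\e,\ell}$ differently. The paper's Lemma~\ref{lem:long-time} writes the difference as the solution of the Boussinesq-type equation \eqref{e.eq-higher1} forced by the residual $\e^4(\bb\otimes\bar\aa_2)\cdot\nabla^6 u_{\e,\ell}$ (plus a homogeneous part accounting for the mismatch of initial data), and then runs energy estimates, using the positive semi-definiteness of $\bb$ and $\cc$ to make the energy terms signed and Fourier-symbol bounds to control the source; you instead solve both constant-coefficient problems explicitly in Fourier variables and compare the two dispersion relations $\Omega_T$ and $\Omega_B$ via Plancherel, with the data mismatch absorbed into the high-frequency region $\{|k|>\Km/(2\e)\}$ through the Schwartz decay of $\hat u_0$. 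The key cancellation is the same in both arguments: your vanishing $\e^2$-bracket $(\bar\aa_0\cdot k^{\otimes 2})(\bb\cdot k^{\otimes 2})-\bar\aa_2\cdot k^{\otimes 4}-\cc\cdot k^{\otimes 4}=0$ is exactly the Fourier-side statement of Lemma~\ref{lemma:decomp}, which in the paper is what reduces the source to order $\e^4$. Both routes yield the same bound $\e+\e^4 t$, hence the corollary; your computation is arguably more elementary and transparent for this constant-coefficient comparison (no energy method, explicit lower bound $\Omega_T+\Omega_B\gtrsim|k|$ on the low-frequency set where $\omega_\ell(\e|k|)=1$), while the paper's energy-estimate scheme is the more robust template it reuses for the regularized operator of Theorem~\ref{t.hom-long} and for the source-term variants in Appendix~\ref{sec:forcing}. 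One small point you leave implicit, as does the paper in stating the corollary: the term $\max\{\e,\e^{\ell}\mu_\alpha(\e^{-1})\}$ from Theorem~\ref{t.wave-eq} is indeed $\lesssim\e$ for $\ell\ge 3$ and $\alpha_1<1$, so the stated form $\e+\e^{\ell}T\mu_\alpha(\e^{-1}T)$ follows; this is a harmless reduction, not a gap.
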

Let us now turn to the arguments in favor of Theorem~\ref{t.hom-long} and Corollary~\ref{coro.hom-long-alaDLS}.
The following elementary lemma (which can be proved by interpolation in Fourier space, and energy estimates) ensures that $\calL_{\ho,\e,\ell}$ is a positive elliptic operator provided $\gamma_\ell$ is chosen large enough, and yields the well-posedness of \eqref{e.eq-higher2}.
\begin{lemma}\label{lem:elliptic-well-posed}
Let $\ell\ge 1$, assume that the homogenized tensors $\{\bar \aa_j\}_{0\le j \le \ell-1}$ are well-defined,
and  recall that $\bar \Gamma_\ell=\max_{0\le j\le \ell-1} |\bar \aa_j|<\infty$.
Then there exist $\gamma_\ell\ge 0$ and $c_\ell>0$ depending only on $\bar \Gamma_\ell$ and $\ell$ such that for 
all $ v\in  H^{[\frac{\ell-1}{2}]+2}(\R^d)$ and all $\e>0$ we have
$$
(\calL_{\ho,\e,\ell} v,v)_{(H^{-([\frac{\ell-1}{2}]+2)},H^{[\frac{\ell-1}{2}]+2})(\R^d)}\,\ge \, c_\ell(\|\nabla v\|_{L^2(\R^d)}^2+\e^{2[\frac{\ell-1}{2}]+2}\|\nabla^{[\frac{\ell-1}{2}]+2} v\|_{L^2(\R^d)}^2).
$$
For $1\le \ell\le 2$, we may choose $\gamma_\ell=0$.

As a consequence:
\begin{itemize}
\item For all $u_0\in \Sc$, equation \eqref{e.eq-higher2} admits a unique solution $w_{\e,\ell} \in L^\infty(\R_+,L^2(\R^d))$.
\item For $d\ge 3$ and for all $f\in L^\frac{2d}{d+2}(\R^d)$ and all $\e>0$,
the equation
\begin{equation}\label{e.ell-higher-hom}
\calL_{\ho,\e,\ell} v_\e\,=\,f
\end{equation}
admits a unique weak solution $v_\e\in L^\frac{2d}{d-2}(\R^d)$ such that $\nabla v_\e \in H^{2[\frac{\ell-1}{2}]+2}(\R^d)$. In addition, we have
\begin{equation}\label{e:elliptic-well-posed}
\|\nabla v_\e\|_{L^2(\R^d)} \,\lesssim \, \| f\|_{L^\frac{2d}{d+2}(\R^d)},
\end{equation}
where the multiplicative constant is independent of $\e>0$.
\end{itemize}
\qed
\end{lemma}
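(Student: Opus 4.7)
The plan is to exploit the constant-coefficient nature of $\calL_{\ho,\e,\ell}$ and work on the Fourier side. Setting $m:=[\tfrac{\ell-1}{2}]$ and using that $\bar\aa_{2p+1}=0$ (Proposition~\ref{prop:properties-lambda}), a direct computation gives the Fourier symbol
\begin{equation*}
\hat\sigma_\e(k) \,=\, \sum_{p=0}^{m} (-1)^p \e^{2p} \bar\aa_{2p}\cdot k^{\otimes(2p+2)} + \gamma_\ell\,\e^{2(m+1)} |k|^{2(m+2)},
\end{equation*}
where the regularization yields a positive contribution because $i^{2(m+1)+2(m+2)}=i^{4m+6}=-1$, which combines with the leading minus sign in~\eqref{first_ell_operator} to produce the advertised positive top-order symbol $\gamma_\ell \e^{2(m+1)}|k|^{2(m+2)}$. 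Since $\hat\sigma_\e$ is real-valued, Plancherel gives $(\calL_{\ho,\e,\ell} v,v)_{(H^{-(m+2)},H^{m+2})} = (2\pi)^{-d}\int_{\R^d} \hat\sigma_\e(k)|\hat v(k)|^2 dk$, and the claimed coercivity estimate reduces to the pointwise bound
\begin{equation*}
\hat\sigma_\e(k)\,\ge\, c_\ell\big(|k|^2 + \e^{2(m+1)}|k|^{2(m+2)}\big) \quad \text{for all } k\in\R^d.
\end{equation*}

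For the symbol bound, ellipticity gives $\bar\aa_0\cdot k^{\otimes 2} = k\cdot\bar\aa_0 k\ge |k|^2$ while each intermediate term is controlled by $|\bar\aa_{2p}\cdot k^{\otimes(2p+2)}|\le \bar\Gamma_\ell|k|^{2p+2}$. Introducing the dimensionless variable $t:=(\e|k|)^2$ and dividing through by $|k|^2$, the desired inequality becomes
\begin{equation*}
1-\bar\Gamma_\ell\sum_{p=1}^m t^p + \gamma_\ell t^{m+1} \,\ge\, c_\ell(1+t^{m+1}), \qquad t\ge 0,
\end{equation*}
which follows from Young's inequality $\bar\Gamma_\ell t^p\le \delta\,t^{m+1}+C_\delta$ (with $C_\delta$ depending on $\delta$, $p$, $\bar\Gamma_\ell$, $\ell$) applied to each $p=1,\ldots,m$. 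The balancing is delicate: $\delta$ must first be chosen small enough that the cumulative constant residual $\bar\Gamma_\ell\sum_p C_\delta$ stays below $\tfrac12$, after which $\gamma_\ell$ is taken large enough (depending only on $\ell$ and $\bar\Gamma_\ell$) to dominate the cumulative $m\delta\,t^{m+1}$ contribution with a positive margin, yielding the bound with some $c_\ell>0$. When $\ell\in\{1,2\}$ the intermediate sum is empty and the first term alone delivers the bound, so $\gamma_\ell=0$ is admissible.

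The well-posedness statements then follow from classical arguments. For the wave equation~\eqref{e.eq-higher2}, Fourier diagonalization reduces it to the scalar ODE $\partial_{tt}^2 \hat w + \hat\sigma_\e \hat w = 0$ with initial data $\hat w(0,k)=\hat u_0(k)$, $\partial_t \hat w(0,k)=0$; since $\hat\sigma_\e\ge 0$, the explicit solution $\hat w(t,k)=\hat u_0(k)\cos(\sqrt{\hat\sigma_\e(k)}\,t)$ gives $\|w(t,\cdot)\|_{L^2}\le\|u_0\|_{L^2}$ for all $t\ge 0$, so $w\in L^\infty(\R_+,L^2(\R^d))$. For the elliptic problem~\eqref{e.ell-higher-hom} with $d\ge 3$, the coercivity estimate makes $\calL_{\ho,\e,\ell}$ coercive on the Hilbert space $H_\e$ obtained by completing $C_c^\infty(\R^d)$ with respect to the norm $(\|\nabla v\|_{L^2}^2+\e^{2(m+1)}\|\nabla^{m+2}v\|_{L^2}^2)^{1/2}$; combining with the Sobolev duality $L^{2d/(d+2)}(\R^d)\hookrightarrow \dot H^{-1}(\R^d)$ (which embeds $H_\e$ continuously in $L^{2d/(d-2)}(\R^d)$ via $\dot H^1$), Lax--Milgram produces a unique $v_\e\in H_\e$, and testing against $v_\e$ gives~\eqref{e:elliptic-well-posed} after Cauchy--Schwarz. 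The higher regularity $\nabla v_\e\in H^{2m+2}(\R^d)$ follows from the explicit formula $\hat v_\e=\hat f/\hat\sigma_\e$ together with the symbol lower bound, which yields Fourier-multiplier estimates at both low and high frequencies. The only real obstacle is the two-parameter Young balancing described above; otherwise the proof is a routine assembly of Plancherel, Lax--Milgram, and Sobolev embedding.
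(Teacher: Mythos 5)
Your proof follows exactly the route the paper has in mind (the paper offers no written proof beyond the parenthetical ``interpolation in Fourier space, and energy estimates''): compute the real Fourier symbol of $\calL_{\ho,\e,\ell}$, check that the regularization enters with a positive sign, reduce the coercivity to a one-variable polynomial inequality in $t=(\e|k|)^2$, and obtain the well-posedness statements from Lax--Milgram, the Sobolev embedding, and the explicit Fourier formulas. The symbol computation, the reduction, and the functional-analytic conclusions are all sound, including the regularity claim $\nabla v_\e\in H^{2[\frac{\ell-1}{2}]+2}(\R^d)$ via $\hat v_\e=\hat f/\hat\sigma_\e$, the symbol lower bound, and $f\in L^{\frac{2d}{d+2}}(\R^d)\hookrightarrow \dot H^{-1}(\R^d)$.

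Two points should be fixed. First, the Young balancing is described with the monotonicity inverted: in $\bar\Gamma_\ell t^p\le\delta t^{m+1}+C_\delta$ (your notation $m=[\frac{\ell-1}{2}]$), the constant $C_\delta$ is of order $\delta^{-p/(m+1-p)}$ and \emph{blows up} as $\delta\downarrow 0$, so you cannot ``choose $\delta$ small enough that the cumulative constant residual stays below $\tfrac12$''. The correct order is the opposite: fix the splitting so that the additive constants sum to at most $\tfrac12$ (this forces a large coefficient, say $A$, on $t^{m+1}$, with $A$ depending only on $\ell$ and $\bar\Gamma_\ell$), and then take $\gamma_\ell\ge A+c_\ell$ with, say, $c_\ell=\tfrac14$; the polynomial inequality $1-\bar\Gamma_\ell\sum_{p=1}^m t^p+\gamma_\ell t^{m+1}\ge c_\ell(1+t^{m+1})$ then holds for all $t\ge0$. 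The structure of your two-step choice is right; only the direction of the choice of $\delta$ must be reversed. Second, for $\ell\in\{1,2\}$ with $\gamma_\ell=0$ the operator reduces to $-\nabla\cdot\bar\aa_0\nabla$, whose symbol $k\cdot\bar\aa_0 k$ cannot dominate $c_\ell\,\e^{2}|k|^{4}$ uniformly in $\e|k|$, so ``the first term alone delivers the bound'' is not literally true for the stated two-term right-hand side. This imprecision is inherited from the statement of the lemma itself; for $\ell\le2$ only the $\|\nabla v\|_{L^2(\R^d)}^2$ coercivity is meaningful (and is all that is used), so the slip is harmless, but it should be acknowledged rather than asserted as the full inequality.
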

\begin{rem}
Note that the constant $\gamma_\ell$ can be computed explicitly in function of $\bar \Gamma_\ell$, which makes this regularization procedure of practical interest.
\qed
\end{rem}
Noting that for all $\alpha\in [0,1)\times \R_+$, $T\ge 0$ and $\e>0$, we have $\varepsilon^{\ell}T\mu_\alpha(\varepsilon^{-1}T) \gtrsim \e^{2([\frac{\ell-1}{2}]+1)}T$, so that adding a regularizing term of higher order does not influence the error estimate in Theorem~\ref{t.hom-long}. 
Indeed, regularizing the equation simply amounts to filtering high frequencies as we did explicitly when replacing
$u_0$ by $u_{0,\ell,\e}$. Theorem~\ref{t.hom-long} and Corollary~\ref{coro.hom-long-alaDLS} follow by the triangle inequality from the combination of Theorem~\ref{t.wave-eq}, the well-posedness result of Lemma~\ref{lem:elliptic-well-posed},
and the following estimate.
\begin{lemma}\label{lem:long-time}
Let $\ell \ge 1$,  and assume that $(\phi_j,\sigma_j,\chi_j)_{0\le j\le \ell}$ satisfy Hypothesis~\ref{hypo:corr}
for some $\alpha\in [0,1)\times \R_+$.
Let $u_0\in \Sc$, and for all $\e>0$, let $u_{\e , \ell}$ be as in  \eqref{e.def-uepsell} and $w_{\e,\ell}$ be the solution of \eqref{e.eq-higher2}
or \eqref{e.eq-higher1} (in which case $3\le \ell \le 4$).
Then we have for all $T\ge0$
\begin{equation}\nonumber
\sup_{t\le  T} \|u_{\e , \ell}-w_{\e,\ell}\|_{L^2(\R^d)}^2
\, \lesssim \, C_{2([\frac{\ell-1}{2}]+2)}(u_0) (\e+ \e^{2([\frac{\ell-1}{2}]+1)}T),
\end{equation}
where for all $p\ge 1$,
$$
C_{p}(u_0) := \Big(\int_{\R^d} |\nabla^p u_0(x)|^2dx \Big)^\frac12.
$$
\qed
\end{lemma}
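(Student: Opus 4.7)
The plan is to work entirely in Fourier space, where both $u_{\e,\ell}$ and $w_{\e,\ell}$ admit explicit representations. By definition \eqref{e.def-uepsell},
$$\hat u_{\e,\ell}(t,k)\,=\,\omega_{\ell}(\e|k|)\,\hat u_0(k)\,\cos(\sqrt{p_\ell(k)}\,t),$$
where $p_\ell(k):=\e^{-2}\Lambda_\ell(\e k)^2=\e^{-2}\tilde\lambda_{\e k,\ell}$ is a polynomial in $k$ satisfying $p_\ell(k)\ge|k|^2/4$ on the support of $\omega_{\ell}(\e|\cdot|)$, precisely by the definition of $\Km$. Whether $w_{\e,\ell}$ solves \eqref{e.eq-higher2} or \eqref{e.eq-higher1}, Lemma~\ref{lem:elliptic-well-posed} (or its direct analogue in the Boussinesq case) produces a non-negative Fourier symbol $P_\ell(k)$ for the spatial operator, so that
$$\hat w_{\e,\ell}(t,k)\,=\,\hat u_0(k)\,\cos(\sqrt{P_\ell(k)}\,t).$$

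First, I would split the difference through the cutoff $\omega_{\ell}$:
$$\hat u_{\e,\ell}-\hat w_{\e,\ell}\,=\,\omega_{\ell}(\e|k|)\,\hat u_0(k)\bigl[\cos(\sqrt{p_\ell}\,t)-\cos(\sqrt{P_\ell}\,t)\bigr]-\bigl(1-\omega_{\ell}(\e|k|)\bigr)\,\hat u_0(k)\cos(\sqrt{P_\ell}\,t).$$
The second piece is supported in $|k|\ge \Km/(2\e)$; Plancherel combined with the Schwartz regularity of $u_0$ bounds its $L^2(\R^d)$-norm by $\lesssim \e\,C_\ell(u_0)$, losing as many derivatives of $u_0$ as needed. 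This takes care of the $\e$ term in the target estimate.

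Second, for the low-frequency piece I would use the elementary inequalities $|\cos a-\cos b|\le |a-b|$ and $|\sqrt{P_\ell}-\sqrt{p_\ell}|\le |P_\ell-p_\ell|/(\sqrt{p_\ell}+\sqrt{P_\ell})$, combined with $p_\ell(k)\ge|k|^2/4$ on the support of $\omega_\ell$. Setting $m:=[\tfrac{\ell-1}{2}]+1$, the task is to estimate $P_\ell-p_\ell$: for \eqref{e.eq-higher2} one reads off the exact identity $P_\ell(k)-p_\ell(k)=\gamma_\ell\e^{2m}|k|^{2m+2}$; for \eqref{e.eq-higher1}, the Fourier symbol is
$$P_\ell(k)\,=\,\frac{\bar\aa_0\cdot k^{\otimes 2}+\e^2\cc\cdot k^{\otimes 4}}{1+\e^2\bb\cdot k^{\otimes 2}},$$
and expanding the denominator (which is licit since $\e|k|\lesssim 1$ on the support of $\omega_\ell$) together with the algebraic decomposition \eqref{decomposition_eq} of Lemma~\ref{lemma:decomp} cancels the $O(\e^2|k|^4)$ contribution exactly, leaving $P_\ell(k)-p_\ell(k)=O(\e^4|k|^6)$; this matches $\e^{2m}|k|^{2m+2}$ since $m=2$ when $\ell\in\{3,4\}$. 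In both cases one obtains $|\sqrt{P_\ell}-\sqrt{p_\ell}|\lesssim \e^{2m}|k|^{2m+1}$ on the support of $\omega_\ell$, so that Plancherel and the inequality $\|\nabla^{2m+1}u_0\|_{L^2(\R^d)}\lesssim C_{2m+2}(u_0)$ (by interpolation with $\|u_0\|_{L^2}$) yield
$$\sup_{t\le T}\|u_{\e,\ell}-w_{\e,\ell}\|_{L^2(\R^d)}\,\lesssim\,\e\,C_\ell(u_0)+T\,\e^{2m}\,C_{2m+2}(u_0),$$
which is the claim since $2m+2=2([\tfrac{\ell-1}{2}]+2)$.

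The main obstacle is the Boussinesq case \eqref{e.eq-higher1}: one needs the identity of Lemma~\ref{lemma:decomp} to kill the leading $O(\e^2|k|^4)$ discrepancy between the Taylor symbol $p_\ell$ and the rational symbol $P_\ell$ exactly, and the Taylor expansion of $(1+\e^2\bb\cdot k^{\otimes 2})^{-1}$ has to be controlled uniformly on the Fourier support (but only there). Outside the support, control must come from the Fourier cutoff and the Schwartz decay of $u_0$, which is why we split through $\omega_\ell$ at the outset.
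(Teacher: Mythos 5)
Your proof is correct, but it takes a genuinely different route from the paper's. The paper argues at the PDE level: it splits $u_{\e,\ell}-w_{\e,\ell}$ into a part driven by the initial-data mismatch and a part driven by the operator discrepancy (the regularization term $\gamma_\ell(i\e)^{2([\frac{\ell-1}{2}]+1)}\Id\cdot\nabla^{2([\frac{\ell-1}{2}]+2)}u_{\e,\ell}$ for \eqref{e.eq-higher2}, resp.\ the residual $\e^4(\bb\otimes\bar\aa_2)\cdot\nabla^6u_{\e,\ell}$ produced by Lemma~\ref{lemma:decomp} for \eqref{e.eq-higher1}), and then runs the same energy estimates as in Proposition~\ref{prop:energy-wave}, using the positivity of $\calL_{\ho,\e,\ell}$ from Lemma~\ref{lem:elliptic-well-posed} (resp.\ of $\bar\aa_0,\bb,\cc$), with Fourier symbols entering only to bound the source by $\|\nabla^{2([\frac{\ell-1}{2}]+2)}u_0\|_{L^2}$. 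You instead exploit that both functions are explicit Fourier multipliers applied to $u_0$: the cutoff $\omega_\ell$ isolates the $O(\e)$ initial-data error, and on its support the inequality $|\cos a-\cos b|\le|a-b|$, the bound $p_\ell\ge |k|^2/4$ coming from $\Km$, and the exact symbol identities (including the cancellation via Lemma~\ref{lemma:decomp}, which indeed leaves $P_\ell-p_\ell=\e^4(\bb\cdot k^{\otimes2})(\bar\aa_2\cdot k^{\otimes4})/(1+\e^2\bb\cdot k^{\otimes2})$) give the linear-in-$T$ term directly, with no energy estimate at all. Your route is shorter and makes the size of the symbol discrepancy completely explicit; the paper's energy route is more robust (it is the same machinery reused for the source-term and systems variants and does not need the solution formula). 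Two small points you should make explicit: the formula $\hat w_{\e,\ell}(t,k)=\hat u_0(k)\cos(\sqrt{P_\ell(k)}\,t)$ requires $P_\ell\ge0$ for \emph{all} $k$ (guaranteed by Lemma~\ref{lem:elliptic-well-posed} for \eqref{e.eq-higher2} and by the positivity of $\bb,\cc$ for \eqref{e.eq-higher1}), and your interpolation step means the final constant also involves $\|u_0\|_{L^2}$ — harmless, since the paper treats its constants as generic norms of $u_0$.
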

%
%%%

\subsection{Higher-order homogenized elliptic equations}

The approach developed above for the wave equation has a counterpart for elliptic equations 
and yields the validity of higher-order homogenized equations (which however do \emph{not} coincide with the standard two-scale expansion --- except for the  terms involving the first two correctors only).
Our analysis applies to the equation on the whole space $\R^d$. For convenience,  we restrict to $d\ge 3$ in the rest of this section (in which case the Hardy inequality 
\`a la Caffarelli-Kohn-Nirenberg
$$
\int_{\R^d} u(x)^2 |x|^{-2} \lesssim \int_{\R^d} |\nabla u(x)|^2
$$
allows one to consider Lax-Milgram solutions of the equation $-\nabla \cdot \aa_\e \nabla u_\e=f$ for $f$ in the Schwartz class).

\medskip

In view of the discussion of the wave operator, the natural candidate $\tilde \calL_{\ho,\ell,\e}$ defined in \eqref{e.eq-ell} for the higher-order homogenized elliptic equation is not  elliptic  for $\ell=3$, and we rather
use the regularized homogenized operator $\calL_{\ho,\e, \ell}$ defined in \eqref{first_ell_operator} for all $\ell\ge 1$ and $\e>0$.
Lemma~\ref{lem:elliptic-well-posed} then ensures that $\calL_{\ho,\e,\ell}$ is elliptic for a suitable choice of $\gamma_\ell\ge 0$.

\medskip

As noticed by Allaire, Briane, and Vanninathan in \cite{ABV}, the formal difference between the higher-order elliptic operator \eqref{e.eq-ell} 
(or its regularized version \eqref{first_ell_operator}) and the one obtained by the standard two-scale expansion is related to the well-preparedness of the RHS of the original equation.
In particular, whereas the usual two-scale expansion is based on the equation
\begin{equation}\label{e.origin-ell}
-\nabla \cdot \aa_\e \nabla u_\e \,=\,f,
\end{equation}
the Bloch-wave expansion (from which the higher-order elliptic operator \eqref{first_ell_operator} is derived) is based on the equation with well-prepared RHS
\begin{equation}\label{e.mod-ell-well}
-\nabla \cdot \aa_\e \nabla u_{\e,\ell} \,=\,\sum_{j=0}^\ell \e^j \phi_j(\frac{\cdot}{\e}) \cdot \nabla^j f,
\end{equation}
where $\varphi_j$ stands for the (symmetric) $j$-th order tensor such that $\varphi_j \cdot e^{\otimes j}=\varphi_j^e$, given in Definition~\ref{defi:ext-corr} for all $e\in \R^d$.
The following theorem quantifies the accuracy of the higher-order homogenized operator for well-prepared data.
\begin{theo}\label{th:2scale-ell}
Let $\ell\ge 1$, and assume that $(\phi_j,\sigma_j,\chi_j)_{0\le j\le \ell}$ satisfy Hypothesis~\ref{hypo:corr}
for some $\alpha\in [0,1)\times \R_+$. Let $f\in \Sc$, and for all $\e>0$, let $u_{\e,\ell}$ and $u_{\ho,\e, \ell}$ denote the solutions of \eqref{e.mod-ell-well} and \eqref{e.ell-higher-hom}, respectively.
Then we have
\begin{equation}\label{e.multiscale-estim0}
\expec{ \Big\|\nabla \big( u_{\e,\ell}-\sum_{j=0}^{\ell} \e^j \nabla^j u_{\ho,\e, \ell} \cdot \phi_j (\frac\cdot\e)\big)\Big\|_{L^2(\R^d)}^2}^\frac12
\lesssim \, \e^{\ell} \mu_\alpha(\varepsilon^{-1}) C_{ \e , \ell}(u_{\ho,\e,\ell}),
\end{equation}
where
\begin{multline}\nonumber
C_{\e, \ell}(u_{\ho,\e,\ell})\,:=\,\left(\int_{\mathbb{R}^d} \mu_\alpha^2(|x|) \sup_{B_\e(x)}\lbrace\vert \nabla^{\ell+1}u_{\ho,\e,\ell}\vert^2+(1+|x|)^2\vert \nabla^{\ell+2}u_{\ho,\e,\ell}\vert^2\rbrace dx\right)^{\frac{1}{2}}
 \\ 
 + \sum_{j=0}^{2[\frac{\ell-1}{2}]}\e^{j+1}
\left(\int_{\mathbb{R}^d}  (1+|x|)^2 \sup_{B_\e(x)}\lbrace\vert \nabla^{j+\ell+3}u_{\ho,\e,\ell}\vert^2\rbrace dx\right)^{\frac{1}{2}}.
\end{multline}
\qed
\end{theo}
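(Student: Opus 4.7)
Our plan is to introduce the two-scale ansatz
$v_{\e,\ell}(x):=\sum_{j=0}^{\ell}\e^{j}\varphi_{j}(x/\e)\cdot\nabla^{j}u_{\ho,\e,\ell}(x)$
and to estimate the gradient of $v_{\e,\ell}-u_{\e,\ell}$ by a standard elliptic energy estimate, after showing that this difference solves an equation of the form $-\nabla\cdot\aa_\e\nabla(v_{\e,\ell}-u_{\e,\ell})=\e^\ell\nabla\cdot G_\e+\e^\ell H_\e+R_\e^{\mathrm{reg}}$, with $G_\e$, $H_\e$, and $R_\e^{\mathrm{reg}}$ explicit combinations of extended correctors and derivatives of $u_{\ho,\e,\ell}$ controllable by Hypothesis~\ref{hypo:corr}.

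Arguing as in Step~1 of the proof of Proposition~\ref{prop:energy-wave}, we first recast the ansatz as $v_{\e,\ell}(x)=(2\pi)^{-d}\int\hat u_{\ho,\e,\ell}(k)\,e^{ik\cdot x}\psi_{\e k,\ell}(x/\e)\,dk$ and use the scaling identity
\begin{equation*}
-\nabla\cdot\aa_\e\nabla\bigl[e^{ik\cdot x}\psi_{\e k,\ell}(x/\e)\bigr]=\e^{-2}e^{ik\cdot x}\bigl\{-(\nabla+i\e k)\cdot\aa(\nabla+i\e k)\psi_{\e k,\ell}\bigr\}(x/\e),
\end{equation*}
combined with Proposition~\ref{prop:bloch}, to split $-\nabla\cdot\aa_\e\nabla v_{\e,\ell}$ into a ``diagonal'' piece (from $\tilde\lambda_{\e k,\ell}\psi_{\e k,\ell}$) and an ``eigendefect'' piece (from $-(i\e|k|)^{\ell+1}\mathfrak{d}_{\e k,\ell}$). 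Since $\e^{-2}\tilde\lambda_{\e k,\ell}$ is the Fourier symbol of $\tilde\calL_{\ho,\e,\ell}$, the diagonal piece re-integrates to $\sum_{j=0}^\ell\e^j\varphi_j(\cdot/\e)\cdot\nabla^j(\tilde\calL_{\ho,\e,\ell}u_{\ho,\e,\ell})$, and the identity $\tilde\calL_{\ho,\e,\ell}u_{\ho,\e,\ell}=f+\gamma_\ell(-\e^2)^{[\frac{\ell-1}{2}]+1}\Id\cdot\nabla^{2[\frac{\ell-1}{2}]+4}u_{\ho,\e,\ell}$ combined with \eqref{e.mod-ell-well} makes it telescope against $-\nabla\cdot\aa_\e\nabla u_{\e,\ell}$ up to $R_\e^{\mathrm{reg}}$. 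Using $(\nabla_y\cdot F)(x/\e)=\e\nabla_x\cdot[F(x/\e)]$ and the decomposition of $\mathfrak{d}_{k,\ell}$ into its divergence part $\nabla\cdot(-\sigma_\ell e+\aa e\varphi_\ell+\nabla\chi_\ell)$ and its $i\kappa$-multiplier part, the eigendefect integral rewrites as $R_\e^{\mathrm{def}}=\e^\ell\nabla\cdot G_\e+\e^\ell H_\e$, where $G_\e$ contracts $\sigma_\ell(\cdot/\e),\aa\varphi_\ell(\cdot/\e),\nabla\chi_\ell(\cdot/\e)$ with $\nabla^{\ell+1}u_{\ho,\e,\ell}$ and $H_\e$ contracts correctors of order $j\le\ell$ with $\nabla^{\ell+2}u_{\ho,\e,\ell}$.

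The energy estimate is then standard: testing by $v_{\e,\ell}-u_{\e,\ell}$, using the uniform ellipticity of $\aa$, Cauchy--Schwarz on the divergence term, and Hardy's inequality on $\R^d$ (this is where we use $d\ge 3$) on the scalar source terms yields
\begin{equation*}
\|\nabla(v_{\e,\ell}-u_{\e,\ell})\|_{L^2(\R^d)}\lesssim \e^\ell\|G_\e\|_{L^2(\R^d)}+\e^\ell\||x|H_\e\|_{L^2(\R^d)}+\||x|R_\e^{\mathrm{reg}}\|_{L^2(\R^d)}.
\end{equation*}
To take expectations, we invoke the local-averaging bound from Step~2 of the proof of Lemma~\ref{lem:prepare} combined with Hypothesis~\ref{hypo:corr}: for smooth $h$, we have $\expec{\int|\varphi_j(x/\e)|^2|h(x)|^2dx}\lesssim\int\sup_{B_\e(x)}|h|^2dx$ when $j<\ell$, and the same bound with an additional factor $\mu_\alpha(|x|/\e)^2$ when $j=\ell$; the same applies to $\sigma_\ell$ and $\nabla\chi_\ell$. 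Submultiplicativity $\mu_\alpha(|x|/\e)\lesssim\mu_\alpha(\e^{-1})\mu_\alpha(|x|)$ then extracts the prefactor $\mu_\alpha(\e^{-1})$, and the remaining weighted integrals of $\sup_{B_\e(x)}|\nabla^{\ell+1}u_{\ho,\e,\ell}|^2$ and of $(1+|x|)^2\sup_{B_\e(x)}|\nabla^{\ell+2}u_{\ho,\e,\ell}|^2$ exactly match the first contribution to $C_{\e,\ell}(u_{\ho,\e,\ell})$; the regularization remainder $R_\e^{\mathrm{reg}}$ decomposes into terms of the form $\e^{2[\frac{\ell-1}{2}]+2+j}\varphi_j(\cdot/\e)\cdot\nabla^{j+2[\frac{\ell-1}{2}]+4}u_{\ho,\e,\ell}$ and contributes similarly, matching the $\sum_j$-contribution in the definition of $C_{\e,\ell}$.

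The main obstacle is the algebraic identification of $R_\e^{\mathrm{def}}$ in physical space: the non-polynomial-in-$k$ prefactor $|k|^{\ell+1}$ times the direction-dependent correctors $(\varphi_\ell^e,\sigma_\ell^e,\nabla\chi_\ell^e)$ must be reorganized into a \emph{genuine polynomial} in $k$ so that, after inverse Fourier transform, it yields honest spatial derivatives of $u_{\ho,\e,\ell}$. This requires tracking the polynomial degree in $e$ of each corrector ($\varphi_j^e$ and $\sigma_j^e$ are of degree $j$, whereas $\chi_j^e$ is of degree $j-1$) and systematically absorbing any missing power of $|k|$ by a factor $|k|^2=k\cdot k$ that is polynomial in $k$; a similar bookkeeping has to be done for the $i\kappa$-multiplier part of $\mathfrak{d}_{k,\ell}$. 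Once this algebra is in place, the rest of the argument follows the template of Lemma~\ref{lem:prepare} and Proposition~\ref{prop:energy-wave}; the specifically elliptic ingredient is the use of Hardy's inequality on $\R^d$, which provides the weighted $L^2$ control that explains the $(1+|x|)^2$ weight appearing in the definition of $C_{\e,\ell}(u_{\ho,\e,\ell})$.
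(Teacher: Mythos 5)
Your argument is correct in substance, but it is not the paper's route: the paper never passes to Fourier space in this proof. Its Step~1 establishes, by induction on the order and using only the algebra of the extended correctors (the defining equations for $\phi_{m+1}$, $\chi_m$, $q_{m+1}$, $\sigma_{m+1}$, and the skew-symmetry of $\sigma_{m+1}$), the physical-space representation formula \eqref{e.2scale-0.2} for $-\nabla\cdot\aa\nabla\big(\sum_{j=0}^{\ell}\varphi_j\cdot\nabla^j v\big)$; it then takes $v=u_{\ho,\e,\ell}$, substitutes the homogenized equation \eqref{e.ell-higher-hom} and its derivatives of order $j\le \ell-1$ to convert the non-divergence terms into the well-prepared terms $\varphi_j\cdot\nabla^j f$, which cancel against the right-hand side of \eqref{e.mod-ell-well}, and concludes exactly as you do (energy estimate, Hardy's inequality for $d\ge 3$, Hypothesis~\ref{hypo:corr}, scaling). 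Your route instead synthesizes the same error equation from Proposition~\ref{prop:bloch} under the Fourier integral, i.e.\ it transplants the proof of Proposition~\ref{prop:energy-wave} to the elliptic setting: what this buys is that the corrector algebra need not be redone (it is encapsulated in the eigendefect), whereas the paper's identities \eqref{e.2scale-0.1}--\eqref{e.2scale-0.2} are purely local, need no Fourier representation of $u_{\ho,\e,\ell}$, and also deliver Corollary~\ref{coro:alaBFFO} directly. Both organizations of the exact identity are legitimate.

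Two points in your bookkeeping need correction, though neither is fatal. First, $\chi_j^e$ is homogeneous of degree $j+1$ in $e$, not $j-1$: from $-\triangle\chi_j=\nabla\chi_{j-1}\cdot e+\sum_{p=1}^{j-1}\lambda_{j-1-p}\phi_p$ and $\deg\lambda_{j-1-p}^e=j+1-p$ one gets degree $3$ for $\chi_2$ and degree $j+1$ inductively. This actually works in your favor: all three entries of $-\sigma_\ell^e e+\aa e\varphi_\ell^e+\nabla\chi_\ell^e$ then have degree $\ell+1$, and the zero-order part of $\mathfrak{d}_{k,\ell}$ has matching degrees as well, so $(i|k|)^{\ell+1}$ times the defect is already a genuine polynomial in $k$ and no absorption of $|k|^2$ factors is needed. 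Second, your remainder is coarser than the stated one: pulling the divergence out of $e^{ik\cdot x}$ produces an extra $ik\cdot$ term that must go into $H_\e$; $H_\e$ in fact contains all terms $\e^{j+p-\ell}\,\varphi_j\lambda_p\cdot\nabla^{j+p+2}u_{\ho,\e,\ell}$ with $j+p\ge\ell$, not only $\nabla^{\ell+2}u_{\ho,\e,\ell}$; and substituting the homogenized equation inside $\nabla^j(\tilde\calL_{\ho,\e,\ell}u_{\ho,\e,\ell})$ for $j=\ell$ creates a regularization term $\e^{2[\frac{\ell-1}{2}]+2+\ell}\varphi_\ell(\tfrac{\cdot}{\e})\cdot\nabla^{\ell+2[\frac{\ell-1}{2}]+4}u_{\ho,\e,\ell}$, i.e.\ one more derivative (with a $\mu_\alpha$-weight) than appears in the stated $C_{\e,\ell}(u_{\ho,\e,\ell})$. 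Since $f\in\Sc$ and $d\ge3$, all these weighted norms are finite, so you obtain the estimate with an essentially equivalent but slightly larger norm on the right-hand side; to land on the constant exactly as stated, do as the paper does and substitute the differentiated homogenized equation only for $j\le\ell-1$, treating the order-$\ell$ term of the expansion directly.
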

To our knowledge, this result is new, even in the periodic setting.

\medskip

Let us give two corollaries of this result. The first corollary shows that for $\ell\le 2$, it is not necessary to prepare the RHS of \eqref{e.origin-ell}. 
On the one hand, this is not surprising since the first two correctors of the Bloch wave expansion coincide with the first two correctors of the usual two-scale expansion (for which the RHS of \eqref{e.origin-ell} needs not be prepared). On the other hand, the correction \eqref{e.mod-ell-well} to the RHS of \eqref{e.origin-ell} is of order $\e$ whereas the RHS of \eqref{e.multiscale-estim0} is of order $\e^2$ (provided the second corrector is essentially bounded), so that there must be subtle cancellations.
This result was first proved (in the non-symmetric setting) in \cite{BFFO}.
\begin{coro}\label{coro:alaBFFO}
Let $1\le \ell\le2$, and assume that $(\phi_j,\sigma_j,\chi_j)_{0\le j\le \ell}$ satisfy Hypothesis~\ref{hypo:corr}
for some $\alpha\in [0,1)\times \R_+$. Let $f\in \Sc$, and for all $\e>0$, let $u_{\e}$ and $u_{\ho}$ denote the solutions of \eqref{e.origin-ell}  (the RHS of which is not well-prepared) and
\begin{equation}\label{e:stand-hom}
-\nabla \cdot \bar \aa_0\nabla u_\ho\,=\,f,
\end{equation}
respectively.
Then we have
\begin{equation}\label{e.multiscale-BFFO}
\expec{ \Big\|\nabla \big( u_{\e}-\sum_{j=0}^{\ell} \e^j \nabla^j u_{\ho} \cdot \phi_j (\frac\cdot\e)\big)\Big\|_{L^2(\R^d)}^2}^\frac12
\lesssim \, \e^{\ell} \mu_\alpha(\varepsilon^{-1}) C_{\ell, \e}(u_{\ho}),
\end{equation}
where
\begin{equation*}
C_{\e, \ell}(u_\ho)\,:=\,\left(\int_{\mathbb{R}^d}\mu_\alpha^2(|x|)\sup_{B_\e(x)}\lbrace\vert \nabla^{\ell+1}u_\ho\vert^2\rbrace dx\right)^{\frac{1}{2}}.
\end{equation*}
\qed
\end{coro}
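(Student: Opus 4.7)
The plan is to deduce Corollary~\ref{coro:alaBFFO} from Theorem~\ref{th:2scale-ell}. The key preliminary observation is that for $1\le\ell\le 2$, the higher-order homogenized operator reduces to the standard one: Proposition~\ref{prop:properties-lambda}(i) gives $\lambda_1=0$ in every direction, hence by polarization the symmetric tensor $\bar\aa_1$ vanishes, and Lemma~\ref{lem:elliptic-well-posed} allows $\gamma_\ell=0$. Thus $\calL_{\ho,\e,\ell}=-\nabla\cdot\bar\aa_0\nabla$, so $u_{\ho,\e,\ell}=u_\ho$ and the two-scale expansion $v_{\e,\ell}$ of Theorem~\ref{th:2scale-ell} reduces to the expansion $v_\e:=\sum_{j=0}^\ell \e^j\phi_j(\cdot/\e)\cdot\nabla^j u_\ho$ of the statement. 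Splitting
\begin{equation*}
u_\e - v_\e \,=\, (u_\e - u_{\e,\ell}) + (u_{\e,\ell} - v_\e),
\end{equation*}
Theorem~\ref{th:2scale-ell} bounds the second summand by the target $\e^\ell\mu_\alpha(\e^{-1})C_{\e,\ell}(u_\ho)$, and the task reduces to showing that $w_\e:=u_\e-u_{\e,\ell}$, which solves
\begin{equation*}
-\nabla\cdot\aa_\e\nabla w_\e \,=\, -\sum_{j=1}^\ell \e^j\phi_j(\tfrac\cdot\e)\cdot\nabla^j f,
\end{equation*}
satisfies the same bound (with $f=-\nabla\cdot\bar\aa_0\nabla u_\ho$, so that norms of $f$ and its derivatives are controlled by those of $u_\ho$).

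\textbf{Case $\ell=1$.} The source of $w_\e$ is the single term $-\e\phi_1(\cdot/\e)\cdot\nabla f$. The Lax--Milgram energy identity together with the Sobolev embedding (valid for $d\ge 3$) gives $\|\nabla w_\e\|_{L^2(\R^d)}\lesssim \|\e\phi_1(\cdot/\e)\cdot\nabla f\|_{L^{2d/(d+2)}(\R^d)}$. Taking expectations, using the pointwise bound $\expec{|\phi_1(x/\e)|^2}^{1/2}\lesssim \mu_\alpha(|x|/\e)$ from Hypothesis~\ref{hypo:corr} and the weighted-$L^p$ interpolation employed in Substep~3.1 of the proof of Proposition~\ref{prop:energy-wave} (to convert $\mu_\alpha(|x|/\e)$ into $\mu_\alpha(\e^{-1})$ at the cost of a polynomial weight in $x$, absorbed by the Schwartz decay of $\nabla f$), one obtains the desired bound $\e\mu_\alpha(\e^{-1})C_{\e,1}(u_\ho)$.

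\textbf{Case $\ell=2$ (main obstacle).} A direct energy estimate on the dominant contribution $-\e\phi_1(\cdot/\e)\cdot\nabla f$ only yields $\e\mu_\alpha(\e^{-1})$, one power of $\e$ short of the target. The key trick exploits the algebraic identity
\begin{equation*}
-\Delta\chi_2^e \,=\, \lambda_0\phi_1^e
\end{equation*}
built into Definition~\ref{defi:ext-corr} (for $j=2$, where $\chi_1\equiv 0$ is used). Rescaling gives $\phi_1^e(x/\e)=-\lambda_0^{-1}\e^2\Delta_x[\chi_2^e(x/\e)]$, and testing the equation for $w_\e$ by $w_\e$ then integrating the problematic term by parts once produces
\begin{equation*}
\int \e\phi_1^e(\tfrac\cdot\e)\partial_e f\, w_\e \,=\, \lambda_0^{-1}\e^2\int (\nabla\chi_2^e)(\tfrac\cdot\e)\cdot\bigl((\nabla\partial_e f)\, w_\e + \partial_e f\, \nabla w_\e\bigr),
\end{equation*}
which is already of order $\e^2$ and only involves $\nabla\chi_2$. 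Since $\nabla\chi_2$ is an extended corrector of order $j=\ell=2$ with growth $\mu_\alpha$ under Hypothesis~\ref{hypo:corr}, combining with Sobolev on $w_\e$, Young's inequality and the weighted interpolation from the $\ell=1$ step yields $\expec{\|\nabla w_\e\|_{L^2(\R^d)}^2}^{1/2}\lesssim \e^2\mu_\alpha(\e^{-1})C_{\e,2}(u_\ho)$ after treating the remaining subdominant $-\e^2\phi_2(\cdot/\e):\nabla^2 f$ contribution directly as in $\ell=1$. The essential obstacle is precisely this $\chi_2$-based trick: without the algebraic relation $-\Delta\chi_2=\lambda_0\phi_1$ made available by the extended-corrector framework, the improvement from $\e$ to $\e^2\mu_\alpha(\e^{-1})$ would not be possible.
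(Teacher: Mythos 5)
Your proposal is correct in substance and delivers the stated rate $\e^{\ell}\mu_\alpha(\e^{-1})$, but it follows a genuinely different route from the paper. The paper never passes through the well-prepared problem \eqref{e.mod-ell-well}: Corollary~\ref{coro:alaBFFO} is proved in Step~2 of the proof of Theorem~\ref{th:2scale-ell} directly from the representation formula \eqref{e.2scale-0.1}, which states that for $1\le \ell\le 2$ one has exactly $-\nabla \cdot \aa \nabla w_{\ell}(u_\ho) = -\nabla \cdot \bar \aa_0 \nabla u_\ho-\nabla \cdot \big[ (\aa \otimes\varphi_{\ell}-\sigma_{\ell})\cdot\nabla^{\ell+1}u_\ho\big]$; the cancellation you produce by hand is already encoded in the corrector algebra there ($\chi_1\equiv 0$, $S_1\equiv 0$, $\bar\aa_1=0$ by Proposition~\ref{prop:properties-lambda}, and the flux corrector $\sigma_2$). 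Subtracting this identity from \eqref{e.origin-ell} leaves a residuum in divergence form carried only by the order-$\ell$ extended correctors, and a single energy estimate plus Hypothesis~\ref{hypo:corr} and a scaling argument yields \eqref{e.multiscale-BFFO} with a constant involving only $\nabla^{\ell+1}u_\ho$. Your route instead splits $u_\e-v_\e=(u_\e-u_{\e,\ell})+(u_{\e,\ell}-v_\e)$, quotes Theorem~\ref{th:2scale-ell} for the second piece (legitimate: $\bar\aa_1=0$ and $\gamma_\ell=0$ indeed give $u_{\ho,\e,\ell}=u_\ho$ for $\ell\le 2$), and recovers the missing power of $\e$ for the first piece from the identity $-\triangle\chi_2^e=\lambda_0\phi_1^e$ (the $j=2$ case of the $\chi$-equation, using $\chi_1\equiv 0$) followed by integration by parts, which is a valid alternative expression of the same cancellation since $\nabla\chi_2$ only carries the $\mu_\alpha$ growth allowed by Hypothesis~\ref{hypo:corr}. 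What the paper's argument buys is economy and a sharper constant: your composite bound inherits from Theorem~\ref{th:2scale-ell} and from the source $\sum_{j\ge1}\e^j\phi_j(\tfrac\cdot\e)\cdot\nabla^jf$ weighted norms of $\nabla^{\ell+2}u_\ho$, $\nabla^{\ell+3}u_\ho$ and of derivatives of $f$, hence a larger constant than $C_{\e,\ell}(u_\ho)$ — harmless for $f\in\Sc$ but strictly weaker than the statement as written — and it additionally requires solvability of the well-prepared problem, which the direct proof bypasses. Conversely, your decomposition isolates explicitly why no preparation of the RHS is needed at order $\ell\le 2$ and identifies $-\triangle\chi_2=\lambda_0\phi_1$ as the precise algebraic source of the gain from $\e$ to $\e^2\mu_\alpha(\e^{-1})$.
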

The second corollary makes use of the Boussinesq trick to avoid the higher-order regularization of \eqref{e.eq-ell} for $3\le \ell \le 4$.
\begin{coro}\label{coro:alaDLS}
Let $3\le \ell\le 4$, and assume that $(\phi_j,\sigma_j,\chi_j)_{0\le j\le \ell}$ satisfy Hypothesis~\ref{hypo:corr}
for some $\alpha\in [0,1)\times \R_+$. 
Let $\bb$ and $\cc$ be the (symmetric positive semi-definite) second and fourth order tensors defined in Lemma \ref{lemma:decomp}. 
Let $f\in \Sc$, and for all $\e>0$, let $u_{\e,\ell}$ denote the solution of \eqref{e.mod-ell-well} (the RHS of which is well-prepared), and
$u_{\ho,\e,\ell}$ denote the unique solution of  
\begin{equation}\label{e.hom-eq-alaDLS}
(- \nabla \cdot \bar \aa_0  \nabla +\e^{2}\textbf{c}\cdot \nabla^{4}) u_{\ho,\e,\ell}\,=\,f-\e^2\textbf{b}\cdot \nabla^2 f.
\end{equation}
Then we have
\begin{equation}\label{e.multiscale-DLS}
\expec{ \Big\|\nabla \big( u_{\e,\ell}-\sum_{j=0}^{\ell} \e^j \nabla^j u_{\ho,\e,\ell} \cdot \phi_j (\frac\cdot\e)\big)\Big\|_{L^2(\R^d)}^2}^\frac12
\lesssim \, \e^{\ell} \mu_\alpha(\varepsilon^{-1}) C_{\e, \ell}(u_{\ho,\e,\ell},f),
\end{equation}
where 
\begin{multline*}
C_{\e, \ell}(u_{\ho,\e,\ell},f)\,:=\,\left(\int_{\mathbb{R}^d} \mu_\alpha^2(|x|) \sup_{B_\e(x)}\lbrace\vert \nabla^{\ell+1}u_{\ho,\e,\ell}\vert^2+(1+|x|)^2\vert \nabla^{\ell+2}u_{\ho,\e,\ell}\vert^2\rbrace dx\right)^{\frac{1}{2}} 
\\ 
+\sum_{j=0}^2\e^{j} \Big(\int_{\R^d} (1+|x|)^2 \sup_{B_\e(x)}\{|\nabla^{\ell+j} f|^2\}dx\Big)^\frac12
\\+\e \Big(\int_{\R^d} (1+|x|)^2 \sup_{B_\e(x)}\{|\nabla^{\ell+3} u_{\ho,\e,\ell}|^2\}dx\Big)^\frac12.
\end{multline*}
\qed
\end{coro}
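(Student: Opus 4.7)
My plan is to reduce Corollary~\ref{coro:alaDLS} to Theorem~\ref{th:2scale-ell} via an auxiliary regularized solution. Let $v_{\e,\ell}$ denote the unique solution of $\calL_{\ho,\e,\ell} v_{\e,\ell}=f$ given by Lemma~\ref{lem:elliptic-well-posed}. Theorem~\ref{th:2scale-ell} applied to $v_{\e,\ell}$ controls
\[
\expec{\Big\|\nabla\Big(u_{\e,\ell}-\sum_{j=0}^\ell \e^j \nabla^j v_{\e,\ell}\cdot \phi_j(\cdot/\e)\Big)\Big\|_{L^2(\R^d)}^2}^{1/2}\lesssim \e^\ell \mu_\alpha(\e^{-1}) C_{\e,\ell}(v_{\e,\ell}).
\]
Writing $v_{\e,\ell}=u_{\ho,\e,\ell}+w$, the triangle inequality then leaves two tasks: (a) estimating the $\dot H^1(\R^d)$-norm of the discrepancy $\sum_{j=0}^\ell\e^j\nabla^j w\cdot\phi_j(\cdot/\e)$ between the two multiscale expansions; and (b) reorganizing $C_{\e,\ell}(v_{\e,\ell})$ into the form $C_{\e,\ell}(u_{\ho,\e,\ell},f)$ announced in the statement.

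The hinge is a good PDE for $w$. Since $\bar\aa_1=\bar\aa_3=0$ by Proposition~\ref{prop:properties-lambda} and $\ell\in\{3,4\}$, we have $\tilde\calL_{\ho,\e,\ell}=-\bar\aa_0\cdot\nabla^2-\e^2\bar\aa_2\cdot\nabla^4$. Substituting Lemma~\ref{lemma:decomp} in the form $-\bar\aa_2\cdot\nabla^4=-(\bb\otimes\bar\aa_0)\cdot\nabla^4+\cc\cdot\nabla^4$ and inserting \eqref{e.hom-eq-alaDLS}, a direct algebraic manipulation -- in which the order-$\e^2$ terms cancel, this being the very point of the Boussinesq trick -- yields
\[
\tilde\calL_{\ho,\e,\ell} u_{\ho,\e,\ell}=f-\e^4\,\bb\cdot\nabla^2\bigl(\bb\cdot\nabla^2 f+\cc\cdot\nabla^4 u_{\ho,\e,\ell}\bigr),
\]
and hence $\calL_{\ho,\e,\ell}w$ is an explicit source of size $O(\e^4)$ involving derivatives of $f$ up to order $4$ and of $u_{\ho,\e,\ell}$ up to order $2([\frac{\ell-1}{2}]+2)=6$. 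Lemma~\ref{lem:elliptic-well-posed} together with standard higher-order elliptic regularity for the constant-coefficient operator $\calL_{\ho,\e,\ell}$ then provides the weighted Sobolev bounds on $w$ needed below, all gaining a factor $\e^4$.

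For step (a), the terms with $j<\ell$ of the discrepancy are handled via Hypothesis~\ref{hypo:corr}\eqref{hyp:growth1} on the stationary correctors $\phi_j$, and the $j=\ell$ term via \eqref{hyp:growth2} (which contributes the weight $\mu_\alpha$). Since $\ell\le 4$, the $\e^4$ gain on $w$ more than accommodates the target $\e^\ell\mu_\alpha(\e^{-1})$ scaling. For step (b), the derivatives $\nabla^{j+\ell+3}v_{\e,\ell}$ appearing in $C_{\e,\ell}(v_{\e,\ell})$ with prefactors $\e^{j+1}$ ($j=0,\dots,2[\frac{\ell-1}{2}]$) are transferred to derivatives of $f$ by means of \eqref{e.hom-eq-alaDLS}: each inversion of $-\bar\aa_0\cdot\nabla^2$ trades two derivatives of $u_{\ho,\e,\ell}$ against one derivative of $f$ plus a lower-order residual that bootstraps away (using the $\e^2$ factors in $\e^2\cc\cdot\nabla^4$ and $\e^2\bb\cdot\nabla^2$), producing exactly the $\sum_{j=0}^2\e^j\nabla^{\ell+j}f$ contributions and the surviving $\e\nabla^{\ell+3}u_{\ho,\e,\ell}$ term of $C_{\e,\ell}(u_{\ho,\e,\ell},f)$.

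The main obstacle lies in the bookkeeping of weighted supremum norms $\mu_\alpha(|x|)\sup_{B_\e(x)}|\cdot|$ through these algebraic substitutions: ensuring that every higher-derivative reduction via \eqref{e.hom-eq-alaDLS} stays within the derivative orders and $\e$-powers allowed in the final constant, and that the $j=\ell$ contribution of the multiscale expansion of $w$, where $\phi_\ell$ carries the $\mu_\alpha$-growth, does not degrade the $\e^4$ gain beyond $\e^\ell\mu_\alpha(\e^{-1})$. This constrains the precise shape of $C_{\e,\ell}(u_{\ho,\e,\ell},f)$ and in particular explains why only a single $\nabla^{\ell+3}u_{\ho,\e,\ell}$-term survives.
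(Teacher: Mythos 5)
Your algebraic core is correct: combining Lemma~\ref{lemma:decomp} with \eqref{e.hom-eq-alaDLS} does give $\tilde \calL_{\ho,\e,\ell}u_{\ho,\e,\ell}=f-\e^4\,\bb\cdot\nabla^2\big(\bb\cdot\nabla^2 f+\cc\cdot\nabla^4 u_{\ho,\e,\ell}\big)$, and this $\e^2$-cancellation is the same Boussinesq mechanism the paper uses. But the paper's proof (Step~4 of the proof of Theorem~\ref{th:2scale-ell}) never passes through the regularized solution $v_{\e,\ell}$ of $\calL_{\ho,\e,\ell}v_{\e,\ell}=f$: it derives, directly for the error $h_\ell=u_{\e,\ell}-u_{\ho,\e,\ell}$, a residuum identity by specializing the representation formula \eqref{e.2scale-0.2} to $3\le\ell\le4$, inserting Lemma~\ref{lemma:decomp}, and using \eqref{e.hom-eq-alaDLS} as a \emph{pointwise algebraic} substitution --- this works because the terms to be rewritten, such as $(\bb\otimes\bar\aa_0)\cdot\nabla^4u_{\ho,\e,\ell}$ and $(\varphi_j\otimes\bar\aa_0)\cdot\nabla^{j+2}u_{\ho,\e,\ell}$, contain the contraction $\bar\aa_0\cdot\nabla^2 u_{\ho,\e,\ell}$ as a factor, so no inversion of the operator is needed; a last substitution lowers the derivative count in the single highest-order term (whence the lone $\e\,\nabla^{\ell+3}u_{\ho,\e,\ell}$ in the constant), and one energy estimate with Hardy's inequality and Hypothesis~\ref{hypo:corr} concludes.

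In your detour the real work sits exactly in the two steps you label (a) and (b), and there it is asserted rather than proved. For (a) you need, uniformly in $\e$, weighted bounds with local suprema over $\e$-balls on $\nabla^m w$ for $m$ up to $\ell+1$ (the term $\e^\ell\phi_\ell(\frac\cdot\e)\cdot\nabla^{\ell+1}w$ forces the $\mu_\alpha$-weight and the $\sup_{B_\e(x)}$ structure); Lemma~\ref{lem:elliptic-well-posed} only provides the $H^1$-type bound \eqref{e:elliptic-well-posed} together with a coercivity that degenerates on the top derivatives as $\e\downarrow0$, so these higher-order weighted estimates for the $\e$-dependent operator $\calL_{\ho,\e,\ell}$ would have to be established. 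For (b), the constant of Theorem~\ref{th:2scale-ell} applied to $v_{\e,\ell}=u_{\ho,\e,\ell}+w$ contains the \emph{full} tensors $\nabla^{\ell+4}v_{\e,\ell}$ and $\nabla^{\ell+5}v_{\e,\ell}$ (with weights), and \eqref{e.hom-eq-alaDLS} only controls the contraction $\bar\aa_0\cdot\nabla^2 u_{\ho,\e,\ell}$: converting these into the announced $f$-terms is not an algebraic trade but a weighted elliptic-regularity statement for the fourth-order operator in \eqref{e.hom-eq-alaDLS}, which you do not supply. Moreover, even granting that machinery, differentiating the equation for $w$ (whose source involves $\nabla^4f$ and $\nabla^6u_{\ho,\e,\ell}$) to reach $\nabla^{\ell+1}w$ generates derivatives such as $\nabla^{\ell+4}f$ and $\nabla^{\ell+6}u_{\ho,\e,\ell}$, so your route yields an estimate of the right order in $\e$ but with a different, larger constant than the stated $C_{\e,\ell}(u_{\ho,\e,\ell},f)$. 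As written, the proposal therefore does not prove the corollary; the paper's direct derivation of the error equation avoids every one of these regularity issues.
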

\begin{rem}
The RHS of \eqref{e.multiscale-estim0} and \eqref{e.multiscale-DLS} in Theorem~\ref{th:2scale-ell} and Corollary~\ref{coro:alaDLS}
involve the same number of derivatives of $f$. For $3\le \ell \le 4$,~\eqref{e.multiscale-estim0} and~\eqref{e.multiscale-DLS} involve $\ell+3$ derivatives of $f$ (for \eqref{e.multiscale-estim0} one needs to differentiate $\ell+3$ times \eqref{e.ell-higher-hom}, whereas for \eqref{e.multiscale-DLS} one needs to differentiate $\ell+1$ times \eqref{e.hom-eq-alaDLS}).
\qed
\end{rem}
%

%%%%%%%%%%%%%%%%%

\subsection{Proof of Lemma~\ref{lem:long-time}: Higher-order approximation of the wave equation\label{subsection_proof_ondes}}

We split the proof of this lemma into two steps, and distinguish between \eqref{e.eq-higher1} and \eqref{e.eq-higher2}.
We start with \eqref{e.eq-higher1}.

\medskip

\step{1} Estimate for \eqref{e.eq-higher1}.

\noindent
Assume that $3\le \ell \le 4$ and let $w_{\e , \ell}$ solve \eqref{e.eq-higher1}.
By Lemma \ref{lemma:decomp}, the error $u_{\e, \ell}-w_{\e , \ell}$ splits into two parts
{$u_{\e, \ell}-w_{\e , \ell}=h_{\e,\ell}+\tilde h_{\e,\ell}$} that satisfy
\begin{equation}\label{e.eq-higher1aux}
\left\{
\begin{array}{rcl}
\partial_{tt}^2h_{\e,\ell}-\bar \aa_0\cdot \nabla^2h_{\e,\ell}-\varepsilon^2\bb\cdot \nabla^2\partial_{tt}^2h_{\e,\ell}+\varepsilon^2\cc\cdot \nabla^4h_{\e,\ell}&=&\varepsilon^4(\bb \otimes\bar\aa_2)\cdot\nabla^6u_{\e, \ell},\\
h_{\e,\ell}(0,\cdot)&=&0,\\
\partial_t h_{\e,\ell}(0,\cdot)&=&0,
\end{array}
\right.
\end{equation}
and
\begin{equation}\label{e.eq-higher1aux-IC}
\left\{
\begin{array}{rcl}
\partial_{tt}^2\tilde h_{\e,\ell}-\bar \aa_0\cdot \nabla^2\tilde h_{\e,\ell}-\varepsilon^2\bb\cdot \nabla^2\partial_{tt}^2\tilde h_{\e,\ell}+\varepsilon^2\cc\cdot \nabla^4\tilde h_{\e,\ell}&=&0,\\
\tilde h_{\e,\ell}(0,\cdot)&=&{u_{0,\e,\ell}-u_0},\\
\partial_t \tilde h_{\e,\ell}(0,\cdot)&=&0.
\end{array}
\right.
\end{equation}
The estimate for $\tilde h_{\e,\ell}$ is similar to that of Lemma~\ref{lem:prepare} and we have
$$
\sup_{0\le t<\infty} \Vert \tilde h_{\e, \ell}(t,\cdot)\Vert^2_{L^2(\mathbb{R}^d)} \,\lesssim\,  \Vert u_{0,\e,\ell}-u_0 \Vert^2_{L^2(\mathbb{R}^d)} \lesssim \e^2 C(u_0).
$$
We then turn to the estimate of $ h_{\e, \ell}$. As in the proof of Lemma \ref{lem:prepare}, we integrate \eqref{e.eq-higher1aux} once in time, multiply by $h_{\e,\ell}$ and integrate over $\left[0,t \right]\times \mathbb{R}^d$. Since  $\bar \aa_0$, $\bb$, and $\cc$ are non-negative, their contributions to the energy estimate on the LSH are non-negative, and
we obtain
\begin{eqnarray}\nonumber
\Vert h_{\e, \ell}(t,\cdot)\Vert^2_{L^2(\mathbb{R}^d)} \lesssim \varepsilon^4\left| \int_0^t\int_{\mathbb{R}^d}h_{\e ,\ell}(s,x)R_{\e ,\ell}(s,x)dxds\right|,
\end{eqnarray}
where, by definiton of $u_{\e, \ell}$ and integration in time,
\begin{eqnarray}\nonumber
R_{ \e, \ell}&:=&\nabla^6\int_{\mathbb{R}^d}\omega(\varepsilon\vert k \vert)\hat{u}_0(k)e^{ik\cdot x}
\frac{\varepsilon}{\Lambda_\ell(\varepsilon\vert k \vert)}\sin(\varepsilon^{-1}\Lambda_\ell(\varepsilon\vert k \vert)s) dk.
\end{eqnarray}
We then proceed  as in Substep~3.1 of the proof of Proposition~\ref{prop:energy-wave}, and define $F_{s,\e,\ell}$ as the linear operator from $\Sc$ to $\Sc$ characterized by its Fourier symbol
\begin{eqnarray*}
\hat F_{s,\e,\ell}(k)&:=&(ik)^{\otimes 6}\omega(\e|k|)\frac{\varepsilon}{\Lambda_\ell(\varepsilon\vert k \vert)}\sin\left( \varepsilon^{-1}\Lambda_\ell(\varepsilon \vert k\vert ) s\right),
\end{eqnarray*}
which is bounded by $|k|^6$ uniformly over $\e>0$ and $s\ge 0$ by definition of $\omega$ and $\Lambda_\ell$.
Hence, by Cauchy-Schwarz inequality,
\begin{eqnarray}\nonumber
\Vert h_{\e , \ell}(t,\cdot)\Vert^2_{L^2(\mathbb{R}^d)}&\lesssim &\varepsilon^4\int_0^t \Vert h_{\e , \ell}(s,\cdot)\Vert_{L^2(\mathbb{R}^d)}\left(\int_{\mathbb{R}^d}\vert F_{s,\e,\ell}u_0\vert^2 dx\right)^\frac{1}{2} ds, \\ \nonumber
&\lesssim & \varepsilon^4 t \sup_{0\le s\leq t}\Vert h_{\e , \ell}(s,\cdot)\Vert_{L^2(\mathbb{R}^d)}\left(\int_{\mathbb{R}^d}|\nabla^6 u_0|^2 dx\right)^\frac{1}{2}.
\end{eqnarray}
Since the RHS is non-decreasing in $t$, we may replace the LHS by $\sup_{0 \le s\le T} \Vert h_{\e , \ell}(t,\cdot)\Vert^2_{L^2(\mathbb{R}^d)}$,
and obtain
\begin{eqnarray}\nonumber
\sup_{0\le t\leq T} \Vert h_{\e , \ell}(t,\cdot)\Vert_{L^2(\mathbb{R}^d)}\lesssim C_\ell(u_0)\varepsilon^{4}T,
\end{eqnarray}
which concludes the proof.

\medskip

\step{2} Estimate for \eqref{e.eq-higher2}.

\noindent
Assume that $\ell \ge 1$ and let $w_{\e , \ell}$ solve \eqref{e.eq-higher2}.
By definition, the error writes $u_{\e, \ell}-w_{\e , \ell}=h_{\e, \ell}+\tilde h_{\e,\ell}$, where $h_{\e, \ell}$ and $\tilde h_{\e, \ell}$ satisfy
\begin{equation}\label{e.eq-higher2aux}
\left\{
\begin{array}{rcl}
\partial^2_{tt} h_{\e,\ell}+\calL_{\ho,\e,\ell} h_{\e,\ell}&=&\e^{2([\frac{\ell-1}{2}]+1)}{R}_{\e , \ell},\\
h_{\e,\ell}(0,\cdot)&=&0,\\
\partial_t h_{\e,\ell}(0,\cdot)&=&0,
\end{array}
\right.
\end{equation}
where 
$${R}_{\e , \ell}:=-\gamma_{\ell}i^{2([\frac{\ell-1}{2}]+1)} \Id \cdot \nabla^{2([\frac{\ell-1}{2}]+2)}u_{\e,\ell},$$
and
\begin{equation}\label{e.eq-higher2aux}
\left\{
\begin{array}{rcl}
\partial^2_{tt} \tilde h_{\e,\ell}+\calL_{\ho,\e,\ell} \tilde h_{\e,\ell}&=&0,\\
\tilde h_{\e,\ell}(0,\cdot)&=&u_{0,\e,\ell}-u_0,\\
\partial_t \tilde h_{\e,\ell}(0,\cdot)&=&0.
\end{array}
\right.
\end{equation}
As in Lemma~\ref{lem:prepare} and Step~1 we have
$$
\sup_{0\le t<\infty} \Vert \tilde h_{\e, \ell}(t,\cdot)\Vert^2_{L^2(\mathbb{R}^d)} \,\lesssim\,  \Vert u_{0,\e,\ell}-u_0 \Vert^2_{L^2(\mathbb{R}^d)} \lesssim \e^2 C(u_0).
$$
We now establish the energy estimate for \eqref{e.eq-higher2aux}. As in Step~1, we integrate \eqref{e.eq-higher2aux} once in time, multiply by $h_{\e,\ell}$, and integrate over $\left[0,t \right]\times \mathbb{R}^d$.
We then obtain, using that $h_{\e,\ell}(0,\cdot)\equiv 0$,
\begin{multline*}
\Vert h_{\e , \ell}(t,\cdot) \Vert_{L^2(\mathbb{R}^d)}^2+\frac12 \bigg(\calL_{\ho,\e,\ell} \int_0^t h_{\e , \ell}(s,\cdot)ds, \int_0^t h_{\e , \ell}(s,\cdot)ds\bigg)_{(H^{-([\frac{\ell-1}{2}]+1)},H^{[\frac{\ell-1}{2}]+1})(\R^d)}
\\
=\,\e^{2([\frac{\ell-1}{2}]+1)} \int_0^t \int_{\mathbb{R}^d}h_{\e , \ell}(s,x)R_{\e , \ell}(s,x)dx ds. 
\end{multline*}
By Lemma~\ref{lem:elliptic-well-posed}, the second LHS term is non-negative, so that we obtain as in Step~1
$$
\sup_{0\le t\le T} \Vert h_{\e , \ell}(t,\cdot) \Vert_{L^2(\mathbb{R}^d)}^2 \,\lesssim \, \e^{4([\frac{\ell-1}{2}]+1)}T^2 \sup_{0\le t\le T} 
\int_{\R^d} R_{\e , \ell}^2(t,x)dx .
$$
It remains to control the last RHS factor. Proceeding in Step~1, we have in Fourier space,
$$
\int_{\R^d} R_{\e , \ell}^2(t,x)dx \,\lesssim \, \int_{\R^d} |k|^{4([\frac{\ell-1}{2}]+2)} |\hat u_0(k)|^2dk \,\lesssim \, \int_{\R^d} |\nabla^{2([\frac{\ell-1}{2}]+2)} u_0(x)|^2dx,
$$
and the proof is complete.

%%%%%%%%%%%%%%%%%

\subsection{Proof of Theorem~\ref{th:2scale-ell}: Higher-order approximation of the elliptic equation}
By scaling, it is enough to consider $\e=1$. 
We split the proof into four steps.
We start by deriving a representation formula for the residuum that relies on the algebra of the correctors, from which
all the estimates follow. 

\medskip

\step{1} Representation formula for the residuum.

\noindent  For all smooth functions $v$ and $m\ge 1$, set  $w_{m}(v):=\sum_{j=0}^{m}\varphi_j\cdot\nabla^j v$, and
for all $m\ge 2$, set
\begin{equation}\label{e.fo-S}
S_{m}(v):=\sum_{p=0}^{m-2}\sum_{j=1}^{m-1-p}(\varphi_j \otimes\bar \aa_p)\cdot \nabla^{p+j+2}v,
\end{equation}
with the understanding that $S_m\equiv 0$ for $m<2$.
We shall prove that that for $1\le \ell\le 2$,
\begin{equation}\label{e.2scale-0.1}
-\nabla \cdot \aa \nabla w_{\ell}(v) \,=\, -\nabla \cdot \bar \aa_0  \nabla v-\nabla \cdot \left[ (\aa \otimes\varphi_{\ell}-\sigma_{\ell})\cdot\nabla^{\ell+1}v\right].
\end{equation}
whereas for all $\ell\ge 1$, 
\begin{multline}\label{e.2scale-0.2}
-\nabla \cdot \aa \nabla w_{\ell}(v) \,=\, -\sum_{j=0}^{\ell-1} \bar \aa_j\cdot \nabla^{j+2}{v} -S_{\ell}(v) \\ 
+\nabla\chi_{\ell}\cdot \nabla^{\ell+2}v-\nabla \cdot \left[ (\aa \otimes\varphi_{\ell}-\sigma_{\ell}+\nabla\chi_{\ell})\cdot\nabla^{\ell+1}v\right],
\end{multline}
Theorem~\ref{th:2scale-ell} will then follow from \eqref{e.2scale-0.2}, whereas Corollary~\ref{coro:alaBFFO} will follow from \eqref{e.2scale-0.1}.
We split the proof of \eqref{e.2scale-0.1} \&~\eqref{e.2scale-0.2} into two substeps.

\medskip

\substep{1.1} First representation formula for the residuum, and proof that for all $m\ge 1$,
\begin{multline}\label{e.2scale-1.1}
\nabla \cdot \aa \nabla w_{m}(v) \,=\, \sum_{j=0}^{m-1} \bar \aa_j\cdot \nabla^{j+2} v+S_{m-1}(v)
\\-\nabla\chi_{m-1}\cdot \nabla^{m+1} v
+\nabla \cdot \left[ (\aa \otimes\varphi_{m}-\sigma_{m})\cdot\nabla^{m+1}v\right].
\end{multline}
We proceed by induction.
The result for $m=1$ is by now standard, and the very reason for introducing $\sigma_1$ in \cite{GNO3}.
More precisely, a direct calculation combined with the defining equation $-\nabla \cdot \aa(\Id+\nabla \phi_1)=0$ for $\phi_1$, and the definition $\nabla \cdot \sigma_1\,=\,\aa(\Id+\nabla \phi_1)-\bar \aa_0$ and skew-symmetry of $\sigma_1$ in form of
$\nabla^2 v\cdot \nabla \cdot \sigma_1=-\nabla \cdot (\sigma_1\cdot \nabla^2 v)$, yields
\begin{eqnarray*}
\nabla \cdot \aa \nabla w_{1}(v)&=& \nabla \cdot \aa  ((\Id+\nabla \phi_1)\nabla v +   \phi_1\cdot \nabla^2 v )
\\
&=& \nabla \cdot \bar \aa_0 \nabla v+\nabla \cdot \big(\aa (\Id+ \nabla \phi_1)-\bar \aa_0\big)\nabla v+\nabla \cdot \aa ( \phi_1\cdot \nabla^2 v )
\\
&=&\nabla \cdot \bar \aa_0 \nabla v+ \nabla^2 v \cdot \big(\aa (\Id+ \nabla \phi_1)-\bar \aa_0\big)+\nabla \cdot \aa ( \phi_1\cdot \nabla^2 v )
\\
&=&\nabla \cdot \bar \aa_0 \nabla v +\nabla \cdot \big[(\aa\varphi_1- \sigma_1)\cdot \nabla^2 v\big],
\end{eqnarray*}
as claimed. Assume now that \eqref{e.2scale-1.1} holds at step $m\ge 1$. 
Writing $w_{m+1}(v)=w_m(v) + \phi_{m+1}\cdot \nabla^{m+1} v$, and using  \eqref{e.2scale-1.1} at step~$m$, we obtain
\begin{eqnarray}\nonumber
\nabla \cdot \aa \nabla w_{m+1}(v) &=& \nabla \cdot \aa \nabla w_{m}(v)+\nabla \cdot [\aa(\nabla \varphi_{m+1}\cdot\nabla^{m+1}v)]+\nabla \cdot [(\aa\otimes \varphi_{m+1})\cdot\nabla^{m+2}v] \\ 
&=& \sum_{j=0}^{m-1} \bar \aa_j\cdot \nabla^{j+2} v+S_{m-1}(v)  -\nabla \chi_{m-1}\cdot\nabla^{m+1}v\nonumber\\ 
&&+ \nabla \cdot \left[ (\aa\nabla\varphi_{m+1}+\aa \otimes\varphi_{m}-\sigma_{m})\cdot\nabla^{m+1}v\right] 
+ \nabla \cdot [(\aa \otimes\varphi_{m+1})\cdot\nabla^{m+2}v] .\nonumber 
\\\label{e.2scale-1.3}
\end{eqnarray}
Let us reformulate the third RHS term.
We add and substract $\nabla \chi_m$ in order to use the defining equation  for $\phi_{m+1}$ in form of $\nabla \cdot  (\aa\nabla\varphi_{m+1}+\aa \otimes\varphi_{m} -\sigma_{m}+\nabla\chi_{m})=0$, and rewrite the third RHS as
\begin{multline*}
\nabla \cdot \left[ (\aa\nabla\varphi_{m+1}+\aa \otimes\varphi_{m}-\sigma_{m})\cdot\nabla^{m+1}v\right]=-\bigtriangleup \chi_{m}\cdot\nabla^{m+1}v \\ +  (\aa\nabla\varphi_{m+1}+\aa \otimes\varphi_{m}-\sigma_{m})\cdot\nabla^{m+2}v.
\end{multline*}
We then appeal to the defining formulas $-\bigtriangleup \chi_{m}:=\nabla\chi_{m-1} +\sum_{j=1}^{m-1} \varphi_j\otimes\bar \aa_{m-1-j}$
 and $q_{m+1}:=  \aa \nabla\phi_{m+1}+\aa\otimes \phi_{m}- \bar \aa_{m}+\nabla \chi_{m}-\sigma_{m}$, and obtain
\begin{multline*}
\nabla \cdot \left[ (\aa\nabla\varphi_{m+1}+\aa \otimes\varphi_{m}-\sigma_{m})\cdot\nabla^{m+1}v\right]= \sum_{j=1}^{m-1}(\varphi_j \otimes\bar \aa_{m-1-j})\cdot \nabla^{m+1}v
\\
+\nabla \chi_{m-1}\cdot\nabla^{m+1}v+(q_{m+1}+\bar \aa_{m}-\nabla \chi_{m})\cdot\nabla^{m+2}v.
\end{multline*}
Combining the above with \eqref{e.2scale-1.3} then yields
\begin{multline*}
\nabla \cdot \aa \nabla w_{m+1}(v) \,=\, \sum_{j=0}^{m} \bar \aa_j\cdot \nabla^{j+2} v+S_{m-1}(v) +\sum_{j=1}^{m-1}(\varphi_j \otimes\bar \aa_{m-1-j})\cdot \nabla^{m+1}v -\nabla \chi_{m}\cdot\nabla^{m+2}v\\ 
+q_{m+1}\cdot \nabla^{m+2}v+ \nabla \cdot [(\aa \otimes\varphi_{m +1})\cdot\nabla^{m+2}v] .
\end{multline*}
Using the property
\begin{equation}\label{relation_S}
S_{m}(v)=S_{m-1}(v)+\sum_{j=1}^{m}(\varphi_j\otimes\bar \aa_{m-j})\cdot\nabla^{m+{2}}v, 
\end{equation}
we conclude by the defining equation $\nabla \cdot \sigma_{m+1}=q_{m+1}$ and the skew-symmetry of $\sigma_{m+1}$
in form of $\nabla^{m+2}v \cdot \nabla \cdot \sigma_{m+1}=-\nabla \cdot (\sigma_{m+1}\cdot \nabla^{m+2}v)$ that 
\begin{multline}\nonumber
\nabla \cdot \aa \nabla w_{m+1}(v) = \sum_{j=0}^{m} \bar \aa_j\cdot \nabla^{j+2} v+S_{{m}}(v) -\nabla\chi_{m}\cdot\nabla^{m+2}v+ \nabla \cdot [(\aa\otimes \varphi_{m+1}-\sigma_{m+1})\cdot\nabla^{m+2}v] ,
\end{multline}
that is, \eqref{e.2scale-1.1} at step $m+1$.

\medskip

\substep{1.2} Reformulation of \eqref{e.2scale-1.1}, and proof of \eqref{e.2scale-0.1} \&~\eqref{e.2scale-0.2}.

\noindent We start with \eqref{e.2scale-0.1}, which is a direct consequence of \eqref{e.2scale-1.1} and the identities $\chi_m \equiv 0$ for all $m \le 1$ and $\bar \aa_1=0$ (cf.~Proposition~\ref{prop:properties-lambda}). We then turn to \eqref{e.2scale-0.2}. 
By the defining formula $-\nabla\chi_{m-1} = \bigtriangleup \chi_{m}+\sum_{j=1}^{m-1} \varphi_j\otimes\bar \aa_{m-1-j}$ for $\chi_{m}$, \eqref{e.2scale-1.1} turns into
\begin{multline}\nonumber
\nabla \cdot \aa \nabla w_{m}(v) \,=\, \sum_{j=0}^{m-1} \bar \aa_j\cdot \nabla^{j+2} v+S_{m-1}(v)  +\sum_{j=1}^{k-1} (\varphi_j\otimes\bar \aa_{m-1-j})\cdot \nabla^{m+1}v\\
+\bigtriangleup \chi_{m}\cdot\nabla^{m+1}v+\nabla \cdot \left[ (\aa \otimes \varphi_{m}-\sigma_{m})\cdot\nabla^{m+1}v\right].
\end{multline}
Combined with \eqref{relation_S} and the identity $\bigtriangleup \chi_{m}\cdot\nabla^{m+1}v=\nabla\cdot(\nabla\chi_{m}\cdot\nabla^{m+1}v)-\nabla\chi_{m}\cdot\nabla^{m+2}v$, this yields  \eqref{e.2scale-0.2}.

\medskip

\step{2} Proof of Corollary~\ref{coro:alaBFFO}: \eqref{e.multiscale-BFFO}.

\noindent Let $u$ and $u_{\ho}$ denote the solutions of \eqref{e.origin-ell} for $\e=1$ and \eqref{e:stand-hom}.
With the choice $v=u_\ho$, substracting \eqref{e.2scale-0.1} from \eqref{e.origin-ell} for $\e=1$ yields
$$
-\nabla \cdot \aa \nabla (u-w_\ell(u_\ho)) \,=\, \nabla \cdot \left[ (\aa \otimes\varphi_{\ell}-\sigma_{\ell}+\nabla\chi_{\ell})\cdot\nabla^{\ell+1}u_\ho \right],
$$
so that \eqref{e.multiscale-BFFO} follows from testing this equation with $u-w_\ell(u_\ho)$, Hypothesis~\ref{hypo:corr}, and a scaling argument.

\medskip

\step{3} Proof of Theorem~\ref{th:2scale-ell}.

\noindent Let $u_{\ell}$ and $u_{\ho,\ell}$ denote the solutions of \eqref{e.mod-ell-well} and \eqref{e.ell-higher-hom} for $\e=1$.
We split the proof into two substeps: we first derive the equation for the error $h_{\ell}:=u_\ell-u_{\ho,\ell}$, and then proceed to the actual estimates.

\medskip

\substep{3.1} Proof of the identity
\begin{multline}\label{e.2scale-3.0}
-\nabla \cdot \aa \nabla h_\ell \,=\,\nabla \cdot \left[ (\aa \otimes\varphi_{\ell}-\sigma_{\ell}+\nabla\chi_{\ell})\cdot\nabla^{\ell+1}u_{\ho,\ell}\right]
+\sum_{p=1}^{\ell-1}\sum_{j=\ell-p}^{\ell-1}(\varphi_j\otimes \bar \aa_p)\cdot\nabla^{j+p+2}u_{\ho , \ell }
\\
-\nabla\chi_{\ell}\cdot \nabla^{\ell+2}u_{\ho,\ell}-\gamma_{\ell}i^{2([\frac{\ell-1}{2}]+1)}\sum_{j=0}^{\ell-1}(\varphi_j\otimes\Id)\cdot\nabla^{j+2([\frac{\ell-1}{2}] +2)}u_{\ho , \ell }.
\end{multline}
The starting point is \eqref{e.2scale-0.2} with $v=u_{\ho,\ell}$:
\begin{multline}\label{e.2scale-3.2}
-\nabla \cdot \aa \nabla w_{\ell}(u_{\ho,\ell}) \,=\, -\sum_{j=0}^{\ell-1} \bar \aa_j\cdot \nabla^{j+2} u_{\ho,\ell}-S_{\ell}(u_{\ho,\ell}) \\ 
+\nabla\chi_{\ell}\cdot \nabla^{\ell+2}u_{\ho,\ell}-\nabla \cdot \left[ (\aa \otimes\varphi_{\ell}-\sigma_{\ell}+\nabla\chi_{\ell})\cdot\nabla^{\ell+1}u_{\ho,\ell}\right].
\end{multline}
Using \eqref{e.ell-higher-hom} for $\e=1$, we reformulate the first RHS term as
\begin{equation}\label{e.2scale-3.3}
-\sum_{j=0}^{\ell-1} \bar \aa_j\cdot \nabla^{j+2} u_{\ho,\ell}\,=\,f+\gamma_{\ell}i^{2([\frac{\ell-1}{2}]+1)}\Id\cdot \nabla^{2([\frac{\ell-1}{2}]+2)}u_{\ho , \ell}.
\end{equation}
In order to reformulate the second RHS term, we take the $j$-th derivative of \eqref{e.ell-higher-hom} for $\e=1$ in form of 
\begin{multline}\label{e.2scale-3.1}
-(\varphi_j \otimes\bar \aa_0 )\cdot \nabla^{j+2}u_{\ho , \ell}\,=\,\varphi_j \cdot\nabla^j f+\sum_{p=1}^{\ell-1}(\varphi_j\otimes \bar \aa_p)\cdot\nabla^{j+p+2}u_{\ho , \ell}\\
 +\gamma_{\ell}i^{2([\frac{\ell-1}{2}]+1)}(\varphi_j\otimes\Id)\cdot\nabla^{j+2([\frac{\ell-1}{2}]+2)}u_{\ho , \ell },
 \end{multline}
which allows us to rewrite $S_{\ell}(u_{\ho,\ell})$ as
\begin{eqnarray*}\nonumber
-S_{\ell}(u_{\ho,\ell})&\stackrel{\eqref{e.fo-S}}{=}&\sum_{j=1}^{\ell-1}(\varphi_j \otimes\bar \aa_0 )\cdot \nabla^{j+2}u_{\ho , \ell}+\sum_{p=1}^{\ell-2}\sum_{j=1}^{\ell-1-p}(\varphi_j \otimes\bar \aa_p)\cdot\nabla^{p+j+2}u_{\ho , \ell} \\
&\stackrel{\eqref{e.2scale-3.1}}{=}&\sum_{j=1}^{\ell-1}\varphi_j \cdot\nabla^j f  -\sum_{p=1}^{\ell-1}\sum_{j=\ell-p}^{\ell-1}(\varphi_j\otimes \bar \aa_p)\cdot\nabla^{j+p+2}u_{\ho , \ell }\\
&&+\gamma_{\ell}i^{2([\frac{\ell-1}{2}]+1)}\sum_{j=1}^{\ell-1}(\varphi_j\otimes\Id)\cdot\nabla^{j+2([\frac{\ell-1}{2}]+2)}u_{\ho , \ell } .
\end{eqnarray*}
Combined with \eqref{e.2scale-3.2} and \eqref{e.2scale-3.3}, this yields
\begin{multline*}
-\nabla \cdot \aa \nabla w_{\ell}(u_{\ho,\ell}) \,=\,\sum_{j=0}^{\ell-1}\varphi_j \cdot\nabla^j f
 -\nabla \cdot \left[ (\aa \otimes\varphi_{\ell}-\sigma_{\ell}+\nabla\chi_{\ell})\cdot\nabla^{\ell+1}u_{\ho,\ell}\right] 
 \\ -\sum_{p=1}^{\ell-1}\sum_{j=\ell-p}^{\ell-1}(\varphi_j\otimes \bar \aa_p)\cdot\nabla^{j+p+2}u_{\ho , \ell }+\nabla\chi_{\ell}\cdot \nabla^{\ell+2}u_{\ho,\ell}
 \\
+\gamma_{\ell}i^{2([\frac{\ell-1}{2}]+1)}\sum_{j=0}^{\ell-1}(\varphi_j\otimes\Id)\cdot\nabla^{j+2([\frac{\ell-1}{2}]+2)}u_{\ho , \ell } .
\end{multline*}
The claim \eqref{e.2scale-3.0} then follows from substracting this identity from \eqref{e.mod-ell-well} for $\e=1$.

\medskip

\substep{3.2} Proof of \eqref{e.multiscale-estim0}. 

\noindent The RHS of \eqref{e.2scale-3.0} is the sum of one term in divergence form and three terms in non-divergence form.
Testing  \eqref{e.2scale-3.0} with $h_\ell$, making an integration by parts for the terms in divergence form, and using Hardy's inequality on the last three RHS terms in form of 
$$
\int_{\R^d} |h_\ell(x) g(x)| dx \,\lesssim\, \Big(\int_{\R^d} |\nabla h_\ell(x)|^2dx\Big)^\frac12 \Big(\int_{\R^d} (1+|x|)^2g(x)^2dx\Big)^\frac12,
$$
the energy estimate for \eqref{e.2scale-3.0} reads
\begin{equation*}\label{e.2scale-3.4}
\|\nabla h_\ell\|_{L^2(\R^d)}^2 \,\lesssim\, I_1^2+I_2^2+I_3^2+I_4^2,
\end{equation*}
where
\begin{eqnarray*}
I_1^2&:=&\int_{\R^d}  \Big(\fint_{B(x)}|\varphi_{\ell}|^2+|\sigma_{\ell}|^2+|\nabla\chi_{\ell}|^2\Big) \sup_{B(x)} \{|\nabla^{\ell+1}u_{\ho,\ell}|^2 \}dx,
\\
I_2^2&:=&\sum_{p=1}^{\ell-1}\sum_{j=\ell-p}^{\ell-1} \int_{\R^d} \Big(\fint_{B(x)} |\varphi_j|^2\Big) (1+|x|)^2\sup_{B(x)}\{|\nabla^{j+p+2}u_{\ho , \ell }|^2\}dx,
\\
I_3^2&:=&\int_{\R^d}\Big(\fint_{B(x)} |\nabla \chi_\ell|^2\Big) (1+|x|)^2\sup_{B(x)}\{|\nabla^{\ell+2}u_{\ho,\ell}|^2\}dx,
\\
I_4^2&:=&\sum_{j=0}^{\ell-1} \int_{\R^d}\Big(\fint_{B(x)} |\varphi_j|^2\Big) (1+|x|)^2 \sup_{B(x)}\{| \nabla^{j+2([\frac{\ell-1}{2}]+2)}u_{\ho , \ell }|^2\}dx.
\end{eqnarray*}
Taking the expectation of the energy estimate, and using Hypothesis~\ref{hypo:corr}, we obtain the claim for $\e=1$. The general
result follows by a scaling argument.

\medskip

\step{4} Proof of Corollary~\ref{coro:alaDLS}: \eqref{e.multiscale-DLS}.

\noindent 
As above, by a scaling argument, it is enough to prove  \eqref{e.multiscale-DLS} for $\e=1$.
Recall that $u_{\ho,\ell}$ is the solution of \eqref{e.hom-eq-alaDLS} for $\e=1$ and $3\le \ell\le 4$.
Starting point is formula~\eqref{e.2scale-0.2}, which we specifically rewrite for $3\le \ell\le 4$ as
\begin{multline}\nonumber
-\nabla \cdot \aa \nabla w_{\ell}(u_{\hom , \ell}) \,= \,-\bar \aa_0\cdot \nabla^{2}u_{\hom , \ell}-\bar \aa_2\cdot \nabla^{4}u_{\hom , \ell}-\sum_{j=1}^{\ell-1} (\varphi_j\otimes\bar \aa_0)\cdot \nabla^{j+2}u_{\hom , \ell} 
\\  -\delta_{\ell=4}(\varphi_1\otimes\bar \aa_2)\cdot \nabla^5u_{\hom , \ell}+\nabla\chi_{\ell}\cdot \nabla^{\ell+2}u_{\hom , \ell}
\\
-\nabla \cdot \left[ (\aa \otimes\varphi_{\ell}-\sigma_{\ell}+\nabla\chi_{\ell})\cdot\nabla^{\ell+1}u_{\hom , \ell}\right],
\end{multline}
where $\delta$ denotes the Kronecker symbol. 
We then appeal to the decomposition of $\bar \aa_2$ provided by Lemma~\ref{lemma:decomp}, and obtain
\begin{eqnarray*}\nonumber
\lefteqn{-\nabla \cdot \aa \nabla w_{\ell}(u_{\hom , \ell}) }
\\
&=&- \bar \aa_0\cdot \nabla^{2}u_{\hom , \ell}+\cc\cdot \nabla^{4}u_{\hom , \ell}-(\bb \otimes\bar \aa_0)\cdot\nabla^4u_{\hom , \ell}\\ \nonumber 
&&-\sum_{j=1}^{\ell-1} (\varphi_j\otimes\bar \aa_0)\cdot \nabla^{j+2}u_{\hom , \ell}  -\delta_{\ell=4}((-\varphi_1\otimes \cc+\varphi_1\otimes \bb \otimes\bar \aa_0)\cdot \nabla^5u_{\hom , \ell})
\\ 
 &&+\nabla\chi_{\ell}\cdot \nabla^{\ell+2}u_{\hom , \ell}-\nabla \cdot \left[ (\aa \otimes\varphi_{\ell}-\sigma_{\ell}+\nabla\chi_{\ell})\cdot\nabla^{\ell+1}u_{\hom , \ell}\right].
\end{eqnarray*}
We now use the defining equation \eqref{e.hom-eq-alaDLS} for $u_{\ho,\ell}$ in form of 
$\bar \aa_0 \cdot \nabla^2u_{\hom , \ell}=-f+\bb\cdot \nabla^2 f+\cc \cdot \nabla^4u_{\hom , \ell}$
to rewrite the following terms as
\begin{eqnarray*}
(\bb \otimes\bar \aa_0)\cdot\nabla^4u_{\hom , \ell}&=&-\bb \cdot \nabla^2 f +\bb ^{\otimes 2}\cdot \nabla^4 f +(\bb \otimes \cc)\cdot \nabla^6 u_{\hom , \ell} , 
\\ 
(\varphi_1 \otimes \bb \otimes \bar \aa_0)\cdot\nabla^5u_{\hom , \ell}&=& -(\varphi_1 \otimes \bb )\cdot\nabla^3 f +(\varphi_1 \otimes \bb ^{\otimes 2})\cdot \nabla^5 f +(\varphi_1 \otimes \bb \otimes \cc)\cdot \nabla^7 u_{\hom , \ell} 
\\ 
 (\varphi_j\otimes \bar \aa_0)\cdot \nabla^{j+2}u_{\hom , \ell} &=&-\varphi_j \cdot \nabla^j f + (\varphi_j \otimes \bb  )\cdot \nabla^{j+2}f +(\varphi_j \otimes \cc)\cdot \nabla^{j+4}u_{\hom , \ell}.
\end{eqnarray*}
Inserting these identities in the above and rearranging the terms, we obtain the identity valid for $3\le \ell \le 4$:
\begin{eqnarray*}
\lefteqn{-\nabla \cdot \aa \nabla w_{\ell}(u_{\hom , \ell}) }
\\
&=& f+\sum_{j=1}^{\ell-1} \varphi_j \cdot \nabla^j f +\sum_{j=\ell-2}^{\ell-1} (\varphi_j \otimes \bb )\cdot \nabla^{j+2}f-\sum_{j=\ell-2}^{\ell-1} (\varphi_j \otimes \cc)\cdot \nabla^{j+4}u_{\hom , \ell}
\\ 
&&-\sum_{j=0}^{\ell-2}(\varphi_j \otimes \bb ^{\otimes 2})\cdot \nabla^{j+4}f-\sum_{j=0}^{\ell-2}(\varphi_j \otimes \bb \otimes \cc)\cdot \nabla^{j+6}u_{\hom , \ell}  
\\  
&&+\nabla\chi_{\ell}\cdot \nabla^{\ell+2}u_{\hom , \ell}-\nabla \cdot \left[ (\aa \otimes \varphi_{\ell}-\sigma_{\ell}+\nabla\chi_{\ell})\cdot\nabla^{\ell+1}u_{\hom , \ell}\right].
\end{eqnarray*}
Substracting this from \eqref{e.origin-ell} yields for $h_{\ell}:=u_{\ell}-u_{\hom , \ell}$:
\begin{multline*}
-\nabla \cdot \aa \nabla h_\ell \,=\,\nabla \cdot \left[ (\aa \otimes \varphi_{\ell}-\sigma_{\ell}+\nabla\chi_{\ell})\cdot\nabla^{\ell+1}u_{\hom , \ell}\right]
\\
-\nabla\chi_{\ell}\cdot \nabla^{\ell+2}u_{\hom , \ell} + \sum_{j=\ell-2}^{\ell-1} (\varphi_j \otimes \bb )\cdot \nabla^{j+2}f-\sum_{j=\ell-2}^{\ell-1} (\varphi_j \otimes \cc)\cdot \nabla^{j+4}u_{\hom , \ell}
\\ 
+\sum_{j=0}^{\ell-2}(\varphi_j \otimes \bb ^{\otimes 2})\cdot \nabla^{j+4}f-\sum_{j=0}^{\ell-2}(\varphi_j \otimes \bb \otimes \cc)\cdot \nabla^{j+6}u_{\hom , \ell}  .
\end{multline*}
Before we proceed with the energy estimate, we reformulate the (higher-order) last RHS term using the homogenized equation to reduce the number of derivatives on $u_{\hom , \ell}$ (and ultimately improve the estimate).
More precisely, we shall use \eqref{e.hom-eq-alaDLS} in the form
\begin{equation*}
\nabla^{\ell}(\textbf{c}\cdot \nabla^{4} u_{\ho,\ell})\,=\, \nabla^\ell(\nabla \cdot \bar \aa_0  \nabla u_{\ho,\ell} +f-\textbf{b}\cdot \nabla^2 f),
\end{equation*}
so that the equation turns into
\begin{multline*}
-\nabla \cdot \aa \nabla h_\ell \,=\,\nabla \cdot \left[ (\aa \otimes \varphi_{\ell}-\sigma_{\ell}+\nabla\chi_{\ell})\cdot\nabla^{\ell+1}u_{\hom , \ell}\right]
\\
-\nabla\chi_{\ell}\cdot \nabla^{\ell+2}u_{\hom , \ell} + \sum_{j=\ell-2}^{\ell-1} (\varphi_j \otimes \bb )\cdot \nabla^{j+2}f-\sum_{j=\ell-2}^{\ell-1} (\varphi_j \otimes \cc)\cdot \nabla^{j+4}u_{\hom , \ell}
\\ 
+\sum_{j=0}^{\ell-2}(\varphi_j \otimes \bb ^{\otimes 2})\cdot \nabla^{j+4}f-\sum_{j=0}^{\ell-3}(\varphi_j \otimes \bb \otimes \cc)\cdot \nabla^{j+6}u_{\hom , \ell} 
\\
 -(\phi_{\ell-2}\otimes \bb) \cdot \nabla^\ell(\nabla \cdot \bar \aa_0  \nabla u_{\ho,\ell} +f-\textbf{b}\cdot \nabla^2 f).
\end{multline*}
The desired estimate  \eqref{e.multiscale-DLS} now follows as in Substep~3.2.

%%%%%%%%%%%%%%%%%
%%%%%%%%%%%%%%%%%
%%%%%%%%%%%%%%%%%
%%%%%%%%%%%%%%%%%
%%%%%%%%%%%%%%%%%
%%%%%%%%%%%%%%%%%

\section{Asymptotic ballistic transport of classical waves}\label{sec:deloc}

\subsection{Statement of the result}

The following definition introduces the notion of asymptotic ballistic transport
for the wave operator $\square=\partial^2_{tt} -\nabla \cdot \aa \nabla$.
\begin{defi}\label{defi:deloc}
Denote by $S:\R_+\times L^2(\R^d)\to L^2(\R^d),(t,u_0)\mapsto S_t(u_0)$ the semi-group associated with the initial value problem
\begin{equation*}
\left\{
\begin{array}{rcl}
\square S_t(u_0)&=&0,
\\
S_0(u_0)&=&u_0,
\\
\partial_t S_t(u_0)|_{t=0}&=&0.
\end{array}
\right.
\end{equation*}
For all $\lambda>0$, let $G_\lambda$ be the centered Gaussian normalized in $L^2(\R^d)$ and of support of size $\textcolor{red}{\lambda^{-1}}$, that is,
\begin{equation}\label{def:gaus}
G_\lambda(x)=\big(\frac{\lambda}{\pi}\big)^{d/2} \exp(-\frac12 \lambda^2 |x|^2).
\end{equation}
For all $T\ge 0$, we set
\begin{equation*}
M(\lambda,T)\,:=\,   \Big(\int_{\R^d} (1+ \lambda |x|)^2 S_T(G_\lambda)^2dx\Big)^\frac12,
\quad
\calM(\lambda,T)\,:=\, \Big(\fint_T^{T+\lambda^{-1}} \expec{M(\lambda,t)^2} dt\Big)^\frac12.
\end{equation*}
We say that $\square$ displays \emph{ballistic transport} at energy $\lambda>0$ if
for all $T\ge 0$,
\begin{equation}\label{e.dyn-deloc}
\calM(\lambda,\lambda^{-1} T)\,\gtrsim \,T.
\end{equation}
We say that $\square$ displays \emph{asymptotic ballistic transport} at $0$ of order $\gamma\ge 0$
if there exists $T>0$ such that for all $0<\e\ll 1$ small enough
\begin{equation}\label{e.as-dyn-deloc}
\calM(\e,\e^{-2-\gamma}T) \,\gtrsim\, \e^{-1-\gamma}T.
\end{equation}
\qed
\end{defi}
Let us comment on this definition.
First note that $\calM(\lambda,0)\sim 1$ by a direct calculation.
In the definition of  $\calM(\lambda,T)$ we average in time over $(T,T+\lambda^{-1})$ instead of considering a pointwise-in-time quantity: indeed, the $L^2$-norm is not a conserved quantity (the invariant quantity involves the kinetic energy as well) and may vanish at some specific times, but not on average (the choice of time $\lambda^{-1}$ is related to the expected speed $\lambda$ of the wave).
Ballistic transport of an initial wave $G_\lambda$ takes place if this wave is essentially transported at speed $\lambda$ (as it is the case for 
a constant-coefficient wave equation). In particular, if $\aa\equiv \Id$, a direct calculation yields for all $\lambda>0$ and $\tau\ge 0$
\begin{equation*}\label{e.ballistic-explain}
\calM(\lambda,\lambda^{-1} T)\,\sim \, T,
\end{equation*}
(that is, $\frac1CT \le \calM(\lambda,\lambda^{-1} T)\le CT$ for some multiplicative constant $C$ independent of $\lambda>0$ and $T\ge 1$),
which, in view of the weighted norm, illustrates that most of the mass is transported at distance $T$ from the origin. This explains \eqref{e.dyn-deloc}.

Let us turn to \eqref{e.as-dyn-deloc}.
Transport is only significant if the support of $G_\lambda$ has moved, 
which requires $T$ in \eqref{e.dyn-deloc} to be at least of order $\lambda^{-2}$ (since the support of $G_\lambda$ has size $\lambda^{-1}$
and the speed of propagation is $\lambda$).
This explains the scaling in $\e$ in \eqref{e.as-dyn-deloc}, and the
wording of \emph{asymptotic ballistic transport}:
\begin{itemize}
\item The result is \emph{asymptotic} because the final time $\tau=\e^{-2-\gamma}T$ one can consider depends on the energy level $\e$.
\item There is effective \emph{transport} because a significant part of the mass has moved by a distance which, measured in the unit $\e^{-1}$ of the typical length-scale at initial time, is bounded by below uniformly in $\e>0$. More precisely, by \eqref{e.as-dyn-deloc} and the definition of the weighted norm, this distance is of order $\e^{-1-\gamma}$, so that the ratio $\frac{\e^{-1-\gamma}}{\e^{-1}}=\e^{-\gamma}$ is isolated from zero as soon as $\gamma\ge 0$. (There would be no asymptotic transport if  \eqref{e.as-dyn-deloc} only held for some $\gamma<0$, as it is the case for the Poisson inclusions in dimensions $d\le 2$, see below).
\item The transport is \emph{ballistic} because it satisfies the ballistic scaling property \eqref{e.dyn-deloc}.
\end{itemize}

\medskip

Note that we could also consider higher-order moments and use $(1+\lambda |x|)^{2p}$ as a weight instead of $(1+\lambda |x|)^2$, in which case
the RHS of \eqref{e.as-dyn-deloc} would be replaced by $(\e^{-1-\gamma}T)^p$ in the definition (and in Theorem~\ref{t.deloc} below, the proof of which adapts straightforwardly).

\medskip

Our main result is as follows:
\begin{theo}\label{t.deloc}
Let ${\ell\ge 2}$, and assume that $(\phi_j,\sigma_j,\chi_j)_{0\le j\le \ell}$ satisfy Hypothesis~\ref{hypo:corr}
for some $\alpha=(\alpha_1,\alpha_2)\in [0,1)\times \R_+$.
Then for all $\gamma\ge 0$, we have for all $\e\ll 1$ and all $T\ge 0$,
\begin{equation}\label{t.deloc1}
\calM(\e,\e^{-2-\gamma}T)\,\gtrsim\, \e^{-1-\gamma}T (1-C\e^{\ell-1-\gamma} T\mu_\alpha(\e^{-2-\gamma}T)),
\end{equation}
where the constant $0<C<\infty$ only depends on $\bar \Gamma_\ell:=\max_{0\le j\le \ell-1} |\bar \aa_j|$, $d$, $\gamma$, $\ell$, and $\alpha$.
In particular, the associated wave operator $\square$ displays asymptotic ballistic transport at $0$ 
provided $\ell=2$ and $\alpha_1< \frac12$ or $\ell>2$ (no condition on $\alpha$), in which case
we have for all  $0\le \gamma<{\frac{\ell-1-2\alpha_1}{1+\alpha_1}}$, all $T<\infty$, and all $0< \e \ll 1$,
\begin{equation}\label{t.deloc3}
\calM(\e,\e^{-2-\gamma}T)\, \gtrsim\, \e^{-1-\gamma}T .
\end{equation}
In the borderline case  ${\ell=2}$ and $\alpha=(\frac12,0)$, $\square$ displays asymptotic ballistic transport at $0$ in the sense that
for $\gamma=0$ and all $0<T\ll1$ small enough, we have for all $0< \e \ll 1$,
\begin{equation}\label{t.deloc2}
\calM(\e,\e^{-2}T)\, \gtrsim\, \e^{-1}T.
\end{equation}
\qed
\end{theo}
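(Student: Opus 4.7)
The plan is to apply Theorem~\ref{t.wave-eq} with initial datum $u_0 = G_\e$ and reduce the lower bound on $\calM(\e,t)$ to an explicit Fourier computation on the Taylor--Bloch approximation $u_{\e,\ell}(t,\cdot)$. Writing $u_\e := S_t(G_\e)$ and setting
\[
A_{\e,\ell}(t)^2 \,:=\, \int_{\R^d}(1+\e|x|)^2 u_{\e,\ell}(t,x)^2\,dx, \qquad E_{\e,\ell}(t) \,:=\, \|(1+\e|\cdot|)(u_\e-u_{\e,\ell})\|_{L^2(\R^d)},
\]
the reverse triangle inequality yields $M(\e,t) \ge A_{\e,\ell}(t) - E_{\e,\ell}(t)$. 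Via the elementary inequality $(a-b)^2 \ge \tfrac12 a^2 - b^2$, estimate~\eqref{t.deloc1} reduces to two separate bounds: (i) a deterministic lower bound on the time-averaged main term, $\fint_T^{T+\e^{-1}} A_{\e,\ell}(t)^2\,dt \gtrsim (\e^{-1-\gamma}T)^2$ at $t = \e^{-2-\gamma}T$, and (ii) a stochastic upper bound on the weighted error, $\fint_T^{T+\e^{-1}}\expec{E_{\e,\ell}(t)^2}\,dt \lesssim (\e^{-1-\gamma}T)^2 \cdot (\e^{\ell-1-\gamma}T\mu_\alpha(\e^{-2-\gamma}T))^2$.

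For~(i), Plancherel gives $\int|x|^2 u_{\e,\ell}(t,x)^2\,dx = (2\pi)^{-d}\int|\nabla_k \hat u_{\e,\ell}(t,k)|^2\,dk$, with $\hat u_{\e,\ell}(t,k) = \omega_\ell(\e|k|)\hat u_0(k)\cos(\e^{-1}\Lambda_\ell(\e k)t)$ and $\hat u_0 = \hat G_\e$ concentrated on $|k|\sim\e$. Among the three Leibniz contributions, the one where $\nabla_k$ falls on the cosine dominates for $t \gg \e^{-1}$, producing $-t\sin(\e^{-1}\Lambda_\ell(\e k)t)\,\nabla\Lambda_\ell(\e k)$ with $|\nabla\Lambda_\ell(\e k)|\sim 1$ uniformly on $|k|\sim\e$, by the quadratic expansion $\Lambda_\ell(\eta)^2 \sim \eta\cdot\bar\aa_0\eta$ for $|\eta|\ll 1$ (from Definition~\ref{defi:Bloch} and the ellipticity $\bar\aa_0 \ge \Id$ of Proposition~\ref{prop:properties-lambda}). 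The time-average $\fint_T^{T+\e^{-1}}\sin^2(\omega(k)t)\,dt$ with $\omega(k) := \e^{-1}\Lambda_\ell(\e k) \sim \sqrt{k\cdot\bar\aa_0 k}$ equals $\tfrac12$ modulo a $k$-oscillatory remainder of size $O(\e/\omega(k))$, which, integrated against the smooth Gaussian $|\hat u_0|^2$, is $o(1)$ by a single integration by parts in $k$ for $T \gg \e^{-1}$. This yields $\fint_T^{T+\e^{-1}} A_{\e,\ell}(t)^2\,dt \gtrsim (\e t)^2 = (\e^{-1-\gamma}T)^2$.

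The main obstacle is~(ii), the sharp weighted error estimate: since $u_\e - u_{\e,\ell}$ has its wave front at $|x| \sim t$, the naive bound $(1+\e|x|) \lesssim 1+\e t$ combined with Theorem~\ref{t.wave-eq} is short by one power of $\e$ in the relevant regime, and a direct weighted energy estimate is required. I would obtain it by revisiting the proof of Proposition~\ref{prop:energy-wave}: on the Fourier side, the weight $|x|$ corresponds to $\nabla_k$ acting on the symbols $\hat F_{j,t,\e,\ell}$ of~\eqref{e.Fourier-sym1}--\eqref{e.Fourier-sym2}, whose $k$-derivatives were already analyzed in Substep~3.1 of that proof (the factor $T$ arising when $\nabla_k$ falls on the trigonometric functions is precisely what encodes ballistic propagation, and crucially no extra $\mu_\alpha$ penalty is produced by such a derivative). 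On the physical side, this corresponds to performing a weighted energy estimate with weight $(1+\e|x|)^2$, in which commutators with $\nabla\cdot\aa_\e\nabla$ are of lower order because each gradient in the rescaled operator carries a factor $\e$.

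With~\eqref{t.deloc1} in hand, the asymptotic ballistic transport claims~\eqref{t.deloc3} and~\eqref{t.deloc2} follow by ensuring that $C\e^{\ell-1-\gamma}T\mu_\alpha(\e^{-2-\gamma}T) \le 1/2$. Using $\mu_\alpha(s) \lesssim (1+s)^{\alpha_1}\log^{\alpha_2}(2+s)$, the leading $\e$-power of this quantity is $\ell - 1 - \gamma - (2+\gamma)\alpha_1$, which is positive (so that the factor vanishes as $\e \downarrow 0$ for arbitrary fixed $T$) precisely when $\gamma < (\ell-1-2\alpha_1)/(1+\alpha_1)$, giving~\eqref{t.deloc3}; in the borderline case $\ell=2$, $\alpha = (\tfrac12,0)$, $\gamma=0$, the quantity reduces to $CT^{3/2}$, which is $\le 1/2$ for $T$ sufficiently small, yielding~\eqref{t.deloc2}.
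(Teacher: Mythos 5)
Your overall skeleton — Plancherel on the approximate solution, the ballistic factor $t$ produced when $\nabla_k$ hits the cosine, time-averaging of $\sin^2$ on the frequency shell where the phase oscillates, and the exponent bookkeeping giving $\gamma<\frac{\ell-1-2\alpha_1}{1+\alpha_1}$ and $T^{3/2}$ in the borderline case — matches the paper. But your step (ii), which you yourself identify as the main obstacle, contains both an arithmetic slip and a genuine gap. The slip: the ``naive'' bound (weight $\lesssim 1+\e t$ times the unweighted error of Theorem~\ref{t.wave-eq}) is \emph{not} short by a power of $\e$; at $t=\e^{-2-\gamma}T$ it gives a weighted error of order $\e^{-1-\gamma}T\big(\e+\e^{\ell}\mu_\alpha(\e^{-1})+\e^{\ell-1-\gamma}T\mu_\alpha(\e^{-2-\gamma}T)\big)$, whose additional terms are $O(\e)$ relative to the main term $\e^{-1-\gamma}T$ and therefore harmless for \eqref{t.deloc1}. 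The real difficulty is different: replacing the weight $(1+\e|x|)$ by $1+\e t$ on the error is not justified, since neither $u_\e$ nor the dispersive band-limited approximation $u_{\e,\ell}$ is supported in the light cone, so some localization or decay argument is indispensable.

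This is exactly where your proposed substitute — a weighted error estimate obtained by ``revisiting'' Proposition~\ref{prop:energy-wave} — is not carried out and is doubtful as sketched. The error $u_\e-u_{\e,\ell}$ is not a Fourier multiplier applied to $u_0$: it solves $\square_\e$ with corrector-dependent source terms built from the eigendefect, so differentiating the symbols \eqref{e.Fourier-sym1}--\eqref{e.Fourier-sym2} does not control its weighted norm; and on the physical side the commutators, although each carries a factor $\e$ from $\nabla(1+\e|x|)$, accumulate over the relevant times $\tau\sim\e^{-1-\gamma}T$ (a crude Gr\"onwall argument produces $e^{C\e\tau}=e^{C\e^{-\gamma}T}$), so ``lower order'' is precisely what needs proof. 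The paper explicitly observes that weighted error estimates are not available and avoids them altogether: it inserts a Gaussian cutoff $G_{(\mathcal C T)^{-1}}$ of width $\mathcal CT$ into the moment of the \emph{approximate} solution, so that $|x|^2G_{(\mathcal CT)^{-1}}^2(x)\lesssim \mathcal C^2T^2$ pointwise and the error is handled by the plain $L^2$ estimate of Theorem~\ref{t.wave-eq}. The price is paid in your easy step (i): the lower bound must be proved for the \emph{localized} moment, which requires showing that convolution with the peaked Gaussian of width $(\mathcal CT)^{-1}$ in Fourier space acts on $\nabla_k\hat u_{\e,\ell}$ as a Dirac mass up to errors $O(1/\mathcal C)$, and then choosing $\mathcal C$ large. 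Unless you either adopt such a localization or actually establish the weighted error estimate uniformly over times $\e^{-1-\gamma}T$, your argument for \eqref{t.deloc1} is incomplete.
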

If the extended correctors blow up more rapidly than in the assumptions of Theorem~\ref{t.deloc}, we cannot conclude that the support at final time has moved with respect
to the support at initial time in the asymptotic regime $\e \downarrow 0$.
Let us make this result more explicit in three interesting examples:  
\begin{itemize}
\item For periodic coefficients, one can prove that the multiplicative constant in \eqref{t.deloc3} only grows exponentially with $\gamma$, so that for  $0< \e \ll 1$ small enough, one may upgrade  \eqref{t.deloc3} to $\calM(\e,\e^{-1}T)\, \gtrsim\,  T$ for all $T\ge 0$, and obtain ballistic transport at all times (thus recovering this classical result for low frequencies without explicit use of the Bloch theorem).
\item For smooth quasi-periodic coefficients satisfying a diophantine condition, there is asymptotic ballistic transport in any dimension at any order $\gamma\ge 0$. This is however not quite enough to prove ballistic transport at all times since the multiplicative constant in \eqref{t.deloc3} grows in this case more than exponentially with $\gamma$.  We believe there could be ballistic transport at all times, although our approach currently fails to prove so.
\item For Poisson random inclusions (or Gaussian coefficient fields with compactly supported correlations), there is asymptotic ballistic transport in dimensions $d>2$ (cf. \cite{GO3,GO4} and Appendix~\ref{sec:corr}
for the desired bounds on the correctors). More precisely, for all $T\ge 0$ and all $0\le \gamma< [\frac d2]-1$ we have  $\calM(\e,\e^{-2-\gamma}T)\, \gtrsim\,  \e^{-1-\gamma}T$. For odd dimensions $d\ge 3$, one can choose
$\gamma= [\frac d2]-1$ provided $0<T\ll 1$.
In particular, the scaling for asymptotic ballistic transport improves with dimension.
\end{itemize}
Corresponding results for more general statistics of $\aa$ follow in a straightforward way from Theorem~\ref{t.deloc} and Appendix~\ref{sec:corr}.
\begin{rem}
Theorem~\ref{t.deloc} is stated for second moments in probability in view of the definition of $\calM(\lambda,T)$.
As already pointed out in Remark~\ref{rem:sto-int}, if one makes stronger assumptions on the growth of the correctors in probability, 
one gets stronger results in terms of stochastic integrability.
In all the stochastic examples of this article, the bounds we have on the growth of correctors are indeed quenched (or ``path-wise'' if we were talking about thermal fluctuations): they hold almost surely up to multiplicative constants which are random but have (typically) stretched exponential moments.
In particular, in these cases, the asymptotic transport result of Theorem~\ref{t.deloc} is also quenched (in the sense we do not need to take the expectation in the definition of $\calM(\lambda,T)$, in which case the constant $C$ in the RHS of \eqref{t.deloc1} is random with stretched exponential moments).
\qed
\end{rem}

\medskip

Let us emphasize that the choice of a Gaussian initial condition in Theorem~\ref{t.deloc} is convenient but not essential to the proof: we could indeed consider any properly-rescaled function of the Schwartz class. 
In terms of spectral interpretation of Theorem~\ref{t.deloc}, \eqref{t.deloc3} essentially suggests that if there are localized states at energy $\e\ll 1$,
then their supports are expected to scale like at least as $\e^{-1-\gamma}$ (in the spirit of the results \cite{C} for the Schr\"odinger operator).

\subsection{Proof of Theorem~\ref{t.deloc}: Asymptotic ballistic transport}

The general strategy is as follows: To prove asymptotic ballistic transport of the solution to the wave equation, we first consider
an approximation of the solution by Taylor-Bloch waves, then prove asymptotic ballistic transport for this approximate solution, and finally conclude
that the approximation is good enough so that the exact solution inherits the transport properties of the approximate solution.
More precisely, we split the proof into three steps. In the first step we rescale the problem in order to place ourselves in the framework of Section~\ref{sec:wave}
and appeal to Taylor-Bloch waves.
The next step consists in showing that the approximate solution (in form of explicit Taylor-Bloch waves) displays the desired asymptotic ballistic transport,
which is the aim of Step~2. A possible strategy could have been to directly rely on the homogenized wave equation to prove the asymptotic ballistic transport.
The difficulty is that we do not have error estimates in weighted spaces (whereas we have to integrate with respect to $|x|^2dx$ to prove ballistic transport), so
that this natural approach might not be applicable. Instead, we first localize in space (which allows to estimate $|x|^2$ by its supremum on the bounded domain),
and introduce a proxy $\mathcal M_\ell$ for $\mathcal M$. In order not to destroy the structure in frequency space, we localize with Gaussians in the definition
of $\mathcal M_\ell$.
Asymptotic ballistic transport amounts to controlling the quantity $\mathcal M_\ell$ by below. This quantity is an integral with respect to a Gaussian times $|x|^2dx$,
which is easier to estimate in Fourier space --- Step~2 is the most technical step.
In the last step, it remains to show that the solution displays asymptotic ballistic transport if the approximate solution does,  which we prove by combining
the results of Section~\ref{sec:wave} with the decay of the Gaussian cut-off.

\medskip

\step{1} Reformulation.

\noindent By the hyperbolic rescaling $(t,x)\leadsto (t',x')=(\e t,\e x)$, the moment takes the form
$$
\calM(\e,\e^{-1}T)\,=\, \Big(\int_T^{T+1} \expec{\int_{\R^d} (1+ |x|)^2 u_\e(t,x)^2 dx} dt\Big)^\frac12
$$
where $u_\e$ solves the initial value problem
\begin{equation*}
\left\{
\begin{array}{rcl}
\square_\e u_\e(t,x)&=&0,
\\
u_\e(0,x)&=&G_1(x),
\\
\partial_t u_\e(0,x)&=&0,
\end{array}
\right.
\end{equation*}
and $G_1$ is the Gaussian defined in \eqref{def:gaus}. In particular, we are in the realm of large-time homogenization.
Recall the approximate solution of Theorem~\ref{t.wave-eq} in Section~\ref{sec:wave}, given by
\begin{eqnarray*}
u_{\e,\ell}(t,x) &=&  \frac{1}{(2\pi)^d} \int_{\R^d} \omega_{\ell}(\e |k|) \hat G_1(k) e^{ik\cdot x} \cos(\e^{-1} \Lambda_\ell(\e k)t)dk,
\end{eqnarray*}
where $\Lambda_\ell(k)\,:=\,  \sqrt{\tilde \lambda_{k,\ell}}$, which is well-defined since $\tilde \lambda_{\e k,\ell}\ge 0$ when $\omega_{\ell}(\e |k|) \ne 0$. 
We shall compare the moment $\calM(\e,\e^{-1}T)$ to some related moment of the approximate solution $u_{\e,\ell}$.
For reasons which will be clear in Step~3 below, we need a localized moment for the approximate solution $u_{\e,\ell}$.
For some $\mathcal C\gg 1$ that will be fixed in Step~2 (and ultimately only depends on $\bar \Gamma_\ell$ and $d$), we set
$$
\calM_\ell(\e,\e^{-1}T)\,:=\, \Big(\big( \frac{\mathcal CT}{2}\big)^{d}\ \int_T^{T+1} {\int_{\R^d} (1+ |x|)^2 u_{\e,\ell}(t,x)^2G_{( \mathcal CT)^{-1}}^2(x) dx} dt\Big)^\frac12,
$$
where $G_{(\mathcal CT)^{-1}}$ is our Gaussian function \eqref{def:gaus}.
Note that $\sup_{\R^d} (\frac{\mathcal CT}{2}\big)^{d} G_{(\mathcal CT)^{-1}}^2 \lesssim 1$.
We shall argue in Step~2 that $\calM_\ell(\e,\e^{-1}T)$ has the desired ballistic scaling in time provided $\mathcal C$ is chosen large enough,
and then argue in Step~3 that the moment of $u_\e$ is indeed essentially bounded by below by  $\calM_\ell(\e,\e^{-1}T)$ using Theorem~\ref{t.wave-eq} and our choice of localizing the moment of the approximate solution $u_{\e,\ell}$.

\medskip

\step{2} Moment of the approximate solution.

\noindent 
In this step, we use the notation $\lesssim$ to denote $\le C\times$ for a constant $C<\infty$ which \emph{does not} depend on 
the constant $\mathcal C$ of the Gaussian kernel, and we always make the dependence upon $\mathcal C$ explicit.
By Plancherel's formula, we may reformulate the moment of $u_{\e,\ell}$ as
\begin{eqnarray*}
\calM_\ell(\e,\e^{-1}T)^2&=&\big( \frac{\mathcal CT}{2}\big)^{d}\int_T^{T+1} \int_{\R^d} (1+|x|)^2 u_{\e,\ell}^2(t,x)G_{(\mathcal CT)^{-1}}^2(x)dx dt \nonumber \\
&\ge &\big( \frac{\mathcal CT}{2}\big)^{d} \int_T^{T+1} \int_{\R^d} |x|^2 u_{\e,\ell}^2(t,x)G_{(\mathcal CT)^{-1}}^2(x)dx dt \nonumber \\
&=&\int_T^{T+1} \int_{\R^d} |\nabla_k \hat u_{\e,\ell}* \big( \frac{\mathcal CT}{2}\big)^{d/2} \hat G_{(\mathcal CT)^{-1}}|^2(t,k)dkdt.
\end{eqnarray*}
Since $\hat G_{(\mathcal CT)^{-1}}(k)=(2\pi)^{d/2} G_{\mathcal CT}(k)$ and with the notation 
$$\bar G_{\mathcal CT}(k)\,:=\, (\frac{\mathcal CT}{\sqrt{2\pi}})^{d} \exp(-\frac12 \mathcal C^2T^2|k|^2)$$
 (so that this Gaussian has mass unity), we may write the above as
\begin{equation}
\calM_\ell(\e,\e^{-1}T)^2 \,\geq  \, \int_T^{T+1} \int_{\R^d} |\nabla_k \hat u_{\e,\ell}* \bar G_{\mathcal CT}|^2(t,k)dkdt \label{e.T0}.
\end{equation}
Using the following more explicit formula for $u_{\e,\ell}$
\begin{eqnarray*}
u_{\e,\ell}(t,x) &=&  \frac{1}{(2\pi)^d} \int_{\R^d} \omega_{\ell}(\e |k|) (2\pi)^{d/2}\exp(- \frac{|k|^2}{2} ) e^{ik\cdot x} \cos(\e^{-1} \Lambda_\ell(\e k)t)dk,
\end{eqnarray*}
we have
\begin{eqnarray*}
\nabla_k \hat u_{\e,\ell}(t,k)&=& \underbrace{\e \frac{k}{|k|}  \omega_{\ell}'(\e |k|)  (2\pi)^{d/2}\exp(- \frac{|k|^2}{2} ) \cos(\e^{-1} \Lambda_\ell(\e k)t) 
-k \hat u_{\e,\ell}(t,k)}_{\dps =:\,\mathcal T_{1,\e,k,t}}\\
&&
\\
&&- \underbrace{t \nabla \Lambda_\ell(\e k) \omega_{\ell}(\e |k|)  (2\pi)^{d/2}\exp(- \frac{|k|^2}{2} )  \sin(\e^{-1} \Lambda_\ell(\e k)t)}_{\dps =:\,\mathcal T_{2,\e,k,t}}.
\end{eqnarray*}
The dominating term is $\mathcal T_{2,\e,k,t}$, which displays the desired ballistic scaling $t$.
We first prove that the contribution of $\mathcal T_{1,\e,k,t}$ remains of order 1 (this is an \emph{upper bound}), and then 
show that the contribution of $\mathcal T_{2,\e,k,t}$ is indeed ballistic (this is a \emph{lower bound}).
On the one hand, by Young's inequality for convolutions,
$$
\int_T^{T+1} \int_{\R^d}|\mathcal T_{1,\e,k,t} * \bar G_{\mathcal CT}|^2(t,k)dkdt
\,\leq \, \int_T^{T+1} \int_{\R^d}|\mathcal T_{1,\e,k,t}|^2(t,k)dkdt.
$$
On the other hand, by the boundedness of $\omega_{\ell}'$ and $\cos$, this yields  for all $\e \le 1$
\begin{equation}\label{e.T1}
\int_T^{T+1} \int_{\R^d} |\mathcal T_{1,\e,k,t}*   \bar G_{\mathcal CT}|^2dkdt \,\lesssim  \,  \int_T^{T+1} \int_{\R^d} (1+|k|)^2\exp(- |k|^2 )dk dt \,\lesssim \, 1
\end{equation}
(the multiplicative constant is independent of $\mathcal C$).
Let us turn to the contribution of $\mathcal T_{2,\e,k,t}$, which is slightly more subtle.
We write the convolution as follows:
\begin{equation}\label{e.T20}
\mathcal T_{2,\e,k,t} * \bar G_{\mathcal CT} \,=\, \mathcal T_{2,\e,k,t}+\int_{\R^d} (\mathcal T_{2,\e,k',t}-\mathcal T_{2,\e,k,t}) \bar G_{\mathcal CT}(k-k')dk'.
\end{equation}
Indeed, we expect the Gaussian $\bar G_{\mathcal CT}$ to be peaked enough so that it acts as a Dirac mass on $\mathcal T_{2,\e,k,t}$ at leading order, 
which allows us to prove the desired ballistic lower bound.
In order to prove that this decomposition is valid (that is, that the second RHS term is higher order), we start by estimating the first RHS term of \eqref{e.T20} by below.
By Fubini's theorem,
\begin{multline*}
\int_T^{T+1} \int_{\R^d} |\mathcal T_{2,\e,k,t}|^2dkdt \\
\gtrsim \, \int_{\R^d} T^2 |\nabla \Lambda_\ell(\e k)|^2 \omega_{\ell}^2(\e |k|) \exp(- |k|^2)\int_T^{T+1}
\sin^2(\e^{-1} \Lambda_\ell(\e k)t)dtdk.
\end{multline*}
By definition of $\Km$  and $\omega_\ell$ in Section~\ref{sec:Kmax}, for all $\e>0$ and $k\in \R^d$ such that $\omega_\ell(\e |k|)\ne 0$, we have $(\e|k|)^{-1} \Lambda_\ell(\e k)\ge \frac12$. In particular,
$\e^{-1} \Lambda_\ell(\e k)\ge \frac12$ for $|k|\ge 1$, so that for all $k\in \R^d$ such that $\omega_\ell(\e |k|)\ne 0$,
$$
\int_T^{T+1}
\sin^2(\e^{-1} \Lambda_\ell(\e k)t)dt \, \gtrsim \, 1_{|k|\ge 1},
$$
and therefore
$$
\int_T^{T+1} \int_{\R^d} |\mathcal T_{2,\e,k,t}|^2dkdt \,\gtrsim \, T^2 \int_{|k|\ge 1}  |\nabla \Lambda_\ell(\e k)|^2 \omega_{\ell}^2(\e |k|) \exp(- |k|^2) dk.
$$
Recall that for all $k=\kappa e \in \R^d$ such that $\omega_{\ell}(|k|)\ne 0$, 
$$
\Lambda_\ell(k)= \sqrt{\tilde \lambda_{k,\ell}}\,=\,  \kappa\sqrt{\sum_{j\ge 0,2j<\ell} (-1)^j\kappa^{2j} \lambda_{2j}^e},
$$
so that
$$
 |\nabla \Lambda_\ell( k)|\, \ge \, \sqrt{e\cdot \bar \aa_0 e}-c\kappa^{[\ell/2]} \ge 1-c|k|^{[\ell/2]}
$$
for some $0<c<\infty$ that only depends on the ellipticity constant $\Lambda$ and the dimension~$d$. 
Up to slightly reducing $\Km>0$, this yields for all $|k|\le \Km$, $|\nabla \Lambda_\ell(k)| \omega_{\ell}( |k|)\gtrsim 1$.
Hence, for all $\e\ll1$ small enough (where smallness depends only on $\bar \Gamma_\ell$ and $d$), we have
\begin{equation}\label{e.T2}
\int_T^{T+1} \int_{\R^d} |\mathcal T_{2,\e,k,t}|^2dkdt \,\gtrsim \, T^2 \int_{\e^{-1}\Km\ge |k|\ge 1} \exp(- |k|^2) dk\, \gtrsim \, T^2.
\end{equation}
We now address the second RHS term in \eqref{e.T20}.
For all $k\in \R^d$ we write
\begin{multline*}
\int_{\R^d} (\mathcal T_{2,\e,k',t}-\mathcal T_{2,\e,k,t}) \bar G_{\mathcal CT}(k-k')dk'
\,=\,
\int_{|k'-k|\le \frac{|k|}{4} \vee 2} (\mathcal T_{2,\e,k',t}-\mathcal T_{2,\e,k,t}) \bar G_{\mathcal CT}(k-k')dk'
\\
+\int_{|k'-k|> \frac{|k|}{4} \vee 2} (\mathcal T_{2,\e,k',t}-\mathcal T_{2,\e,k,t}) \bar G_{\mathcal CT}(k-k')dk'.
\end{multline*}
For the first integral term we use a Lipschitz bound on $k'\mapsto \mathcal T_{2,\e,k',t}$, whereas for the second integral term we exploit the
exponential decay of the averaging kernel.
Indeed, 
\begin{multline*}
\nabla \mathcal T_{2,\e,k',t}\,=\,t^2 \nabla \Lambda_\ell(\e k')\otimes  \nabla \Lambda_\ell(\e k') \omega_{\ell}(\e |k'|)  (2\pi)^{d/2}\exp(- \frac{|k'|^2}{2} )  \cos(\e^{-1} \Lambda_\ell(\e k')t) 
\\+ \e t \nabla \Lambda_\ell(\e k')\otimes  \frac{k'}{|k'|}\omega_{\ell}'(\e |k'|)  (2\pi)^{d/2}\exp(- \frac{|k'|^2}{2} )  \sin(\e^{-1} \Lambda_\ell(\e k')t) 
\\+\e t \nabla^2 \Lambda_\ell(\e k')\omega_{\ell}(\e |k'|)  (2\pi)^{d/2}\exp(- \frac{|k'|^2}{2} )  \sin(\e^{-1} \Lambda_\ell(\e k')t) 
\\ + t\nabla \Lambda_\ell(\e k')\otimes k'  \omega_{\ell}(\e |k'|)  (2\pi)^{d/2}\exp(- \frac{|k'|^2}{2} )  \sin(\e^{-1} \Lambda_\ell(\e k')t),
\end{multline*}
so that on the set $\mathcal{K}_k:=\{k':|k'-k|\le \frac{|k|}{4} \vee 2\}$, we have for all $\e\le 1$,
$$
\sup_{k'\in \mathcal{K}_k} |\nabla \mathcal T_{2,\e,k',t}| \,\lesssim \, t^2 |k|^{\ell+1} \exp(-\frac{9|k|^2}{32})\,\lesssim\, t^2 \exp(-\frac{|k|^2}{4}).
$$
We thus obtain
\begin{eqnarray}
\lefteqn{\int_{|k'-k|\le \frac{|k|}{4} \vee 2} |\mathcal T_{2,\e,k',t}-\mathcal T_{2,\e,k,t}| \bar G_{\mathcal CT}(k-k')dk'}\nonumber
\\
& \lesssim & t^2 \exp(-\frac{|k|^2}{4}) \int_{|k'-k|\le \frac{|k|}{4} \vee 2} |k-k'| \bar G_{\mathcal CT}(k-k')dk' \nonumber\\
&\lesssim & \frac{t^2}{\mathcal CT}  \exp(-\frac{|k|^2}{4})  \int_{\R^d} \mathcal CT|k-k'| \bar G_{\mathcal CT}(k-k')dk' \nonumber\\
&\lesssim & \frac{t^2}{\mathcal C T} \exp(-\frac{|k|^2}{4}), \label{e.T2-10}
\end{eqnarray}
where the multiplicative constant does depend on $\Lambda$ and $d$, but not on $\mathcal C$.
We treat now the second integral term, and simply bound $\mathcal T_{2,\e,k',t}$ by $t$: 
\begin{eqnarray*}
|\mathcal T_{2,\e,k',t}|&=&|t \nabla \Lambda_\ell(\e k') \omega_{\ell}(\e |k'|)  (2\pi)^{d/2}\exp(- \frac{|k'|^2}{2} )  \sin(\e^{-1} \Lambda_\ell(\e k')t)|
\\
&\lesssim & t |k'|^{[\ell/2]} \exp(- \frac{|k'|^2}{2} )\,\lesssim\, t.
\end{eqnarray*}
Hence, we obtain by direct integration of $\bar G_{\mathcal CT}$
\begin{eqnarray}
\int_{|k'-k|> \frac{|k|}{4} \vee 2} |\mathcal T_{2,\e,k',t}-\mathcal T_{2,\e,k,t}| \bar G_{\mathcal CT}(k-k')dk'
&\lesssim & t \int_{|k'|> \frac{|k|}{4} \vee 2}\bar G_{\mathcal CT}(k')dk'\nonumber
\\
&\lesssim & t \exp\big(-\frac14 \mathcal C^2T^2 (\frac{|k|}{4}\vee 2)^2\big)  . \label{e.T2-11}
\end{eqnarray}
Combining \eqref{e.T2-10} and  \eqref{e.T2-11}, we thus obtain
\begin{eqnarray}
\lefteqn{\int_{T}^{T+1} \int_{\R^d} \Big(\int_{\R^d} (\mathcal T_{2,\e,k',t}-\mathcal T_{2,\e,k,t}) \bar G_{\mathcal CT}(k-k')dk'\Big)^2dkdt }\nonumber
\\
&\lesssim & \int_{\R^d}\Big( \frac{T^2}{\mathcal C} \exp(-\frac{|k|^2}{4})+T^2 \exp(-\frac12 \mathcal C^2T^2 (\frac{|k|}{4}\vee 2)^2\Big) dk ,\label{e.T2-12}
\end{eqnarray}
(note that the multiplicative constant depends on $\Lambda$ and $d$ but not on $\mathcal C$).

\medskip

We are in the position to conclude this step. 
For $\e\ll 1$ and $T\ge 1$, the four estimates \eqref{e.T0}, \eqref{e.T1}, \eqref{e.T2}, and \eqref{e.T2-12} combine to 
$$
\calM_\ell(\e,\e^{-1}T)^2\,\ge \, c T^2(1-\frac1{\mathcal C}-\exp(-\frac12\mathcal C^2))
$$
for some constant $c>0$ which does not depend on $\mathcal C$.
This turns into the desired ballistic estimate
\begin{equation}\label{e.approx-ballistic}
\calM_\ell(\e,\e^{-1}T)\,\gtrsim\, T
\end{equation}
provided $\mathcal C$ is chosen large enough and $T\ge 1$.

\medskip

\step{3} Control of the error between $\calM(\e,\e^{-1}T)$ and $\calM_\ell(\e,\e^{-1}T)$.

\noindent Since $\sup_{\R^d} (\frac{\mathcal CT}{2}\big)^{d} G_{(\mathcal CT)^{-1}}^2 \lesssim 1$, we have 
$$
\calM(\e,\e^{-1}T) \,\gtrsim \, \Big(\int_T^{T+1} \expec{\int_{\R^d} |x|^2 u_\e(t,x)^2 (\frac{\mathcal CT}{2}\big)^{d} G_{(\mathcal CT)^{-1}}^2(x)dx} dt\Big)^\frac12,
$$
which, by the triangle inequality and the definition of $G_{(\mathcal CT)^{-1}}$, yields
\begin{multline}\label{e.T1000}
\calM(\e,\e^{-1}T) \,\gtrsim \, \calM_\ell(\e,\e^{-1}T)
\\
-  \Big(\int_T^{T+1} \expec{\int_{\R^d} |x|^2 |u_\e(t,x)-u_{\e,\ell}(t,x)|^2 \exp(-\frac{|x|^2}{\mathcal C^2T^2})dx} dt\Big)^\frac12.
\end{multline}
It remains to control the second RHS term. Thanks to the exponential weight, we have 
\begin{multline*}
\int_T^{T+1} \expec{\int_{\R^d} |x|^2 |u_\e(t,x)-u_{\e,\ell}(t,x)|^2 \exp(-\frac{|x|^2}{\mathcal C^2T^2})dx} dt
\\ \lesssim \,\mathcal C^2T^2  \sup_{T\le t\le T+1} \expec{\|u_\e(t)-u_{\e,\ell}(t)\|_{L^2(\R^d)}^2} .
\end{multline*}
By Theorem~\ref{t.wave-eq}, this turns into
\begin{multline}\label{e.T1001}
\int_T^{T+1} \expec{\int_{\R^d} |x|^2 |u_\e(t,x)-u_{\e,\ell}(t,x)|^2 \exp(-\frac{|x|^2}{\mathcal C^2T^2})dx} dt\\
\lesssim \, \mathcal C^2T^2\big( \max\{\e,{\e^{\ell}}\mu_\alpha(\e^{-1})\}+{\e^{\ell}}T\mu_{\alpha}(\e^{-1}T)\big)^2.
\end{multline}
The desired estimate~\eqref{t.deloc1} then follows from \eqref{e.T1000}, \eqref{e.T1001}, and \eqref{e.approx-ballistic}
with $T$ replaced by $\e^{-1-\gamma}T$ (for which the condition $\e^{-1-\gamma}T\ge 1$ for \eqref{e.approx-ballistic} in Step~2 is automatically satisfied in the
asymptotic regime $\e\ll 1$).

%%%%%%%%%%%%%%%%%
%%%%%%%%%%%%%%%%%
%%%%%%%%%%%%%%%%%
%%%%%%%%%%%%%%%%%
%%%%%%%%%%%%%%%%%
%%%%%%%%%%%%%%%%%

\appendix

\numberwithin{prop}{section} 
\numberwithin{theo}{section} 
\numberwithin{coro}{section} 
\numberwithin{lemma}{section}

\section{The case of a localized-in-time source term}\label{sec:forcing}

As emphasized in Remark~\ref{rem:forcing}, our approach allows one to deal with the alternative problem
\begin{equation}\label{i3}
\left\{
\begin{array}{rcl}
\square_\e u_\e(t,x)&=&f,
\\
u_\e(0,\cdot)&=&0,
\\
\partial_t u_\e(0,\cdot)&=&0,
\end{array}
\right.
\end{equation}
where $f$ has compact support in time (say in $[0,1]$) and in the Schwartz class in space. 
In the spirit of Section~\ref{sec:wave}, we define an approximation of $u_\e$ by
\begin{equation}\label{sol_approx_interior}
u_{\e, \ell}(t,x):=\frac{1}{(2\pi)^d}\int_{\mathbb{R}^d}\int_{0}^t \omega_\ell(\e \vert k \vert)\hat{f}(s,k)e^{ik\cdot x}\frac{\sin((\e^{-1}\Lambda_{\ell}(\e k))^2(t-s))}{(\e^{-1}\Lambda_{\ell}(\e k))^2}\psi_{\e k , \ell}\left( \frac{x}{\e}\right) \, ds \, dk , 
\end{equation}
where $\hat{f}(s,k)$ denotes the (partial) Fourier transform in the space variable only, which we may simplify in the form
\begin{equation}\label{sol_approx_interior2}
\tilde u_{\e,\ell}(t,x):=\frac{1}{(2\pi)^d}\int_{\mathbb{R}^d}\int_{0}^t \omega_\ell(\e \vert k \vert)\hat{f}(s,k)e^{ik\cdot x}\frac{\sin((\e^{-1}\Lambda_{\ell}(\e k))^2(t-s))}{(\e^{-1}\Lambda_{\ell}(\e k))^2} \, ds \, dk .
\end{equation}
We shall prove the following.
\begin{theo}\label{t.wave-eq-int}
Let $\ell\ge 1$, and assume that $(\phi_j,\sigma_j,\chi_j)_{0\le j\le \ell}$ satisfy Hypothesis~\ref{hypo:corr}
for some $\alpha\in [0,1)\times \R_+$. Then for all $T\ge 1$ and all $1\ge \e>0$, the solution $u_\e$ of \eqref{i3}
and the function $u_{\e,\ell}$ defined in \eqref{sol_approx_interior} satisfy
\begin{multline}\label{e.forcing_term}
\sup_{0\le t\le T} \expec{\|\partial_t(u_\e-u_{\e,\ell})\|_{L^2(\R^d)}^2+\|\nabla(u_\e-u_{\e,\ell})\|_{L^2(\R^d)}^2+T^{-2}\|u_\e-u_{\e,\ell}\|_{L^2(\R^d)}^2}^\frac12
 \\
 \lesssim \, C_\ell(f) (\max\{\e,\e^{\ell}\mu_\alpha(\e^{-1})\}+\e^{\ell}T\mu_{\alpha}(\e^{-1}T)),
\end{multline}
where $C_\ell(f)$ is a generic norm of $f$ which only depends on $\ell$ and $d$, and is finite for measurable $f:\R_+\times \R_d\to \R$ supported in $[0,1]$ in time, such that $f(s,\cdot)\in \mathcal S(\R^d)$ for all $s\in [0,1]$ and such that for all $n\in \N$, $\int_0^1 \|f(s,\cdot)\|_{H^n(\R^d)}ds<\infty$.
Likewise, we have for the simplified version $\tilde u_{\e,\ell}$ of $u_{\e,\ell}$ defined in  \eqref{sol_approx_interior2} 
\begin{equation}\label{e.forcing_term2}
\sup_{0\le t\le T} \expec{\|u_\e-\tilde u_{\e,\ell}\|_{L^2(\R^d)}^2}^\frac12
 \, \lesssim \, C_\ell(f) (T \max\{\e,\e^{\ell}\mu_\alpha(\e^{-1})\}+\e^{\ell}T\mu_{\alpha}(\e^{-1}T)).
\end{equation}
\qed
\end{theo}
\begin{rem}
Compared to Theorem~\ref{t.wave-eq}, Theorem~\ref{t.wave-eq-int} has the advantage to yield accuracy in the energy norm on top of the $L^2$-norm (provided
we keep the correctors). This owes to
the fact that with a source term rather than a nontrivial initial condition, the problem is naturally well-prepared.
The factor $T^{-1}$ in front of the $L^2$-norm of the error is the natural scaling since the only a priori bound on the $L^2$-norm of the solution $u_\e$ in general 
is precisely
$
\int u_\e^2(x,T)dx \,\lesssim \, T^2,
$
so that the accuracy of the error estimate at the level of the $L^2$-norm remains unchanged in relative terms. 
 \qed
\end{rem}
\begin{rem}
Likewise, we can consider a nontrivial initial velocity and vanishing initial position and forcing term, in which case \eqref{e.forcing_term2} still holds while \eqref{e.forcing_term} only survives at the level of the $L^2$-norm only (with a RHS depending on norms of the initial velocity).
\qed
\end{rem}
The proof of Theorem~\ref{t.wave-eq-int} is similar to the proof of Theorem~\ref{t.wave-eq} and relies on the following two arguments:
\begin{itemize}
\item the source term in the interior can be replaced by a well-prepared source term $f_{\e,\ell}$ up to an error uniformly small (in the energy norm) in time;
\item the fact that the Taylor-Bloch waves almost diagonalize the wave operator, and that the error due to the eigendefects can be controlled by suitable energy estimates on the wave equation with well-prepared source term.
\end{itemize}
\begin{lemma}\label{lem:prepare-ap}
Let $\ell\ge 1$, and let $f_{\e,\ell} \in L^2(\R^d)$ be defined by 
$$
f_{\e,\ell}(t, x)\,:=\,\sum_{j=0}^{\ell}\varepsilon^j \varphi_j\big( \frac{x}{\varepsilon}\big) \cdot \nabla^j f_{\e}(t,x),
$$
where $f_{\e}(t,\cdot):=\calF^{-1}(\omega_{\ell}(\e|\cdot|)\hat f(t, \cdot))$, and where $\varphi_j$ stands for the (symmetric) $j$-th order tensor such that $\varphi_j \cdot e^{\otimes j}=\varphi_j^e$, the $j$-th corrector in direction $e$.
Consider the unique weak solution $v_{\e,\ell}\in L^\infty(\R_+,L^2(\R^d))$ of the initial value problem
\begin{equation}\label{e.eq-velleps-append}
\left\{
\begin{array}{rcl}
\square_\e v_{\e,\ell}&=&f_{\e,\ell},
\\
v_{\e,\ell}(0,\cdot)&=&0,
\\
\partial_t v_{\e,\ell}(0,\cdot)&=&0.
\end{array}
\right.
\end{equation}
Then if Hypothesis~\ref{hypo:corr} holds, we have
\begin{multline*}
\sup_{0\le t\le T} \expec{\|\partial_t(u_\e-v_{\e,\ell})\|_{L^2(\R^d)}^2+\|\nabla(u_\e-v_{\e,\ell})\|_{L^2(\R^d)}^2+T^{-2} \|u_\e-v_{\e,\ell}\|_{L^2(\R^d)}^2}^\frac12 \\
\lesssim \, C_\ell(f) \max\{\e,\e^{\ell}\mu_\alpha(\e^{-1})\}.
\end{multline*}
\qed
\end{lemma}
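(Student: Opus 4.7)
The proof parallels the structure of Lemma~\ref{lem:prepare}. Set $w:=u_\e-v_{\e,\ell}$, which by linearity solves
\[
\square_\e w \,=\, f-f_{\e,\ell},\qquad w(0,\cdot)\,=\,\partial_t w(0,\cdot)\,=\,0.
\]
First, I would derive the standard energy estimate: testing the equation against $\partial_t w$, using ellipticity of $\aa$, and proceeding with a Gronwall-type argument as in Step~1 of the proof of Lemma~\ref{lem:prepare}, one obtains, uniformly in $t\ge 0$,
\[
\sqrt{\|\partial_t w(t,\cdot)\|_{L^2(\R^d)}^2+\|\nabla w(t,\cdot)\|_{L^2(\R^d)}^2}\,\lesssim\,\int_0^t\|(f-f_{\e,\ell})(s,\cdot)\|_{L^2(\R^d)}\,ds.
\]
Because $f$ (hence $f_{\e,\ell}$) is supported in $[0,1]$ in time, the RHS is automatically $t$-independent.

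Second, I would estimate the pointwise-in-time preparation error exactly as in Step~2 of the proof of Lemma~\ref{lem:prepare}. Since $\phi_0\equiv 1$, one has
\[
(f-f_{\e,\ell})(s,x)\,=\,(f-f_\e)(s,x)-\sum_{j=1}^\ell \e^j\varphi_j(x/\e)\cdot\nabla^j f_\e(s,x),
\]
and we control the first term via Plancherel and the high-frequency cutoff, the terms $1\le j\le \ell-1$ via the stationarity bound~\eqref{hyp:growth1}, and the $j=\ell$ term via the growth bound~\eqref{hyp:growth2} combined with the weighted-Sobolev argument. This yields, for each $s$,
\[
\expec{\|(f-f_{\e,\ell})(s,\cdot)\|_{L^2}^2}^{1/2}\,\lesssim\,C_\ell(f(s,\cdot))\max\{\e,\e^\ell\mu_\alpha(\e^{-1})\}.
\]
Integrating over $s\in[0,1]$ and combining with the energy estimate controls $\|\partial_t w\|_{L^2}$ and $\|\nabla w\|_{L^2}$ as claimed.

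Third, and most delicately, I have to control $\|w(t)\|_{L^2}$ uniformly in $t$. The naive bound $\|w(t)\|_{L^2}\le\int_0^t\|\partial_s w\|_{L^2}ds$ carries a spurious linear-in-$t$ factor, so I would exploit instead that after time $1$ the source vanishes and $w$ evolves under the free wave equation. By spectral calculus with $A_\e:=-\nabla\cdot\aa_\e\nabla$,
\[
w(t)\,=\,\cos((t-1)\sqrt{A_\e})\,w(1)+\frac{\sin((t-1)\sqrt{A_\e})}{\sqrt{A_\e}}\,\partial_t w(1),\qquad t\ge 1.
\]
The cosine piece is $L^2$-bounded by $\|w(1)\|_{L^2}$, itself controlled via the naive estimate on $[0,1]$. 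For the sine piece, integrating the equation once in time gives
\[
\partial_t w(1)\,=\,\nabla\cdot(\aa_\e\nabla W(1))+F^\infty,\quad W(1):=\int_0^1 w\,ds,\quad F^\infty:=\int_0^1(f-f_{\e,\ell})ds;
\]
the first summand is automatically in divergence form with $L^2$ coefficient of order $\max\{\e,\e^\ell\mu_\alpha(\e^{-1})\}$. For $F^\infty=\bar f-\bar f_{\e,\ell}$ (with $\bar f:=\int_0^1 f\,ds$), which has exactly the form of $u_0-u_{0,\e,\ell}$ in Lemma~\ref{lem:prepare}, one uses the algebraic identities satisfied by the extended correctors $\sigma_j$ and $\chi_j$ of Definition~\ref{defi:ext-corr}, in the spirit of Substep~1.1 of the proof of Theorem~\ref{th:2scale-ell}, to put $F^\infty=\nabla\cdot G_\e$ in divergence form with $\|G_\e\|_{L^2}$ of the desired order. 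The spectral-calculus bound $\|\tfrac{\sin(\tau\sqrt{A_\e})}{\sqrt{A_\e}}\nabla\cdot G\|_{L^2}\lesssim\|G\|_{L^2}$ then yields the uniform-in-$t$ $L^2$ control.

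The hard part is precisely this last reduction of $F^\infty$ to divergence form with small coefficient uniformly in $\e$: while the high-frequency remainder $\bar f-\bar f_\e$ is trivially in $\dot H^{-1}$, the corrector summands $\e^j\varphi_j(x/\e)\nabla^j\bar f_\e$ are not manifestly divergences, and realizing their near-divergence structure with the correct scaling requires iterated integration by parts together with the defining equations of $\sigma_j$ and $\chi_j$, exactly as in the multiscale expansion argument underlying Theorem~\ref{th:2scale-ell}.
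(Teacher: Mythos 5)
Your Steps 1 and 2 coincide with the paper's argument: the paper also reduces everything to an energy estimate for $w=u_\e-v_{\e,\ell}$ with source $g=f_{\e,\ell}-f$ supported in $[0,1]$, and estimates $\|f-f_{\e,\ell}\|_{L^2}$ exactly as in Step~2 of the proof of Lemma~\ref{lem:prepare} using Hypothesis~\ref{hypo:corr}. Where you diverge is the uniform-in-time $L^2$ bound. The paper first controls $\sup_{t\le 1}\|w(t)\|_{L^2}$ by integrating the equation once in time and testing with $w$ (Substep~3.2 of Proposition~\ref{prop:energy-wave}), and then, for $t\ge 1$, invokes the propagation estimate \eqref{e.appen-apriori} for the homogeneous equation with data at time $1$, obtained again by energy methods (integrate in time, test with $w$, absorb); you instead write the propagator by spectral calculus and exploit the decomposition $\partial_t w(1)=\nabla\cdot(\aa_\e\nabla W(1))+F^\infty$. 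Your identification of the delicate point is correct — taken literally, a bound of $\sup_t\|w(t)\|_{L^2}$ by $\|w(1)\|_{L^2}+\|\partial_t w(1)\|_{L^2}$ alone cannot hold for general data (low frequencies of the initial velocity), so some structure of $\partial_t w(1)$ must be used, and your divergence-form piece $\nabla\cdot(\aa_\e\nabla W(1))$ with the bound $\|A_\e^{-1/2}\nabla\cdot G\|_{L^2}\lesssim\|G\|_{L^2}$ is the right mechanism for that part (it is the spectral-calculus counterpart of the paper's absorption argument).

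The genuine gap is your treatment of $F^\infty=\int_0^1(f-f_{\e,\ell})\,ds$. You claim it can be put in divergence form with small $L^2$ potential ``using the algebraic identities satisfied by $\sigma_j$ and $\chi_j$, in the spirit of Substep~1.1 of the proof of Theorem~\ref{th:2scale-ell}''. Those identities do not deliver this: they compute $-\nabla\cdot\aa\nabla$ \emph{applied to} the two-scale expansion $w_m(v)$, not a vector potential of the expansion itself, and the fields $\sigma_j,\chi_j$ of Definition~\ref{defi:ext-corr} are potentials for the higher-order fluxes $q_j$, not for the correctors $\phi_j$ appearing in $f_{\e,\ell}$; there is no ready-made representation $\phi_j=\nabla\cdot(\cdot)$ with controlled potential among the extended correctors, so this step would require constructing new objects and is not a routine integration by parts. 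Fortunately the detour is also unnecessary: each summand $\e^j\varphi_j(\cdot/\e)\cdot\nabla^j \bar f_\e$ already carries the factor $\e^j$ and is, in expectation, of order $\max\{\e,\e^\ell\mu_\alpha(\e^{-1})\}$ in $L^2$ (and, thanks to the spatial decay of $\bar f_\e$, in $L^{2d/(d+2)}$ for $d\ge3$), so that $\expec{\|A_\e^{-1/2}F^\infty\|_{L^2}^2}^{1/2}\le\expec{\|F^\infty\|_{\dot H^{-1}}^2}^{1/2}$ is already of the required size, with the high-frequency part $\bar f-\bar f_\e$ handled by Plancherel as in Lemma~\ref{lem:prepare}; alternatively one can avoid spectral calculus altogether and follow the paper's route, absorbing the divergence part of $\partial_t w(1)$ in the energy identity and estimating the contribution of $F^\infty$ by a negative-norm bound of the same kind. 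With that repair your argument closes; as written, the reduction of $F^\infty$ to divergence form is the step that would fail.
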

\begin{proof}[Proof of Lemma~\ref{lem:prepare-ap}]
The proof relies on the following energy estimate.
Let the function $g\in L^\infty(\R_+,L^2(\mathbb{R}^d))$ be a source term supported in time in $[ 0,1]$ and consider $w$ the solution of the wave equation:
\begin{equation}\label{e.eq-append}
\left\{
\begin{array}{rcl}
\partial_{tt}^2 w-\nabla \cdot \aa \nabla w &=&g,
\\
w(0,\cdot)&=&0,
\\
\partial_t w(0,\cdot)&=&0,
\end{array}
\right.
\end{equation}
for some uniformly elliptic and bounded matrix field $\aa$.
For all $t\ge 0$, we multiply \eqref{e.eq-append} by $\partial_t w$ and integrate over $[0,t] \times \mathbb{R}^d$, which yields the energy estimate (see Substep~3.1 in the proof of Proposition~\ref{prop:energy-wave}):
$$
\frac{1}{2}\left(\Vert \partial_t w(t,\cdot)\Vert_{L^2(\mathbb{R}^d)}^2 + \Vert \nabla w(t,\cdot)\Vert_{L^2(\mathbb{R}^d)}^2\right) \leq \int_{\left[0,t \right] \times \mathbb{R}^d}g(s,x)\partial_t w(s,x) \, dx \, ds. 
$$
Since $g$ is compactly supported in time in $[ 0,1]$, we may absorb part of the RHS in the LHS by Young's inequality and obtain that for all $T\ge 0$
$$
\sup_{0\le t\leq T}\left(\Vert \partial_t w(t,\cdot)\Vert_{L^2(\mathbb{R}^d)}^2 + \Vert \nabla w(t,\cdot)\Vert_{L^2(\mathbb{R}^d)}^2\right) \lesssim \int_{\left[0,1 \right] \times \mathbb{R}^d} g(s,x)^2 \, dx \, ds. 
$$
We then integrate the PDE from $0$ to $T$ and argue as in Substep~3.2 of the proof of Proposition~\ref{prop:energy-wave} to 
obtain
$$
\sup_{0\le t\leq T}\Vert  w(t,\cdot)\Vert_{L^2(\mathbb{R}^d)}^2 \,\lesssim \,  T^2\int_{\left[0,1 \right] \times \mathbb{R}^d} g(s,x)^2 \, dx \, ds,
$$
where this time, we have an additional factor $T^2$.
\end{proof}
We then turn to the second point of the proof, and estimate $v_{\e , \ell}-u_{\e , \ell}$.
\begin{prop}\label{prop:energy-wave-appen}
For $\ell\ge 1$, let $ u_{\e,\ell}$ be defined in \eqref{sol_approx_interior}, and $v_{\e,\ell}$ be the unique weak solution of \eqref{e.eq-velleps-append}.
Then if Hypothesis~\ref{hypo:corr} holds we have for all $T\ge 1$ and $1\gg \e>0$,
\begin{multline}\label{e.energy-wave-appen}
\sup_{0\le t\le T} \expec{ \|\partial_t(v_{\e,\ell}-u_{\e,\ell})\|_{L^2(\R^d)}^2+\|\nabla(v_{\e,\ell}-u_{\e,\ell})\|_{L^2(\R^d)}^2+ T^{-2} \|v_{\e,\ell}-u_{\e,\ell}\|_{L^2(\R^d)}^2}^\frac12 
\\
\lesssim \, C_\ell(f)\e^{\ell}T\mu_{\alpha}(\e^{-1}T).
\end{multline}
\qed
\end{prop}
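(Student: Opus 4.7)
The plan is to follow the three-step structure of the proof of Proposition~\ref{prop:energy-wave}, adapted to the source-term setting. First, I would reformulate the well-prepared source in terms of Taylor-Bloch waves: exactly as in Step~1 of the proof of Proposition~\ref{prop:energy-wave}, the definition of $f_{\e,\ell}$ combined with the inverse Fourier transform yields
$$
f_{\e,\ell}(t,x)\,=\,\frac{1}{(2\pi)^d}\int_{\R^d}\omega_\ell(\e|k|)\hat f(t,k)\,e^{ik\cdot x}\psi_{\e k,\ell}\bigl(\tfrac{x}{\e}\bigr)\,dk.
$$
Denote by $K(\tau,\e k):=\dfrac{\sin(\e^{-1}\Lambda_\ell(\e k)\tau)}{\e^{-1}\Lambda_\ell(\e k)}$ the Duhamel kernel appearing in \eqref{sol_approx_interior}. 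Since $K(0,\cdot)=0$ and $\partial_\tau K(0,\cdot)=1$, we have $u_{\e,\ell}(0,\cdot)=\partial_t u_{\e,\ell}(0,\cdot)=0$, so $w_{\e,\ell}:=u_{\e,\ell}-v_{\e,\ell}$ satisfies zero initial data.

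Next, I would compute $\square_\e u_{\e,\ell}$. Differentiating twice in time and using the Duhamel identity $\partial_{tt}^2 K(\tau,\e k)+\e^{-2}\Lambda_\ell(\e k)^2 K(\tau,\e k)=0$ with the instantaneous contribution at $\tau=0$, one gets $\partial_{tt}^2 u_{\e,\ell}=f_{\e,\ell}-\e^{-2}\int\int_0^t\omega_\ell(\e|k|)\hat f(s,k)e^{ik\cdot x}\Lambda_\ell^2 K\,\psi_{\e k,\ell}(\tfrac x\e)\,ds\,dk$. On the other hand, Proposition~\ref{prop:bloch} applied at wave-vector $\e k$ together with the rescaling identity $-\nabla\cdot\aa_\e\nabla(e^{ik\cdot x}\psi_{\e k,\ell}(\tfrac x\e))=e^{ik\cdot x}\e^{-2}\bigl(\tilde\lambda_{\e k,\ell}\psi_{\e k,\ell}-(i\e\kappa)^{\ell+1}\mathfrak d_{\e k,\ell}\bigr)(\tfrac x\e)$ yields, after cancellation with the $\Lambda_\ell^2$ term and using $(\nabla_y\cdot F)(\tfrac x\e)=\e\nabla_x\cdot(F(\tfrac x\e))$ to extract one power of $\e$ from the divergence piece of $\mathfrak d_{\e k,\ell}$:
$$
\square_\e w_{\e,\ell}\,=\,\e^{\ell}\bigl(F_{\e,1,\ell}+F_{\e,2,\ell}\bigr),
$$
with
$$
F_{\e,1,\ell}(t,x):=-\frac{1}{(2\pi)^d}\!\int_{\R^d}\!\!\int_0^{t}\!(i\kappa)^{\ell+1}\omega_\ell(\e|k|)\hat f(s,k)\,\nabla\!\cdot\!\bigl(e^{ik\cdot x}\Phi_{1,\ell}^{e}(\tfrac x\e)\bigr)\,K(t-s,\e k)\,ds\,dk,
$$
$$
F_{\e,2,\ell}(t,x):=-\frac{1}{(2\pi)^d}\!\int_{\R^d}\!\!\int_0^{t}\!(i\kappa)^{\ell+2}\omega_\ell(\e|k|)\hat f(s,k)\,e^{ik\cdot x}\Phi_{2,\ell,\kappa}^{e}(\tfrac x\e)\,K(t-s,\e k)\,ds\,dk,
$$
where $\Phi_{1,\ell}^{e}$ and $\Phi_{2,\ell,\kappa}^{e}$ are the linear combinations of extended correctors introduced in Step~2 of the proof of Proposition~\ref{prop:energy-wave}.

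Finally, I would derive the energy estimate for $w_{\e,\ell}$ by adapting Substeps~3.1 and~3.2 of the proof of Proposition~\ref{prop:energy-wave}. Testing $\square_\e w_{\e,\ell}=\e^{\ell}(F_{\e,1,\ell}+F_{\e,2,\ell})$ against $\partial_t w_{\e,\ell}$ and integrating over $[0,t]\times\R^d$, the divergence structure of $F_{\e,1,\ell}$ is handled via integration by parts in space followed by integration by parts in time — producing a term involving $\nabla w_{\e,\ell}$ that is absorbed by Young's inequality into the energy on the left-hand side. The Fourier-symbol analysis is identical to that after \eqref{e.bound-before-symbol}: one introduces operators $F_{i,s,\e,\ell}$ on $\Sc$ whose symbols are essentially $(i\kappa)^{\ell+1}\omega_\ell(\e|k|)K(t-s,\e k)$ or its time-derivative, estimates $\sup_{B(x)}|F_{i,s,\e,\ell}f(s,\cdot)|^2\mu_\alpha(|x/\e|)^2$ by interpolation as in the paragraph preceding \eqref{e.ener-estim-3.1} (now integrating the Schwartz norm of $f(s,\cdot)$ over the compact time support $[0,1]$), and invokes Hypothesis~\ref{hypo:corr} to produce the factor $\mu_\alpha(\e^{-1}T)$. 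The $L^2$-estimate on $w_{\e,\ell}$ itself is obtained by one further integration in time, as in Substep~3.2.

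The main obstacle is bookkeeping: the Duhamel kernel $K(\tau,\e k)$ is only bounded by $\min\{\tau,\e|\Lambda_\ell(\e k)|^{-1}\}$ and its $k$-derivatives produce factors of $\tau$, so one must track carefully how these powers of time combine with the $s$-integration against $\hat f(s,\cdot)$ and with the polynomial-logarithmic weight $\mu_\alpha$. Once this is set up properly, the same Sobolev/Hardy interpolation as in Substep~3.1 yields the claimed bound \eqref{e.energy-wave-appen}, with $C_\ell(f)$ a Schwartz-type norm of $f$ integrated in time.
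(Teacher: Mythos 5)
Your proposal follows the paper's own proof essentially step by step: you rewrite the well-prepared source $f_{\e,\ell}$ as a superposition of Taylor-Bloch waves, compute $\square_\e u_{\e,\ell}$ via Proposition~\ref{prop:bloch} so that only the eigendefect survives, yielding a residual of the form $\e^{\ell}(F_{\e,1,\ell}+F_{\e,2,\ell})$ built from $\Phi_{1,\ell}^e$ and $\Phi_{2,\ell,\kappa}^e$ with the Duhamel kernel in place of the cosine, and you then transpose the energy estimates of Substeps~3.1--3.2 of the proof of Proposition~\ref{prop:energy-wave} (integration by parts in space, then in time, absorption of $\nabla(v_{\e,\ell}-u_{\e,\ell})$, Fourier-symbol bounds in weighted Sobolev norms producing the factor $T\mu_\alpha(\e^{-1}T)$, with the time support $[0,1]$ of $f$ absorbed into $C_\ell(f)$). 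Incidentally, your kernel $\sin(\e^{-1}\Lambda_\ell(\e k)(t-s))/(\e^{-1}\Lambda_\ell(\e k))$ is the correct reading of \eqref{sol_approx_interior}: the squared exponents there must be typographical, since otherwise the cancellation with $\e^{-2}\tilde\lambda_{\e k,\ell}$ that both you and the paper use would fail.

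The one place where you genuinely diverge is the $L^2(\R^d)$-in-space bound for large times. You propose to run the Substep~3.2 argument (integrate the equation once in time and test with the difference) on the whole interval $[0,T]$, using that the time-integrated Duhamel kernel gains a factor of order $(\e^{-1}\Lambda_\ell(\e k))^{-2}\sim|k|^{-2}$, harmless against the symbol $(i\kappa)^{\ell+1}$ for $\ell\ge1$. The paper instead runs Substep~3.2 only up to time $1$ and then transfers the $L^2$-bound to $[1,T]$ via the a priori estimate \eqref{e.appen-apriori} from the proof of Lemma~\ref{lem:prepare-ap}, treating $v_{\e,\ell}-u_{\e,\ell}$ as a free wave after time $1$ with data controlled by the already-established energy bound; this shortcut glosses over the fact that the eigendefect source, being a Duhamel convolution, does not actually vanish for $t>1$. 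Your direct route avoids that issue at the price of the bookkeeping you flag (powers of $t-s$ from $k$-derivatives of the kernel against the compact time support of $f$), and it delivers the same right-hand side $C_\ell(f)\,\e^{\ell}(1+T)\mu_\alpha(\e^{-1}T)$, so it is a sound and arguably cleaner way to conclude.
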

\begin{proof}[Proof of Proposition~\ref{prop:energy-wave-appen}]
We split the proof into two steps.

\medskip

\step{1} Reformulation.

\noindent We first note that $f_{\e,\ell}$ satisfies 
$$
f_{\e,\ell}(t,x)\,=\,\frac{1}{(2\pi)^d}\int_{\R^d} \hat f_\e(t,k)e^{ik\cdot x} \psi_{\e k,\ell} \big(\frac x\e\big) \ dk.
$$
We then compute $\square_\varepsilon u_{\e , \ell}$, which, in view of  Proposition~\ref{prop:bloch}, satisfies
\begin{eqnarray}\nonumber
\square_\varepsilon u_{\varepsilon ,\ell}  &=& f_{\e,\ell}\\ \nonumber
&-&\frac{\varepsilon^{\ell-1}}{(2\pi)^d}\int_{\mathbb{R}^d}\int_0^t (i\kappa)^{\ell+1} \hat f_{\e}(s,k)e^{ik\cdot x}(\nabla \cdot \Phi_{1,\ell}^e)\big(\frac{x}{\varepsilon}\big) \frac{\sin((\varepsilon^{-1}\Lambda_\ell(\varepsilon k ))^2(t-s))}{(\varepsilon^{-1}\Lambda_\ell(\varepsilon k ))^2}ds \, dk \\ \nonumber 
&-&\frac{\varepsilon^{\ell}}{(2\pi)^d}\int_{\mathbb{R}^d}\int_0^t (i\kappa)^{\ell+2}\hat f_{\e}(s,k) e^{ik\cdot x}\Phi_{2, \ell , \kappa}^e
\big( \frac{x}{\varepsilon}\big) \frac{\sin((\varepsilon^{-1}\Lambda_\ell(\varepsilon k ))^2(t-s))}{(\varepsilon^{-1}\Lambda_\ell(\varepsilon k ))^2}ds \, dk,
\end{eqnarray}
where $\Phi_{1,\ell}^e$ and $\Phi_{2,\ell,\kappa}^e$ are still given by
$$\Phi_{1,\ell}^e:=-\sigma_{\ell}^e e+\aa e\varphi_{\ell}^e +\nabla \chi_{\ell}^e \text{, } \quad \Phi_{2,\ell,\kappa}^e:=e\cdot \aa e\varphi_{\ell}^e-\sum_{j=1}^{\ell} \sum_{p=\ell-j}^{\ell} (i\e\kappa)^{j+p-\ell}  \lambda_{p}\phi_{j}^e.$$ 
Hence, the difference $w_{\e , \ell}:=v_{\e, \ell}-u_{\e , \ell}$ satisfies
\begin{equation}
\nonumber
\left\{
\begin{array}{rcl}
\square_\e w_{\e , \ell}(t,x)&=&\varepsilon^{\ell}(f_{\varepsilon , 1,\ell} +f_{\varepsilon , 2,\ell} ) ,
\\
\partial_t w_{\varepsilon ,\ell}(0,\cdot)&=&0,
\\
w_{\varepsilon ,\ell}(0,\cdot)&=&0.
\end{array}
\right.
\end{equation}
with the source terms
\begin{multline*}
f_{\e ,1,\ell}(t,x):=\\-\frac{1}{(2\pi)^d} \int_{\mathbb{R}^d}\int_0^t(i\kappa)^{\ell+1} \hat f_{\e}(s,k)\nabla \cdot \left(e^{ik\cdot x}\Phi_{1,\ell}^e\big( \frac{x}{\varepsilon}\big) \right) \frac{\sin((\varepsilon^{-1}\Lambda_\ell(\varepsilon k ))^2(t-s))}{(\varepsilon^{-1}\Lambda_\ell(\varepsilon k ))^2}ds \, dk,
\end{multline*}
and
\begin{multline*}
 f_{\e ,2,\ell}(t,x):=\\-\frac{1}{(2\pi)^d}\int_{\mathbb{R}^d}\int_0^t(i\kappa)^{\ell+2} \hat f_{\e}(s,k)  e^{ik\cdot x}\Phi_{2,\ell,\kappa}^e\big( \frac{x}{\varepsilon}\big)  \frac{\sin((\varepsilon^{-1}\Lambda_\ell(\varepsilon k ))^2(t-s))}{(\varepsilon^{-1}\Lambda_\ell(\varepsilon k ))^2}ds \, dk ,
\end{multline*}

\medskip

\step2 Proof of \eqref{e.energy-wave-appen}.

\noindent We first prove that 
\begin{multline}\label{e.energy-wave-appen-b}
\sup_{0\le t\le T} \expec{ \|\partial_t(v_{\e,\ell}-u_{\e,\ell})\|_{L^2(\R^d)}^2+\|\nabla(v_{\e,\ell}-u_{\e,\ell})\|_{L^2(\R^d)}^2}^\frac12
%+\sup_{t\le 1} \expec{  \|v_{\e,\ell}-u_{\e,\ell}\|_{L^2(\R^d)}^2}^\frac12 
\,
\lesssim \, C_\ell(f)\e^{\ell}T\mu_{\alpha}(\e^{-1}T).
\end{multline}
Indeed, since $\hat f_{\e}(\cdot , k) $ is supported in time in $[0,1]$, this estimate
follows from Substep~3.1  in the proof of Proposition~\ref{prop:energy-wave}.
% for the first two LHS terms and from Substep~3.2
% (note that the integration is limited to time 1) in the proof of Proposition~\ref{prop:energy-wave}.
To control the $L^2$-norm, we proceed as in Substep~3.2  in the proof of Proposition~\ref{prop:energy-wave},
but lose this time an additional factor of $T$.
%
%It remains to upgrade this estimate to \eqref{e.energy-wave-appen}.
%Note that $v_{\e,\ell}-u_{\e,\ell}$ satisfies on $[1,\infty)\times \R^d$
%%
%\begin{equation*}
%\left\{
%\begin{array}{rcl}
%\partial_{tt}^2 (v_{\e,\ell}-u_{\e,\ell})-\nabla \cdot \aa_\e \nabla (v_{\e,\ell}-u_{\e,\ell}) &=&0,
%\\
%(v_{\e,\ell}-u_{\e,\ell})(1,\cdot)&=&(v_{\e,\ell}-u_{\e,\ell})(1,\cdot),
%\\
%\partial_t (v_{\e,\ell}-u_{\e,\ell})(1,\cdot)&=&\partial_t (v_{\e,\ell}-u_{\e,\ell})(1,\cdot),
%\end{array}
%\right.
%\end{equation*}
%%
%so that the energy estimate~\eqref{e.appen-apriori} in the proof of Lemma~\ref{lem:prepare-ap} yields for all $T\ge 1$,
%%
%$$
%\sup_{1\le t\le T} \Vert  (v_{\e,\ell}-u_{\e,\ell})(t,\cdot)\Vert_{L^2(\mathbb{R}^d)}^2  \lesssim    \Vert  (v_{\e,\ell}-u_{\e,\ell})(1,\cdot)\Vert_{L^2(\mathbb{R}^d)}^2+ \Vert  \partial_t (v_{\e,\ell}-u_{\e,\ell})(1,\cdot)\Vert_{L^2(\mathbb{R}^d)}^2.
%$$
%%
%Combined with \eqref{e.energy-wave-appen-b} for $T=1$ and for general $T\ge 0$, this proves \eqref{e.energy-wave-appen}.
\end{proof}
Theorem~\ref{t.wave-eq-int} then essentially follows from Lemma~\ref{lem:prepare-ap} and Proposition~\ref{prop:energy-wave-appen}.

\medskip

We conclude this section with a long-time homogenization result for \eqref{i3}, for which we have a corrector result (that is, convergence in the
energy norm) since the problem is naturally well-prepared.
\begin{theo}\label{t.hom-long-source}
Let $\ell\ge 1$, and assume that $(\phi_j,\sigma_j,\chi_j)_{0\le j\le \ell}$ satisfy Hypothesis~\ref{hypo:corr}
for some $\alpha\in [0,1)\times \R_+$. Assume that $\gamma_\ell\ge 0$ is large enough so that $\calL_{\ho,\e,\ell}$ (defined in \eqref{first_ell_operator}) is a positive elliptic operator (see Lemma~\ref{lem:elliptic-well-posed}), and let $f$ be as in Theorem~\ref{t.wave-eq-int}.
For all $\e>0$, let $u_{\e}$ denote the solution of \eqref{i3} and let $w_{\e, \ell}$  denote the solution of  the homogenized equation
\begin{equation}\label{e.eq-uepsell-append} 
\left\{
\begin{array}{rcl}
\partial^2_{tt} w_{\e, \ell}+ \calL_{\ho,\e,\ell} w_{\e, \ell} &=&f,\\
w_{\e, \ell}(0,\cdot)&=&0,\\
\partial_t w_{\e, \ell}(0,\cdot)&=&0.
\end{array}
\right.
\end{equation}
Then we have for all $T\ge 1$
\begin{equation}\nonumber
\sup_{0 \le t\le T} \expec{\|u_\e-w_{\e, \ell}\|_{L^2(\R^d)}^2}^\frac12
\, \lesssim \, C_\ell(f)T\big(\varepsilon  \\+\varepsilon^{\ell}T\mu_\alpha(\varepsilon^{-1}T)\big),
\end{equation}
where $C_\ell(f)$ is a generic (finite) norm of $f$ which only depends on $\ell$.
In addition, if we consider the multiscale expansion $\tilde w_{\e, \ell}$ of $w_{\e, \ell}$ defined as
$$
\tilde w_{\e, \ell}(t,x)\,:=\, \sum_{j=0}^{\ell}\varepsilon^j \varphi_j\big( \frac{x}{\varepsilon}\big) \cdot \nabla^j w_{\e, \ell}(t,x),
$$
then we have the following long-time estimate in the energy norm (the so-called corrector estimate)
\begin{equation}\nonumber
\sup_{t\le T} \expec{\|\nabla (u_\e- \tilde w_{\e, \ell})\|_{L^2(\R^d)}^2+\|\partial_t( u_\e- \tilde w_{\e, \ell})\|_{L^2(\R^d)}^2}^\frac12
\, \lesssim \, C_\ell(f)\big(\varepsilon  \\+\varepsilon^{\ell}T\mu_\alpha(\varepsilon^{-1}T)\big).
\end{equation}
\qed
\end{theo}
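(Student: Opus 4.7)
The strategy parallels that of Theorem~\ref{t.hom-long}, but relies on Theorem~\ref{t.wave-eq-int} in place of Theorem~\ref{t.wave-eq}, and crucially exploits that the source-term problem is naturally well-prepared. The plan has three parts. First, I would observe that the simplified Taylor-Bloch approximation $\tilde u_{\e,\ell}$ defined in \eqref{sol_approx_interior2} satisfies
\[
\partial^2_{tt} \tilde u_{\e,\ell}+\tilde\calL_{\ho,\e,\ell}\tilde u_{\e,\ell}\,=\,f_\e, \qquad \tilde u_{\e,\ell}(0,\cdot)=\partial_t\tilde u_{\e,\ell}(0,\cdot)=0,
\]
where $f_\e:=\calF^{-1}(\omega_\ell(\e|\cdot|)\hat f)$, by the same computation as in~\eqref{e.eq-uepsell}. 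Likewise, the full Taylor-Bloch approximation $u_{\e,\ell}$ in~\eqref{sol_approx_interior} is exactly the multiscale expansion of $\tilde u_{\e,\ell}$:
\[
u_{\e,\ell}(t,x)\,=\,\sum_{j=0}^\ell \e^j\varphi_j(x/\e)\cdot\nabla^j\tilde u_{\e,\ell}(t,x).
\]

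The first main step is to compare $\tilde u_{\e,\ell}$ with $w_{\e,\ell}$ in $L^2(\R^d)$ over time scales of order $T$. This is the analog of Lemma~\ref{lem:long-time} in the source-term setting: the difference $h_{\e,\ell}:=\tilde u_{\e,\ell}-w_{\e,\ell}$ solves \eqref{e.eq-uepsell-append} with vanishing initial conditions and right-hand side $\gamma_\ell(i\e)^{2([\frac{\ell-1}2]+1)}\Id\cdot\nabla^{2([\frac{\ell-1}2]+2)}\tilde u_{\e,\ell}-(f-f_\e)$. The energy estimate for~\eqref{e.eq-uepsell-append} (using the positivity of $\calL_{\ho,\e,\ell}$ from Lemma~\ref{lem:elliptic-well-posed}, and integrated in time once as in Substep~3.2 of the proof of Proposition~\ref{prop:energy-wave}) combined with Fourier-space estimates of the right-hand side — as in Step~2 of the proof of Lemma~\ref{lem:long-time} — yields
\[
\sup_{0\le t\le T}\|h_{\e,\ell}(t,\cdot)\|_{L^2(\R^d)}\,\lesssim\, C_\ell(f)(\e+\e^{2([\frac{\ell-1}2]+1)}T),
\]
where the first term controls the high-frequency truncation error (thanks to the Schwartz regularity of $f$) and the second is the regularization error. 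Combined with Theorem~\ref{t.wave-eq-int} (applied to the simplified Taylor-Bloch approximation $\tilde u_{\e,\ell}$) and the triangle inequality, this proves the first assertion of the theorem.

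The second main step addresses the corrector estimate in the energy norm. Using Theorem~\ref{t.wave-eq-int}, it suffices to control $\tilde w_{\e,\ell}-u_{\e,\ell}$ in the energy norm. Since both functions have the form of a multiscale expansion — $u_{\e,\ell}$ built from $\tilde u_{\e,\ell}$ and $\tilde w_{\e,\ell}$ built from $w_{\e,\ell}$ — differentiating in $x$ and $t$, developing the Leibniz rule on the corrector factors $\varphi_j(\cdot/\e)$, and applying Hypothesis~\ref{hypo:corr} to bound the moments of the $\varphi_j,\sigma_j,\nabla\chi_j$, I expect the corresponding energy norm to be controlled by an $L^2$-type norm of $\tilde u_{\e,\ell}-w_{\e,\ell}$ together with higher-derivative norms involving $\mu_\alpha(\e^{-1}T)$ weights — exactly as in Substeps~3.1--3.2 of the proof of Proposition~\ref{prop:energy-wave}. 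The upper bound on $h_{\e,\ell}$ derived above, together with its derivatives (obtained by differentiating the equation, which preserves the analysis since the symbols of the associated Fourier multipliers remain bounded by powers of $|k|$ on the support of $\omega_\ell(\e|\cdot|)$), then yields the desired corrector estimate.

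The main obstacle is the first step: obtaining the sharp long-time $L^2$ estimate on $h_{\e,\ell}$ requires careful Fourier-space estimates of the operators $F_{s,\e,\ell}$ (in the spirit of Substep~3.1 of the proof of Proposition~\ref{prop:energy-wave}) together with the a priori energy estimate~\eqref{e.appen-apriori} used in the proof of Lemma~\ref{lem:prepare-ap} to propagate $L^2$-bounds on $[1,T]$ from the initial $L^2$-bound at $t=1$. The rest is technical but follows the blueprint already developed in Sections~\ref{sec:wave}--\ref{sec:disper} and Appendix~\ref{sec:forcing}.
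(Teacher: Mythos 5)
Your proposal is correct and follows exactly the route the paper intends: the paper does not write out a proof of Theorem~\ref{t.hom-long-source} (it is declared a straightforward adaptation of the proof of Theorem~\ref{t.hom-long} and left to the reader), and your argument --- combining Theorem~\ref{t.wave-eq-int} with a source-term analog of Lemma~\ref{lem:long-time} for $h_{\e,\ell}=\tilde u_{\e,\ell}-w_{\e,\ell}$ via the positivity of $\calL_{\ho,\e,\ell}$ from Lemma~\ref{lem:elliptic-well-posed}, and identifying $u_{\e,\ell}$ as the multiscale expansion of $\tilde u_{\e,\ell}$ to handle the energy-norm/corrector estimate --- is precisely that adaptation. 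The only blemish is cosmetic (the sign of the regularization term in the equation for $h_{\e,\ell}$), which is immaterial for the energy estimates.
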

The proof of Theorem~\ref{t.hom-long} is a straightforward adaptation of the proof of Theorem~\ref{t.hom-long} and is left to the reader.

\section{The case of systems}\label{sec:systems}

For systems the problem acquires an additional dimension, say $d$ (with linear elasticity in mind). In that case, we define $d$ families of extended higher-order correctors.
In this section, we assume that $\aa:\R^d \to \mathcal{M}_{d\times d}(\R)$ (the set of symmetric fourth-order tensors) is uniformly bounded and satisfies the strong ellipticity condition
$$
\xi \cdot \aa(x) \xi \ge \lambda |\xi|^2
$$
for some $\lambda>0$ and for almost all $x\in \R^d$ and all $\xi \in \R^{d\times d}$. In view of \cite{GNO3}, we can also consider the weaker notion of functional coercivity
$$
\int_{\R^d} \nabla v \cdot \aa \nabla v \ge \lambda \int_{\R^d} |\nabla v|^2
$$
for all $v\in \mathcal S(\R^d,\R^d)$, which allows us to deal with the system of linear elasticity.

\medskip

Fix a direction $e \in \R^{d}$. 
We define $d$ families of extended correctors $(\phi_j^m,\sigma_j^m,\chi_j^m)_{1\le m\le d,j}$ in direction $e$ as follows.
\begin{defi}\label{defi:ext-corr-systems}
For all $\ell\ge 0$, we say that $(\phi_j^m,\sigma_j^m,\chi_j^m)_{1\le m\le d,0\le j\le \ell}$ are 
 the first $\ell$ extended correctors in direction $e$ if these 
functions are locally square-integrable, if for all $0<j\le \ell$ the functions $(\nabla \phi_j^m,\nabla \sigma_j^m)_{1\le m\le d}$ are $\Z^d$-stationary and
satisfy $\expec{\int_Q |(\nabla \phi^m_j,\nabla \sigma^m_j)|^2}$ $<\infty$, if for all $0<j<\ell$ the functions $(\phi_j^m,\sigma_j^m,\nabla \chi_j^m)_{1\le m\le d}$ are $\Z^d$-stationary and satisfy
$\expec{\int_Q (\phi_j^m,\sigma_j^m,\nabla \chi_j^m)}=0$ and $\expec{\int_Q |( \phi_j^m, \sigma_j^m, \nabla \chi_j^m)|^2}<\infty$, and if the following extended corrector equations on $\R^d$
are satisfied:
\begin{itemize}
\item for all $1\le m\le d$, $\phi_0^m\equiv e_m$, and for all $j\ge 1$, $\phi_j^m$ is a vector field that satisfies
$$-\nabla \cdot \aa\nabla \phi_j^m=\nabla \cdot (-\sigma_{j-1}^m e+\aa (e\otimes\phi_{j-1}^m)+\nabla \chi_{j-1}^m);$$
\item for all $j\ge 0$, the symmetric fourth order tensor $\tilde \aa_j$, the symmetric $(j+4)$-th order tensor $\bar \aa_j$, and the symmetric matrix $\lambda_j$ are given 
for all $1\le m\le d$ 
$$\bar \aa_j  (e^{\otimes j} \otimes e\otimes e_m) = \tilde\aa_j (e \otimes  e_m):= \expec{\int_Q\aa (\nabla \phi_{j+1}^m+ e\otimes\phi_j^m)}, \quad \lambda_j:= e \cdot \tilde \aa_j  e;$$
\item  for all $1\le m\le d$, $\chi_0^m\equiv 0$, $\chi_1^m\equiv 0$, and for all $j\ge 2$, $\chi_j^m$ is a vector field that satisfies
$$
-\triangle \chi_j^m=\nabla\chi_{j-1}^m\cdot e +\sum_{p=1}^{j-1}\lambda_{j-1-p}\phi_{p}^m;
$$
\item  for all $1\le m\le d$ and all $j\ge 1$, $q_j^m$ is a matrix field (a higher-order flux) given by
$$q_j^m:=  \aa (\nabla\phi_j^m+e\otimes\phi_{j-1}^m)-\tilde\aa_{j-1} (e \otimes  e_m)+\nabla \chi_{j-1}^m-\sigma_{j-1}^me, \quad \expec{\int_Q q_j^m}=0;$$
\item  for all $1\le m\le d$, $\sigma_0^m\equiv 0$, and for all $j\ge 1$, $\sigma_j^m$ is a skew-symmetric third-order tensor field (a higher-order flux corrector), i.e. $\sigma_{jkln}^m=-\sigma_{jlkn}^m=-\sigma_{jknl}^m=-\sigma_{jnlk}^m$, that satisfies
$$
-\triangle \sigma_j^m = \nabla \times q_j^m, \quad \nabla \cdot \sigma_j^m= q_j^m,
$$
with the three-dimensional notation: $[\nabla \times q_j^m]_{pn}=\nabla_p [q_j^m]_n-\nabla_n [q_j^m]_p$,
and where the divergence is taken with respect to the third index, i.~e.~$(\nabla \cdot \sigma_j^m)_{kl}:=\sum_{n=1}^d \partial_n \sigma_{jkln}^m$.
\end{itemize}
\qed
\end{defi}
Proposition~\ref{prop:properties-lambda} holds in the following form:
\begin{itemize}
\item For all unit directions $e'\in \R^d$, and all $j$ odd, $e'\cdot \lambda_j e'=0$;
\item For all unit directions $e'\in \R^d$, $e'\cdot \lambda_0 e'>0$ and $e'\cdot \lambda_2 e'\ge 0$.
\end{itemize}
More precisely, the proof displayed in the scalar setting holds mutatis mutandis for each entry $e'\cdot \lambda_j e'$ of the symmetric matrix in the case of systems.

\medskip

As in the scalar case, we can introduce Taylor-Bloch waves, ``eigenvalues'' and eigendefects.
For all $\ell\ge 1$ and all $k=\kappa e\in \R^d$ we define the Taylor-Bloch ``eigenvalue'' (in form of a symmetric matrix)
$$\tilde \lambda_{k,\ell} \,:=\,\kappa^2 \sum_{j=0}^{\ell-1} (i\kappa)^j \lambda_j , $$
and for all $1\leq m \leq d$ we define the Taylor-Bloch wave $\psi_{k,\ell}^m$ (a vector) and the eigendefect $\mathfrak{d}_{k,\ell}^m$ (also a vector) by
\begin{multline*}
\psi_{k,\ell}^m \,:=\,\sum_{j=0}^\ell (i\kappa)^j \phi_j^m, 
\\
 \mathfrak{d}_{k,\ell}^m= \nabla \cdot (-\sigma_{\ell}^me+\aa (e\otimes\varphi_{\ell}^m) +\nabla \chi_{\ell}^m)+i\kappa \Big(e\cdot \aa (e\otimes \varphi_{\ell}^m)-\sum_{j=1}^\ell \sum_{p=\ell-j}^{\ell-1} (i\kappa)^{j+p-\ell}  \lambda_{p}\phi_{j}^m\Big) .
\end{multline*}
Proposition \ref{prop:bloch} then holds in the following form: For all $1\leq m \leq d$ we have the eigendefect identity:
\begin{equation}\nonumber
-(\nabla +ik) \cdot \aa (\nabla + ik )\psi_{k,\ell}^m \,=\, \tilde \lambda_{k,\ell} \psi_{k,\ell}^m -
(i\kappa)^{\ell+1} \mathfrak{d}_{k,\ell}^m.
\end{equation}
The proof is identical to the scalar case.

\medskip

There is a significant difference between the Fourier transform and the approximate Floquet-Bloch transform in the case of systems: 
Fourier modes are diagonal, whereas for Taylor-Bloch modes, $\tilde \lambda_{k,\ell}$ is not diagonal in general (the $\tilde \lambda_{k,\ell}$'s do not commute for different $k$) and the modes are coupled. This owes to the well-known fact
that spectral projectors are the natural objects for systems (rather than eigenvectors).

\medskip

We turn now to the approximation of the solution of the initial value problem \eqref{e.eq-ueps}, and quickly argue how to extend Theorem~\ref{t.wave-eq} to systems.
To this aim, we first define the quantity (now a matrix) $\Lambda_\ell$: \\

For all $\ell\ge 0$ and $k\in \R^d$
such that $\tilde \lambda_{k,\ell}$ is a well-defined and non-negative matrix (which holds for $|k|\ll 1$ since $\lambda_0$ is invertible for all unit vectors $e\in \R^d$), 
we set $\Lambda_\ell(k):=\sqrt{\tilde \lambda_{k,\ell}}$ (i.e.~the square-root of a symmetric non-negative matrix).
Recall the definition of the low-pass $\omega_{\ell}$ the role of which is to filter frequencies $k$ for which $\tilde \lambda_{k,\ell}$ is not non-negative.
Assume that Hypothesis \ref{hypo:corr} holds for each familly of extended correctors $(\phi_j^m,\sigma_j^m,\chi_j^m)_{0\le j\le \ell}$.
As for the proof of Theorem~\ref{t.wave-eq}, we start with preparing the data, and we replace $u_0$ by
$$
u_{0,\e,\ell}(x)\,:=\,\sum_{m=1}^d\sum_{j=0}^{\ell}\varepsilon^j \varphi_j^m\big( \frac{x}{\varepsilon}\big) \cdot \nabla^j [u_{0,\e}]_m(x) \in \R^d,
$$
where $u_{0,\e}=\calF^{-1}(\omega_{\ell}(\e|\cdot|)\hat u_0)$ is the filtering of $u_0$ by $\omega_\ell$, and we consider the solution $v_{\e,\ell}$ associated with this well-prepared initial condition. The estimate of Lemma~\ref{lem:prepare} is unchanged.
We then define an approximation $\tilde v_{\e,\ell}$ of $v_{\e,\ell}$ using Taylor-Bloch waves
$$
\tilde v_{\e,\ell}(t,x) \,:=\,\sum_{m=1}^d\frac{1}{(2\pi)^d}\int_{\R^d} \omega_{\ell}(\e|k|)[\hat u_{0}]_m(k)e^{ik\cdot x}  \cos(\e^{-1} \Lambda_\ell(\e k)t) \psi_{\e k,\ell}^m \big(\frac x\e\big)\ dk, 
$$
where $M\mapsto \cos(M)$ denotes the cosinus function on matrices (defined as the real part of the complex exponential of a matrix).
By controlling the growth in time of the eigendefect, we obtain the estimate of Proposition~\ref{prop:energy-wave} for systems.
It remains to simplify the approximate solution by defining
$$ u_{\e,\ell}(t,x)\, :=\sum_{m=1}^d  \frac{1}{(2\pi)^d} \int_{\R^d} \omega_{\ell}(\e |k|) [\hat u_{0}]_m(k)  e^{ik\cdot x} \cos(\e^{-1} \Lambda_\ell(\e k)t)e_m dk ,$$ 
which remains accurate in the sense of Lemma~\ref{lem:throw-away}.
 
 \medskip

Based on this, Sections~\ref{sec:disper} and~\ref{sec:deloc} are easily extended to systems, and we leave the details to the reader.

\section{Estimates of the extended higher-order correctors}\label{sec:corr}

\subsection{Existence of higher-order correctors}

Recall that $\phi_0\equiv 1,\sigma_0\equiv 0,\chi_0\equiv 0,\chi_1\equiv 0$, and
that $\phi_1$ and $\sigma_1$ are the classical corrector and flux corrector in homogenization which are well-defined for stationary ergodic coefficients (cf. \cite{GNO3}).
As it is standard in stochastic homogenization, one might modify the (higher-order) corrector equations 
by adding a zero-order term of magnitude $T^{-1}$ for some $T\gg 1$. This would yield existence and uniqueness
of stationary approximations of the extended correctors $(\phi_j,\sigma_j,\nabla \chi_j)$ at any order.
In fact, interpreting $T$ as a time-scale, one could even let $T$ depend in a nontrivial way on $\e$ in the various estimates, and work with
approximate correctors only. The price to pay to work with these well-defined approximate correctors is that the crucial identity
$\nabla \cdot \sigma_j=q_j$ would only hold up to some defect (depending on $T$). Similarly, there would be an additional defect 
in the eigenvalue/eigenvector relation for the associated approximate Taylor-Bloch wave.
Last, we would have approximations of the tensors $\bar \aa_j$ depending on $T$.
To avoid this additional approximation, we directly work with the higher-order correctors without massive approximation.
In this case however, higher-order extended correctors are not necessarily well-defined, and the associated existence/uniqueness theory
makes heavy use of quantitative homogenization methods.

\medskip

Let us start with a soft result: the existence and uniqueness of a (non-stationary) $(\phi_{j},\sigma_{j})$ 
provided $\phi_{j-1},\sigma_{j-1},\nabla \chi_{j-1}$ are stationary fields with finite second moment.
\begin{lemma}
Let $\phi_{j-1},\sigma_{j-1},\nabla \chi_{j-1},q_j$ be as in Definition~\ref{defi:ext-corr}.
Assume that $\phi_{j-1},\sigma_{j-1}$, and $\nabla \chi_{j-1}$ are stationary fields with finite second moments,
which implies that $q_j$ is also stationary with finite second moment.
Then there exist random fields $\phi_j,\sigma_j$ solving 
\begin{eqnarray*}
-\nabla \cdot \aa\nabla \phi_j\,=\,\nabla \cdot (\sigma_{j-1}e+\aa e\phi_{j-1}+\nabla \chi_{j-1}),
\\
-\triangle \sigma_j = \nabla \times q_j, \quad \nabla \cdot \sigma_j= q_j,
\end{eqnarray*}
such that $\nabla \phi_j,\nabla \sigma_j$ are uniquely defined stationary fields
with finite second moments.
\qed
\end{lemma}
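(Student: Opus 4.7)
The proof naturally splits into constructing $\phi_j$ first and then $\sigma_j$, both following the classical Kozlov--Papanicolaou--Varadhan (KPV) scheme in stochastic homogenization. Since $\phi_{j-1}$, $\sigma_{j-1}$ and $\nabla \chi_{j-1}$ are by assumption stationary with finite second moments, the field $h := -\sigma_{j-1}e + \aa e \phi_{j-1} + \nabla \chi_{j-1}$ is stationary with $\expec{\int_Q |h|^2} < \infty$, so the defining equation for $\phi_j$ takes the abstract form $-\nabla \cdot \aa \nabla \phi_j = \nabla \cdot h$ with a stationary right-hand side in divergence form.

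The plan for $\phi_j$ is the standard massive regularization: for each $\mu > 0$, solve the modified equation $\mu \phi_j^\mu - \nabla \cdot \aa \nabla \phi_j^\mu = \nabla \cdot h$ for $\phi_j^\mu$ stationary. Existence and uniqueness of $\phi_j^\mu \in L^2(\Omega)$ follow from Lax--Milgram applied in the probability space (with respect to the bilinear form induced by $\mu \, \Id + (-\nabla \cdot \aa \nabla)$ lifted to the Hilbert space of stationary fields). Testing the equation against $\phi_j^\mu$ and using ellipticity yields the uniform energy estimate $\expec{|\nabla \phi_j^\mu|^2} \lesssim \expec{|h|^2}$. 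Passing to the limit $\mu \downarrow 0$ via weak compactness of $\nabla \phi_j^\mu$ in $L^2(\Omega)^d$, and exploiting ergodicity to control $\sqrt{\mu}\, \phi_j^\mu$ (which has bounded $L^2(\Omega)$ norm by the energy estimate), produces a stationary gradient $\nabla \phi_j \in L^2(\Omega)^d$ with $\expec{\nabla \phi_j} = 0$ solving the target equation; uniqueness of $\nabla \phi_j$ among stationary mean-zero gradients is a direct consequence of ellipticity. The potential $\phi_j$ itself is then recovered as a (generally non-stationary) random field on $\R^d$ by integrating $\nabla \phi_j$ along paths, uniquely defined up to an additive random constant.

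Next, one verifies that $q_j := \aa(\nabla \phi_j + e \phi_{j-1}) - \tilde \aa_{j-1} e + \nabla \chi_{j-1} - \sigma_{j-1} e$ is a well-defined stationary field with $\expec{\int_Q q_j} = 0$ (by the very definition of $\tilde \aa_{j-1}$) and, crucially, that it is divergence-free: substituting the equation just obtained for $\phi_j$ and using the skew-symmetry of $\sigma_{j-1}$ in the form $\nabla \cdot (\sigma_{j-1} e) = -\nabla \cdot (\nabla \cdot \sigma_{j-1})$ acting on the suitable index, one checks by direct computation that $\nabla \cdot q_j = 0$ in the sense of distributions. The flux corrector $\sigma_j$ is then constructed componentwise by the same KPV procedure applied to the Poisson equation $-\triangle \sigma_{jkl} = \nabla_k [q_j]_l - \nabla_l [q_j]_k$, producing a stationary $\nabla \sigma_j$ with $\expec{|\nabla \sigma_j|^2} \lesssim \expec{|q_j|^2}$. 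The gauge identity $\nabla \cdot \sigma_j = q_j$ is finally verified by observing that both sides satisfy the same Poisson equation (with the Laplacian on the left side) and share the same stationary gradient, hence agree up to an additive constant which vanishes by the zero-mean property.

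The main technical obstacle in this program is not algebraic but analytic: it lies in the passage $\mu \downarrow 0$, which requires the ergodic theorem together with the fact that the Lax--Milgram theory in the probability space applies on the correct closed subspace (mean-zero potential fields) so that the weak limit solves the unregularized equation without residual mass term. Once this is in place, the algebraic cross-check $\nabla \cdot q_j = 0$ and the reconstruction of $\sigma_j$ are essentially the same calculation as for the base case $j = 1$ treated in \cite{GNO3}, and do not pose additional difficulty.
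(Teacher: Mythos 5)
The paper does not actually prove this lemma: it is stated as a ``soft result'' with the argument omitted as standard, the base case $j=1$ being referred to \cite{GNO3}, so your proposal is supplying precisely the construction the paper implicitly invokes --- massive regularization and Lax--Milgram in the space of stationary fields, the uniform energy estimate $\mu\expec{|\phi_j^\mu|^2}+\expec{|\nabla \phi_j^\mu|^2}\lesssim\expec{|h|^2}$, weak passage to the limit $\mu\downarrow 0$, uniqueness of the stationary gradient by ellipticity, and the flux-corrector construction from the divergence-free flux --- and it is correct. Two small points of precision: the identity $\nabla\cdot q_j=0$ follows immediately by substituting the equation for $\phi_j$ into the definition of $q_j$ (no skew-symmetry of $\sigma_{j-1}$ is needed, and the identity you quote for it is not correct as written); and the gauge relation $\nabla\cdot\sigma_j=q_j$ should be justified by noting that $\nabla\cdot\sigma_j-q_j$ is harmonic with stationary, mean-zero, finite-second-moment gradient, hence has vanishing gradient by the usual Liouville/energy argument in the probability space and is then the zero constant --- saying the two sides ``share the same stationary gradient'' assumes what is to be proved. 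Neither remark is a genuine gap.
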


\medskip

Let us now distinguish between the assumptions on $\aa$.
We start with the periodic and quasi-periodic setting, then turn to the almost periodic setting, and conclude with the random setting. 

%%%%%%%%%%%%%%%%%

\subsection{Periodic and quasi-periodic coefficients}

The following result is a direct consequence of the Poincar\'e inequality on the torus
and of spectral theory.
\begin{prop}\label{prop:corr-perio}
Let $\aa$ be a measurable periodic coefficient field. Then for all $j\ge 1$, there exist unique periodic extended correctors 
$\phi_j,\sigma_j,\chi_j\in H^1_\loc(\R^d)$ with zero average.
If in addition $\aa$ is symmetric, then for all $j\ge 1$ and unit direction $e$,
$\lambda_{2j+1}=0$.
\qed
\end{prop}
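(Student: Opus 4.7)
The plan is to address the two assertions separately: existence and uniqueness of the periodic extended correctors by induction on $j$, and vanishing of the odd-order homogenized coefficients via Bloch spectral theory.

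\textbf{Existence and uniqueness by induction.} I will proceed by induction on $j$, with base step $\phi_0\equiv 1$, $\sigma_0\equiv 0$, $\chi_0\equiv 0$ (and $\chi_1\equiv 0$) provided by Definition~\ref{defi:ext-corr}. Assuming $(\phi_k,\sigma_k,\chi_k)_{k\le j-1}$ have been constructed as periodic functions in $H^1_{\loc}(\R^d)$ with zero average, I build the triple at order $j$ in three substeps. First, $\chi_j$ solves $-\triangle \chi_j = \nabla \chi_{j-1}\cdot e + \sum_{p=1}^{j-1}\lambda_{j-1-p}\phi_p$; the right-hand side has zero mean on the torus by the inductive hypothesis (since each $\phi_p$ and each $\nabla \chi_{j-1}$ is zero-mean), so Poincar\'e's inequality on the torus yields a unique periodic zero-mean solution $\chi_j\in H^2_{\loc}$. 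Second, $\phi_j$ solves the divergence-form equation $-\nabla\cdot \aa \nabla \phi_j = \nabla \cdot (-\sigma_{j-1}e + \aa e\phi_{j-1} + \nabla \chi_{j-1})$; by uniform ellipticity of $\aa$ and the Lax--Milgram theorem on the space of $H^1_{\per}$-functions with zero average, this yields a unique such $\phi_j$. Third, defining $q_j$ as in Definition~\ref{defi:ext-corr} makes $\expec{\int_Q q_j}=0$ by the very choice of $\tilde\aa_{j-1}$, and a short calculation using the equations for $\phi_j$ and $\chi_{j-1}$ together with $\nabla\cdot \sigma_{j-1}=q_{j-1}$ (the inductive gauge condition) gives $\nabla\cdot q_j = 0$. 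One may then solve $-\triangle \sigma_{jkl}=\partial_k q_{jl}-\partial_l q_{jk}$ componentwise on the torus (right-hand side is a zero-mean derivative); skew-symmetry is built into the construction, and a harmonic-function argument on the torus shows $\nabla\cdot \sigma_j - q_j$ is a zero-mean periodic harmonic function, hence $\equiv 0$, which is the required gauge. Uniqueness of $\phi_j,\chi_j$ (up to additive constants, fixed by the zero-mean normalization) and of $\nabla \sigma_j$ follows from the same energy-type arguments.

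\textbf{Vanishing of odd-order homogenized coefficients.} When $\aa$ is symmetric, the magnetic operator $\calL_k := -(\nabla+ik)\cdot \aa(\nabla+ik)$ on $L^2(\mathbb{T}^d)$ is self-adjoint with compact resolvent (by Rellich, since $\aa$ is bounded and coercive on the torus). Hence its spectrum consists of real eigenvalues accumulating at $+\infty$. At $k=0$, the lowest eigenvalue is simple (with eigenvector $\equiv 1$) and $=0$. By Kato's analytic perturbation theory for self-adjoint operators with compact resolvent, there is an analytic map $k\mapsto (\lambda(k),\psi_k)$ defined in a small complex neighborhood of $0$, with $\lambda(k)\in \R$ for $k\in \R^d$ and $\psi_k\in H^1_{\per}$. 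Since $\aa$ is real symmetric, $\overline{\calL_k f}=\calL_{-k}\bar f$, so $\overline{\psi_k}$ is an eigenvector of $\calL_{-k}$ with eigenvalue $\overline{\lambda(k)}=\lambda(k)$; by simplicity of the lowest eigenvalue, this forces $\lambda(-k)=\lambda(k)$, so $\kappa\mapsto \lambda(\kappa e)$ is an even analytic function.

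\textbf{Matching to the Taylor--Bloch expansion.} It remains to identify the Taylor coefficients of $\kappa \mapsto \lambda(\kappa e)$ with the $\lambda_j$ of Definition~\ref{defi:ext-corr}. Inserting the formal ansatz $\psi_k=\sum_{j\ge 0}(i\kappa)^j \phi_j^e$ into the eigenvalue equation and matching powers of $\kappa$ (exactly as in the proof of Proposition~\ref{prop:bloch}) shows that the Taylor expansion of $\psi_k$ is given precisely by the extended correctors $\phi_j^e$ constructed above, and that the expansion of $\lambda(k)$ is $\lambda(\kappa e) = \kappa^2\sum_{j\ge 0}(i\kappa)^j\lambda_j$; the uniqueness of the (simple) analytic eigenpair from Kato theory legitimizes this matching. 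Evenness of $\lambda(\cdot e)$ then forces $(i\kappa)^{2j+1}\lambda_{2j+1}=0$ for all $\kappa$, hence $\lambda_{2j+1}=0$.

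The main obstacle is the third paragraph: making rigorous the identification of the formal Taylor--Bloch coefficients with the genuine analytic Bloch expansion, for which one must verify both that the $\phi_j^e$ constructed by induction are uniquely the derivatives of $\psi_k$ at $0$ (this is where one uses the simplicity of the lowest eigenvalue and the normalization choice discussed in Remark~\ref{rem:normalization}), and that no hidden regularity is needed beyond what the induction delivers. The rest is bookkeeping.
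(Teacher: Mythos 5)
Your proposal is correct and follows essentially the same route as the paper: the existence/uniqueness part is the standard torus construction (Poincar\'e, Lax--Milgram, and the flux-corrector gauge argument) that the paper treats as routine, and the vanishing of $\lambda_{2j+1}$ is obtained, as in the paper, from the compact resolvent and analyticity of the first eigenvalue of the magnetic operator $-(\nabla+ik)\cdot\aa(\nabla+ik)$ on the torus together with the identification of its Taylor coefficients at $k=0$ with the $\lambda_j$ of Definition~\ref{defi:ext-corr}. The only cosmetic difference is that you conclude via evenness of $\kappa\mapsto\lambda_1(\kappa)$ (complex-conjugation symmetry) whereas the paper concludes from realness of $\lambda_1^{(j)}(0)=i^{j+1}\lambda_{j+1}$; both rest on the same identification step, which you rightly flag and which the paper likewise leaves as a ``direct computation'' (citing \cite{CV}).
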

\begin{proof}[Proof of Proposition~\ref{prop:corr-perio}]
Once the structure of the correctors is clear (only the definition of the flux correctors $\sigma_j$ is delicate --- cf. the discussion
on closed forms after Definition~\ref{defi:ext-corr}), the only subtle result is that $\lambda_{2j+1}=0$ for all $j\ge 0$.
We give here the classical proof of this fact (which we already proved by a direct approach in Proposition~\ref{prop:properties-lambda}).
Let $k=\kappa e$, where $e$ is a fixed unit direction and $\kappa \in \R_+$.
Since $-(\nabla +i\kappa e)\cdot\aa (\nabla+i\kappa e)$ has compact resolvent on the torus, we can consider
the first eigenvalue $\lambda_1(\kappa)$. As a function of $\kappa$, $\lambda_1$ is real analytic on a neighborhood of the
origin (so that its derivatives are
given by the extended correctors), cf.~\cite{CV}. 
A direct computation shows that $\lambda_1^{(j)}(0)=i^{j+1}\lambda_{j+1}$ (the 
$j$-th derivative of $\lambda_1$ at zero is given by $i^{j+1}\lambda_{j+1}$ from Definition~\ref{defi:ext-corr}).
Since $\lambda_j \in \R$ and $\lambda_1^{(j)}(0)\in \R$, this implies $\lambda_{2j+1}=0$, as claimed.
\end{proof}

Similar results as in Proposition~\ref{prop:corr-perio} hold in the case of smooth quasi-periodic coefficient fields first considered by Kozlov \cite{K}.
The arguments  of \cite[Theorem~4]{GH}, based on an diophantine condition in the form of a weak Poincar\'e inequality, on
Garding's inequality, and elliptic regularity, indeed allow to prove the following.
\begin{prop}
Let $\tilde \aa$ be a smooth coefficient field on a higher-dimensional torus $\T_m$, $m>d$,
 let $M$ be a winding $m\times d$-matrix, and set $\aa:\R^d\to \Md, x\mapsto \aa(x):=\tilde \aa(Mx)$.
 If $M$ satisfies a diophantine condition, then for all $j\ge 1$, there exist unique smooth quasi-periodic extended correctors 
$\phi_j,\sigma_j,\chi_j\in H^1_\loc(\R^d)$ with zero average. {In particular} all the extended correctors $\phi_j,\sigma_j,\chi_j$ are bounded.
\qed
\end{prop}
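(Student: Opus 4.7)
The plan is to lift the corrector equations to the higher-dimensional torus $\T_m$ and solve them inductively on $j$ using a Lax--Milgram-type argument powered by a weak Poincar\'e inequality. Write $\nabla_y$ for the gradient on $\T_m$ and $\partial_M := M^\top \nabla_y$ for the directional derivative induced by the winding matrix, so that for $\tilde u \in C^\infty(\T_m)$, the function $u(x):=\tilde u(Mx)$ satisfies $\nabla u(x) = (\partial_M \tilde u)(Mx)$. The diophantine condition on $M$ provides, for some $s>0$ and every zero-mean $\tilde u \in H^s(\T_m)$, a weak Poincar\'e inequality of the form
\begin{equation*}
\|\tilde u\|_{L^2(\T_m)} \,\lesssim\, \|\partial_M \tilde u\|_{H^{-s}(\T_m)}.
\end{equation*}
Combined with Garding's inequality and the smoothness of $\tilde\aa$, this makes the lifted degenerate-elliptic operator $-\partial_M \cdot \tilde \aa \,\partial_M$ invertible with smooth inverse on the mean-zero sector of $C^\infty(\T_m)$, exactly as in \cite[Theorem~4]{GH}.

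The induction on $j$ is then essentially algebraic. The base case $j=1$ is the Kozlov/Gloria--Habib result: one obtains $\tilde\phi_1 \in C^\infty(\T_m)$ with zero mean solving $-\partial_M \cdot \tilde\aa(\partial_M \tilde\phi_1 + e)=0$, and sets $\phi_1(x) := \tilde\phi_1(Mx)$; the homogenized tensor $\bar\aa_0$ and the flux $q_1$ follow and are smooth quasi-periodic fields. Since $q_1$ has zero mean and $\partial_M \cdot q_1 = 0$, $\tilde\sigma_1$ is produced by solving the Hodge-type system $-\Delta_y \tilde\sigma_1 = \nabla_y \times q_1$ on $\T_m$ in Fourier space (using a gauge that respects skew-symmetry), and $\chi_1 \equiv 0$ by definition. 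For the inductive step, assume $(\tilde\phi_i,\tilde\sigma_i,\tilde\chi_i)_{i\le j-1}$ are smooth zero-mean functions on $\T_m$; the lifted equation for $\tilde\phi_j$ reads
\begin{equation*}
-\partial_M \cdot \tilde\aa\,\partial_M\tilde\phi_j \,=\, \partial_M \cdot\bigl(-\tilde\sigma_{j-1}e + \tilde\aa e\,\tilde\phi_{j-1} + \partial_M\tilde\chi_{j-1}\bigr),
\end{equation*}
whose RHS is the $\partial_M$-divergence of a smooth zero-mean field on $\T_m$. The weak Poincar\'e inequality and Garding's inequality produce a unique mean-zero solution $\tilde\phi_j \in L^2(\T_m)$, and a standard elliptic bootstrap (using the smoothness of $\tilde\aa$ and of the inductive data) upgrades it to $C^\infty(\T_m)$. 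The higher-order tensor $\bar\aa_{j-1}$ is then well-defined and the flux $q_j$ satisfies $\partial_M \cdot q_j = 0$ by construction of $\tilde\phi_j$, so $\tilde\sigma_j$ is obtained as in the base case. For $\tilde\chi_j$, the RHS $\partial_M\tilde\chi_{j-1}\cdot e + \sum_{p=1}^{j-1}\lambda_{j-1-p}\tilde\phi_p$ is checked to have zero mean by testing the equation for $\tilde\phi_{j-1}$ against constants, so the Poisson equation $-\Delta_y\tilde\chi_j = \partial_M\tilde\chi_{j-1}\cdot e + \sum_p \lambda_{j-1-p}\tilde\phi_p$ is uniquely solvable in $C^\infty(\T_m)$ with zero mean by Fourier analysis on the torus.

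The main obstacle is the \emph{weak} coercivity of $-\partial_M\cdot\tilde\aa\,\partial_M$: because $\partial_M$ has the nontrivial kernel of functions constant along the orbits of $M$, classical Lax--Milgram on $H^1(\T_m)$ does not apply, and invertibility on the mean-zero sector can only be extracted via the diophantine weak Poincar\'e estimate together with Garding's inequality and a careful elliptic bootstrap. Once these ingredients are in place, the induction proceeds mechanically; uniqueness is immediate from the weak Poincar\'e inequality applied to the difference of two solutions; and boundedness of $\phi_j,\sigma_j,\chi_j$ on $\R^d$ follows for free since each is the restriction to the orbit $\{Mx : x\in\R^d\}$ of a function in $C^\infty(\T_m)$, hence bounded on the compact torus.
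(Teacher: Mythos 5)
Your overall route is the same as the paper's (which only sketches this proposition by citing \cite[Theorem~4]{GH}): lift to $\T_m$, use the diophantine condition through a weak Poincar\'e inequality together with Garding's inequality and elliptic regularity, and induct on $j$. However, two steps are wrong as written. First, the weak Poincar\'e inequality is stated with the wrong sign of the Sobolev exponent: the diophantine condition gives a \emph{loss} of derivatives, $|M^\top\xi|^{-1}\lesssim|\xi|^{s}$ for $\xi\in\Z^m\setminus\{0\}$, hence for zero-mean $\tilde u$ one only gets $\|\tilde u\|_{L^2(\T_m)}\lesssim\|\partial_M\tilde u\|_{H^{s}(\T_m)}$ (equivalently $\|\tilde u\|_{H^{-s}}\lesssim\|\partial_M\tilde u\|_{L^2}$), not $\|\tilde u\|_{L^2}\lesssim\|\partial_M\tilde u\|_{H^{-s}}$. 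The inequality you wrote is false (it would assert a gain of derivatives for a degenerate operator) and would make the Lax--Milgram step essentially classical; the genuine difficulty, and the reason Garding's inequality in high-order Sobolev norms and the smoothness of $\tilde\aa$ enter, is precisely to compensate this loss of $s$ derivatives at each inversion of the hypo-elliptic operator $-\partial_M\cdot\tilde\aa\,\partial_M$ on the mean-zero sector.

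Second, the equations you solve for $\tilde\sigma_j$ and $\tilde\chi_j$ use the full torus Laplacian $\Delta_y$, but if $\sigma_j(x)=\tilde\sigma_j(Mx)$ then $\triangle\sigma_j(x)=(\partial_M\cdot\partial_M\,\tilde\sigma_j)(Mx)$, so the lifted equations of Definition~\ref{defi:ext-corr} read $-\partial_M\cdot\partial_M\,\tilde\sigma_j=\partial_M\times\tilde q_j$, $\partial_M\cdot\tilde\sigma_j=\tilde q_j$, and $-\partial_M\cdot\partial_M\,\tilde\chi_j=\partial_M\tilde\chi_{j-1}\cdot e+\sum_{p}\lambda_{j-1-p}\tilde\phi_p$. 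Solving instead $-\Delta_y\tilde\sigma_j=\nabla_y\times\tilde q_j$ (or $-\Delta_y\tilde\chi_j=\dots$) produces functions whose restriction to the orbit $\{Mx\}$ does not satisfy the $\R^d$ equations, so the inductive step fails as stated. The correct equations are again degenerate, with small divisors $|M^\top\xi|^2$ in Fourier space, so their solvability and smoothness also rely on the diophantine condition combined with the rapid Fourier decay of the smooth right-hand sides --- the same mechanism you already invoke for $\tilde\phi_j$, not plain Fourier inversion of $\Delta_y$. (A minor point: the zero mean of the right-hand side for $\tilde\chi_j$ is automatic, since $\partial_M\tilde\chi_{j-1}$ and the $\tilde\phi_p$ have zero mean; no testing of the $\tilde\phi_{j-1}$ equation is needed.) With these two corrections the argument does go through and coincides with the proof intended in the paper; the boundedness statement indeed follows for free from smoothness on the compact torus.
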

We then turn the more general case of almost-periodic coefficient fields $\aa$.

%%%%%%%%%%%%%%%%%

\subsection{Almost-periodic coefficients}

We first recall
the quantitative measure of almost-periodicity introduced in \cite{AGK}.
Given $f:\R^d \to \R^k$ and $x,y,z\in\R^d$, we define
\begin{equation*}
T_zf(x):= f(x+z)
\end{equation*}
and the difference operator
\begin{equation}
\label{e.diff-op}
\Delta_{yz}f(x):= \frac12 \left( T_yf(x) - T_zf(x)\right) = \frac12 \left(f(x+y) - f(x+z)\right).
\end{equation}
Let $\mathcal T_k = \left( (y_1,z_1),\ldots,(y_k, z_k) \right) \in (\R^d\times\R^d)^k$ be a $k$-tuple formed by couples $(y_j ,z_j) \in \R^d \times \R^d$. For a function $f:\R^d \to \R^{m \times n}$, $m,n \in \N$, we define a difference operator $\Delta_{{\mathcal T}_k}$ acting on $f$ by
\begin{equation}
\label{e.iterate-diff}
\Delta_{\mathcal T_k} f(x) = \Delta_{y_k z_k} \cdots \Delta_{y_1 z_1} f(x)\,.
\end{equation}
Let $\mathcal P_{j,k}$, $j \in \{0,1,\ldots,k\}$, stand for a set of increasing ordered subsets of $\{1,\ldots,k\}$ with $j$ members. In other words, for $j>0$ we define
\begin{equation*} \label{}
\mathcal P_{j,k}:= \left\{ \zeta \in \left\{ 1,\ldots,k\right\}^j \,:\,    \zeta_i < \zeta_{i+1}  \  \forall  i \in \{ 1,\ldots,j-1\} \right\}
\end{equation*}
and, for $j=0$, we set $\mathcal P_{0,k} = \emptyset$. By abuse of notation, we also think of $\zeta \in \mathcal{P}_{j,k}$ as being ordered subsets of $\{ 1,\ldots,k\}$. Then, for $\zeta \in \mathcal P_{j,k}$, we denote by $\zeta^c$ the unique member of $\mathcal P_{k-j,k}$ such that $\{1,\ldots,k\} = \zeta \cup \zeta^c$. By $|\zeta|$ we denote the number of elements in $\zeta \in P_{j,k}$, i.e., $|\zeta| = j$. For $\mathcal T_k$ as above and for $\zeta \in \mathcal P_{j,k}$ we denote by the $j$-tuple $\zeta(\mathcal T_k)$  the set  $\left( ( y_{\zeta_1}, z_{\zeta_1}), \ldots, (y_{\zeta_{j}}, z_{\zeta_{j} }) \right)$ for $j>0$, and if $\zeta \in \mathcal P_{0,k}$, we set $\zeta(\mathcal T_k) = \emptyset$  and  $\Delta_{\zeta(\mathcal T_k)} f = 1$.  Furthermore, we let $\mathcal P_k$ stand for the family of subsets $(\zeta^1,\ldots,\zeta^k) \in \mathcal P_{j_1,k} \times \cdots \times \mathcal P_{j_k,k}$ with $\sum_{i=1}^k j_i = k$.

 \smallskip

We are now in position to recall the quantitative measure of almost periodicity introduced in \cite{AGK}. 
For a given $f \in C(\R^d;\R^{m \times n})$, $m,n \in \N$, and $\mathcal T_k = \{(y_1,z_1),\ldots,(y_k, z_k)\}$ we define
\begin{equation} \label{e.G_k}
G_k(f,\mathcal T_k) := \max_{(\zeta^1,\ldots,\zeta^k) \in \mathcal P_k} \left\{ \prod_{j=1}^k \left\| \Delta_{\zeta^j(\mathcal T_k)} f  \right\|_{L^\infty(\R^d;\R^{m \times n})} \right\}\,,
\end{equation}
that is, the maximum is taken over all (increasing, ordered) partitions of $\mathcal P_k$. Then, we define a quantity $\rho_k$, for each $k \in \N$ and $R \geq 1$, by
\begin{equation} \label{e.defrhok}
 \rho_k(f,R) : = \sup_{y_1\in\R^d} \inf_{z_1\in B_R} \cdots \sup_{y_k\in\R^d} \inf_{z_k\in B_R} G_k\left(f , \left( (y_1, z_1),\ldots, (y_k,z_k)†\right) \right),
 \end{equation}
which are the building blocks for the quantitative measure of almost periodicity of \cite{AGK}:
\begin{equation}
\label{e.defrhostar}
\rho_*(\aa,R):= \inf_{k \in\N \cap [1,R] } C^k k! \rho_k\left(\aa, k^{-1} R\right),
\end{equation}
where the constant $C$ in~\eqref{e.defrhostar} only depends on $d,\Lambda$.
The main quantitative ergodicity assumption that we make on the coefficients is therefore that there exists an exponent $\delta > 0$ and a constant $K\geq 1$ such that, for every $R\geq 1$, 
\begin{equation}
\label{e.ergodicassump}
\rho_*(\aa,R) \leq K R^{-\delta}.
\end{equation}
We introduce the following notation: for all $\delta>0$, integer $j\ge 1$, and $t\ge 0$, we set
$$
\nu_{\delta,j}(t)\,:=\,\left\{
\begin{array}{lll}
1 &\text{ for }&j-\delta <0,\\
\log(2+t)^\frac{1}{2}& \text{ for }&j-\delta=0,\\
t^{j-\delta}&\text{ for } &0<j-\delta<1.
\end{array}
\right.
$$
Although it is not straightforward, under assumption \eqref{e.ergodicassump}, methods similar to \cite{AGK} (see also \cite{AKM2}, with some care for the borderline case $j=\delta$) essentially allow to prove the following control of the extended correctors:
\begin{prop}\label{prop:alper}
Let $\aa$ be an almost-periodic coefficient field satisfying \eqref{e.ergodicassump} for some $K\ge 1$ and $\delta>0$.
Then, for all $1\le j<1+\delta$, $(\phi_j,\sigma_j,\nabla \chi_j)$ are well-defined and satisfy
for all $x\in \R^d$
$$
|\phi_j(x)|+|\sigma_j(x)|+|\nabla \chi_{j}(x)| \,\lesssim\, \nu_{\delta,j}(|x|).
$$ 
\qed
\end{prop}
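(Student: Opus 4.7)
The plan is to proceed by (finite) induction on $j\ge 1$ up to the largest integer strictly less than $1+\delta$, importing the quantitative almost-periodic homogenization machinery of \cite{AGK} (and its refinements in \cite{AKM2}) at each step. The base case $j=1$ is a direct consequence of \cite{AGK}: the quantitative ergodicity \eqref{e.ergodicassump} yields the existence of the classical corrector $\phi_1$ and flux corrector $\sigma_1$ together with the sublinear growth bound $|\phi_1(x)|+|\sigma_1(x)|\lesssim \nu_{\delta,1}(|x|)$, while $\chi_1\equiv 0$ by Definition~\ref{defi:ext-corr}. Suppose then that the bound holds for all orders strictly less than $j<1+\delta$; I would perform the induction step in three substeps, one for each of $\phi_j$, $\sigma_j$, and $\nabla \chi_j$.

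For $\phi_j$, starting point is the defining equation
\begin{equation*}
-\nabla\cdot\aa\nabla\phi_j=\nabla\cdot F_{j-1},\qquad F_{j-1}:=-\sigma_{j-1}e+\aa e\phi_{j-1}+\nabla\chi_{j-1},
\end{equation*}
whose right-hand side is in divergence form with $|F_{j-1}|\lesssim \nu_{\delta,j-1}$ by the induction hypothesis. I would invoke the large-scale regularity (of Schauder and Meyers type) for the operator $-\nabla\cdot \aa\nabla$ developed in \cite{AGK}: the quantitative decay of spatial averages at rate $R^{-\delta}$ provided by \eqref{e.ergodicassump} allows one to run a dyadic iteration on scales which trades the $\nu_{\delta,j-1}$-growth of the data for a $\nu_{\delta,j}$-growth of the primitive $\phi_j$, in the same way as one classically passes from a bounded datum to a linearly growing solution in the constant-coefficient case.

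For $\sigma_j$ and $\nabla\chi_j$, once $\phi_j$ is under control the flux $q_j$ of Definition~\ref{defi:ext-corr} is bounded by $\nu_{\delta,j}$ and is divergence-free by its very construction; inverting the vectorial Poisson equation $-\triangle\sigma_j=\nabla\times q_j$ together with the gauge $\nabla\cdot \sigma_j=q_j$ then yields $|\sigma_j(x)|\lesssim \nu_{\delta,j}(|x|)$ by the same decay-of-averages mechanism applied to the free Laplacian. The equation $-\triangle\chi_j=\nabla\chi_{j-1}\cdot e+\sum_{p=1}^{j-1}\lambda_{j-1-p}\phi_p$ is handled analogously for $\nabla\chi_j$; here it is crucial to use Proposition~\ref{prop:properties-lambda} to cancel the odd terms $\lambda_{2p+1}=0$, which otherwise would spuriously pair even-indexed $\lambda$'s with fast-growing correctors and destroy the expected stationary structure of the source.

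The main obstacle is the transition $\nu_{\delta,j-1}\leadsto \nu_{\delta,j}$, that is, the \emph{gain of precisely one order of growth} at each induction step. For $j\ge 2$ there is no classical Liouville theorem available, so the gain must be extracted from a careful quantitative two-scale expansion at order $j-1$ combined with the $C^{0,1}$-type large-scale regularity of \cite{AGK}; the borderline value $j=\delta$ is particularly delicate because the exponential/power balance in the Meyers iteration collapses and produces the logarithmic correction $\log^{1/2}(2+t)$, which has to be tracked through every integration by parts without being lost in a Sobolev embedding. This is the reason the authors only claim the result can be obtained by methods \emph{similar} to \cite{AGK,AKM2} rather than as a black-box corollary, and it is where essentially all the technical work would be concentrated.
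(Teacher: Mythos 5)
You should first be aware that the paper does not actually prove Proposition~\ref{prop:alper}: it is stated with the explicit caveat that ``although it is not straightforward, \ldots methods similar to \cite{AGK} (see also \cite{AKM2}, with some care for the borderline case $j=\delta$) essentially allow to prove'' the bounds, and no argument is given. So there is no proof in the paper to compare against line by line; what can be assessed is whether your outline is a credible route to the statement. In spirit it is: induction on the order of the correctors, using the divergence-form structure of the equations in Definition~\ref{defi:ext-corr} and the quantitative almost-periodic machinery of \cite{AGK,AKM2}, is exactly the route the authors gesture at, and your treatment of the base case $j=1$ and of the general architecture ($\phi_j$ from its divergence-form equation, then $\sigma_j$ and $\nabla\chi_j$ from constant-coefficient equations) is consistent with it.

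However, as a proof your proposal has a genuine gap, and you essentially concede it yourself: the entire mathematical content of the proposition is the passage from the $\nu_{\delta,j-1}$-bounds on the data to the $\nu_{\delta,j}$-bounds on $(\phi_j,\sigma_j,\nabla\chi_j)$, and this step is only named as an ``obstacle'' rather than carried out. Two points deserve emphasis. First, for $\sigma_j$ (and similarly for $\nabla\chi_j$) a pointwise bound $|q_j|\lesssim\nu_{\delta,j}$ is \emph{not} a sufficient input: even for $j=1$, the growth of the flux corrector is governed by the decay of large-scale spatial averages of $q_j$ minus its mean, not by its size, and establishing this decay for the higher-order fluxes is precisely where the quantitative two-scale expansion and the sensitivity/regularity estimates of \cite{AGK,AKM2} must be redone at order $j$; your phrase ``decay-of-averages mechanism applied to the free Laplacian'' points at the right mechanism but presupposes the estimate that needs proving. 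Second, the claim that Proposition~\ref{prop:properties-lambda} ($\lambda_{2p+1}=0$) is ``crucial'' to preserve the stationary structure of the source for $\chi_j$ is not right as stated: the $\lambda$'s are constants and the source $\nabla\chi_{j-1}\cdot e+\sum_{p=1}^{j-1}\lambda_{j-1-p}\phi_p$ is stationary with vanishing expectation regardless, since the sum starts at $p=1$ and the stationary $\phi_p$ have zero mean; the vanishing of the odd $\lambda$'s matters elsewhere (realness of $\tilde\lambda_{k,\ell}$), not here. So the proposal is a reasonable plan matching the paper's (unproven) indication, but it does not constitute a proof: the one-order gain per induction step, including the borderline logarithmic case $j=\delta$, remains to be established.
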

We finally consider random coefficient fields with decaying correlations.

%%%%%%%%%%%%%%%%%

\subsection{Random coefficients}

In this last subsection, we address the representative example of a family of Gaussian coefficient fields.
More precisely, we consider Gaussian ensembles
of scalar fields $a(x)$.
In order to get an example of an ensemble of uniformly elliptic coefficient
fields $\aa$, one applies a pointwise nonlinear Lipschitz transform to possibly several copies of the above.
Let $\Pm'$ (with expectation $\mathbb{E}'$) stand for the distribution of a scalar Gaussian field
$a(x)$ that is stationary and centered, and thus characterized by its covariance
\begin{equation}\nonumber
c(x):=\mathbb{E}'[ a(x)a(0)].
\end{equation}
We assume that the covariance is radial and decays mildly in the sense that there exists $\beta >0$  such that 
\begin{equation}\label{e.beta}
|c(x)|\lesssim \gamma_\beta(x):=(1+|x|)^{-\beta}.
\end{equation}
With a slight abuse of notation, we shall say that the law $\Pm$ (with expectation $\mathbb{E}$) of $\aa$ is Gaussian with parameter $\beta>0$.
Under this assumption, we have the validity of a weighted logarithmic-Sobolev inequality (cf.~\cite{DG1,DG2}), which is key to 
establish moment bounds on correctors in \cite{GNO3,GNO3b}.
%We define $\gamma_{\beta,*}(t):=\fint_{|x|<t} \gamma_\beta(|x|)dx$. The latter satisfies
%%
%$$
%\gamma_{\beta,*}(t)\,\sim \, \left\{
%\begin{array}{lll}
%t^{-d} &\text{ for }&\beta>d,\\
%t^{-d} \log(2+t)& \text{ for }&\beta=d,\\
%t^{-\beta}&\text{ for } &\beta<d.
%\end{array}
%\right.
%$$
%%
%We further introduce the following notation: for all $\beta>0$, $d\ge 1$, integer $j\ge 1$, and $t\ge 0$, we set
%%%
%%$$
%%\mu_{d,\beta,j}(t)\,:=\,\Big(1+\int_0^{t^2} \sqrt{\gamma_{\beta,*}(\tau)} (1+\tau)^{j-1}d\tau\Big)^\frac12.
%%$$
%%%
%%
%$$
%\mu_{d,\beta,j}(t)\,:=\,\Big(1+\int_0^{t} {\gamma_{\beta,*}(\tau)} (1+\tau)^{2j-1}d\tau\Big)^\frac12.
%$$
%%
Although it is not straightforward, proceeding as in \cite{GNO3b} (or using a semi-group approach as in \cite{GNO1,GO4}), one can prove the following sharp estimates (see also \cite{BFFO} for the case $j=2$):
\begin{prop}\label{prop:Gauss}
Let $\aa$ be a Gaussian coefficient field satisfying \eqref{e.beta} for some $\beta>0$.
Then, for all $j\in \N$, provided $d\ge 2j$ and $\beta>2(j-1)$, correctors of order $j$ exist and 
satisfy
\begin{multline*}
\expec{|\phi_j(x)|^2}^\frac12+\expec{|\sigma_j(x)|^2}^\frac12+\expec{|\nabla \chi_{j}(x)|^2}^\frac12 \\
\lesssim\, 
\left\{
\begin{array}{lll}
1&\text{ for }&\beta>2j,d>2j,\\
\log^\frac12(2+|x|)&\text{ for }&\beta>2j,d=2j,\\
\log(2+|x|)&\text{ for }&\beta=2j,d\ge2j,\\
1+|x|^{1-\frac\beta{2j}}&\text{ for }&2(j-1)<\beta<2j,d\ge2j.\\
\end{array}
\right.
\end{multline*}
\qed
\end{prop}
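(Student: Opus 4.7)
\medskip

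\noindent\textbf{Proof plan for Proposition~\ref{prop:Gauss}.} The plan is to proceed by induction on~$j$, the base case $j=1$ being the content of~\cite{GNO3,GNO3b,GO4}. Assume the bounds of the proposition hold for all extended correctors of order $\le j-1$, so that in particular the stationary random fields $\phi_{j-1},\sigma_{j-1},\nabla\chi_{j-1}$ are well defined and, denoting the pointwise moment bound by $\mu_{j-1}(x)$ (which is $1$, $\log^{1/2}(2+|x|)$, $\log(2+|x|)$, or $(1+|x|)^{1-\beta/(2(j-1))}$ according to the four regimes), satisfy $\expec{|(\phi_{j-1},\sigma_{j-1},\nabla\chi_{j-1})|^2}^{1/2}\lesssim \mu_{j-1}(x)$. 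I would first construct a massive approximation $(\phi_{j,T},\sigma_{j,T},\chi_{j,T})$ obtained by adding a zero-order term $T^{-1}\phi_{j,T}$ to the elliptic equations, which gives a unique stationary solution by Lax--Milgram, and then pass to the limit $T\uparrow\infty$ using monotonicity.

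The heart of the argument is to control $\phi_{j,T}$ uniformly in $T$ via the weighted logarithmic-Sobolev inequality (LSI) for the Gaussian ensemble, cf.~\cite{DG1,DG2}. Applied to the random variable $F_R:=\fint_{B_R(x_0)}\phi_{j,T}$, the LSI yields
\begin{equation*}
\mathrm{var}\left[F_R\right]\,\lesssim\, \expec{\int_{\R^d}\int_{\R^d} \gamma_\beta(x-y)\,\Big|\frac{\partial F_R}{\partial \aa(x)}\Big|\,\Big|\frac{\partial F_R}{\partial \aa(y)}\Big|\,dx\,dy},
\end{equation*}
where $\gamma_\beta(x)=(1+|x|)^{-\beta}$ is the covariance weight. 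The Malliavin-type derivative $\partial F_R/\partial\aa(x)$ is estimated by differentiating the defining equation for $\phi_{j,T}$: one picks up a term involving the Green function of $-\nabla\cdot\aa\nabla + T^{-1}$ against $\nabla\phi_{j,T}$, plus contributions from $\partial\phi_{j-1}/\partial\aa$, $\partial\sigma_{j-1}/\partial\aa$, and $\partial\nabla\chi_{j-1}/\partial\aa$. The latter are controlled by the inductive hypothesis and the annealed Green-function estimates of~\cite{GNO3,GNO3b} (or by the large-scale $C^{1,\alpha}$ regularity theory of~\cite{AS,AKM2}). Combining these ingredients, the variance bound gives a pointwise control $\expec{|\phi_{j,T}(x)|^2}\lesssim \int_{\R^d} G(x,y)^2 \mu_{j-1}(y)^2 dy$ (schematically), convolved with the decay kernel~$\gamma_\beta$; carefully computing the resulting convolution integrals according to whether $\beta$ and $d$ lie above, at, or below the critical thresholds produces the four regimes displayed in the statement. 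The step from $\phi_j$ to $\sigma_j$ and $\nabla\chi_j$ is then by elliptic regularity on the equations $-\triangle\sigma_j=\nabla\times q_j$, $\nabla\cdot\sigma_j=q_j$, and $-\triangle\chi_j=\nabla\chi_{j-1}\cdot e+\sum\lambda_{j-1-p}\phi_p$, where the mean-zero property $\expec{q_j}=0$ is guaranteed by the definition of $\tilde\aa_{j-1}$.

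The main obstacle is the sharpness of the tracking of exponents through the induction, especially at the critical cases $\beta=2j$ and $d=2j$, where the logarithmic corrections must be extracted from a borderline convolution estimate; this is exactly the phenomenon already encountered at order $j=1$ in~\cite{GNO3b} and at order $j=2$ in~\cite{BFFO}, and the nested structure (each order inheriting an $L^2$ growth from the previous one) forces the constraint $\beta>2(j-1)$ in order for the RHS of the equation for $\phi_j$ to be summable against the Green function in the appropriate sense. Once the uniform-in-$T$ estimates are in place, passing to the limit $T\uparrow\infty$ (modulo spatial averaging for $\phi_j,\sigma_j,\chi_j$ themselves, whose gradients are stationary) yields the existence of the extended correctors with the claimed moment bounds.
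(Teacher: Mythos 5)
The first thing to say is that the paper does not actually prove Proposition~\ref{prop:Gauss}: it is stated in Appendix~\ref{sec:corr} with the remark that ``proceeding as in \cite{GNO3b} (or using a semi-group approach as in \cite{GNO1,GO4}), one can prove the following sharp estimates (see also \cite{BFFO} for the case $j=2$)'', and the introduction explicitly flags these corrector bounds as quoted from the quantitative stochastic homogenization literature rather than established here. Your plan --- induction on $j$, massive approximation, sensitivity calculus under the weighted functional inequalities of \cite{DG1,DG2}, annealed Green-function/large-scale regularity input, and convolution estimates producing the four regimes --- is exactly the route the paper points to, so there is no divergence of approach to report; but there is also no in-paper argument against which your steps can be checked line by line.

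As a standalone proof your text has genuine gaps, concentrated precisely where the paper says the work is ``not straightforward''. First, the displayed inequality is a weighted spectral-gap/covariance estimate, not the LSI, and in either case it only controls the \emph{fluctuations} of $\fint_{B_R(x_0)}\phi_{j,T}$; in the regimes where $\phi_j$ grows ($\beta\le 2j$ or $d=2j$) the corrector is not stationary, so the quantity $\expec{|\phi_j(x)|^2}$ must be understood with an anchoring, and the systematic growth has to be recovered separately (e.g.\ by integrating stationary gradient bounds along paths), which your sketch does not address. Second, the Malliavin-type derivative of $F_R$ involves products such as $\nabla G_T(\cdot,x)\,(\phi_{j-1},\sigma_{j-1},\nabla\chi_{j-1})(x)$ together with the derivatives of the lower-order correctors themselves; closing this requires either a buckling/self-improvement argument for $\nabla\phi_{j,T}$ or the large-scale $C^{1,\alpha}$ theory of \cite{GNO3,AKM2}, and the induction must also carry quantitative bounds on the stationary gradients $(\nabla\phi_{j},\nabla\sigma_j)$, not only on the potentials listed in your inductive hypothesis. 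Third, the passage from the schematic bound $\int G(x,y)^2\mu_{j-1}(y)^2\,dy$ ``convolved with $\gamma_\beta$'' to the exact four regimes --- in particular the two critical cases $d=2j$ and $\beta=2j$, where the logarithms appear, and the verification that the thresholds $d\ge 2j$, $\beta>2(j-1)$ are exactly what makes $q_j$ admissible for the construction of $\sigma_j$ and $\chi_j$ --- is asserted rather than carried out, and this bookkeeping is the actual content of the proposition. So your proposal is a reasonable roadmap matching the references the paper defers to, but it does not yet constitute a proof.
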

%

%%%%%%%%%%%%%

\section*{Acknowledgements}
The authors acknowledge financial support from the European Research Council under
the European Community's Seventh Framework Programme (FP7/2014-2019 Grant Agreement
QUANTHOM 335410). 
We wish to thank Gr\'egoire Allaire and Jeffrey Rauch for inspiring discussions on the subject (which led to Appendix~\ref{sec:forcing})
and Laszlo Erd\"os for his comments on a preliminary version of this manuscript.

%%%%%%%%%%%%%

%%%%%%%%%%%%

\end{document}